\DeclareSymbolFont{rsfs}{U}{rsfs}{m}{n}
\DeclareSymbolFontAlphabet{\mathcal}{rsfs}
\numberwithin{equation}{section}
\newtheorem{remark}{Remark}
\newtheorem{example}{Example}[section]
\newtheorem{thm}{Theorem}
\newtheorem{lem}{Lemma}
\newcommand{\R}{\mathbb{R}}
\newcommand{\fb}{{\bf f}}
\newcommand{\gb}{{\bf g}}
\newcommand{\ub}{{\bf u}}
\newcommand{\Vb}{{\bf w}}
\newcommand{\Vvb}{{\bf V}}
\newcommand{\Lb}{{\bf L}}
\newcommand{\Rb}{{\bf R}}
\newcommand{\Bb}{{\bf B}}
\newcommand{\nb}{{\bf n}}
\newcommand{\Cb}{{\bf C}}
\newcommand{\wb}{{\bf w}}
\newcommand{\vb}{{\bf v}}
\newcommand{\Eb}{{\bf E}}
\newcommand{\pb}{{\bf p}}
\newcommand{\qb}{{\bf q}}
\newcommand{\pa} {\partial}
\newcommand{\na} {\nabla}
\newcommand{\eps} {\epsilon}
\newcommand{\hf}{\frac{1}{2}}
\newcommand{\iph}{{i+\hf}}
\newcommand{\jph}{{j+\hf}}
\newcommand{\uad}{\mathbb{U}_{\textrm{ad}}}
\title{Positivity-preserving finite difference WENO scheme for Ten-Moment equations with source term}
\date{}
\author{Asha Kumari Meena\footnote{Email: asha@tifrbng.res.in}, \ Rakesh Kumar\footnote{Email: rakesh@tifrbng.res.in}, \ Praveen Chandrashekar\footnote{Email: praveen@tifrbng.res.in, Corresponding author}\\
TIFR Centre for Applicable Mathematics, Bangalore-560065, India}
\begin{document}
\maketitle

\begin{abstract}
We develop a positivity-preserving finite difference WENO scheme for the Ten-Moment equations with body forces acting as a source in the momentum and energy equations. A positive forward Euler scheme under a CFL condition is first constructed which is combined with an operator splitting approach together with an integrating factor, strong stability preserving Runge-Kutta scheme. The positivity of the forward Euler scheme is obtained under a CFL condition by using a scaling type limiter, while the solution of the source operator is performed exactly and is positive without any restriction on the time step. The proposed method can be used with any WENO reconstruction scheme and we demonstrate it with fifth order accurate WENO-JS, WENO-Z and WENO-AO schemes. An adaptive CFL strategy is developed which can be more efficient than the use of reduced CFL for positivity preservation. Numerical results show that high order accuracy and positivity preservation are achieved on a range of test problems.
\end{abstract}	
{\bf Keywords}: Ten-moment equations, finite difference, WENO scheme, positivity scheme, integrating factor SSPRK

\section{Introduction}
The Euler equations are used to model compressible flows but there are many situations where classical Euler equations cannot be used~\cite{berthon2015,ber_06a,sangam,mee-kum_17a,mee-kum_17b,meenaKumarFVM}. In case of Euler equations, pressure is a scalar quantity, which comes from the assumption of local thermodynamical equilibrium. But in many situations, local thermodynamical equilibrium is not valid and for such situations, several alternative models are proposed~\cite{levermore1,levermore2}. Ten-Moment equations is one such model which allows anisotropic effects in the flow and produces the symmetric tensorial description of the pressure field.

The Ten-Moment equations form a system of hyperbolic conservation laws \cite{levermore1,levermore2}. The appearance of shock, contact and rarefaction waves~\cite{godlewskiRaviart,toro,godlewski,bouchet} in the solution makes it difficult to develop non-oscillatory higher order numerical schemes for this model. The oscillations can lead to loss of positivity of density and/or the loss of positive definiteness of the pressure tensor, which leads to break down of the simulations. Many numerical schemes have been proposed in the literature based on  different approaches. TVD type limiters are commonly used to control spurious numerical oscillations may not be enough to prevent loss of positivity. Berthon~\cite{ber_06a,berthon2015} designed first order positivity preserving scheme and first order entropy stable schemes. Meena et al.~\cite{mee-kum_17a} proposed a positivity preserving discontinuous Galerkin method and results are presented for degree upto three. Further a second order positivity preserving finite volume scheme is developed in Meena \& Kumar~\cite{meenaKumarFVM} where positivity is achieved by applying appropriate conditions on slopes of reconstruction. An entropy stable finite volume scheme is developed by Sen \& Kumar~\cite{sen-kum_18a}.

The weighted essentially non-oscillatory (WENO) schemes are higher order accurate spatial reconstruction techniques that can be used for numerical solution of hyperbolic conservation laws. They are very often used to simulate compressible flows due to their capability to capture small scale structures with a high resolution and resolving shocks in a non-oscillatory manner.  WENO scheme was developed by Liu et al. \cite{liu-etal_94a} in a finite volume framework, as a convex combination of stencils used in ENO scheme \cite{shu-osh_88a,shu-osh_89a} and have an advantage of one extra order of accuracy over the ENO scheme. Over the years, WENO schemes have witnessed  many developments and improvements \cite{jia-shu_96a,hen-etal_05a,bal-shu_00a,shi-etal_02,bor-etal_08d,ger-etal_09a, bal-etal_16a}. In particular, we refer to some WENO schemes which are relevant to the current work, namely, WENO-JS \cite{jia-shu_96a}, WENO-Z \cite{bor-etal_08d}, WENO-ZQ \cite{zhu-qiu_16a}, and WENO-AO \cite{bal-etal_16a,hua-che_18a, kum-pra_18a, kum-pra_18b}.

High order positivity preserving schemes are built by first constructing a first order scheme which is provably positive under some CFL condition. Usually, upwind type schemes or those based on exact or approximate Riemann solvers like HLL/HLLC~\cite{Batten1997} or relaxation solvers~\cite{Waagan2011},~\cite{Thomann2019} can be proved to be positivity preserving. Another important class of positive schemes are central schemes like Lax-Friedrich or local Lax-Friedrich's which can be proven to be positive for many systems of conservation laws and is also used in the present work within a finite difference WENO setting. High order positivity preserving schemes have been developed in \cite{zha-shu_10d} using a scaling type limiter applied to the reconstruction polynomial. This type of scaling limiter has been very successful in case of other numerical schemes also and has been used with WENO and DG schemes. Positivity preserving finite volume WENO and DG schemes were developed in~\cite{Zha-shu_10a} and finite difference WENO schemes in~\cite{zha-shu_12a, hu-etal_13a}. The other positivity preserving schemes for Euler equations with and without source term can be found in~\cite{Zha-shu_11a,guo-etal_14a}.

In the present work, we have developed a positivity preserving finite difference WENO scheme for the Ten-Moment equations with source term originating from body forces. The scheme consists of two main ingredients due to the operator splitting approach employed to deal with the source terms: (a) a positive finite difference WENO scheme for the hyperbolic part using a scaling type limiter, and, (2) an integrating factor strong stability preserving Runge-Kutta scheme to include the effect of the source term. The positivity property of the present algorithm is independent of the type of WENO reconstruction and order of WENO scheme. To demonstrate the idea of positivity preservation, we have considered three fifth order WENO reconstruction schemes: WENO-JS \cite{jia-shu_96a}, WENO-Z \cite{bor-etal_08d} and WENO-AO \cite{bal-etal_16a}. Our analysis to achieve the positivity preservation of the scheme is similar to the approach discussed in~\cite{zha-shu_12a} where the flux reconstruction polynomial is modified with a scaling limiter based on the ideas of Zhang and Shu. We use local Lax-Friedrich splitting of the flux and prove the positivity of the splitting for the Ten-Moment equations. The forward Euler scheme together with WENO reconstruction is written as a sum of two terms originating from the flux splitting. A scaling type limiter is used to modify the numerical fluxes so that the two parts are each positive under a CFL condition.

The source term in the Ten-Moment equations originating from the body forces is linear in the conserved variables which allows us to solve the source ODE exactly in case of time independent sources and approximately if the body force potential is time dependent and cannot be integrated exactly. In either case, we can show that the solution of the source ODE is positivity preserving without any restriction on the time step, and this holds both forward and backward in time. The complete update is achieved by using the integrating factor, strong stability preserving Runge-Kutta schemes developed in~\cite{lea-etal_18a} and we demonstrate this approach using third order time integration.

The resulting algorithm is provably positivity preserving under a time step restriction which is $\alpha^x \Delta t/\Delta x \le \frac{1}{12}$ in 1-D and $(\alpha^x/\Delta x + \alpha^y/\Delta y)\Delta t \le \frac{1}{12}$ in 2-D, where $\alpha^x, \alpha^y$ are the maximum wave speeds occuring in the whole mesh along the $x,y$ directions, respectively. The CFL of $1/12$ is a consequence of the fifth order WENO reconstruction scheme that we employ for the hyperbolic part of the model. The use of such a small CFL can lead to small time steps and consequently increased computational cost. In order to improve this situation, we propose and test an adaptive strategy where the finite difference scheme with CFL $\approx 1$ is first used to update the solution and if positivity is lost, then we repeat the time update with the reduced CFL of $1/12$ together with the positivity limiter. We show in some numerical experiments that this strategy is faster than using the reduced CFL in all the time steps. Numerical tests show that the proposed scheme give stable computations which would break down if positivity limiter is not used.

The rest of the article is organized as follows. In Section~(\ref{sec:tmf}), we introduce the Ten-Moment Gaussian closure equations with source term and discuss its hyperbolicity and positivity of the source operator. Section~(\ref{sec:tms}) gives the main time stepping algorithm based on the integrating factor approach. The WENO scheme in 1-D is introduced in Section~(\ref{sec:weno1d}) and its positivity property under a scaling limiter is eastablished. The corresponding scheme in 2-D is introduced in Section~(\ref{sec:weno2d}). We then explain the algorithm including the adaptive CFL approach in Section~(\ref{sec:impl}). A comprehensive set of numerical tests is performed in Section~(\ref{sec:nm}) to show order of accuracy, non-oscillatory property and positivity preservation. Finally, the paper ends with a summary and conclusions.

\section{Ten-moment equations with source term}
\label{sec:tmf}
We consider the Ten-Moment equations with source term \cite{lev_96a,lev-wil_98a,ber-etal_15a,ber_06a} which is a system of balance laws given as,
\begin{subequations}
	\label{eq:10momeq}
	\begin{eqnarray}
	\label{eq:masscons}
	\pa_{t}\rho + \nabla\cdot(\rho \vb) &=& 0\\ 
	\label{eq:momcons}
	\pa_{t}(\rho \vb) + \nabla\cdot(\rho \vb\otimes \vb + \pb) &=& -\frac{1}{2}\rho\na W\\
	\label{eq:energycons}
	\pa_{t}\Eb + \nabla\cdot((\Eb+\pb)\otimes \vb)^s &=& -\frac{1}{4}\rho (\na W \otimes \vb + \vb\otimes\na W)
	\end{eqnarray} 
\end{subequations}
where $\rho$, $\vb$, and $\Eb$ represent the density, velocity and symmetric energy tensor and the symbol $\otimes$ denotes the tensor outer product and the superscript $(\cdot)^s$ denotes the symmetric tensor outer product~\cite{sangam}. The variable $\pb$ is the symmetric pressure tensor and is related to the energy tensor by the equation of state,
\begin{equation*}
\label{eq:eqnstate}
\Eb=\frac{1}{2}\left(\pb +\rho\vb\otimes\vb \right)
\end{equation*}
and $W(x,y,t)$ is a given potential function whose gradient gives an external body force term. In two-dimensions, the system \eqref{eq:10momeq} can be written as,
\begin{equation}
\label{eq:balance_law}
\pa_t \ub +\pa_x \fb(\ub) + \pa_{y} \gb(\ub) = {\bf s}(\ub)
\end{equation}
with the {\em conservative variable} $\ub=\{\rho,\rho v_{1},\rho v_2,E_{11},E_{12},E_{22}\}^{\top}$, the fluxes

\begin{equation*}
\label{eq:flux}
\fb(\ub) = \left\{\begin{array}{c}
\rho v_1 \\ 
\rho v_1^2 + p_{11} \\
\rho v_{1}v_2 + p_{12} \\
(E_{11} + p_{11})v_1 \\
E_{12}v_{1} + \frac{1}{2}(p_{11}v_2 + p_{12}v_1) \\
E_{22}v_1 + p_{12}v_2
\end{array} \right\},  \qquad
\gb(\ub) = \left\{ \begin{array}{c}
\rho v_2 \\
\rho v_1 v_2 + p_{12} \\ 
\rho v_2^{2} + p_{22} \\
E_{11}v_{2} + p_{12}v_{1} \\ 
E_{12}v_{2} +\frac{1}{2}( p_{12}v_2 + p_{22}v_{1})\\
(E_{22} + p_{22})v_{2}
\end{array}
\right\}
\end{equation*}
and the source,
\begin{equation*}
\label{eq:source}
{\bf s}(\ub) = \left\{\begin{array}{c}
0 \\ 
-\frac{1}{2}\rho \partial_{x} W \\
-\frac{1}{2}\rho \partial_{y} W \\
-\frac{1}{2}\rho v_{1} \partial_{x} W \\
-\frac{1}{4}\rho v_{2} \partial_{x} W -\frac{1}{4}\rho v_{1} \partial_{y} W \\
-\frac{1}{2}\rho v_{2} \partial_{y} W
\end{array} \right\}
\end{equation*}
Here $v_{1}$ and $v_{2}$ are components of two-dimensional velocity $\vb$; $E_{11}$, $E_{12}$ and $E_{22}$ are components of two-dimensional symmetric energy tensor $\Eb$ and $p_{11}$, $p_{12}$ and $p_{22}$ are components of two-dimensional symmetric pressure tensor $\pb$. We aim to find the solutions in the following convex set of physically admissible states,
\begin{equation}
\uad = \left\{ \ub\in\R^6 \ | \ \rho(\ub) >0 \; \mbox{ and } \; {\bf x}^\top \pb(\ub) \: {\bf x} > 0, \; \forall \; {\bf x} \in \R^2 \setminus \lbrace \bm{0} \rbrace \right\}
\label{eq:sol_set}
\end{equation}
which contains the states $\ub$ with positive density and positive definite pressure tensor. The positive definiteness of $\pb$ implies that $p_{11}+p_{22}>0$ and $\det\pb=p_{11}p_{22}-p_{12}^{2}>0$. As a consequence, we also have the condition that $p_{11}>0$ and $p_{22}>0$. From \cite{mee-kum_17a}, we have the following result for the hyperbolicity of the system.

\begin{lem} For $\ub \in \uad$, the system \eqref{eq:10momeq} without source term is hyperbolic. The eigenvalues for the system are given as,
	$$
	\vb \cdot \nb, \quad \vb \cdot \nb \pm \sqrt{\frac{3(\pb \cdot \nb) \cdot \nb}{\rho}}, \quad \vb \cdot \nb \pm \sqrt{\frac{(\pb \cdot \nb) \cdot \nb}{\rho}}
	$$
	along the unit vector $\nb$.
	The first eigenvalue has two order of multiplicity and it is associated to a linearly degenerate field. The second eigenvalue has one order of multiplicity and is associated to a genuinely nonlinear field. The third eigenvalue has one order of multiplicity and is associated to a linearly degenerate field.
\end{lem}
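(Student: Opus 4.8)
The plan is to exploit the rotational invariance of the Ten-Moment system so as to reduce the computation to a single coordinate direction, and then to diagonalize the flux Jacobian by passing to primitive variables. First I would note that the system \eqref{eq:10momeq} is invariant under rotations of the $(x,y)$ frame, with $\vb$ transforming as a vector and the tensors $\pb$ and $\Eb$ as rank-two symmetric tensors. Consequently the directional flux Jacobian $n_1 \fb'(\ub) + n_2 \gb'(\ub)$ is similar, through the corresponding rotation matrix $R(\nb)$, to the $x$-flux Jacobian $\fb'(\ub)$ evaluated at the rotated state. It therefore suffices to analyse the case $\nb=(1,0)$ and to recover a general direction at the end by the substitutions $v_1 \mapsto \vb\cdot\nb$ and $p_{11}\mapsto (\pb\cdot\nb)\cdot\nb$.

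Next I would change to the primitive variables $\wb = (\rho, v_1, v_2, p_{11}, p_{12}, p_{22})^\top$. On $\uad$ the map $\ub \mapsto \wb$ is a diffeomorphism with invertible Jacobian, so $\fb'(\ub)$ is similar to the coefficient matrix $\Ab(\wb)$ of the quasilinear primitive form $\pa_t \wb + \Ab(\wb)\,\pa_x \wb = 0$; in particular the two matrices share their eigenvalues, and $\fb'$ is diagonalizable if and only if $\Ab$ is. Rewriting \eqref{eq:masscons}--\eqref{eq:energycons} (with $\nb=(1,0)$ and no source) in terms of $\wb$, using the equation of state $\Eb = \hf(\pb + \rho\,\vb\otimes\vb)$, yields a matrix $\Ab$ whose diagonal part is $v_1 I$ and whose only nonzero off-diagonal entries couple $\rho$, $v_1$, $v_2$ to the pressure components through the factors $1/\rho$, $3p_{11}$, $2p_{12}$, $p_{11}$, $p_{22}$.

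I would then compute the spectrum of $\Ab$ by writing $\Ab = v_1 I + N$, where $N$ collects the off-diagonal coupling. The columns of $N$ corresponding to $\rho$ and $p_{22}$ vanish identically, so $N$ has rank at most four and $v_1$ is an eigenvalue of $\Ab$ of algebraic multiplicity at least two. The surviving four-dimensional block acting on $(v_1, v_2, p_{11}, p_{12})$ has an anti-triangular structure, and squaring it reduces the eigenvalue problem to a triangular $2\times2$ matrix with diagonal entries $3p_{11}/\rho$ and $p_{11}/\rho$. Hence $N$ has eigenvalues $0$ (double), $\pm\sqrt{3p_{11}/\rho}$ and $\pm\sqrt{p_{11}/\rho}$, so that $\Ab$ carries the five speeds $v_1$, $v_1\pm\sqrt{3p_{11}/\rho}$, $v_1\pm\sqrt{p_{11}/\rho}$; undoing the rotation gives exactly the eigenvalues in the statement. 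These speeds are real precisely because $\ub\in\uad$ makes $\pb$ positive definite, so that $p_{11}>0$ and, more generally, $(\pb\cdot\nb)\cdot\nb>0$ for every unit $\nb$, whence every radicand is strictly positive.

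The one genuinely subtle point, and the step I expect to require the most care, is to confirm that $\Ab$ is diagonalizable and not merely equipped with real eigenvalues, since $v_1$ is a repeated root. I would verify this by computing $\ker N$ directly: the componentwise equations force $v_1$, $v_2$, $p_{11}$ and $p_{12}$ to vanish while leaving $\rho$ and $p_{22}$ free (here positivity of $p_{11}$ is used again), so $\ker N$ is two-dimensional and the geometric multiplicity of $v_1$ matches its algebraic multiplicity. Together with the four simple nonzero eigenvalues this gives a full set of eigenvectors, establishing hyperbolicity in every direction $\nb$. Finally, to classify the characteristic fields I would compute the right eigenvectors $\mathbf{r}$ explicitly and evaluate $\na_{\ub}\lambda\cdot\mathbf{r}$ for each: for the contact speed $\vb\cdot\nb$ and for the slower pair $\vb\cdot\nb\pm\sqrt{(\pb\cdot\nb)\cdot\nb/\rho}$ this quantity vanishes identically, marking them as linearly degenerate, whereas for the faster pair $\vb\cdot\nb\pm\sqrt{3(\pb\cdot\nb)\cdot\nb/\rho}$ it is nonzero, marking them as genuinely nonlinear, in accordance with the multiplicities and field types asserted in the lemma.
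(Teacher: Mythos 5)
Your proposal is correct in substance, but note that the paper itself offers no proof of this lemma: it simply imports the result from the earlier work of Meena et al.\ (reference \cite{mee-kum_17a}), so there is no in-paper argument to compare against. What you supply is the standard self-contained derivation, and it checks out: rotational invariance legitimately reduces the problem to the $x$-direction; in primitive variables the quasilinear matrix is $\Ab=v_1 I+N$ with the $\rho$- and $p_{22}$-columns of $N$ identically zero; the kernel computation (which does use $p_{11}>0$ to kill the $v_2$-component via the $p_{12}$-equation) gives geometric multiplicity two for the speed $v_1$; and the remaining $4\times 4$ block $B$ on $(v_1,v_2,p_{11},p_{12})$ satisfies that $B^2$ is lower block-triangular with diagonal entries $3p_{11}/\rho$ and $p_{11}/\rho$, yielding the four acoustic speeds. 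Two places deserve a little more care than your sketch gives them. First, knowing the eigenvalues of $B^2$ are $\{3p_{11}/\rho,3p_{11}/\rho,p_{11}/\rho,p_{11}/\rho\}$ does not by itself rule out, say, $+\sqrt{3p_{11}/\rho}$ occurring twice; you need the trace-free, $\pm$-paired structure of $B$ (or a direct expansion of its characteristic polynomial into $(\mu^2-3p_{11}/\rho)(\mu^2-p_{11}/\rho)$) together with $p_{11}>0$ to conclude that all four nonzero eigenvalues are simple. Second, the classification of the fields is only asserted, not computed; the asserted outcome (contact and slower pair linearly degenerate, faster pair genuinely nonlinear) is the correct one and agrees with the lemma, but an actual proof would have to exhibit the right eigenvectors and evaluate $\nabla_{\ub}\lambda\cdot\mathbf{r}$, verifying in particular that it is nonvanishing on all of $\uad$ for the genuinely nonlinear pair. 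With those two computations filled in, your argument is a complete and correct proof of the cited result.
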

\begin{remark}
For later use in the numerical schemes, we define the maximum wave speeds for the Ten-Moment model along the two coordinate directions by
\[
\alpha^{x}(\ub)= |v_{1}| + \sqrt{\frac{3 p_{11}}{\rho}}, \qquad
\alpha^{y}(\ub)=|v_{2}| + \sqrt{\frac{3 p_{22}}{\rho}}
\]
\end{remark}
\begin{remark}
We will refer to the set of variables $\rho, \vb, \pb$ as {\em primitive variables}. We can of course uniquely convert between primitive and conserved variables.
\end{remark}
Our numerical strategy for including the source term in the numerical scheme is based on an operator splitting scheme. The Lemma below shows that the source term alone leads to a positive solution. We assume that the spatial location is fixed and hence the space coordinate is not specified in the ODE system.
\begin{lem}
\label{lem:pos_ExactSource}
The solution of the source ODE 
\[
\frac{d \ub}{d t} = {\bf s}(\ub)
\]
is positivity preserving, i.e.  $\ub(t) \in \uad$ at any time $t$ implies $\ub(t+\tau) \in \uad$ $\forall$ $\tau \in \mathbb{R}$.
	\begin{proof}{\rm 
		We write the source ODE in the following form,
		\[
		\frac{d \ub}{d t} = {\bf s}(\ub) = {\bf B}(t) \ub
		\]	
		where ${\bf B}(t)$ is the following matrix,
		\begin{equation*}
		\label{eq:source_matrix}
		{\bf B}(t) = \left[ \begin{array}{cccccc}
		0 & 0 & 0 & 0 & 0 & 0\\
		a(t) & 0 & 0 & 0 & 0 & 0 \\
		b(t) & 0 & 0 & 0 & 0 & 0 \\
		0 & a(t) & 0 & 0 & 0 & 0 \\
		0 & \frac{b(t)}{2} & \frac{a(t)}{2} & 0 & 0 & 0 \\
		0 & 0 & b(t) & 0 & 0 & 0
		\end{array} \right]
		\end{equation*}
		with $a(t) = -\frac{1}{2} \pa_{x}W$ and $b(t) = -\frac{1}{2} \pa_{y}W$.
		For given $\ub(t)\in \uad$, solution of the ODE at time $t+\tau$ is the given by,
		\begin{equation}
		\label{eq:exact_sol}
		\ub(t+\tau) = e^{\Cb(t, \tau)} \ub(t)
		\end{equation}
		with 
		\[
		\Cb(t,\tau) = \int_{t}^{t+\tau} \Bb(s) ds
		\]
		and
		\begin{equation*}
		\label{eq:expC}
		e^{\Cb(t,\tau)} = \left[ \begin{array}{c c c c c c}
		1 & 0 & 0 & 0 & 0 & 0 \\
		\hat{a}(t,\tau) & 1 & 0 & 0 & 0 & 0 \\
		\hat{b}(t,\tau) & 0 & 1 & 0 & 0 & 0 \\
		\frac{(\hat{a}(t,\tau))^{2}}{2} & \hat{a}(t,\tau) & 0 & 1 & 0 & 0 \\
		\frac{\hat{a}(t,\tau) \hat{b}(t,\tau)}{2} & \frac{\hat{b}(t,\tau)}{2} & \frac{\hat{a}(t,\tau)}{2} & 0 & 1 & 0 \\
		\frac{(\hat{b}(t,\tau))^{2}}{2} & 0 & \hat{b}(t,\tau) & 0 & 0 & 1
		\end{array} \right]
		\end{equation*}
		where $\hat{a}(t,\tau) = \int_{t}^{t+\tau}a(s)ds$ and $\hat{b}(t,\tau) = \int_{t}^{t+\tau}b(s)ds$. The equation \eqref{eq:exact_sol} implies,
		\begin{eqnarray*}
			\rho(t+\tau) & = & \rho(t),\\
			(\rho v_{1})(t+\tau) & = & \rho(t) \left(\hat{a}(t,\tau) + v_{1}(t) \right),\\
			(\rho v_{2})(t+\tau) & = & \rho(t) \left(\hat{b}(t,\tau) + v_{2}(t) \right),\\
			E_{11}(t+\tau) & = & \rho(t) \left( \frac{(\hat{a}(t,\tau))^{2}}{2} + v_{1}(t) \hat{a}(t,\tau) \right) + E_{11}(t),\\
			E_{12}(t+\tau) & = & \rho(t) \left( \frac{\hat{a}(t,\tau) \hat{b}(t,\tau)}{2} + v_{1}(t) \frac{\hat{b}(t,\tau)}{2} +  v_{2}(t) \frac{\hat{a}(t,\tau)}{2} \right) + E_{12}(t),\\
			E_{22}(t+\tau) & = & \rho(t) \left( \frac{(\hat{b}(t,\tau))^{2}}{2} + v_{2}(t) \hat{b}(t,\tau) \right) + E_{22}(t)
		\end{eqnarray*}
		Then the velocity components are given by,
		\begin{eqnarray}
		\nonumber
		 v_{1}(t+\tau) = \hat{a}(t,\tau) + v_{1}(t),~~~
		 v_{2}(t+\tau) = \hat{b}(t,\tau) + v_{2}(t),
		\end{eqnarray}
		and the pressure components are given as,
		\begin{align}
		\nonumber
		p_{11}(t+\tau) & =  2 E_{11}(t+\tau) - \rho(t+\tau) [ v_{1}(t+\tau) ]^{2} \\
		\nonumber
		& =\rho(t) [\hat{a}(t,\tau)]^{2} + 2 \rho(t) v_{1}(t) \hat{a}(t,\tau) + 2 E_{11}(t) - \rho(t) [\hat{a}(t,\tau) + v_{1}(t)]^{2} \\
		\nonumber
		& =  2 E_{11}(t) - \rho(t) [v_{1}(t)]^{2} \\
		\nonumber
		& = p_{11}(t)
		\end{align}
		Similarly, it can be shown that $p_{12}(t+\tau)=p_{12}(t)$ and $p_{22}(t+\tau)=p_{22}(t)$.
		We see that the density and pressure solution at time $t+\tau$ coincide with the corresponding solutions at time $t$, unconditionally (without any condition on $W$). Hence, we can conclude that for any given potential function $W(x,y,t)$, the solution of the source ODE is positivity preserving.} 
	\end{proof}
\end{lem}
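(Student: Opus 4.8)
The plan is to exploit two structural facts: the source term is \emph{linear} in the conserved variables, and the two quantities that define membership in $\uad$, namely the density and the pressure tensor, are actually \emph{conserved} by the source dynamics. First I would verify directly from the explicit form of $\so(\ub)$ that it can be written as $\so(\ub)=\Bb(t)\ub$ with $\Bb(t)$ a strictly lower triangular matrix whose nonzero entries are built from $a(t)=-\hf\,\pa_x W$ and $b(t)=-\hf\,\pa_y W$. Because $\Bb(t)$ is strictly lower triangular, the system decouples into a cascade: $\rho$ obeys $\dot\rho=0$, the momenta $\rho v_1,\rho v_2$ are driven only by $\rho$, and the energies $E_{11},E_{12},E_{22}$ are driven only by $\rho$ and the momenta. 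Hence the solution is obtained by successive quadratures of $a$ and $b$, is explicit, and is defined for every $\tau\in\R$ (both forward and backward in time), so no finite-time issue arises.

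The core of the argument is to show that $\rho$ and $\pb$ are invariant along trajectories, which I would do by differentiating them rather than by writing out the full solution. From $\dot\rho=0$ we immediately get $\rho(t+\tau)=\rho(t)>0$. Since $\rho$ is constant, $\dot v_i=\dot{(\rho v_i)}/\rho$, and reading off the momentum rows of $\Bb(t)$ gives $\dot v_1=a(t)$ and $\dot v_2=b(t)$.

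For the pressure I would use the relations $p_{11}=2E_{11}-\rho v_1^2$, $p_{12}=2E_{12}-\rho v_1 v_2$ and $p_{22}=2E_{22}-\rho v_2^2$. Differentiating $p_{11}$ and substituting $\dot E_{11}=a(t)\rho v_1$ together with $\dot v_1=a(t)$ gives $\dot p_{11}=2a(t)\rho v_1-2\rho v_1 a(t)=0$; the analogous cancellations, using $\dot E_{12}=\hf b(t)\rho v_1+\hf a(t)\rho v_2$, $\dot E_{22}=b(t)\rho v_2$ and $\dot v_2=b(t)$, yield $\dot p_{12}=\dot p_{22}=0$. Therefore $\pb(t+\tau)=\pb(t)$ stays positive definite and $\rho(t+\tau)>0$, so $\ub(t+\tau)\in\uad$ for every $\tau\in\R$.

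I do not expect a deep obstacle: the whole lemma rests on the linear cascade structure and on the exact cancellations in $\dot p_{ij}$, so the one thing to execute carefully is that algebra (and the observation that it requires no hypothesis on $W$). An equivalent route is to solve the ODE in closed form: since the two constant matrices multiplying $a(t)$ and $b(t)$ in $\Bb(t)$ commute, the time-ordered exponential collapses to $\exp\big(\int_{t}^{t+\tau}\Bb(s)\,ds\big)$, from which one reads off the components and checks directly that $\rho$ and $p_{ij}$ are unchanged. I would nonetheless prefer the differentiation argument, since it bypasses the commutativity check and makes the conservation of $\rho$ and $\pb$ transparent.
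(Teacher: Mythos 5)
Your proposal is correct and rests on the same key fact as the paper's proof, namely that the source dynamics leave the density and the pressure tensor invariant, but you establish this by a different route. The paper writes out the exact solution $\ub(t+\tau)=e^{\Cb(t,\tau)}\ub(t)$ with $\Cb(t,\tau)=\int_t^{t+\tau}\Bb(s)\,ds$, computes the (nilpotent) matrix exponential explicitly, and then verifies component by component that $\rho$ and $p_{11},p_{12},p_{22}$ are unchanged; you instead differentiate the primitive quantities along the trajectory and exhibit the cancellations $\dot p_{11}=\dot p_{12}=\dot p_{22}=0$ directly, using only the cascade structure of the strictly lower triangular $\Bb(t)$ to guarantee global-in-time existence. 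Your algebra checks out ($\dot E_{11}=a\rho v_1$ against $\rho\,\tfrac{d}{dt}v_1^2=2\rho v_1 a$, and likewise for the other components), and your remark about the commutator is a genuine point in your favour: the formula $e^{\int\Bb}$ used by the paper is only the exact propagator because $\Bb(t)=a(t)M_a+b(t)M_b$ with $M_a$ and $M_b$ commuting constant matrices (one can check $M_aM_b=M_bM_a$ has a single nonzero entry $1/2$ in position $(5,1)$), a fact the paper uses silently; your differentiation argument sidesteps this entirely and is arguably cleaner, at the cost of not producing the explicit update formula \eqref{eq:exact_sol} that the paper reuses later in the integrating-factor SSPRK scheme.
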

\begin{remark}
The source operator does not change the density and pressure tensor which means that an initial positive solution remains positive at all times under the action of the source term alone. The positivity property of the source term holds both forward and backward in time. This allows us to use standard SSPRK schemes to achieve the positivity property of the entire scheme including the hyperblic part of the model. The proof also holds if we replace the integrals in the definition of $\hat{a}$, $\hat{b}$ with some quadrature rule in case the integrals cannot be analytically evaluated.
\end{remark}
\section{Discretization with source term} \label{sec:tms}
Let us assume that we have discretized the flux divergence term in the balance law by a finite difference scheme leading to the system of ordinary differential equations which can be written in the form
\begin{equation*}
\frac{d \ub}{d t} = \Rb(\ub)+ \Bb(t) \ub,
\end{equation*}
Since the source term is linear in the unknown solution $\ub$ we can use an integrating factor approach~\cite{lea-etal_18a}. Using an exponential integrating factor, we can transform this ODE to
\[
\frac{d}{dt} e^{-\int_{t_n}^t \Bb(s)ds} \ub =  e^{-\int_{t_n}^t \Bb(s)ds} \Rb(\ub)
\]
We can now apply the standard third order accurate strong stability preserving scheme to the above ODE leading to the following set of update equations
\begin{subequations}
	\label{eq:expssprk3a}
	\begin{align}
	\ub^{(1)} & = e^{\Cb(t_{n},\Delta t)} [\ub^{n} + \Delta t \; \Rb(\ub^{n})] \\
	\ub^{(2)} & = \frac{3}{4} e^{\Cb(t_{n},\frac{\Delta t}{2})} \ub^{n} + \frac{1}{4} e^{-\Cb(t_{n}+\frac{\Delta t}{2},\frac{\Delta t}{2})}[\ub^{(1)} + \Delta t \; \Rb(\ub^{(1)})] \\
	\ub^{n+1} & = \frac{1}{3} e^{\Cb(t_{n},\Delta t)} \ub^{n} + \frac{2}{3} e^{\Cb(t_{n}+\frac{\Delta t}{2},\frac{\Delta t}{2})}[\ub^{(2)} + \Delta t \; \Rb(\ub^{(2)})]
	\end{align}
\end{subequations}
and the expressions for the matrices $\Cb$ and the exponential matrix have already been given in the previous sections. If the matrix $\Bb$ does not depend on time, the integrating factor SSPRK3 scheme takes the following form,
\begin{subequations}
	\label{eq:expssprk3b}
	\begin{align}
	\ub^{1} & = e^{\Bb \Delta t} [\ub^{n} + \Delta t \; \Rb(\ub^{n})] \\
	\ub^{2} & = \frac{3}{4} e^{\frac{1}{2} \Bb \Delta t}\ub^{n} + \frac{1}{4} e^{-\frac{1}{2}\Bb \Delta t}[\ub^{1} + \Delta t \; \Rb(\ub^{1})] \\
	\ub^{n+1} & = \frac{1}{3} e^{\Bb \Delta t} \ub^{n} + \frac{2}{3} e^{\frac{1}{2} \Bb \Delta t}[\ub^{2} + \Delta t \; \Rb(\ub^{2})]
	\end{align}
\end{subequations}
If there is no source term in the model, then the exponential factors must be replaced with an identity, leading to the standard SSPRK3 scheme. We observe from the above update equations that it is composed of a forward Euler scheme in a convex combination with some exponential factors. The exponential factors provide an update to the source ODE and we have already proven that this step is positivity preserving under any time step $\Delta t$. One of the exponential factors in the second stage has a negative sign in the exponent which means that we are evolving the source ODE backward in time. However, the solution of the source ODE is shown to be positive for both forward and backward in time integration. It therefore remains to ensure that the forward Euler scheme is positivity preserving which then ensures that the third order update scheme is also positivity preserving.
\begin{thm}
Assume that the forward Euler scheme is positivity preserving, i.e.,
\[
\ub \in \uad \quad\implies\quad \ub + \Delta t \Rb(\ub) \in \uad \qquad \textrm{for any} \quad \Delta t \le \Delta t^*(\ub)
\]
Then the third order SSPRK scheme \eqref{eq:expssprk3a} or \eqref{eq:expssprk3b} is positivity preserving under the same restriction on the time step.
\end{thm}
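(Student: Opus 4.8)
The plan is to exploit the standard structural property of SSP Runge--Kutta schemes --- that each stage is a convex combination of forward Euler updates --- and to combine it with two facts already available to us: the convexity of the admissible set $\uad$ (noted just after \eqref{eq:sol_set}), and the unconditional positivity of the exponential source propagator established in Lemma~\ref{lem:pos_ExactSource}, which holds for evolution both forward and backward in time. Concretely, I would isolate three ingredients: (i) if $\ub\in\uad$ and $\Delta t\le\Delta t^*(\ub)$ then $\ub+\Delta t\,\Rb(\ub)\in\uad$, which is the hypothesis; (ii) for any of the exponential factors $e^{\pm\Cb}$ occurring in \eqref{eq:expssprk3a}--\eqref{eq:expssprk3b}, one has $\vb\in\uad\implies e^{\pm\Cb}\vb\in\uad$, since these matrices have exactly the structure analysed in Lemma~\ref{lem:pos_ExactSource} and leave $\rho$ and $\pb$ unchanged; and (iii) $\uad$ is convex, so any convex combination of admissible states is admissible.

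With these in hand I would argue stage by stage, assuming $\ub^n\in\uad$. In the first stage, the bracket $\ub^n+\Delta t\,\Rb(\ub^n)\in\uad$ by (i), and applying the forward propagator $e^{\Cb(t_n,\Delta t)}$ keeps it in $\uad$ by (ii), so $\ub^{(1)}\in\uad$. In the second stage I would treat the two summands separately: $e^{\Cb(t_n,\Delta t/2)}\ub^n\in\uad$ directly by (ii), while the second summand is produced by applying forward Euler to $\ub^{(1)}\in\uad$ (admissible by (i)) and then the \emph{backward} propagator $e^{-\Cb(t_n+\Delta t/2,\Delta t/2)}$, which preserves $\uad$ again by (ii); since the weights $3/4$ and $1/4$ are nonnegative and sum to one, convexity (iii) gives $\ub^{(2)}\in\uad$. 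The third stage is identical in spirit: $e^{\Cb(t_n,\Delta t)}\ub^n\in\uad$ and $e^{\Cb(t_n+\Delta t/2,\Delta t/2)}[\ub^{(2)}+\Delta t\,\Rb(\ub^{(2)})]\in\uad$, so the convex combination with weights $1/3,2/3$ yields $\ub^{n+1}\in\uad$. The autonomous case \eqref{eq:expssprk3b} follows verbatim with $\Cb$ replaced by the appropriate multiples of $\Bb\Delta t$.

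The step I expect to demand the most care is the second stage, where the factor $e^{-\Cb(t_n+\Delta t/2,\Delta t/2)}$ carries a negative exponent and hence evolves the source ODE backward in time; for a generic source this is precisely where positivity could be lost, and the whole argument hinges on Lemma~\ref{lem:pos_ExactSource} being valid for all $\tau\in\R$ rather than only $\tau\ge 0$. A secondary point I would state explicitly is the meaning of ``the same restriction'': the hypothesis must be invoked with the forward Euler step taken at each intermediate state, so positivity of $\ub^{(1)}+\Delta t\,\Rb(\ub^{(1)})$ and $\ub^{(2)}+\Delta t\,\Rb(\ub^{(2)})$ also requires $\Delta t\le\Delta t^*(\ub^{(1)})$ and $\Delta t\le\Delta t^*(\ub^{(2)})$; since each intermediate state is itself admissible and $\Delta t^*$ is fixed by the same CFL number through the local wave speeds, the single CFL restriction suffices, exactly as in the usual SSPRK positivity argument.
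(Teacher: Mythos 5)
Your proposal is correct and follows essentially the same route as the paper: the paper's justification (given in the text immediately preceding the theorem statement) is precisely that each stage is a convex combination of forward Euler updates composed with the exponential source propagators, that these propagators preserve $\uad$ for both forward and backward time evolution by Lemma~\ref{lem:pos_ExactSource}, and that $\uad$ is convex. Your stage-by-stage write-up, including the explicit attention to the negative exponent in the second stage and to invoking the forward Euler hypothesis at each intermediate admissible state, is a faithful and somewhat more careful rendering of the same argument.
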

The remaining part of the paper will be concerned with constructing a finite difference scheme such that the forward Euler scheme is positive. Since the multi-dimensional scheme is based on a combination of one dimensional schemes, we start by considering the one dimensional scheme in more detail.
\section{1-D finite difference WENO scheme}
\label{sec:weno1d}
We discretize the domain into a uniform mesh with nodes $x_{i}$ and define $x_{i+\frac{1}{2}}=\frac{1}{2}(x_{i+1}+x_{i})$ which are the location of the cell faces, and the cells $I_{i}=[x_{i-\frac{1}{2}}, x_{i+\frac{1}{2}}]$ are of size $\Delta x=x_{i+1}-x_{i}$. A conservative finite difference scheme for one-dimensional model,
\begin{equation*}
\label{eq:con_law_1d}
\pa_{t} \ub + \pa_{x} \fb(\ub) = 0
\end{equation*} 
is given by the forward Euler scheme
\begin{equation}
\label{eq:fdm1d}
\ub_{i}^{n+1} = \ub_{i}^{n} - \lambda (\fb^n_{i+\frac{1}{2}} - \fb^n_{i-\frac{1}{2}})
\end{equation}
where $\lambda = \frac{\Delta t }{\Delta x}$ with time step $\Delta t$ and $\fb_{i + \frac{1}{2}}$ is the numerical flux defined at the interface $x_{i+\frac{
		1}{2}}$. It is enough to consider the positivity of the above forward Euler scheme, since higher order accuracy in time is achieved by a SSPRK scheme which involves convex combinations of forward Euler scheme.

To ensure the stability of the numerical scheme for hyperbolic PDEs, we first split the flux into positive and negative parts,
\[
\fb(\ub) = \fb^{+}(\ub) + \fb^{-}(\ub),
\]
such that $\frac{d \fb^{+}(\ub)}{d \ub} \geq 0$ and $\frac{d \fb^{-}(\ub)}{d \ub} \leq 0$. The simplest splitting is a Lax-Friedrich splitting of the form,
\begin{equation*}
\fb^{\pm}(\ub) = \frac{1}{2} \left(\fb(\ub) \pm \alpha \ub \right)
\end{equation*}
where $\alpha$ is an estimate of largest wave speed. In practice, we use a local Lax-Friedrich splitting where $\alpha$ is estimated using some local data around each face as explained below.
The WENO reconstruction can be performed directly over split fluxes $\fb^\pm$ or after performing a transformation to characteristic variables. We perform it via characteristic variables since this is known to give better control on the oscillations.

Let us fix the cell face located at $x_{i+\frac{1}{2}}$ and briefly explain the WENO procedure to compute $\fb_\iph$. We consider the two states $\ub_{i}$ and $\ub_{i+1}$, and compute their average $\bar{\ub}_{i+\frac{1}{2}} = \frac{1}{2}(\ub_{i} + \ub_{i+1})$. We denote the matrix of right eigenvectors and matrix of left eigenvectors of the flux Jacobian $\frac{\pa \fb}{\pa \ub}(\bar{\ub}_{i+\frac{1}{2}}^{n})$ by $\Rb_{i+\frac{1}{2}}$ and $\Lb_{i+\frac{1}{2}}$.  The expressions for the eigenvectors can be found in \cite{don-gro_05a}. For fifth order WENO reconstruction, we need a five point stencil. So, for reconstruction of $\fb^{+}_{i+\frac{1}{2}}$, we need following values,
\begin{equation}
\label{eq:fp_cell}
\fb_{j}^{+} = \frac{1}{2}\left( \fb(\ub_{j}) + \alpha_{i+\frac{1}{2}} \ub_{j} \right), \quad j = i-2, \ldots, i+2
\end{equation}
and for reconstruction of $\fb_{i+\frac{1}{2}}^{-}$, we need,
\begin{equation}
\label{eq:fm_cell}
\fb_{j}^{-} = \frac{1}{2}\left( \fb(\ub_{j}) - \alpha_{i+\frac{1}{2}} \ub_{j} \right), \quad j = i-1, \ldots, i+3
\end{equation}
where 
\begin{equation}
\alpha_{i+\frac{1}{2}}=\max\{ \alpha(\ub_{i}), \alpha(\ub_{i+1})\}, \qquad \alpha(\ub) = \alpha^x(\ub) = |v_1| + \sqrt{ \frac{3 p_{11}}{\rho} }
\label{eq:alpha1d}
\end{equation}
The quantities $\fb^{\pm}_{j}$ for all given $j$ in \eqref{eq:fp_cell} and \eqref{eq:fm_cell} are transformed to the local characteristic field as,
\[
{\bf F}_{j}^{\pm} = \Lb_{i+\frac{1}{2}} \fb_{j}^{\pm}
\]
Using these local characteristic states, we perform the WENO reconstruction \cite{zha-shu_12a} and denote the reconstructed values by ${\bf F}_{i+\frac{1}{2}}^{\pm}$. The reconstructed values are transformed back to the conserved variable by multiplying with the matrix of right eigenvectors as,
\[
\fb_{i+\frac{1}{2}}^{\pm} = \Rb_{i+\frac{1}{2}} {\bf F}_{i+\frac{1}{2}}^{\pm}
\]
Finally, the numerical flux is given by
\[
\fb_\iph = \fb_\iph^- + \fb_\iph^+
\]
In order to establish the positivity property, we first show some preliminary results and then discuss the positivity limiter.
\subsection{Positivity property}
We introduce the new variables inspired by the Lax-Friedrich splitting \cite{zha-shu_12a} as,
\begin{equation}
\label{eq:w_pm}
\Vb^{\pm}(\ub) = \frac{1}{2}\left( \ub \pm \frac{\fb(\ub)}{\alpha(\ub)}
\right)
\end{equation}
which provides a splitting of the conserved variables since
\begin{equation*}
\Vb^{+}(\ub) + \Vb^{-}(\ub) = \ub
\end{equation*}
The definition of $\alpha(\ub)$ is given in \eqref{eq:alpha1d}. We have the following positivity result for $\Vb^{\pm}(\ub)$. 
\begin{thm}
	\label{thm:positivity1}
	If $\ub \in \uad$, then
	the states $\Vb^{\pm}(\ub) \in \uad$.
	\begin{proof}{\rm
		From the definition of $\Vb^\pm$, we have,
		\begin{equation*}
		2 \Vb^{\pm}(\ub) = \left( \begin{array}{c}
		\rho \pm \frac{1}{\alpha} \rho v_{1} \\
		\rho v_{1} \pm \frac{1}{\alpha} (\rho v_{1}^{2} + p_{11}) \\
		\rho v_{2} \pm \frac{1}{\alpha} (\rho v_{1} v_{2} + p_{12}) \\
		E_{11} \pm \frac{1}{\alpha}(E_{11} + p_{11}) v_{1} \\
		E_{12} \pm \frac{1}{\alpha} (E_{12} v_{1} + \frac{1}{2}(p_{11} v_{2} + p_{12} v_{1})) \\
		E_{22} \pm \frac{1}{\alpha} (E_{22} v_{1} + p_{12} v_{2})
		\end{array} \right) := \Vvb^\pm = \left( \begin{array}{c}
		V_{1}^{\pm} \\ V_{2}^{\pm} \\ V_{3}^{\pm} \\ V_{4}^{\pm} \\ V_{5}^{\pm} \\ V_{6}^{\pm}
		\end{array} \right)
		\end{equation*}
		We show that $\Vb^{+}\in \uad$ and the proof for $\Vb^-$ can be shown similarly.  Let us denote the primitive variables corresponding to $\Vvb^{+}$ as, $$(\rho(\Vvb^{+}), v_{1}(\Vvb^{+}), v_{2}(\Vvb^{+}), p_{11}(\Vvb^{+}), p_{12}(\Vvb^{+}), p_{22}(\Vvb^{+}))^{\top}$$ 
For positivity of density, we have,
		\[
		\rho(\Vvb^{+}) =  V_{1}^{+} =  \left( 1 + \frac{v_{1}}{\alpha} \right) \rho > 0
		\]
since
\begin{equation}
v_1 + \alpha = v_1 + |v_{1}| + \sqrt{\frac{3 p_{11}}{\rho}} \ge \sqrt{\frac{3 p_{11}}{\rho}} > 0
\label{eq:ineq1}
\end{equation}
		For positivity of symmetric pressure tensor, first we show the positivity of $p_{11}(\Vvb^{+})$ as follows,  
		\begin{eqnarray*}
			\rho(\Vvb^{+}) \: p_{11}(\Vvb^{+}) & = & 2 V_{1}^{+} V_{4}^{+} - (V_{2}^{+})^{2} \\ & = & 2 \left( \rho + \frac{1}{\alpha} \rho v_{1} \right) \left( E_{11} + \frac{1}{\alpha}(E_{11} + p_{11}) v_{1} \right) - \left( \rho v_{1} + \frac{1}{\alpha} (\rho v_{1}^{2} + p_{11}) \right)^{2} \\
			& = & \frac{p_{11}}{\alpha^{2}} \left(\rho (v_{1}+ \alpha)^{2} - p_{11} \right) \\
		\end{eqnarray*}
But using \eqref{eq:ineq1}, we have
\begin{equation}
v_1 + \alpha \ge \sqrt{\frac{3 p_{11}}{\rho}} \quad\implies\quad \rho (v_{1}+ \alpha)^{2} - p_{11} \ge 2 p_{11}
\label{eq:ineq2}
\end{equation}
so that
\[
			\rho(\Vvb^{+}) \: p_{11}(\Vvb^{+}) 
			 \geq   \frac{2 p_{11}^{2}}{\alpha^{2}} > 0
\]
		Hence $p_{11}(\Vvb^{+})> 0$ since $\rho(\Vvb^{+}) >0$. Let us introduce the notation $\det\pb(\Vvb^{+})$ for determinant of pressure tensor $p_{11}(\Vvb^{+})p_{22}(\Vvb^{+})-(p_{12}(\Vvb^{+}))^{2}$. Now, we show its positivity as follows,
		{\small \begin{eqnarray*}
				\left( \rho(\Vvb^{+}) \right)^{2} \det\pb(\Vvb^{+}) & = & (2 V_{1}^{+} V_{4}^{+} - (V_{2}^{+})^{2}) (2 V_{1}^{+} V_{6}^{+} - (V_{3}^{+})^{2}) - (2 V_{1}^{+} V_{5}^{+} - (V_{2}^{+} V_{3}^{+}))^{2} \\ 
				& = & \frac{(p_{11} p_{22}-p_{12}^{2}) \rho (v_{1}+\alpha)^{2}(\rho(v_{1}+\alpha)^{2}-p_{11})}{\alpha^{4}} \\
				&  \geq & \frac{(p_{11} p_{22}-p_{12}^{2}) \rho (v_{1}+\alpha)^{2} (2 p_{11})}{\alpha^{4}}, \quad \textrm{using \eqref{eq:ineq2}}\\
				& > & 0, \quad \textrm{since each term is strictly $>0$}
		\end{eqnarray*} }
		This implies $\det\pb(\Vvb^{+})>0$. Hence we can conclude that $\Vb^{+}(\ub) \in \uad$ for any $\ub \in \uad$.
}
	\end{proof}
\end{thm}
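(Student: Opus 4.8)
The plan is to exploit that $\uad$ is a cone: multiplying a conserved state by a positive scalar scales $\rho$ and every entry of $\pb$ by the same factor, so positivity of density and positive definiteness of $\pb$ are preserved. Hence it suffices to prove the claim for $\Vvb^{\pm} := 2\Vb^{\pm} = \ub \pm \fb(\ub)/\alpha$, whose components $V_1^{\pm},\dots,V_6^{\pm}$ are read off from \eqref{eq:w_pm} and the explicit flux. For any state $(\,\rho,\rho v_1,\rho v_2,E_{11},E_{12},E_{22})$ the primitive variables are recovered by $\rho p_{11}=2\rho E_{11}-(\rho v_1)^2$, $\rho p_{12}=2\rho E_{12}-(\rho v_1)(\rho v_2)$ and $\rho p_{22}=2\rho E_{22}-(\rho v_2)^2$. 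Since $\uad$ asks exactly for $\rho>0$, $p_{11}>0$ and $\det\pb>0$, I would verify these three inequalities for $\Vvb^{+}$ (the $\Vvb^{-}$ case being symmetric), working throughout with the $\rho$-weighted quantities $\rho(\Vvb^{+})p_{11}(\Vvb^{+})=2V_1^{+}V_4^{+}-(V_2^{+})^2$ and so on, so as to stay polynomial.

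First I would handle the density. The first component gives $\rho(\Vvb^{+})=V_1^{+}=(1+v_1/\alpha)\rho$, so positivity reduces to $v_1+\alpha>0$. This is immediate from $\alpha=|v_1|+\sqrt{3p_{11}/\rho}$, and in fact yields the sharper bound $v_1+\alpha\ge\sqrt{3p_{11}/\rho}$ recorded in \eqref{eq:ineq1}, which I will reuse. For $\Vvb^{-}$ the analogous step needs $\alpha-v_1>0$, which holds by the same computation.

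Next, for $p_{11}$ I would expand $2V_1^{+}V_4^{+}-(V_2^{+})^2$ and look for the cancellations that collapse it to the compact form $\tfrac{p_{11}}{\alpha^2}\big(\rho(v_1+\alpha)^2-p_{11}\big)$. Squaring the density bound gives $\rho(v_1+\alpha)^2\ge 3p_{11}$, hence $\rho(v_1+\alpha)^2-p_{11}\ge 2p_{11}>0$ as in \eqref{eq:ineq2}; this is exactly where the factor $3$ inside $\alpha$ is essential, since it upgrades mere nonnegativity into a strict positive lower bound. With $\rho(\Vvb^{+})>0$ already established, $p_{11}(\Vvb^{+})>0$ follows.

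The main obstacle is the determinant. I would compute $(\rho(\Vvb^{+}))^2\det\pb(\Vvb^{+})=(2V_1^{+}V_4^{+}-(V_2^{+})^2)(2V_1^{+}V_6^{+}-(V_3^{+})^2)-(2V_1^{+}V_5^{+}-V_2^{+}V_3^{+})^2$ and hope it factors as $\tfrac{(p_{11}p_{22}-p_{12}^2)\,\rho(v_1+\alpha)^2\big(\rho(v_1+\alpha)^2-p_{11}\big)}{\alpha^4}$, a product of four manifestly positive factors (the first from $\ub\in\uad$, the remaining ones from the bounds above). This is the hard part: it is a degree-four polynomial identity in $(\rho,v_1,v_2,p_{11},p_{12},p_{22},\alpha)$ whose large cancellation is not evident beforehand, so I would verify it by careful direct expansion, or by symbolic computation. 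Once it is in hand, $\det\pb(\Vvb^{+})>0$ follows, and together with $\rho>0$ and $p_{11}>0$ this gives $\Vvb^{+}\in\uad$; dividing by $2$ returns $\Vb^{+}\in\uad$. The $\Vvb^{-}$ computation is identical with $v_1+\alpha$ replaced by $\alpha-v_1$.
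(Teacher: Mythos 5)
Your proposal is correct and follows essentially the same route as the paper: it reduces to $\Vvb^{\pm}=2\Vb^{\pm}$ (the cone property you invoke is implicit in the paper), establishes $v_1+\alpha\ge\sqrt{3p_{11}/\rho}$, and relies on exactly the same two factorizations $\rho(\Vvb^{+})p_{11}(\Vvb^{+})=\tfrac{p_{11}}{\alpha^2}\bigl(\rho(v_1+\alpha)^2-p_{11}\bigr)$ and $(\rho(\Vvb^{+}))^2\det\pb(\Vvb^{+})=\tfrac{(p_{11}p_{22}-p_{12}^2)\rho(v_1+\alpha)^2(\rho(v_1+\alpha)^2-p_{11})}{\alpha^4}$ that the paper verifies by direct expansion. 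The only difference is presentational: you flag the determinant identity as the step requiring careful symbolic verification, which is precisely what the paper carries out.
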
 
\subsection{Sufficient condition for positivity}
The key idea is now to make use of the positivity of $\Vb^\pm(\ub_i^n)$ to obtain the positivity of the finite difference scheme under a forward Euler update.  For simplicity of notation we drop the superscript $n$ denoting the current time level in some of the equations below. At the interface $x_{i+\frac{1}{2}}$ let us define
\begin{equation*}
\Vb_{i+\frac{1}{2}}^{\pm} = \pm \frac{\fb^{\pm}_{i+\frac{1}{2}}}{\alpha_{i+\frac{1}{2}}}
\end{equation*}
where $\alpha_\iph$ is the estimate of the local wave speed used in the flux splitting of the WENO scheme as defined in equation~(\ref{eq:alpha1d}). In terms of these variables, the numerical flux can be written as,
\begin{eqnarray*}
\nonumber
\fb_{i+\frac{1}{2}} 
= \alpha_{i+\frac{1}{2}} (\Vb^{+}_{i+\frac{1}{2}} - \Vb^{-}_{i+\frac{1}{2}})
\end{eqnarray*}
Then the finite difference scheme \eqref{eq:fdm1d} can be written as a sum of two terms
\begin{equation*}
\ub_{i}^{n+1} = \ub^{n+1,+}_{i} + \ub^{n+1,-}_{i}
\end{equation*}
where
\[
\ub_{i}^{n+1,+} = \Vb^{+}_{i} - \lambda \alpha_{i+\frac{1}{2}} \Vb^{+}_{i+\frac{1}{2}} + \lambda \alpha_{i-\frac{1}{2}} \Vb^{+}_{i-\frac{1}{2}}, \qquad
\ub_{i}^{n+1,-} = \Vb^{ -}_{i} + \lambda \alpha_{i+\frac{1}{2}} \Vb^{-}_{i+\frac{1}{2}} - \lambda \alpha_{i-\frac{1}{2}} \Vb^{-}_{i-\frac{1}{2}}
\]
which are two finite volume-type schemes. For positivity, we have to achieve $\ub_{i}^{n+1,\pm}\in \uad$. First we consider the state $\ub_{i}^{n+1,+}$. Let us introduce a $4^{th}$ degree polynomial $\qb_{i}^{+}(x)$, such that $\qb_{i}^{+}(x_{i+\frac{1}{2}}) = \Vb_{i+\frac{1}{2}}^{+}$ and whose cell average coincides with $\Vb_{i}^{+}$, i.e.
\[
\Vb_{i}^{+} = \frac{1}{\Delta x} \int_{I_{i}} \qb_{i}^{+}(x) \: dx
\]
We compute the integral involved in the cell average by Gauss-Lobatto quadrature rule of $N$ points as,
\[
\Vb_{i}^{+} = \sum_{\alpha=1}^{N} \hat{w}_{\alpha} \qb_{i}^{+} (\hat{x}_{i}^{\alpha}) 
 =  \sum_{\alpha=1}^{N-1} \hat{w}_{\alpha} \qb_{i}^{+} (\hat{x}_{i}^{\alpha}) + \hat{w}_{N} \Vb^{+}_{i+\frac{1}{2}}
= (1-\hat{w}_{N}) \qb_{i}^{+,*} + \hat{w}_{N} \Vb^{+}_{i+\frac{1}{2}}
\]
where we have defined 
\[
\qb_{i}^{+,*} = \frac{1}{1-\hat{w}_{N}} \sum_{\alpha=1}^{N-1} \hat{w}_{\alpha} \qb_{i}^{+}(\hat{x}_{i}^{\alpha})
\]
So, we have,
\begin{equation*}
\ub_{i}^{n+1,+} = (1-\hat{w}_{N}) \qb_{i}^{+,*} + (\hat{w}_{N} - \lambda \alpha_{i+\frac{1}{2}}) \Vb^{+}_{i+\frac{1}{2}} + \lambda \alpha_{i-\frac{1}{2}} \: \Vb^{+}_{i-\frac{1}{2}}
\end{equation*}
Similarly, we can write,
\begin{equation*}
\ub_{i}^{n+1,-} = (1-\hat{w}_{1}) \qb_{i}^{-,*} + (\hat{w}_{1} - \lambda \alpha_{i-\frac{1}{2}}) \Vb^{-}_{i-\frac{1}{2}} + \lambda \alpha_{i+\frac{1}{2}} \: \Vb^{-}_{i+\frac{1}{2}}
\end{equation*}
where 
\[
\qb_{i}^{-,*} = \frac{1}{1-\hat{w}_{1}} \sum_{\alpha=2}^{N} \hat{w}_{\alpha} \qb_{i}^{-}(\hat{x}_{i}^{\alpha})
\]

\begin{thm}
\label{thm:pos1d}
Assume that the following conditions hold for all $i$.
\begin{enumerate}
\item $\ub_{i}^{n} \in \uad$
\item $\left \lbrace \qb_{i}^{\pm,*}, \Vb^{\pm}_{i+\frac{1}{2}} \right \rbrace \in \uad$
\end{enumerate}
where 
\begin{equation}
\label{eq:qstar}
\qb_{i}^{+,*} = \frac{1}{1-\hat{w}_{N}}(\Vb_{i}^{+} - \hat{w}_{N} \Vb_{i + \frac{1}{2}}^{+}), \qquad
\qb_{i}^{-,*} = \frac{1}{1-\hat{w}_{1}} (\Vb_{i}^{-} - \hat{w}_{1} \Vb_{i- \frac{1}{2}}^{-})
\end{equation}
The finite difference WENO scheme \eqref{eq:fdm1d} is positivity-preserving if the CFL condition
\[
\frac{\alpha \Delta t}{\Delta x} \le \hat{w}_1
\]
holds, i.e., $\ub_i^{n+1} \in \uad$.
\end{thm}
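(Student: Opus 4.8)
The plan is to exploit the fact that $\uad$ is not merely convex but an \emph{open convex cone}: in addition to the convexity already noted for $\uad$, it is invariant under multiplication by positive scalars and hence closed under addition of its elements. The genuinely new structural ingredient I would verify first is the positive-scaling invariance. For $\ub\in\uad$ and $c>0$, the density scales linearly, $\rho(c\ub)=c\rho>0$; the velocity $\vb$ is a ratio of conserved quantities and is therefore unchanged; the energy-tensor components are themselves conserved variables, so $\Eb\mapsto c\Eb$; and consequently $\pb(c\ub)=2(c\Eb)-(c\rho)\,\vb\otimes\vb=c\,\pb$ stays positive definite. Combined with convexity this shows that any nonnegative linear combination of states in $\uad$ again lies in $\uad$. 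This is exactly what is needed, because the coefficients appearing in $\ub_i^{n+1,\pm}$ do not sum to one, so plain convexity would not suffice.

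With this in hand I would treat $\ub_i^{n+1,+}$ and $\ub_i^{n+1,-}$ separately and exhibit each as a nonnegative combination of states already known to be admissible. For the plus part,
\[
\ub_i^{n+1,+} = (1-\hat{w}_N)\,\qb_i^{+,*} + (\hat{w}_N - \lambda\alpha_\iph)\,\Vb^{+}_\iph + \lambda\alpha_\imh\,\Vb^{+}_\imh,
\]
I would check the three coefficients. Since the Gauss--Lobatto weights satisfy $0<\hat{w}_N<1$, we have $1-\hat{w}_N>0$; since hypothesis~1 guarantees admissible data, every wave-speed estimate is real and nonnegative, so $\lambda\alpha_\imh\ge 0$; and for the middle coefficient I would use the symmetry of the Gauss--Lobatto rule, $\hat{w}_1=\hat{w}_N$, together with $\alpha_\iph\le\alpha$ (as $\alpha$ is the maximum wave speed over the whole mesh) and the CFL hypothesis $\lambda\alpha\le\hat{w}_1$, to get $\hat{w}_N-\lambda\alpha_\iph=\hat{w}_1-\lambda\alpha_\iph\ge \hat{w}_1-\lambda\alpha\ge 0$. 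The three states $\qb_i^{+,*}$, $\Vb^{+}_\iph$ and $\Vb^{+}_\imh$ lie in $\uad$ by hypothesis~2 of the theorem (the last one by applying hypothesis~2 at index $i-1$), so the cone property yields $\ub_i^{n+1,+}\in\uad$.

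The negative part $\ub_i^{n+1,-}$ is handled by the identical argument with the roles of $\iph$ and $\imh$ interchanged and $\hat{w}_N$ replaced by $\hat{w}_1$: the coefficients $(1-\hat{w}_1)$, $(\hat{w}_1-\lambda\alpha_\imh)$ and $\lambda\alpha_\iph$ are again nonnegative under the same CFL condition, and the states $\qb_i^{-,*}$, $\Vb^{-}_\imh$, $\Vb^{-}_\iph$ lie in $\uad$ by hypothesis~2 (the latter two at indices $i-1$ and $i$), so $\ub_i^{n+1,-}\in\uad$. Finally, since $\ub_i^{n+1}=\ub_i^{n+1,+}+\ub_i^{n+1,-}$ is a sum of two elements of the cone $\uad$, it too lies in $\uad$, which is the assertion.

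The main obstacle, and the step I would be most careful about, is the cone structure of $\uad$: the decomposition into $\ub_i^{n+1,\pm}$ produces weights that add up to one only \emph{after} the two halves are recombined, so the argument cannot rest on convexity alone and instead needs the homogeneity of the density and pressure-positivity constraints under positive scaling. A secondary point to get right is the bookkeeping that the local wave speeds $\alpha_\iph$ and $\alpha_\imh$ are all dominated by the single global $\alpha$ entering the CFL number, which is precisely what allows one uniform time-step restriction to control every coefficient in both half-sums simultaneously.
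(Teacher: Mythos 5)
Your proof is correct and rests on the same decomposition the paper sets up just before the theorem: split $\ub_i^{n+1}$ into $\ub_i^{n+1,+}+\ub_i^{n+1,-}$, expand each half via the Gauss--Lobatto identity as a nonnegative combination of $\qb_i^{\pm,*}$ and the interface states, and use the CFL condition to make the middle coefficient nonnegative. The one place where you go beyond the paper is worth noting: the paper's proof asserts that the update is ``a convex combination of certain quantities,'' but, as you correctly observe, the three weights in each half sum to $1\mp\lambda\left(\alpha_{i+\frac{1}{2}}-\alpha_{i-\frac{1}{2}}\right)$ rather than to one, and the two halves are subsequently added, so convexity of $\uad$ alone does not literally close the argument. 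Your verification that $\uad$ is a convex \emph{cone} --- density and the pressure tensor are positively homogeneous of degree one in the conserved variables while the velocity is homogeneous of degree zero, so $c\,\ub\in\uad$ for $c>0$ and hence nonnegative combinations (with at least one strictly positive coefficient, supplied here by $1-\hat{w}_N>0$) stay in $\uad$ --- is precisely the ingredient that makes the paper's one-line proof rigorous, and it is the only nontrivial step. The remaining bookkeeping in your write-up (the symmetry $\hat{w}_1=\hat{w}_N$ of the Gauss--Lobatto weights, the bound $\alpha_{i\pm\frac{1}{2}}\le\alpha$ so that one global CFL restriction controls every coefficient, and invoking hypothesis~2 at index $i-1$ for the states at $x_{i-\frac{1}{2}}$) matches what the paper intends, so I would only suggest folding your cone observation into a one-sentence remark rather than presenting it as a separate route.
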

\begin{proof}
The proof follows easily since we have written the update scheme as a convex combination of certain quantities, all of which are in the admissible set by the assumptions in the theorem. The CFL condition ensures that the coefficients are positive which is required to have a convex combination.
\end{proof}
\begin{remark}
In the computer implementation of the numerical scheme, we do not have to construct the polynomials $\qb_i^\pm(x)$ since we only require $\qb_i^{\pm,*}$ and these quantities can be obtained from~(\ref{eq:qstar}). The quadrature rule used for $\wb_i^\pm$ must be exact for a polynomial of degree four which requires the use of $N \ge 4$ point Gauss-Lobatto rule. For the $N=4$ point Gauss-Lobatto rule, we have the weight $\hat{w}_1 = \frac{1}{12}$ which is the CFL number required for the proof of positivity.
\end{remark}
\subsection{Positivity limiter}
The sufficient condition $(1)$ in Theorem \eqref{thm:pos1d} can be assumed directly and we have to enforce condition (2) by a positivity limiter. For given $\ub_{i}^{n} \in  \uad$, the Theorem \eqref{thm:positivity1} implies $\Vb_{i}^{\pm} \in \uad$. 
Let us say,
\begin{eqnarray*}
\Vb_{i}^{+} &=& (\rho_{i}^{+}, m_{1,i}^{+}, m_{2,i}^{+}, E_{11,i}^{+}, E_{12,i}^{+}, E_{22,i}^{+})^{\top} \\
\Vb_{i+\frac{1}{2}}^{+} &=& (\rho_{i+\frac{1}{2}}^{+}, m_{1,i+\frac{1}{2}}^{+}, m_{2,i+\frac{1}{2}}^{+},
E_{11, i+\frac{1}{2}}^{+},
E_{12, i+\frac{1}{2}}^{+},
E_{22, i+\frac{1}{2}}^{+})^{\top} \\
\qb_{i}^{+,*} &=& (\rho_{i}^{*}, m_{1,i}^{*}, m_{2,i}^{*}, E_{11,i}^{*}, E_{12,i}^{*}, E_{22,i}^{*})^{\top}
\end{eqnarray*}
To achieve the sufficient condition (2) of Theorem \eqref{thm:pos1d}, we modify $\Vb_{i+\frac{1}{2}}^{+}$ into $\tilde{\Vb}_{i+\frac{1}{2}}^{+}$. If $\Vb_{i+\frac{1}{2}}^{+}$, $\qb_{i}^{+,*}$ are not in the admissible set, then the following limiter essentially pushes those quantities towards $\Vb_{i}^{+}$ which we know is in the admissible set.

\noindent {\bf Step 1:} Replace densities $\rho_{i+\frac{1}{2}}^{+} $ and $\rho_{i}^{*}$ by,
\[
\hat{\rho}_{i+\frac{1}{2}}^{+} = \theta_{1} \left(\rho_{i+\frac{1}{2}}^{+}-\rho_{i}^{+} \right) + \rho_{i}^{+}, \qquad
\hat{\rho}_{i}^{*} = \theta_{1} \left(\rho_{i}^{*}-\rho_{i}^{+} \right) + \rho_{i}^{+}
\]
where $\theta_{1} = \mbox{min} \left\{ \frac{\rho^{+}_{i} - \epsilon}{\rho^{+}_{i}-\rho_{min} },1 \right\}$, with $\rho_{min} = \min \left\{ \rho_{i+\frac{1}{2}}^{+}, \rho_{i}^{*}\right\}$ and define the modified states,
$$
\hat{\Vb}_{i+\frac{1}{2}}^{+} =  ( \hat{\rho}_{i+\frac{1}{2}}^{+},m_{1,i+\frac{1}{2}}^{+}, m_{2,i+\frac{1}{2}}^{+}, E_{11, i+\frac{1}{2}}^{+}, E_{12, i+\frac{1}{2}}^{+}, E_{22, i+\frac{1}{2}}^{+})^{\top},
$$
and
$$
\hat{\qb}_{i}^{+,*} = (\hat{\rho}_{i}^{*}, m_{1,i}^{*}, m_{2,i}^{*}, E_{11,i}^{*}, E_{12,i}^{*}, E_{22,i}^{*})^{\top}
$$
{\bf Step 2:}
If we have $ p_{11} (\hat{\Vb}_{i+\frac{1}{2}}^{+}) < \epsilon$, then we have to solve the following quadratic equation for $t_{1} \in [0,1]$,
$$
p_{11} \left((1-t_{1})\Vb_{i}^{+} + t_{1} \hat{\Vb}_{i+\frac{1}{2}}^{+}\right) = \epsilon
$$
Similarly, if $p_{11} (\hat{\qb}_{i}^{+,*}) < \epsilon$, then we have to solve following equation for $t_{2}\in[0,1]$,
$$
p_{11} \left((1-t_{2})\Vb_{i}^{+} + t_{2} \hat{\qb}_{i}^{+,*}\right) = \epsilon
$$ 
We define $t_{1}$ and $t_{2}$ as,
\begin{equation*}
t_{1} = \left\lbrace \begin{array}{l l l} \mbox{Root of} \; p_{11} \left((1-t_{1})\Vb_{i}^{+} + t_{1} \hat{\Vb}_{i+\frac{1}{2}}^{+}\right) = \epsilon, & \mbox{if} & p_{11} (\hat{\Vb}_{i+\frac{1}{2}}^{+}) < \epsilon  \\
1, & \mbox{if} &  p_{11} (\hat{\Vb}_{i+\frac{1}{2}}^{+}) \geq  \epsilon
\end{array} \right.	
\end{equation*}
\begin{equation*}
t_{2} = \left\lbrace \begin{array}{l l l} \mbox{Root of} \; p_{11} \left((1-t_{2})\Vb_{i}^{+} + t_{2} \hat{\qb}_{i}^{+,*}\right) = \epsilon, & \mbox{if} & p_{11}(\hat{\qb}_{i}^{+,*}) < \epsilon \\
1, & \mbox{if} &  p_{11} (\hat{\qb}_{i}^{+,*}) \geq  \epsilon \end{array} \right.	
\end{equation*}
Now, If we have $ p_{22} (\hat{\Vb}_{i+\frac{1}{2}}^{+}) < \epsilon$, then we have to solve the following quadratic equation for $t_{3} \in [0,1]$,
$$
p_{22} \left((1-t_{3})\Vb_{i}^{+} + t_{3} \hat{\Vb}_{i+\frac{1}{2}}^{+}\right) = \epsilon
$$
Similarly, if $p_{22} (\hat{\qb}_{i}^{+,*}) < \epsilon$, then we have to solve following equation for $t_{4}\in[0,1]$,
$$
p_{11} \left((1-t_{4})\Vb_{i}^{+} + t_{4} \hat{\qb}_{i}^{+,*}\right) = \epsilon
$$ 
We define $t_{3}$ and $t_{4}$ as,
\begin{equation*}
t_{3} = \left\lbrace \begin{array}{l l l} \mbox{Root of} \; p_{22} \left((1-t_{3})\Vb_{i}^{+} + t_{3} \hat{\Vb}_{i+\frac{1}{2}}^{+}\right) = \epsilon, & \mbox{if} & p_{22} (\hat{\Vb}_{i+\frac{1}{2}}^{+}) < \epsilon  \\
1, & \mbox{if} &  p_{22} (\hat{\Vb}_{i+\frac{1}{2}}^{+}) \geq  \epsilon,  \end{array} \right.	
\end{equation*}
\begin{equation*}
t_{4} = \left\lbrace \begin{array}{l l l} \mbox{Root of} \; p_{22} \left((1-t_{4})\Vb_{i}^{+} + t_{4} \hat{\qb}_{i}^{+,*}\right) = \epsilon, & \mbox{if} & p_{22}(\hat{\qb}_{i}^{+,*}) < \epsilon \\
1, & \mbox{if} &  p_{22} (\hat{\qb}_{i}^{+,*}) \geq  \epsilon \end{array} \right.	
\end{equation*}
Let us define $\theta_{2}$ as,
$$\theta_{2} = \min \left(t_{1}, t_{2}, t_{3}, t_{4} \right)$$
We introduce new states,
\[
\check{\wb}_{i+\frac{1}{2}}^{+} = \theta_{2} \left( \hat{\wb}_{i+\frac{1}{2}}^{+} - \wb_{i}^{+} \right) + \wb_{i}^{+}, \qquad
\check{\qb}^{+,*}_{i} = \theta_{2} \left( \hat{\qb}^{+,*}_{i} - \wb_{i}^{+} \right) + \wb_{i}^{+}
\]
{\bf Step 3:}
If we have $\det\pb(\check{\Vb}_{i+\frac{1}{2}}^{+}) < \epsilon$, we have to find solution of the following cubic equation in [0,1],
$$
\det\pb\left((1-t_{5})\Vb_{i}^{+} + t_{5} \check{\Vb}_{i+\frac{1}{2}}^{+}\right) = \eps
$$
and if $\det\pb(\check{\qb}^{+,*}_{i}) < \epsilon$, we also have to solve,
$$
\det\pb\left((1-t_{6})\Vb_{i}^{+} + t_{6} \check{\qb}_{i}^{+,*}\right) = \eps
$$
We define $t_{5}$ and $t_{6}$ as,
\begin{equation*}
t_{5} = \left\lbrace \begin{array}{l l l} \mbox{Root of} \det\pb\left((1-t_{5})\Vb_{i}^{+} + t_{5} \check{\Vb}_{i+\frac{1}{2}}^{+}\right) = \epsilon & \mbox{if} & \det\pb(\check{\Vb}_{i+\frac{1}{2}}^{+}) < \epsilon  \\
1 & \mbox{if} & \det\pb (\check{\Vb}_{i+\frac{1}{2}}^{+}) \geq  \epsilon \end{array} \right.	
\end{equation*}
\begin{equation*}
t_{6} = \left\lbrace \begin{array}{l l l} \mbox{Root of} \det\pb\left((1-t_{6})\Vb_{i}^{+} + t_{6} \check{\qb}_{i}^{+,*}\right) = \epsilon & \mbox{if} & \det\pb(\check{\qb}_{i}^{+,*}) < \epsilon \\
1 & \mbox{if} &  \det\pb (\check{\qb}_{i}^{+,*}) \geq  \epsilon \end{array} \right.	
\end{equation*}
Let us define $\theta_{3}$,
$$\theta_{3} = \min \left(t_{5}, t_{6} \right)$$
Using $\theta_{3}$, we define the new state,
\begin{equation*}
\label{eq:u_tilde}
\tilde{\Vb}_{i+\frac{1}{2}}^{+} = \theta_{3}\left(\check{\Vb}_{i+\frac{1}{2}}^{+}-\Vb_{i}^{+} \right) + \Vb_{i}^{+}
\end{equation*}
{\bf Step 4:} Similarly, using the values $\Vb_{i+1}^-, \Vb_\iph^-, \qb_{i+1}^{-,*}$ we can compute $\tilde{\Vb}_\iph^-$. Then we obtain the scaled numerical flux as
\[
\fb_{\iph} = \alpha_{\iph}( \tilde{\Vb}_\iph^+ - \tilde{\Vb}_\iph^-)
\]
which is the flux which is used in the finite difference scheme.
\begin{remark}
The computer implementation of the above method is best achieved by performing a loop over all the faces of the cells in the mesh. At each face, the positive and negative fluxes are computed using WENO scheme, the scaling limiter is applied to them and the numerical flux is obtained as the sum of the scaled positive and negative fluxes. Then in a loop over all cells, the conserved variables are updated.
\end{remark}
\section{2-D finite difference WENO scheme}\label{sec:weno2d}
For the two-dimensional system \eqref{eq:balance_law}, the domain is discretized into a uniform mesh with nodes $(x_{i},y_{j})$. We define $x_{i+\frac{1}{2}} = \frac{1}{2}(x_{i+1}+x_{i})$, 
$y_{j+\frac{1}{2}} = \frac{1}{2}(y_{j+1}+y_{j})$ which are the mid-points of the faces, and with $I_{ij} = [x_{i-\frac{1}{2}},x_{i+\frac{1}{2}}]\times[y_{j-\frac{1}{2}},y_{j+\frac{1}{2}}]$ being a rectangular cell of size $\Delta x \times \Delta y$. The conservative finite difference scheme using forward Euler method is given by,
\begin{equation}
\label{eq:fdm2d}
\ub_{ij}^{n+1} = \ub_{ij}^{n} - \lambda^{x} \left(\fb^n_{i+\frac{1}{2},j} - \fb^n_{i-\frac{1}{2},j} \right) - \lambda^{y} \left( \gb^n_{i,j+\frac{1}{2}} - \gb^n_{i,j-\frac{1}{2}} \right)
\end{equation}
where $\lambda^{x} = \frac{\Delta t}{\Delta x}$ and $\lambda^{y} = \frac{\Delta t}{\Delta y}$. The fluxes are computed using the WENO scheme by applying it along each coordinate direction. For the fluxes $\fb_{\iph,j}$ along the $x$ axes, we use the splitting
\[
\fb^\pm(\ub) = \frac{1}{2}\left[ \fb(\ub) \pm \alpha^x_{\iph,j} \ub \right], \qquad \alpha^x_{\iph,j} = \max\{ \alpha^x(\ub_{i,j}), \alpha^x(\ub_{i+1,j} \}
\]
then compute the characteristic decomposition using the eigenvectors of the flux Jacobian along the $x$ direction, and apply the WENO reconstruction. A similar approach is used to compute the fluxes $\gb_{i,j+\frac{1}{2}}$ along the $y$ direction. We rewrite the scheme \eqref{eq:fdm2d} as,
\begin{align}
\nonumber
\ub_{ij}^{n+1} & = \left(\frac{\alpha^{x}\Delta y}{\alpha^{x} \Delta y + \alpha^{y}\Delta x}\right)\ub_{ij}^{n} +  \left(\frac{\alpha^{y}\Delta x}{\alpha^{x} \Delta y + \alpha^{y}\Delta x}\right) \ub_{ij}^n - \lambda^{x} \left(\fb^n_{i+\frac{1}{2},j} - \fb^n_{i-\frac{1}{2},j} \right) - \lambda^{y} \left( \gb^n_{i,j+\frac{1}{2}} - \gb^n_{i,j-\frac{1}{2}} \right)
\end{align}
where $\alpha^{x} = \displaystyle{\max_{i,j} \alpha^x(\ub_{ij}^{n})}$ and $\alpha^{y} = \displaystyle{\max_{i,j} \alpha^y(\ub_{ij}^{n})}$.
This can be written as a convex combination of two one-dimensional schemes
\begin{align*}
\ub_{ij}^{n+1} = \left(\frac{\alpha^{x}\Delta y}{\alpha^{x} \Delta y + \alpha^{y}\Delta x}\right) \ub_{ij}^{n+1,x} + \left(1-\frac{\alpha^{x}\Delta y}{\alpha^{x} \Delta y + \alpha^{y}\Delta x}\right) \ub_{ij}^{n+1,y}
\end{align*}
where
\begin{align}
\label{eq:ux}
& \ub_{ij}^{n+1,x} = \ub_{ij}^{n} - \frac{\lambda^{x}(\alpha^{x} \Delta y + \alpha^{y}\Delta x)}{\alpha^{x}\Delta y} \left(\fb^n_{i+\frac{1}{2},j} - \fb^n_{i-\frac{1}{2},j} \right) \\
\label{eq:uy}
& \ub_{ij}^{n+1,y} = \ub_{ij}^{n} -\frac{\lambda^{y}(\alpha^{x} \Delta y + \alpha^{y}\Delta x)}{\alpha^{y}\Delta x} \left( \gb^n_{i,j+\frac{1}{2}} - \gb^n_{i,j-\frac{1}{2}} \right) 
\end{align}
To prove the positivity of the scheme, we apply the same procedure to both the one-dimensional schemes as we discussed in previous section. So, we introduce the new variables
\[
\wb^{x,\pm}(\ub) = \frac{1}{2} \left(\ub \pm \frac{\fb(\ub)}{\alpha^{x}(\ub)} \right), \qquad \wb^{y,\pm}(\ub) = \frac{1}{2} \left(\ub \pm \frac{\gb(\ub)}{\alpha^{y}(\ub)} \right)
\]
and rewrite \eqref{eq:ux} as
\[
\ub_{ij}^{n+1,x} = \ub_{ij}^{n+1,x,+} + \ub_{ij}^{n+1,x,-}
\]
with
\begin{align}
\nonumber
\ub_{ij}^{n+1,x,+} & = \wb^{x,+}_{ij} - \frac{\lambda^{x}(\alpha^{x} \Delta y + \alpha^{y}\Delta x)}{\alpha^{x}\Delta y} \left( \alpha^{x}_{i+\frac{1}{2},j} \wb^{x,+}_{i+\frac{1}{2},j}- \alpha^{x}_{i-\frac{1}{2},j} \wb^{x,+}_{i-\frac{1}{2},j} \right)\\
\nonumber
& = (1-\hat{w}_{1}) \qb_{ij}^{x,+,*} + \left(\hat{w}_{1} -\frac{\lambda^{x}(\alpha^{x} \Delta y + \alpha^{y}\Delta x)}{\alpha^{x}\Delta y} \alpha^{x}_{i+\frac{1}{2},j} \right) \wb_{i+\frac{1}{2},j}^{x,+} \\
& + \frac{\lambda^{x}(\alpha^{x} \Delta y + \alpha^{y}\Delta x)}{\alpha^{x}\Delta y} \alpha^{x}_{i-\frac{1}{2},j} \wb^{x,+}_{i-\frac{1}{2},j}
\label{eq:u+}
\end{align}
and
\begin{align}
\nonumber
\ub_{ij}^{n+1,x,-} & = \wb^{x,-}_{ij} - \frac{\lambda^{x}(\alpha^{x} \Delta y + \alpha^{y}\Delta x)}{\alpha^{x}\Delta y} \left( \alpha^{x}_{i+\frac{1}{2},j} \wb^{x,-}_{i+\frac{1}{2},j}- \alpha^{x}_{i-\frac{1}{2},j} \wb^{x,-}_{i-\frac{1}{2},j} \right)\\
\nonumber
& = (1-\hat{w}_{1}) \qb_{ij}^{x,-,*} + \left(\hat{w}_{1} -\frac{\lambda^{x}(\alpha^{x} \Delta y + \alpha^{y}\Delta x)}{\alpha^{x}\Delta y} \alpha^{x}_{i-\frac{1}{2},j} \right) \wb_{i-\frac{1}{2},j}^{x,-} \\
&+ \frac{\lambda^{x}(\alpha^{x} \Delta y + \alpha^{y}\Delta x)}{\alpha^{x}\Delta y} \alpha^{x}_{i+\frac{1}{2},j} \wb^{x,-}_{i+\frac{1}{2},j}
\label{eq:u-}
\end{align}
where
\[
\Vb_{\iph,j}^{x,\pm} = \pm \frac{1}{\alpha^x_{\iph,j}} \fb_{\iph,j}^\pm, \qquad \Vb_{i,\jph}^{y,\pm} = \pm \frac{1}{\alpha^y_{i,\jph}} \gb_{i,\jph}^\pm
\]
\[
\qb^{x,+,*}_{ij} = \frac{1}{1-\hat{w}_{1}} \sum_{r=1}^{N-1} \hat{w}_{r} \qb_{ij}^{x,+}(\hat{x}_{i}^{r}), \qquad
\qb^{x,-,*}_{ij} = \frac{1}{1-\hat{w}_{1}} \sum_{r=2}^{N} \hat{w}_{r} \qb_{ij}^{x,-}(\hat{x}_{i}^{r})
\]
The $\qb^{x,\pm,*}_{ij}$ quantities are defined in terms of $\qb_{ij}^{x,\pm}(x)$ which are $4^{th}$ degree polynomials such that
$$
\qb_{ij}^{x,+}(x_{i+\frac{1}{2}}) = \wb^{x,+}_{i+\frac{1}{2},j}, \qquad
\wb_{ij}^{x,+} = \frac{1}{\Delta x} \int_{x_{i-\frac{1}{2}}}^{x_{i+\frac{1}{2}}} \qb_{ij}^{x,+}(x) \: dx
$$
$$\qb_{ij}^{x,-}(x_{i-\frac{1}{2}}) = \wb^{x,-}_{i-\frac{1}{2},j}, \qquad
\wb_{ij}^{x,-} = \frac{1}{\Delta x} \int_{x_{i-\frac{1}{2}}}^{x_{i+\frac{1}{2}}} \qb_{ij}^{x,-}(x) \: dx
$$
and $N$ is the number of quadrature points of Gauss-Lobatto quadrature rule used to approximate the states $\wb^{x}_{ij}$ in $x$-direction which must be exact for a polynomial of degree four. A similar procedure is followed to write $\ub_{ij}^{n+1,y}$ as a sum of two parts $\ub_{ij}^{n+1,y,\pm}$. Each of these terms is written as a linear combination of $\qb_{ij}^{y,\pm,*}$, $\wb_{i,\jph}^{y,\pm}$. Then, we have the following result for positivity of the scheme \eqref{eq:fdm2d},
\begin{thm}
\label{thm:pos2D}
Assume that the following conditions hold for all $i,j$.
	\begin{enumerate}
		\item $\ub_{ij}^{n} \in \uad$
		\item $\left \lbrace \qb_{ij}^{x,\pm,*}, \wb^{x,\pm}_{i+\frac{1}{2},j} \right \rbrace \in \uad$
		\item $\left \lbrace \qb_{ij}^{y,\pm,*}, \wb^{y,\pm}_{i,j+\frac{1}{2}} \right \rbrace \in \uad$
	\end{enumerate}
Then the two-dimensional finite difference WENO scheme \eqref{eq:fdm2d} is positivity-preserving if the following CFL condition is satisfied
\begin{equation}
\left(\frac{\alpha^x}{\Delta x}+\frac{\alpha^y}{\Delta y}\right) \Delta t \leq \hat{w}_1
\label{eq:cfl2d}
\end{equation}
i.e., $\ub_{ij}^{n+1} \in \uad$.
\end{thm}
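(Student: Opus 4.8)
The plan is to reduce Theorem~\ref{thm:pos2D} to the one-dimensional Theorem~\ref{thm:pos1d} by exploiting two structural properties of $\uad$. The first, and the one I would establish up front, is that $\uad$ is not merely convex but a convex \emph{cone}. Indeed, for $\rho>0$ the condition $\pb(\ub)\succ0$ is equivalent, via the Schur complement, to positive definiteness of the symmetric block matrix $\left(\begin{smallmatrix}\rho & (\rho\vb)^\top\\ \rho\vb & 2\Eb\end{smallmatrix}\right)$, which depends \emph{linearly} on the conserved variables $\ub=(\rho,\rho\vb,\Eb)$. Thus $\uad$ is the preimage of the open positive-definite-matrix cone under a linear map, hence an open convex cone; in particular it is closed under addition and under multiplication by positive scalars. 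This is needed because, although the outer directional splitting below is a genuine convex combination, the $\pm$ pieces are not (their weights sum to a value exceeding one), so mere convexity does not suffice.

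With this in hand I would first dispose of the outer splitting. The rewritten scheme expresses $\ub_{ij}^{n+1}$ as the combination of the directional updates $\ub_{ij}^{n+1,x}$ and $\ub_{ij}^{n+1,y}$ with weights $\tfrac{\alpha^x\Delta y}{\alpha^x\Delta y+\alpha^y\Delta x}\in(0,1)$ and its complement; these are genuine convex weights, so by convexity of $\uad$ it is enough to prove $\ub_{ij}^{n+1,x}\in\uad$ and $\ub_{ij}^{n+1,y}\in\uad$ separately. By symmetry I treat only the $x$ update, writing $\ub_{ij}^{n+1,x}=\ub_{ij}^{n+1,x,+}+\ub_{ij}^{n+1,x,-}$.

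Next I would handle each summand exactly as in one dimension. Formulas~\eqref{eq:u+} and~\eqref{eq:u-} present $\ub_{ij}^{n+1,x,\pm}$ as linear combinations of the states $\qb_{ij}^{x,\pm,*}$ and $\wb^{x,\pm}_{i\pm\frac12,j}$, all of which lie in $\uad$ by hypothesis~2. Every coefficient is manifestly nonnegative except the middle one, e.g. $\hat{w}_1-\frac{\lambda^x(\alpha^x\Delta y+\alpha^y\Delta x)}{\alpha^x\Delta y}\,\alpha^x_{i+\frac12,j}$ in~\eqref{eq:u+}. The crux is to show this is $\ge0$: using $\lambda^x=\Delta t/\Delta x$ I would simplify the prefactor to $\frac{\lambda^x(\alpha^x\Delta y+\alpha^y\Delta x)}{\alpha^x\Delta y}=\frac{\Delta t}{\alpha^x}\big(\frac{\alpha^x}{\Delta x}+\frac{\alpha^y}{\Delta y}\big)$, and then invoke the global bound $\alpha^x_{i+\frac12,j}\le\alpha^x=\max_{i,j}\alpha^x(\ub_{ij}^n)$ to estimate the whole term by $\Delta t\big(\frac{\alpha^x}{\Delta x}+\frac{\alpha^y}{\Delta y}\big)\le\hat{w}_1$, which is precisely the CFL condition~\eqref{eq:cfl2d}. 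Hence each of $\ub_{ij}^{n+1,x,\pm}$ is a nonnegative combination of admissible states, so it lies in $\uad$ by the cone property, and so does their sum $\ub_{ij}^{n+1,x}$. The $y$ update is identical with $\alpha^y_{i,\jph}\le\alpha^y$, and the outer convex combination then gives $\ub_{ij}^{n+1}\in\uad$.

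The main obstacle is the coefficient-sign verification of the previous paragraph: it is exactly where the form $\big(\frac{\alpha^x}{\Delta x}+\frac{\alpha^y}{\Delta y}\big)\Delta t\le\hat{w}_1$ of the CFL bound is forced, and where replacing the local face speed $\alpha^x_{i+\frac12,j}$ by the global maximum $\alpha^x$ is essential so that the estimate is uniform over all cells $(i,j)$. The secondary point worth flagging, as noted above, is that the $\pm$ decomposition is not a convex combination, so the cone property (closure under addition and positive scaling), rather than convexity alone, is genuinely required to conclude.
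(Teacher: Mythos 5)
Your proof is correct and follows essentially the same route as the paper's (sketched) argument: the outer convex splitting into the directional updates, the decomposition of each into $\pm$ parts via \eqref{eq:u+}--\eqref{eq:u-}, and the coefficient-sign check using $\alpha^x_{i+\frac{1}{2},j}\le\alpha^x$ that forces the CFL condition \eqref{eq:cfl2d}. Your explicit observation that $\uad$ is a convex \emph{cone} (so that the $\pm$ sum, whose nonnegative coefficients do not sum to one, still lands in $\uad$) is a welcome precision that the paper glosses over by calling everything a ``convex combination''.
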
	
We skip the detailed proof as it is simply based on using the one dimensional schemes along each coordinate direction and the convex combination property. The convexity requires the coefficients to be positive which leads to the CFL condition. The positivity of the second coefficient in (\ref{eq:u+}) requires that
\[
\left(\frac{\alpha^x}{\Delta x}+\frac{\alpha^y}{\Delta y}\right) \Delta t \leq \hat{w}_1 \frac{\alpha^x}{\alpha^x_{\iph,j}}
\]
and since $\alpha^x \ge \alpha^x_{\iph,j}$, we find that (\ref{eq:cfl2d}) is a sufficient condition. This condition is enough to ensure that $\ub_{ij}^{n+1,x,\pm}$, $\ub_{ij}^{n+1,y,\pm}$ are convex combinations of certain other quantities. The positivity limiter as described in the one dimensional case is applied to achieve the conditions $(2)$ and $(3)$ of the above theorem from which the positivity of $\ub_{ij}^{n+1}$ follows due to convex combination of positive states. We note that the CFL number is again $\hat{w}_1 = \frac{1}{12}$.	
\section{Implementation details}
\label{sec:impl}
In computing numerical solutions of the Ten-Moment equations, we must ensure  positivity of the density and positive definiteness of pressure tensor under the restriction on CFL condition derived in Section~(\ref{sec:weno1d})-(\ref{sec:weno2d}). The time step can be computed from
\[
 \Delta t(\ub,\mbox{CFL})=\frac{\mbox{CFL}}{\max_{i,j}\left( \frac{\alpha^x(\ub_{ij})}{\Delta x} + \frac{\alpha^y(\ub_{ij})}{\Delta y} \right)}
\]
and for positivity property to hold, we have to choose $CFL=\hat{w}_1 = 1/12$ which is twice the CFL number used in \cite{zha-shu_12a}. Even then, this CFL number is too small compared to a $CFL \approx 1$ which is enough for $L^2$ stability, and this reduced CFL will lead to more time steps and hence higher computational cost.  To decrease the computational costs, we have implemented an adaptive CFL strategy. For the iteration from time $t^{n}$ to $t^{n} + \Delta t$, we first apply the Runge-Kutta scheme without any positivity limiter and using a CFL $\approx 1$. After each Runge-Kutta stage, we check for the positivity of the solution.  If the solution fails to satisfy positivity conditions in any cell, the simulations exit the Runge-Kutta iteration and we go back to the previous time level $t^n$. Now, we compute the time step $\Delta t$ by $\Delta t(\ub^{n},\hat{w}_{1})$ which ensures positivity preservation, and apply the Runge-Kutta scheme with positivity limiter to produce the solution at time $t^{n} + \Delta t$. The implementation of the algorithm with adaptive CFL is explained in Algorithm \eqref{alg:adaptiveCFL} which performs one time step of the finite difference scheme.
\begin{algorithm}
	\label{alg:adaptiveCFL}
	\SetAlgoLined
	Given $\ub^{n}$\;
	Set $\Delta t = \Delta t(\ub^{n},\mbox{CFL})$\;
	Set poslim = false\;
	$\ub^{n+1}= \mbox{SSPRK3}(\ub^{n},\Delta t,\mbox{poslim})$\;
	\If{poslim=true}{
		$\Delta t = \Delta t(\ub^{n},\hat{w}_{1})$\;
		$\ub^{n+1}= \mbox{SSPRK3}(\ub^{n},\Delta t,\mbox{poslim})$\;
	}
	\caption{Adaptive CFL for positivity limiter}
\end{algorithm}
The implementation of the SSPRK3 scheme for one time step is explained in  Algorithm~\eqref{alg:ssprk3} for the case of no source term and in Algorithm \eqref{alg:expssprk3} when there is source term present in the Ten-Moment model. The use of such an adaptive time step condition helps to reduce the computational cost by allowing us to take large time steps in most of the iterations and we provide numerical evidence of this in later sections.
\begin{algorithm}
	\label{alg:ssprk3}
	\SetAlgoLined
	Given $\ub^{n}$, $\Delta t$, poslim\;
	\For{each RK stage}{
		\eIf{poslim == true}{
			Compute residual $\Rb$ using positivity limiter;
		}
		{
			Compute $\Rb$ without using positivity limiter\;
		}
		\For{each cell}{
		    Update solution to next stage\;
		    \If{density or pressure is negative}
		    {
		    Set poslim = true and return\tcp*[r]{Exit algorithm}
		    }
		}
	}
	Return $\ub^{n+1}$\;
	\caption{SSPRK3 Scheme}
\end{algorithm}

\begin{algorithm}
	\label{alg:expssprk3}
	\SetAlgoLined
	Given $\ub^{n}$, $\Delta t$, poslim\;
	Compute $\Rb(\ub^n)$ using positivity limiter if poslim == true\tcp*[r]{Stage 1}	\For{each cell}{
		Compute 
		$
		\hat{\ub} = \ub^n + \Delta t  \Rb(\ub^n)
		$ \;
		\If{density or pressure is negative}
		{
		Set poslim=true and return\;
		}
		Compute 
		$
		\hat{\ub} = e^{\Cb(t^n,\Delta t)} \hat{\ub}$
	}	
	Compute $\Rb(\hat{\ub})$ using positivity limiter if poslim == true\tcp*[r]{Stage 2}
	\For{each cell}{
		Compute $\hat{\ub} = \hat{\ub} + \Delta t  \Rb(\hat{\ub})$\;
		\If{density or pressure is negative}
		{
		Set poslim=true and return\;
		}
		Compute $\hat{\ub} = \frac{3}{4}  e^{\Cb(t^n,\frac{\Delta t}{2})}  \ub^n + \frac{1}{4} e^{-\Cb(t^n+\frac{\Delta t}{2},\frac{\Delta t}{2})} \hat{\ub}$\;
	}
	Compute $\Rb(\ub^n)$ using positivity limiter if poslim == true\tcp*[r]{Stage 3}
	\For{each cell}{
		Compute $\hat{\ub} = \hat{\ub} + \Delta t  \Rb(\hat{\ub})$\;
		\If{density or pressure is negative}
		{
		Set poslim=true and return\;
		}
		Compute $\hat{\ub} = \frac{1}{3} e^{\Cb(t^n, \Delta t)}  \ub^n + \frac{2}{3}  e^{\Cb(t^n+\frac{\Delta t}{2},\frac{\Delta t}{2})} \hat{\ub}$\;
	}
	Return $\ub^{n+1} = \hat{\ub}$\;
	\caption{Integrating factor SSPRK3 scheme}
\end{algorithm}
\section{Numerical results}
\label{sec:nm}
In this section, we first test the accuracy and convergence rate of the proposed positivity preserving WENO scheme for Ten-Moment equations on simple problems with known solutions and then apply it to more complicated problems. Further, we have compared the resolution of proposed algorithms for the case where the solution has shocks or discontinuities. To test the positivity property of the scheme, we have performed many one- and two-dimensional test cases where the positivity limiter is shown to stabilize the computations. The proposed algorithm is designed to work with all variants of WENO schemes. But to demonstrate the idea, we consider
the fifth order accurate WENO-JS \cite{jia-shu_96a}, WENO-Z \cite{bor-etal_08d}, and WENO-AO \cite{bal-etal_16a} schemes for flux reconstruction. In the case of WENO-AO \cite{bal-etal_16a}, we have chosen the linear weights as follows
\begin{equation*}
 \gamma_0^5 = 0.5, \quad \gamma_{-1}^3=0.125, \quad \gamma_0^3=0.25, \quad \gamma_1^3=0.125
\end{equation*}
In the following sections, we specify the initial condition for each test case in terms of the primitive variables
\[
\bold{V} = [\rho, \ v_1, \ v_2, \ p_{11}, \ p_{12}, \ p_{22}]
\]
\subsection{Accuracy tests} 
We first perform numerical experiments to measure the accuracy and convergence rate of proposed positivity preserving WENO schemes for Ten-Moment equations. 
The accuracy of the schemes are measured in $L^{\infty}$-, $L^1$-, $L^2$-error norms, which are defined for error $e$ over the domain $[a,b]$ as follows
\[
 \|e\|_{\infty} = \max_{j} |u_j-(u_{\Delta x})_j|, \qquad  \| e \|_1 =  \Delta x \sum_{j}|u_j-(u_{\Delta x})_j| 
\]
\[
\| e \|_2 = \left(\Delta x \sum_{j}|u_j-(u_{\Delta x})_j|^2\right)^{\frac{1}{2}}
\]
where  $u_j$ and $(u_{\Delta x})_j$ denote the exact and approximate solutions at point $x_j$, respectively.
\begin{figure}
\begin{center}
	\begin{tabular}{c c}
\includegraphics[width=2.8in]{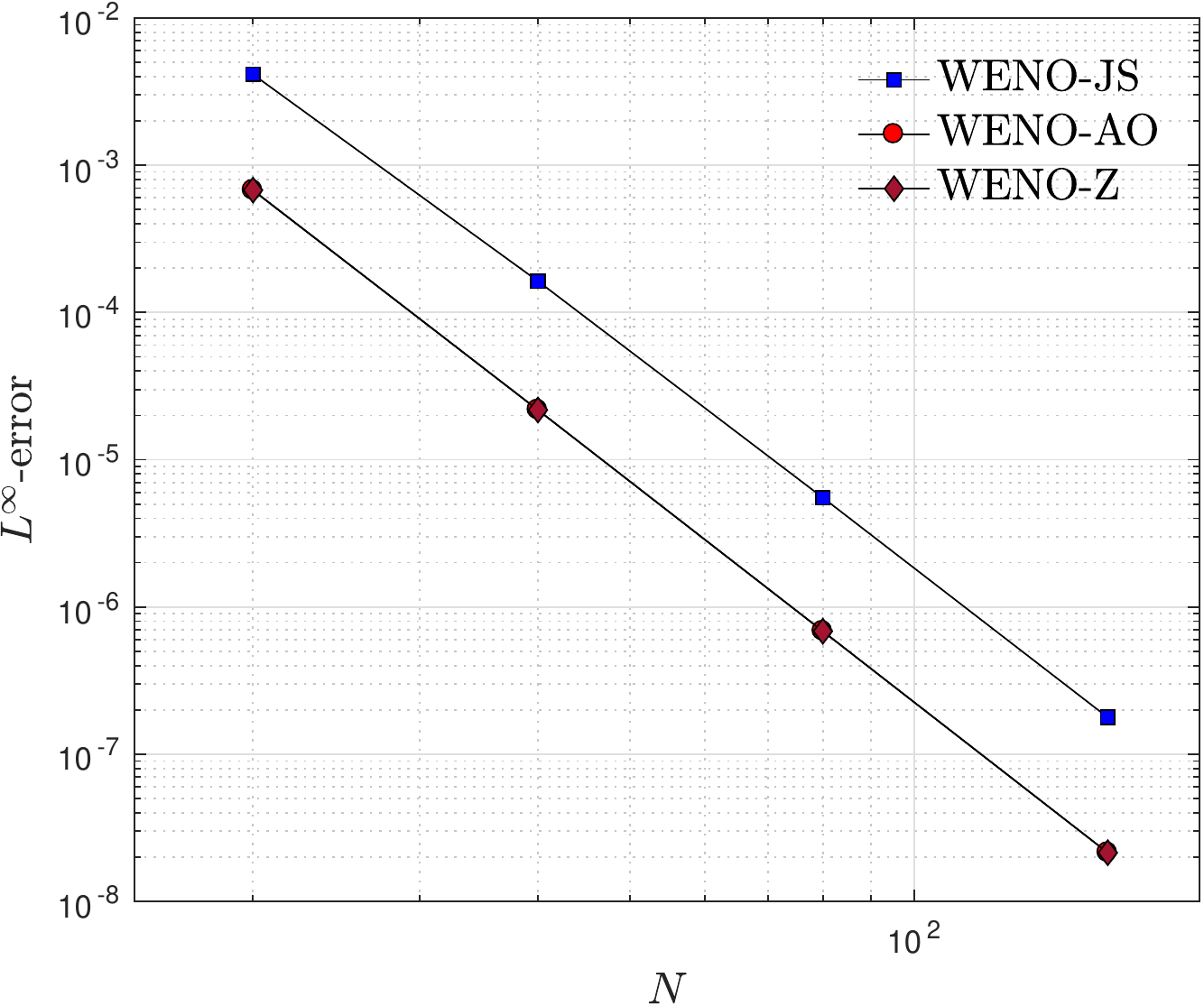}&
\includegraphics[width=2.8in]{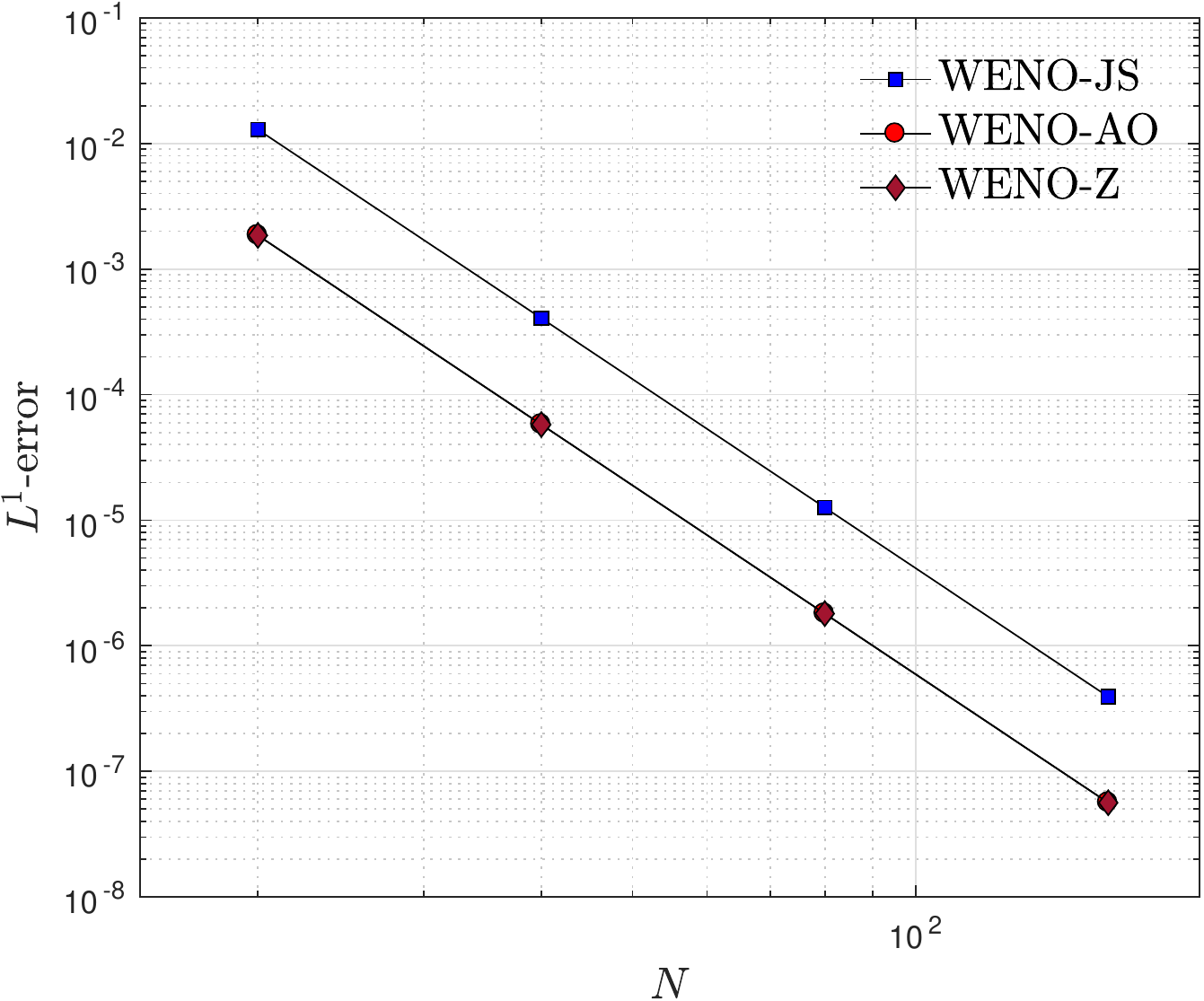}\\
	(a) Convergence plot: $L^{\infty}$-error& (b) Convergence plot:  $L^{1}$-error  
   	\end{tabular}
\end{center}
    \caption{Comparison of WENO-JS, WENO-AO, and WENO-Z schemes in terms of $L^{\infty}$- and $L^1$-errors for Example \ref{Ex.hom.acc} at time $T=0.5$. }
    \label{Figure:ex1}
\end{figure}
Since the time discretization is only third order accurate while the space discretization is fifth order accurate, to ensure fifth-order convergence of the full scheme, we  choose the time step $\Delta t= C \Delta x^{5/3}$ such that this also satisfies the CFL condition.
\begin{table}
\centering 
\begin{tabular}{|c| c|c| c| c| c|  c| c| r|}
\hline 
 $N$ & WENO-JS ($L^{\infty}$) & Order & WENO-AO ($L^{\infty}$) & Order & WENO-Z ($L^{\infty}$)& Order \\ 
\hline
20  &  4.142134e-03 & --     &6.748957e-04 &-- &6.779808e-04 &\\ 
\hline 
40  &  1.632954e-04 & 4.66     &2.176286e-05 &4.95  &2.177505e-05 & 4.96    \\
\hline
80  &  5.526777e-06 & 4.88     &6.843767e-07 &4.99  &6.844163e-07 & 4.99    \\
\hline
160  &  1.778311e-07 & 4.96     &2.142911e-08 &5.00  &2.142922e-08 & 5.00    \\
\hline
320  & 5.493679e-09 & 5.02      &6.721070e-10 & 4.99 & 6.720646e-10 & 4.99 \\
\hline
\hline
 $N$ & WENO-JS ($L^{1}$) & Order & WENO-AO ($L^{1}$) & Order & WENO-Z ($L^{1}$)& Order \\ 
\hline
20  &  1.289941e-02 & --     &1.856446e-03 &-- &1.853946e-03 &\\ 
\hline 
40  &  4.061796e-04 & 4.99     &5.784190e-05 &5.00  &5.781507e-05 & 5.00    \\
\hline
80  &  1.262294e-05 & 5.00     &1.803930e-06 &5.00  &1.803879e-06 & 5.00    \\
\hline
160  &  3.932262e-07 & 5.00    &5.631144e-08 &5.00  &5.631134e-08 & 5.00   \\
\hline
320  &  1.226749e-08 & 5.00    & 1.755926e-09 &5.00 & 1.755916e-09 & 5.00 \\
\hline
\end{tabular}
\caption{ Comparison of WENO-JS, WENO-AO and WENO-Z schemes in terms of $L^{\infty}$- and $L^1$-errors along with their convergence rates for Example \ref{Ex.hom.acc} at time $T=0.5$.}
\label{Table.ex1}
\end{table}

\begin{example}\label{Ex.hom.acc}
 {\rm (Homogeneous case)
  Consider the 1-D Ten-Moment equations with initial data for primitive variables given by
\[
 \bold{V}=(2+\sin(2\pi x), \ 1, \ 0, \ 1, \ 0, \ 1)
\]
over the domain [-0.5,0.5] with periodic boundary conditions.   The profile of density at time $t$ advects towards the right with speed $2\pi$ given as $\rho(x,t) = 2+\sin(2 \pi (x-t))$, whereas other variables remain constant. Numerical solutions are computed at time $T=0.5$ over the domain having different grid sizes. The $L^{\infty}$- and $L^1$-errors obtained using WENO-JS, WENO-Z, and WENO-AO schemes are depicted in Table~\ref{Table.ex1} and Figure~\ref{Figure:ex1}. We observe that all WENO schemes converge to the exact solution with a convergence rate of five. The WENO-Z and WENO-AO schemes are more accurate than the WENO-JS scheme, and WENO-AO and WENO-Z schemes are comparable to each other as it can be observed from  Table~\ref{Table.ex1} and Figure~\ref{Figure:ex1}.
}
\end{example}

\begin{example}\label{Ex.acc2}{\rm (With time independent source term)
 This test is performed to study the accuracy of the proposed schemes for Ten-Moment equations with time-independent source term. We consider the domain $[-0.5,0.5]$ and potential is taken to be $W(x)=x$ with following continuous initial profile,
 \[
  \bold{V}=(2+\sin(2\pi x), \ 1, \ 0, \ 5-x+\frac{1}{4\pi}\cos(2\pi x), \ 0, \ 1)
 \]
%
The exact solution of the problem at time $t$ contains the advected profile of density and pressure term $p_{11}$ given by
\[
\rho(x,t) = 2 + \sin(2 \pi (x-t)), \qquad  p_{11}(x,t) = 5 + t-x +\frac{1}{4\pi} \cos(2\pi (x-t))  
\]
whereas other variables  remain constant. The boundary values for the numerical solutions are computed using the exact solution.
The solutions are computed until the final time $T=0.5$ using  20, 40, 80, and 160 grid points. The numerical errors are computed and depicted in Table~\ref{Table.ex2} and Figure~\ref{Figure.ex2}. It can be observed that all considered WENO schemes achieve the expected convergence rate and WENO-JS is less accurate as compared to WENO-Z and WENO-AO schemes.
}
\end{example}

\begin{figure}
\begin{center}
	\begin{tabular}{c c}
	\includegraphics[width=2.8in]{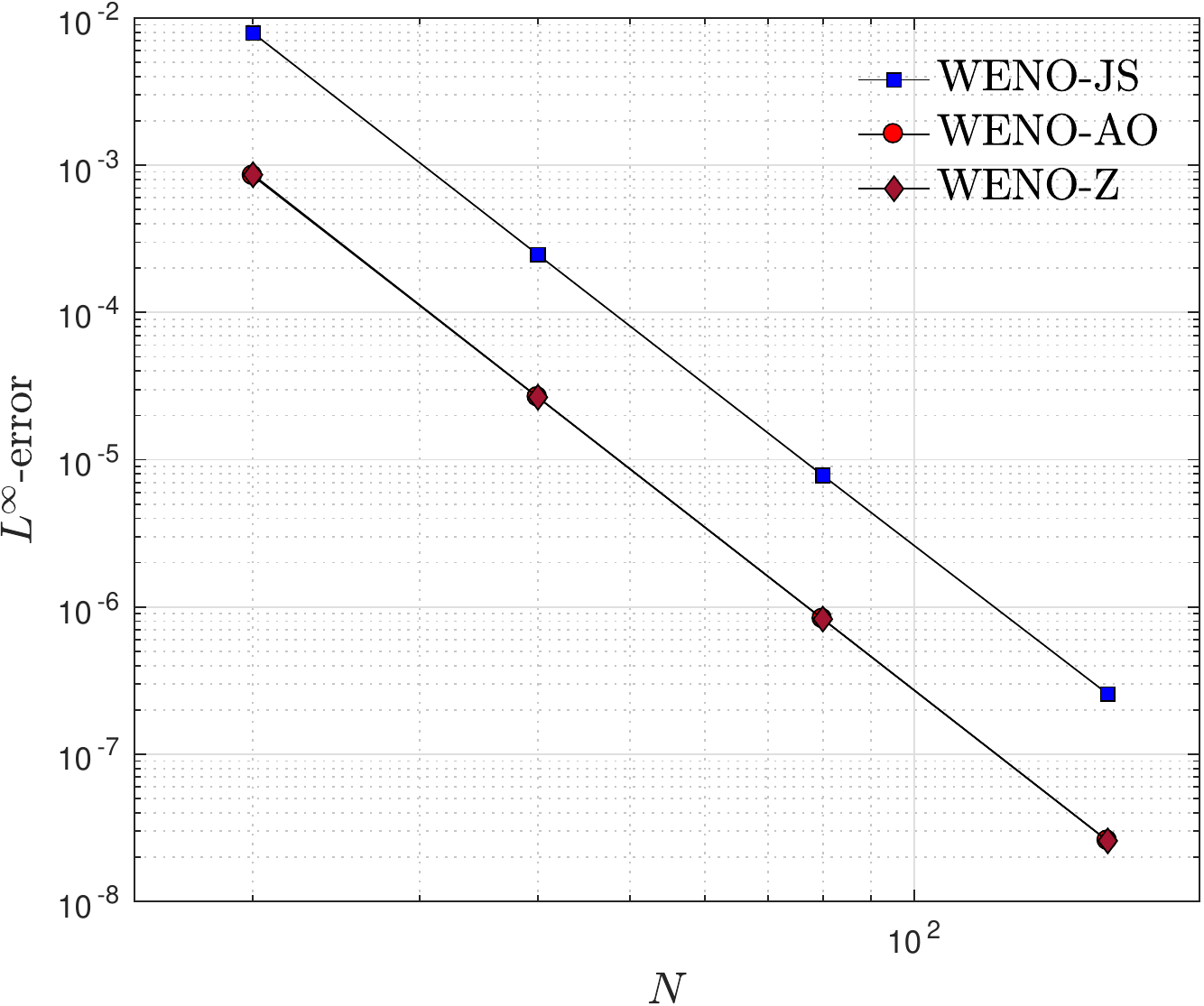} &
   	\includegraphics[width=2.8in]{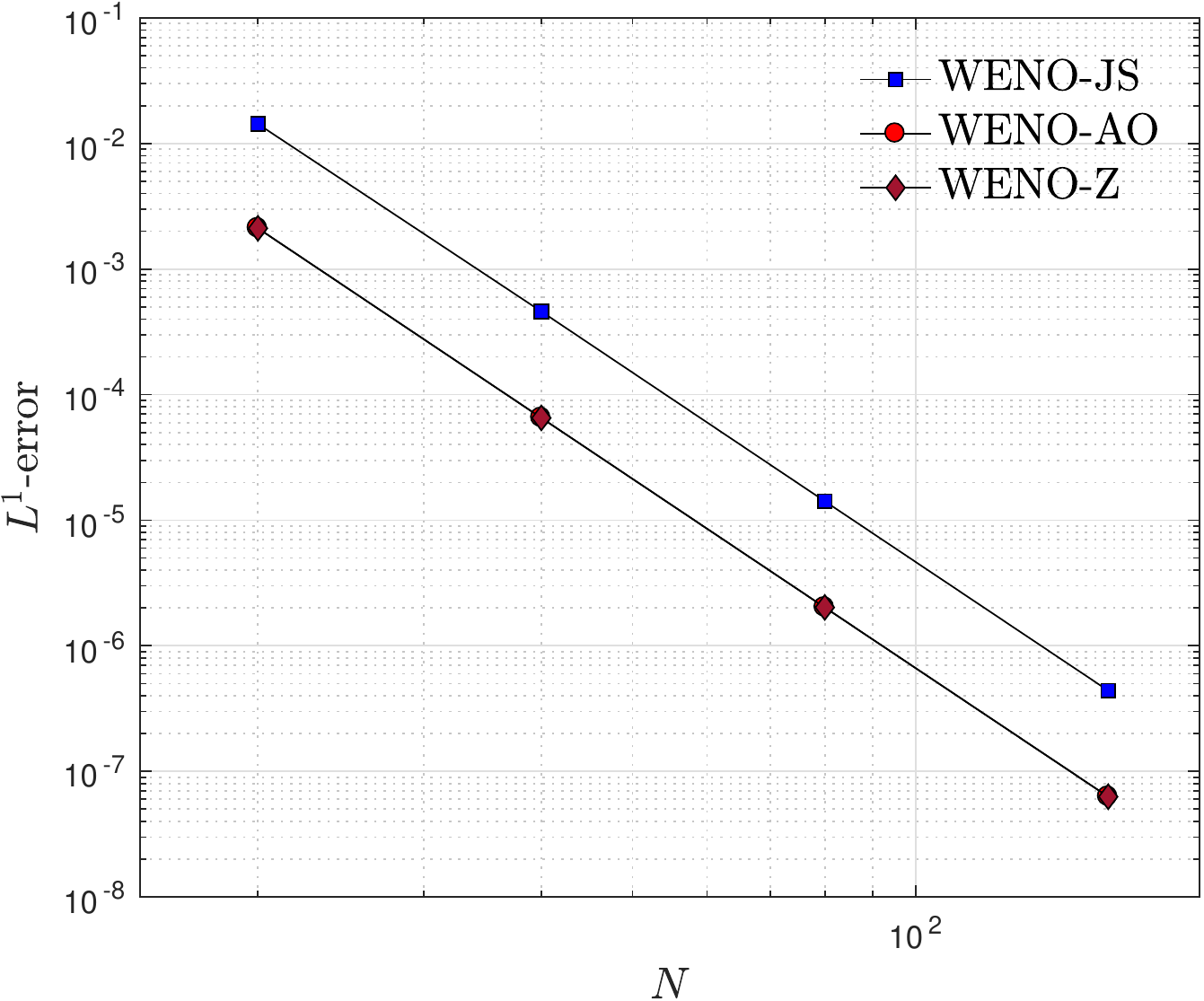}
    	\\
   	(a) Convergence plot: $L^{\infty}$-error& (b) Convergence plot:  $L^{1}$-error  
   	\end{tabular}
\end{center}
    \caption{Comparison of WENO-JS, WENO-AO and WENO-Z schemes in terms of $L^{\infty}$- and $L^1$-errors for Example \ref{Ex.acc2} at time $T=0.5$. }
    \label{Figure.ex2}
\end{figure}

\begin{table}
\centering 
\begin{tabular}{|c| c|c| c| c| c|  c| c| r|}
\hline 
 $N$ & WENO-JS ($L^{\infty}$) & Order & WENO-AO ($L^{\infty}$) & Order & WENO-Z ($L^{\infty}$)& Order \\ 
\hline
20  &  7.905498e-02 & --     &8.457498e-03 &-- &8.618240e-04 &\\ 
\hline 
40  &  2.468581e-04 & 5.00     &2.650183e-05 &5.00  &2.657482e-05 & 5.02    \\
\hline
80  &  7.813491e-06 & 4.98     &8.271779e-07 &5.00 &8.273620e-07 & 5.01    \\
\hline
160  &  2.563829e-07 & 4.93    &2.579152e-08 &5.00  &2.579116e-08 & 5.00    \\
\hline
320 & 9.350345e-09 & 4.78      & 8.050622e-10 & 5.00 & 8.050578e-10 & 5.00\\
\hline
\hline
 $N$ & WENO-JS ($L^{1}$) & Order & WENO-AO ($L^{1}$) & Order & WENO-Z ($L^{1}$)& Order \\ 
\hline
20  &  1.430735e-02 & --     &2.113319e-03 &-- &2.113968e-03 &\\ 
\hline 
40  &  4.594505e-04 & 4.96     &6.532232e-05 &5.02  &6.529159e-05 & 5.02    \\
\hline
80  &  1.417471e-05 & 5.02    &2.022391e-06 &5.01  &2.022334e-06 & 5.01    \\
\hline
160  &  4.397786e-07 & 5.01     &6.285120e-08 &5.01  &6.285107e-08 & 5.01    \\
\hline
320 &  1.369191e-08  & 5.01    & 1.958068e-09 & 5.00 & 1.958090e-09 & 5.00\\
\hline
\end{tabular}
\caption{ Comparison of WENO-JS, WENO-AO and WENO-Z schemes in terms of $L^{\infty}$- and $L^1$-errors along with their convergence rate for Example \ref{Ex.acc2} at time $T=0.5$.}
\label{Table.ex2}
\end{table}
\begin{figure}[htbp]
	\begin{center}
		\begin{tabular}{c c}
			\includegraphics[width=2.8in]{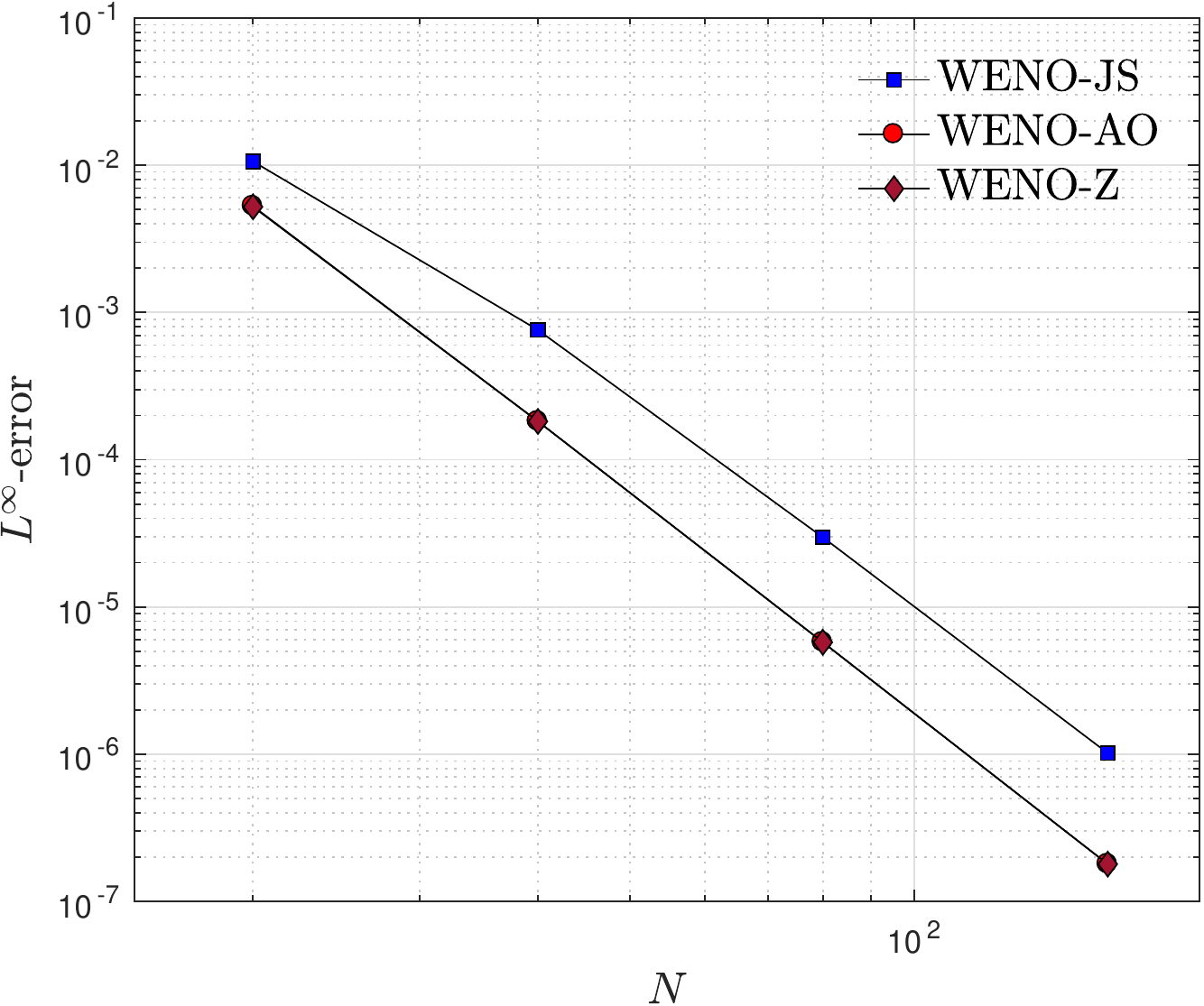} &
			\includegraphics[width=2.8in]{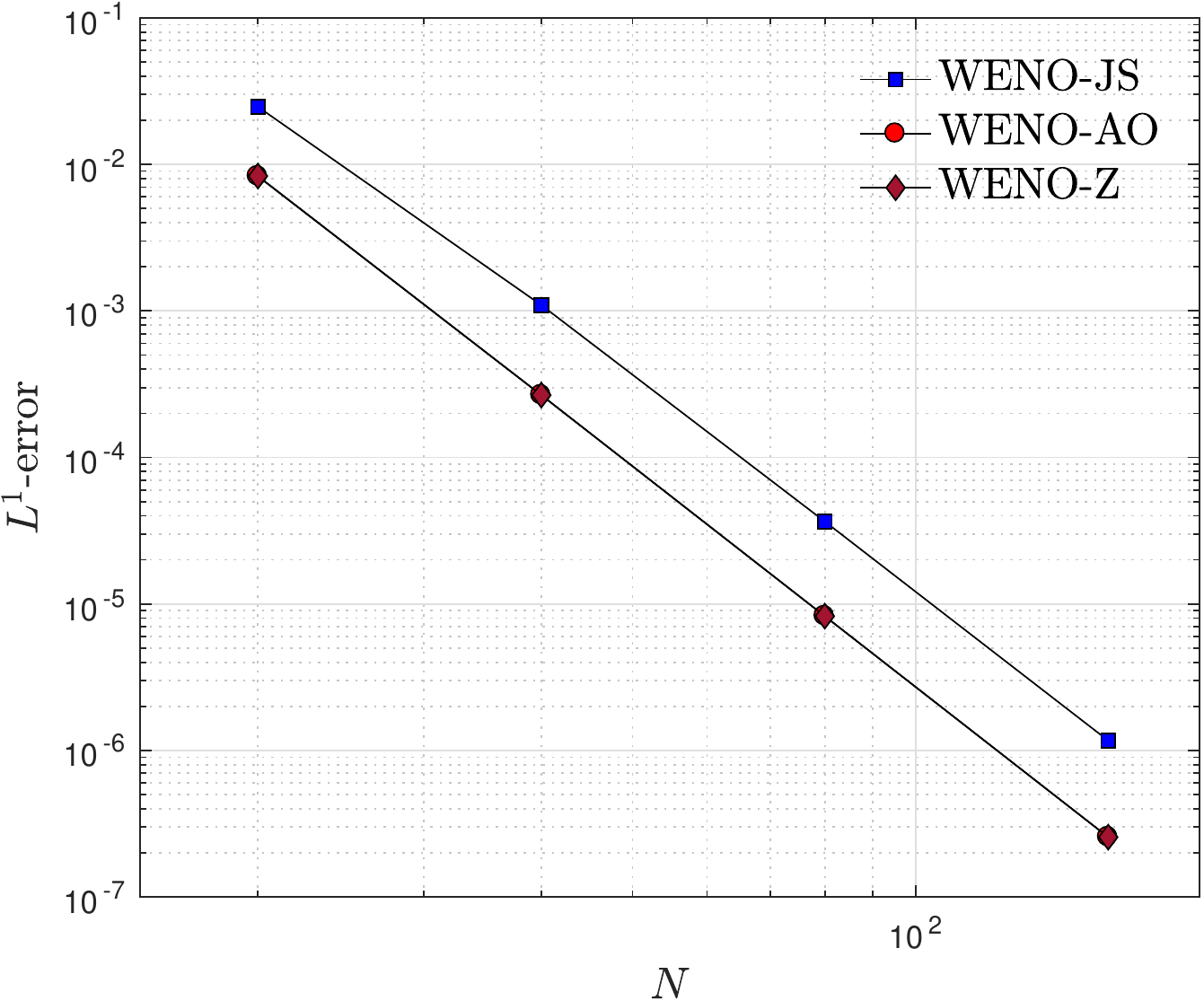} \\
			(a) Convergence rate: $L^{\infty}$-error of density & (b) Convergence rate: $L^{1}$-error of density
		\end{tabular}
	\end{center}
   \caption{Comparison of WENO-JS, WENO-AO and WENO-Z schemes in terms of $L^{\infty}$- and $L^1$-errors for Example \ref{Ex.acc3} at time $T=0.5$. }
   \label{Figure.ex3}
\end{figure}

\begin{table}
	\centering 
	\begin{tabular}{|c| c|c| c| c| c|  c| c| r|}
		\hline 
		$N$ & WENO-JS ($L^{\infty}$) & Order & WENO-AO ($L^{\infty}$) & Order & WENO-Z ($L^{\infty}$)& Order \\   
		\hline
20  &  1.061379e-02 & --     &5.276108e-03 &-- &5.221549e-03 &\\ 
\hline 
40  &  7.609719e-04 & 3.80     &1.822327e-04 &4.86  &1.820924e-04 & 4.84    \\
\hline
80  &  2.974258e-05 & 4.68     &5.759323e-06 &4.98  &5.759533e-06 & 4.98    \\
\hline
160  &  1.016390e-06 & 4.87     &1.792940e-07 &5.01  &1.792955e-07 & 5.01    \\
\hline
320 & 3.209556e-08 & 4.98      & 5.582646e-09 & 5.01 & 5.582708e-09 & 5.01 \\
\hline 
\hline
		$N$ & WENO-JS ($L^{1}$) & Order & WENO-AO ($L^{1}$) & Order & WENO-Z ($L^{1}$) & Order \\   
		\hline
		20  &  2.474187e-02 & --     &8.295712e-03 &-- &8.330823e-03 &\\ 
		\hline 
		40  &  1.096019e-03 & 4.50    &2.661359e-04 &4.96  &2.665033e-04 & 4.97   \\
		\hline
		80  &  3.654961e-05 & 4.91     &8.257317e-06 &5.01  &8.257391e-06 & 5.01    \\
		\hline
		160  &  1.167704e-06 & 4.97     &2.561385e-07 &5.01  &2.561381e-07 & 5.01   \\
		\hline
		320 & 3.663405e-08 & 4.99      &7.965435e-09 & 5.01 & 7.965479e-09 & 5.01\\
		\hline
	\end{tabular}
\caption{ Comparison of WENO-JS, WENO-AO and WENO-Z schemes in terms of $L^{\infty}$- and $L^1$-errors along with their convergence rate for Example \ref{Ex.acc3} at time $T=0.5$.}
	\label{Table.ex3}
\end{table}

\begin{example}\label{Ex.acc3}{\rm (With time-dependent source term) Here, we consider the time-dependent source term 
over the domain [-0.5,0.5]. The potential is taken as, $W(x,t)=\sin(2 \pi (x-t))$ and the initial profile is given by
\[
 \bold{V}=\left(2+\sin(2\pi x), \ 1, \ 0, \ 1.5+\frac{1}{8}(\cos(4\pi x)-8\sin(2\pi x)), \ 0, \ 1  \right)
\]
Similar to the previous cases,  velocity and pressure $p_{12}$ and $p_{22}$ remain constant with time $t$. The density and pressure $p_{11}$  at time $t$ are given by, 
\[
\rho(x,t) = 2+\sin(2 \pi (x-t)), \qquad p_{11}(x,t) = 1.5 + \frac{1}{8} [ \cos(4 \pi (x-t)) - 8 \sin(2 \pi (x-t)) ]
\]
Numerical solutions are computed till final time $T=0.5$ using periodic boundary conditions and errors are shown in Table \ref{Table.ex3} and Figure \ref{Figure.ex3}. All the considered schemes converge to the exact solution with a convergence rate of five.
} 
\end{example}
\begin{example}
\label{ex:lowdensity}
{\rm (Accuracy test for low density problem) 
This test is performed to study the accuracy of the proposed scheme when positivity limiter is active. We consider the domain $[-0.25,0.25]$ and the initial profile with the low density given as,
\begin{equation}\label{ex:ld}
 {\bf V} = \left( \tilde{\eps} + \sin^2(2 \pi x), \ 1, \ 0, \ 5 - x\left(\frac{\tilde\eps}{2} + \frac{1}{4}\right) + \frac{\sin(4 \pi x)}{16 \pi}, \ 0, \ 1 \right)
\end{equation}
with the potential $W(x)=x$. The exact solution at time $t$ is given by,
\[
\rho = \tilde{\eps} + \sin^2(2 \pi (x-t)), \qquad p_{11} = 5 + (t -x)\left(\frac{\tilde\eps}{2} + \frac{1}{4}\right) + \frac{\sin(4 \pi (x-t))}{16 \pi}
\]
All other variables remain constant at any time $t$. We compute the solutions with the parameter $\tilde{\eps}$ taking the value $10^{-2}$ or $10^{-6}$. The boundary values are evaluated using the exact solution. The solutions are computed at the final time $T=0.5$. For $\tilde \eps=10^{-2}$, the positivity limiter is not used since the base scheme maintains positivity during the simulations. In this case, the convergence results using WENO-AO scheme are tabulated in Table~\ref{Table.ex4a} and we observe close to optimal accuracy in all norms. For $\tilde \eps =10^{-6}$, convergence results are depicted in Table \ref{Table.ex4b}. In this case, positivity limiter is used during simulations many times. We can observe from  Table~\ref{Table.ex4b} that the positivity limiter does not degrade the accuracy and convergence results of the scheme atleast in $L^1$-error norm, which is consistent with the results observed in~\cite{zha-shu_10d}. Similar results have been obtained with WENO-JS and WENO-Z schemes (not shown here to save space).
}
\end{example}
\begin{table}
	\centering 
	\begin{tabular}{|c| c|c| c| c| c|  c| c| r|}
		\hline 
		$N$ & $L^{1}$-error & Order & $L^{2}$-error & Order & $L^{\infty}$-error& Order \\   
\hline
10  &  6.9266e-03 & --     &1.2016e-02 &-- &2.7763e-02 &\\ 
\hline 
20  &  2.2851e-04 & 4.92     &4.0967e-04 &4.87  &1.1932e-03 & 4.54    \\
\hline
40  &  7.0609e-06 & 5.02     &1.4424e-05 &4.83  &5.6716e-05 & 4.39    \\
\hline
80  &  2.0517e-07 & 5.10     &4.8369e-07 &4.90  &2.4188e-06 & 4.55   \\
\hline
160  &  6.1699e-09 & 5.06    &1.5446e-08 &4.97  &9.0870e-08 & 4.73    \\
\hline
320  &  1.9061e-10 & 5.02     &4.8046e-10 &5.01  &3.0445e-09 & 4.90    \\
\hline 
\end{tabular}
\caption{  Comparison of $L^1$-, $L^2$- and $L^{\infty}$-errors along with their convergence rates for low density problem Example \ref{ex:lowdensity} with $\tilde \eps=10^{-2}$ at time $T=0.5$.}
	\label{Table.ex4a}
\end{table}
\begin{table}
	\centering 
	\begin{tabular}{|c| c|c| c| c| c|  c| c| r|}
		\hline 
		$N$ & $L^{1}$-error & Order & $L^{2}$-error & Order & $L^{\infty}$-error& Order \\   
\hline
10  &  6.9886e-03 & --     &1.2155e-02 &-- &2.8640e-02 &\\ 
\hline 
20  &  2.3453e-04 & 4.90     &4.2055e-04 &4.85  &1.2504e-03 & 4.52    \\
\hline
40  &  7.6188e-06 & 4.94     &1.5722e-05 &4.74  &6.5124e-05 & 4.26    \\
\hline
80  &  2.4767e-07 & 4.94     &5.9952e-07 &4.71  &3.2944e-06 & 4.31    \\
\hline
160  &  7.2357e-09 & 5.10     &2.2418e-08 &4.74  &1.6556e-07 & 4.31    \\
\hline
320  &  2.1793e-10 & 5.05    &8.5867e-10 &4.71  &8.3254e-09 & 4.31    \\
\hline
\end{tabular}
\caption{   Comparison of $L^1$-, $L^2$- and $L^{\infty}$-errors along with their convergence rates for low density problem Example \ref{ex:lowdensity} with $\tilde \eps=10^{-6}$ at time $T=0.5$.}
	\label{Table.ex4b}
\end{table}

\subsection{Test cases with discontinuities}
\begin{figure}
\begin{center}
\begin{tabular}{ccc}
\includegraphics[width=0.32\textwidth]{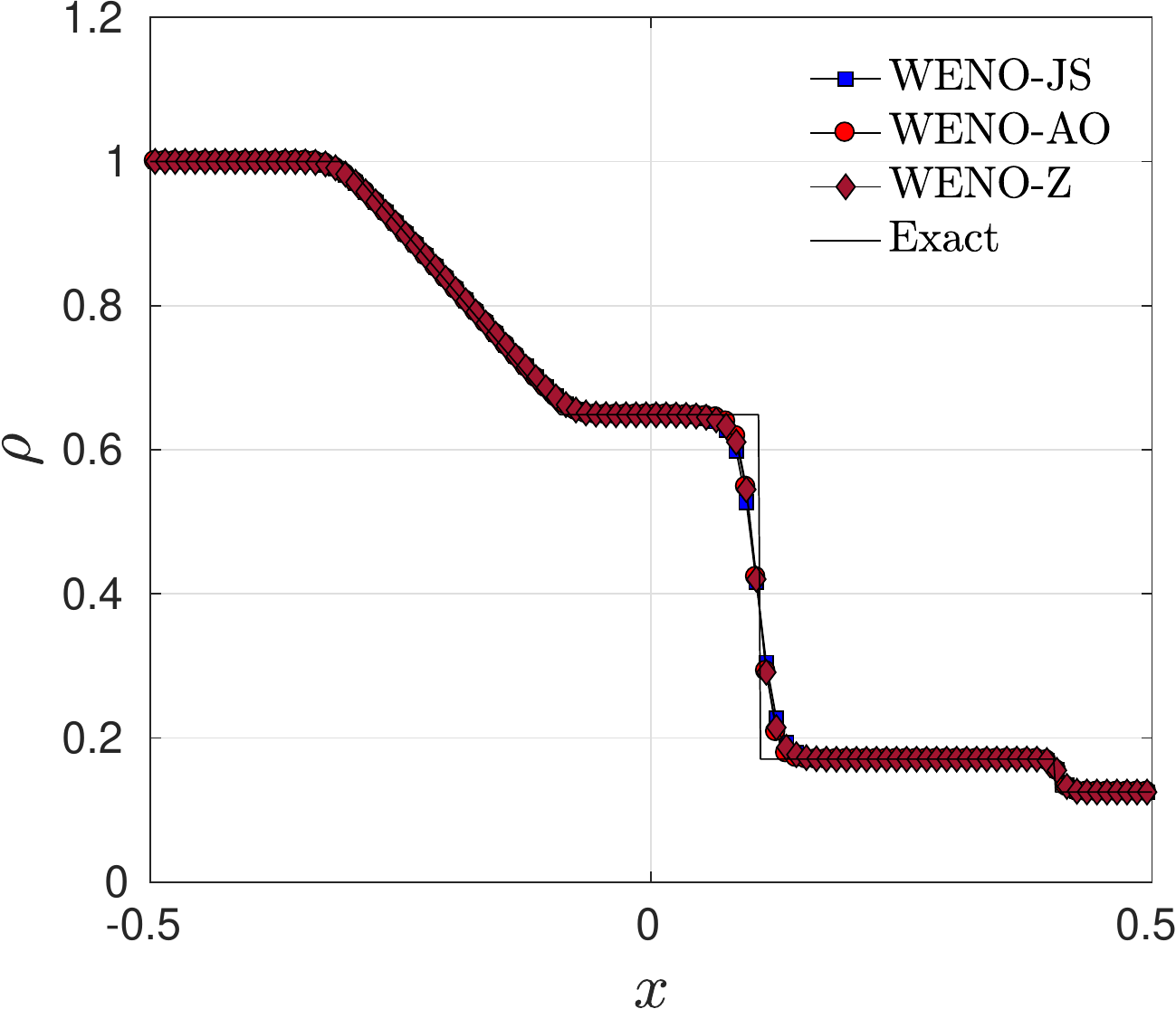} &
 \includegraphics[width=0.32\textwidth]{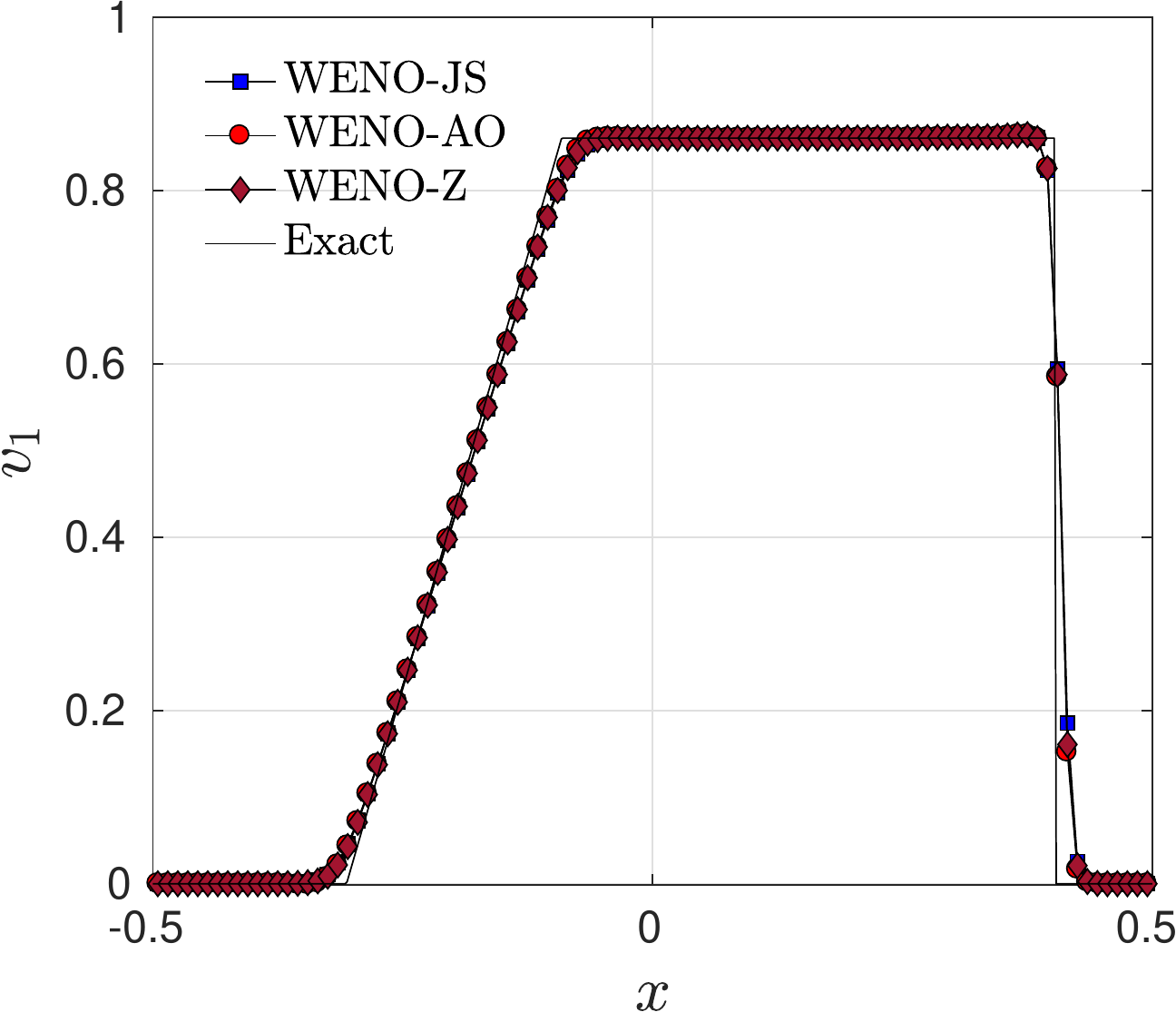} &
  \includegraphics[width=0.32\textwidth]{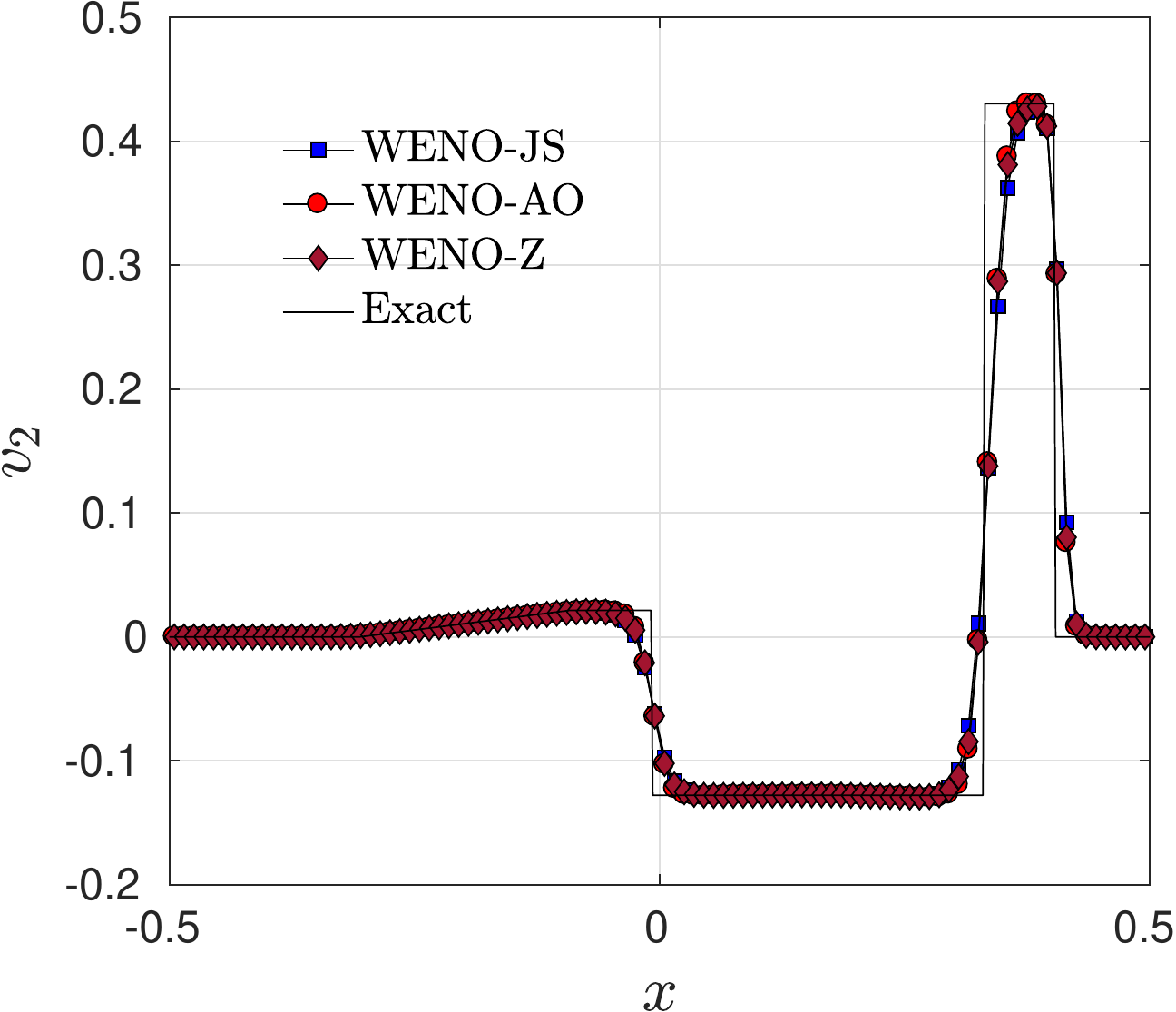} \\
 (a) $\rho$ & (b) $v_1$ &   (c) $v_2$  \\
 \includegraphics[width=0.32\textwidth]{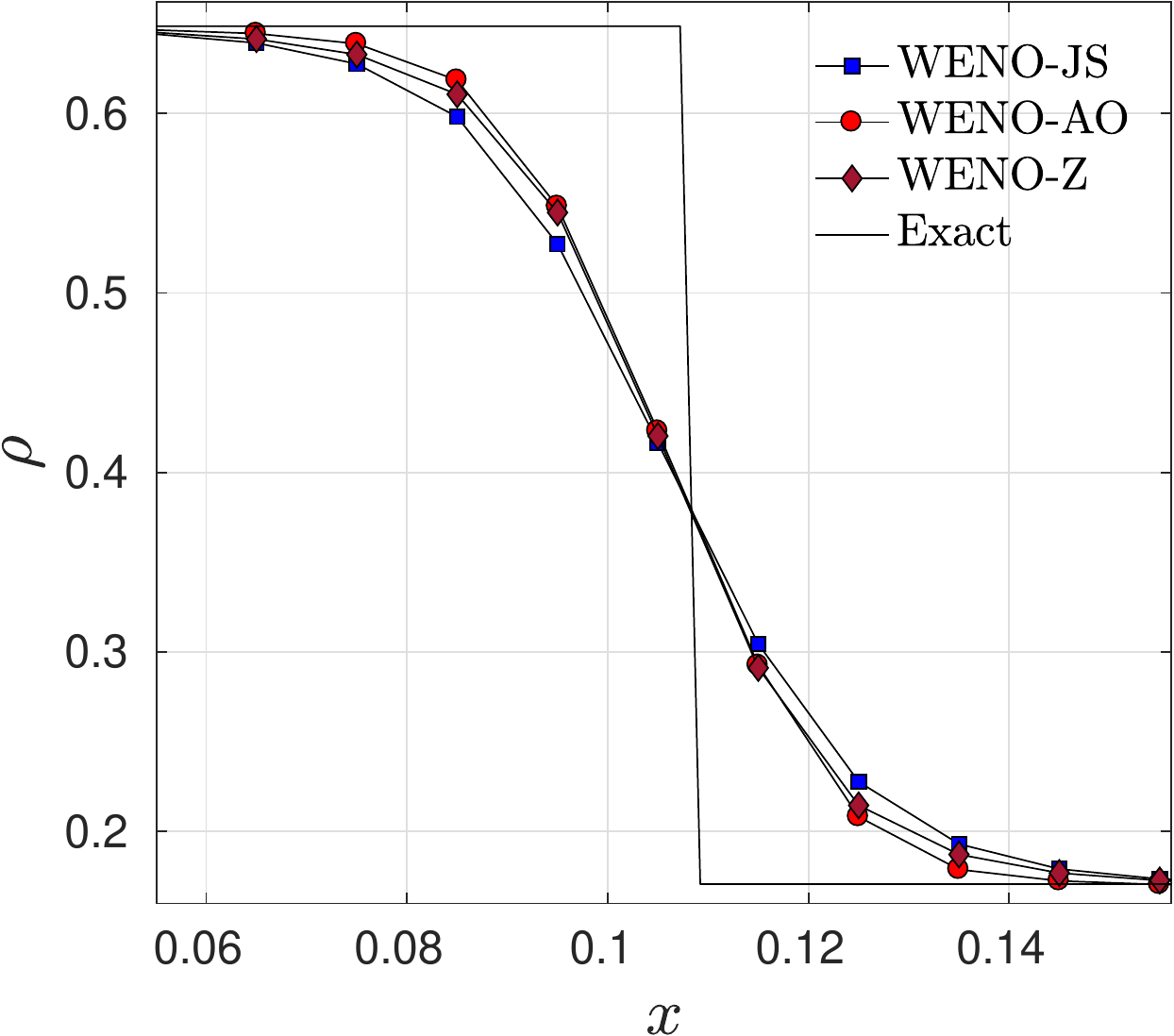} &
 \includegraphics[width=0.32\textwidth]{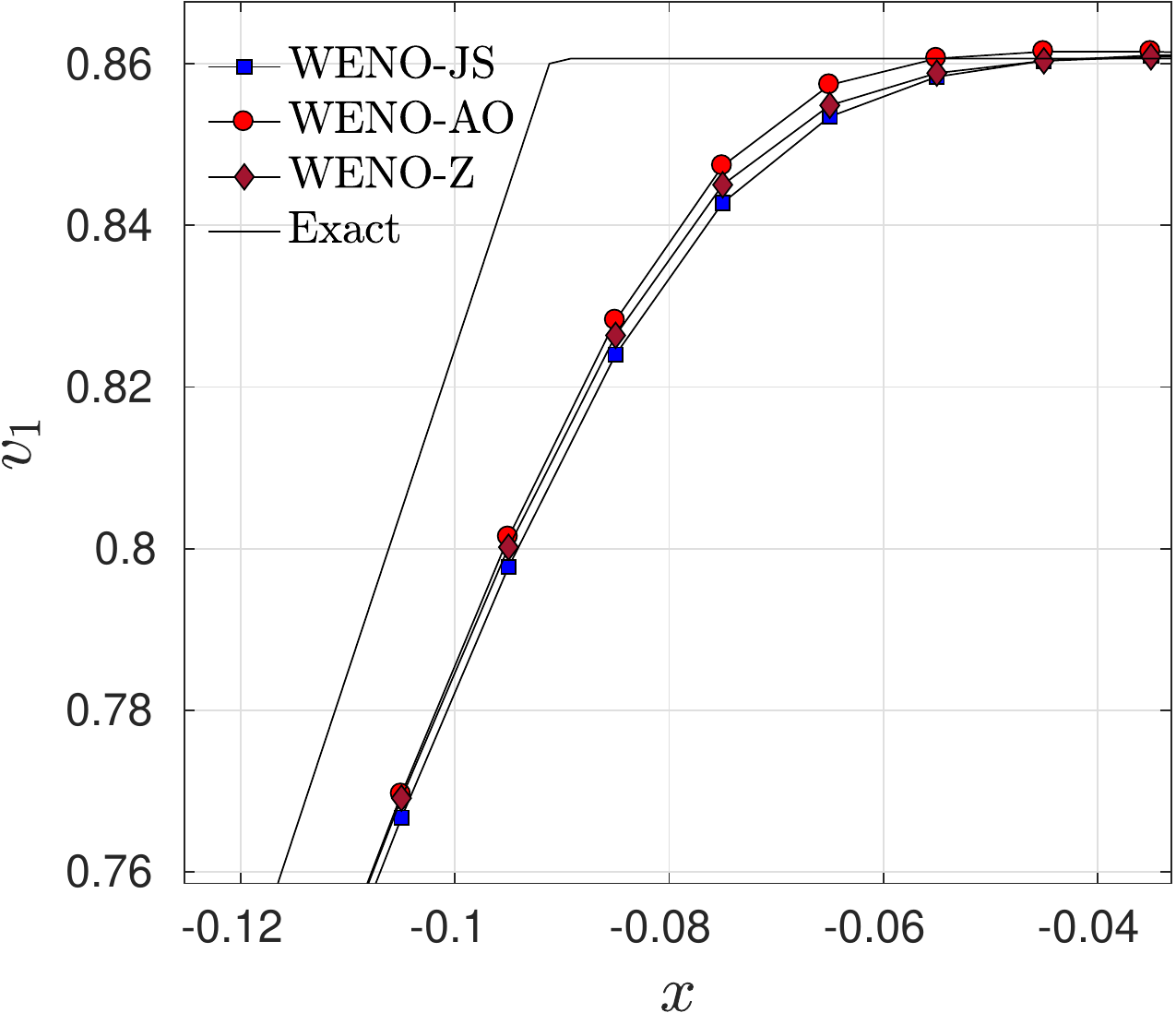} &
 \includegraphics[width=0.32\textwidth]{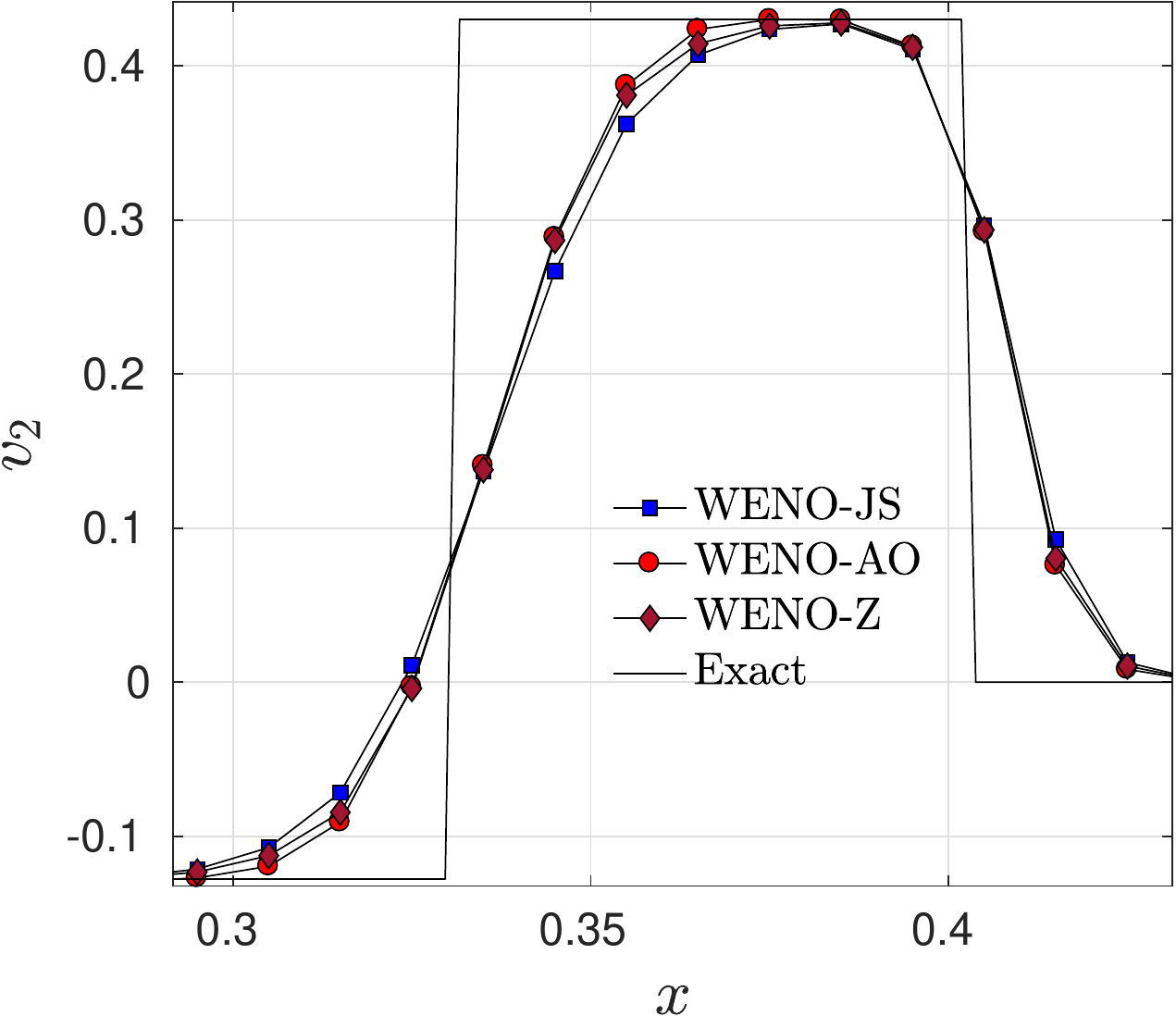}\\
(d) Zoom of $\rho$ &  (e) Zoom of $v_1$ & (f) Zoom of $v_2$
\end{tabular}
\end{center}
 \caption{Comparison of $\rho$, $v_1$ and $v_2$ obtained using WENO-JS, WENO-Z and WENO-AO schemes with the exact solution for Example \ref{ex:sod}. }
 \label{fig:sod1}
\end{figure}

\begin{figure}
\begin{center}
\begin{tabular}{ccc}
\includegraphics[width=0.32\textwidth]{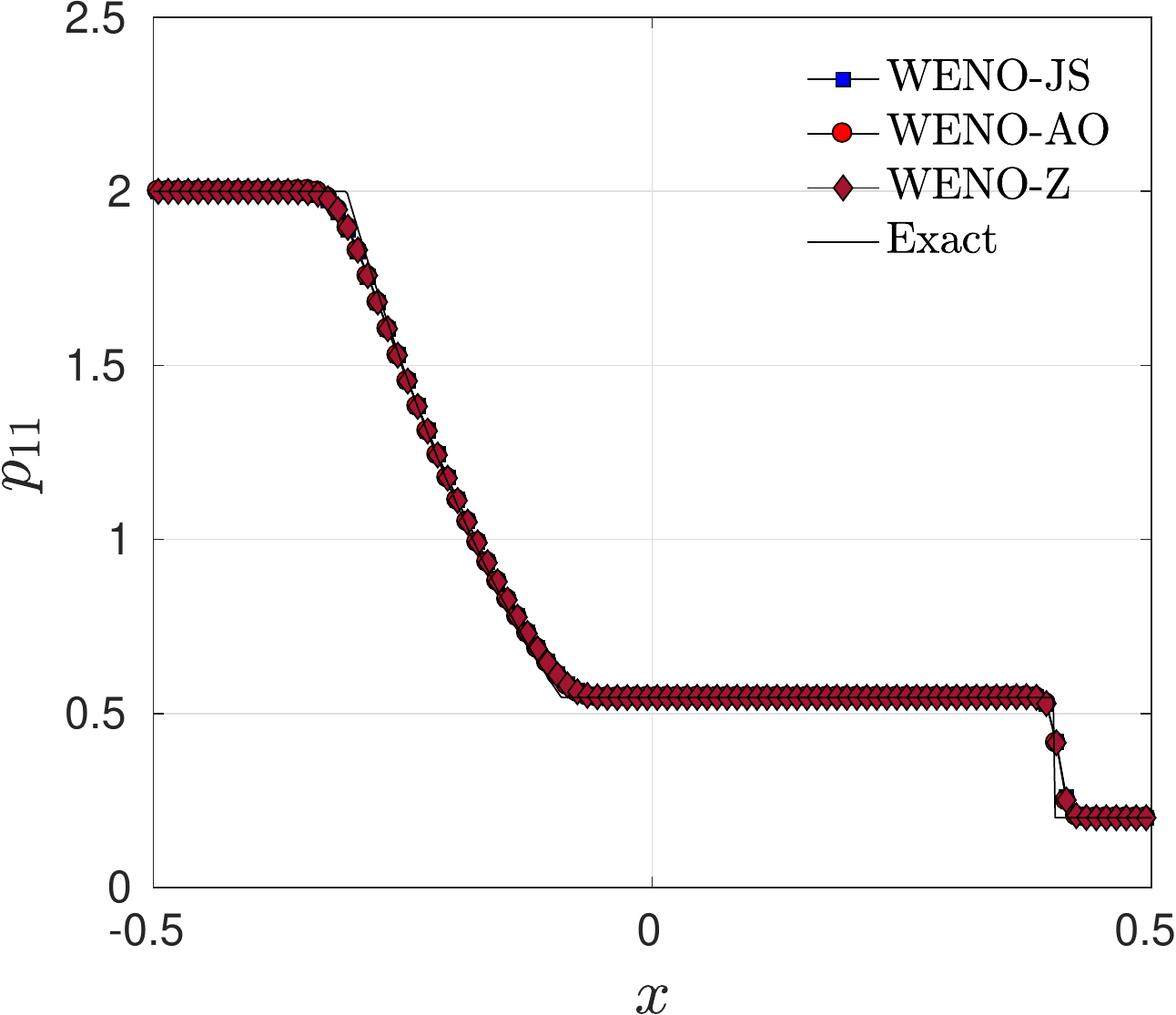} &
 \includegraphics[width=0.32\textwidth]{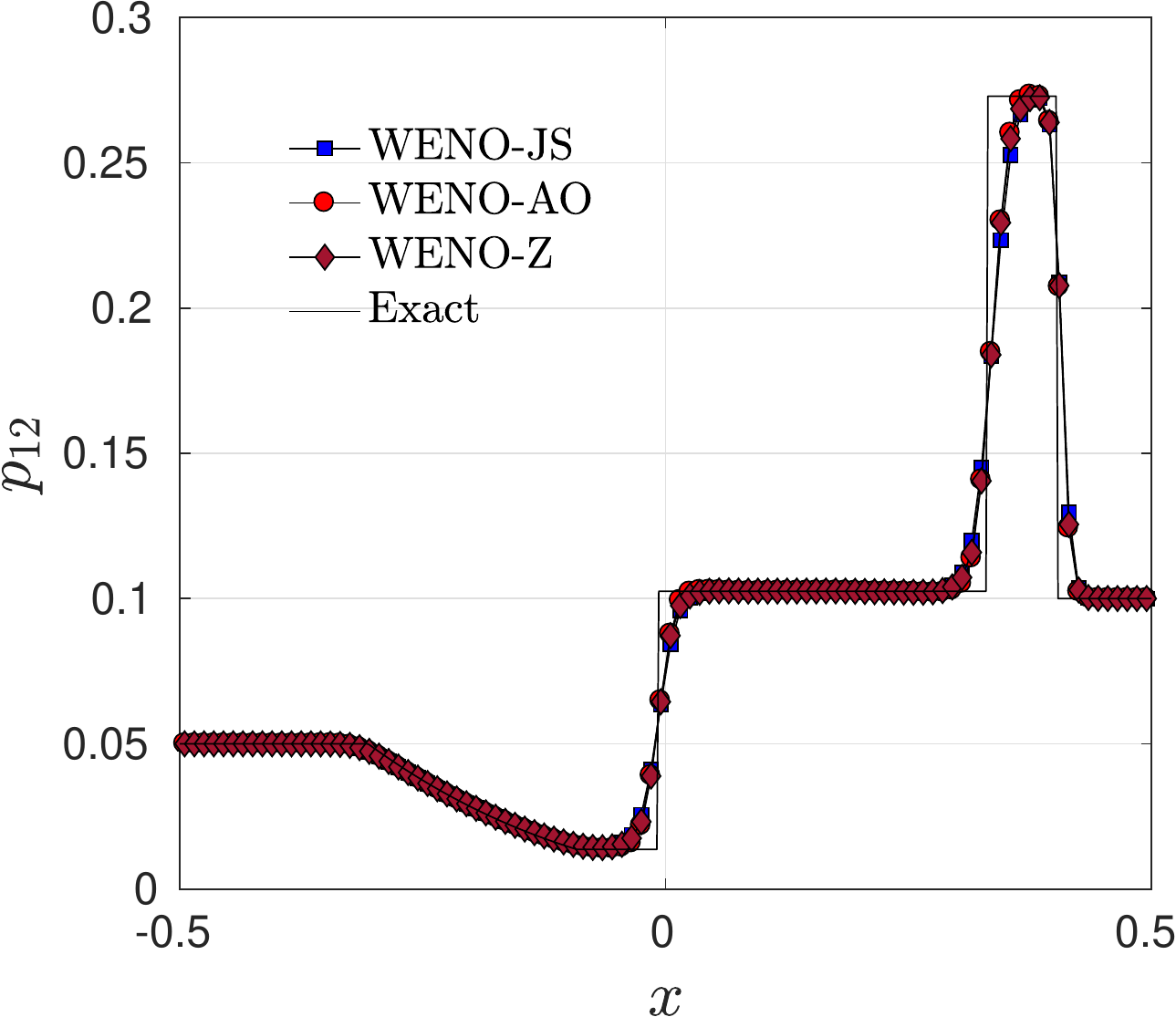} &
  \includegraphics[width=0.32\textwidth]{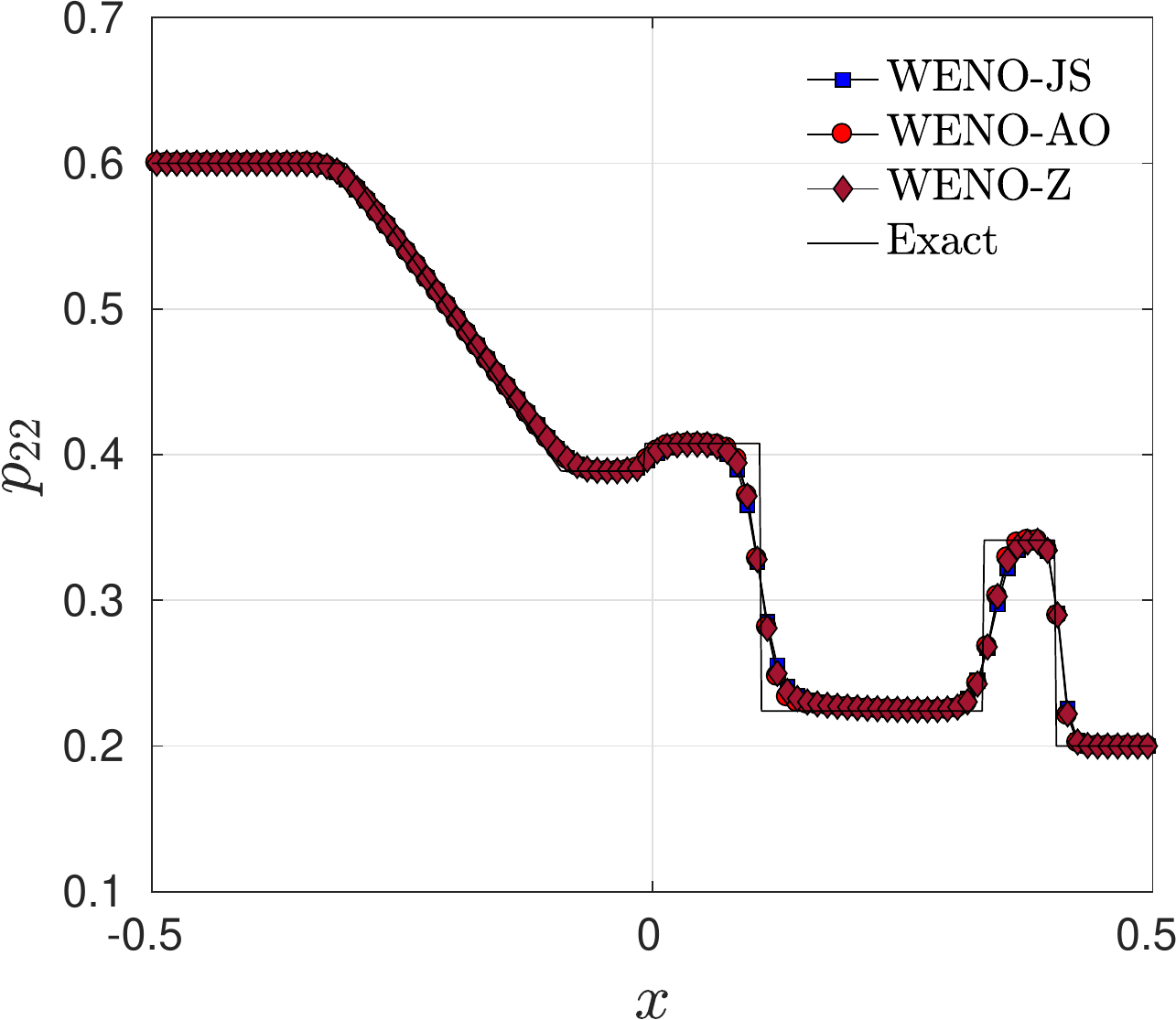} \\
 (a) $p_{11}$ & (b) $p_{12}$ &   (c) $p_{22}$  \\
 \includegraphics[width=0.32\textwidth]{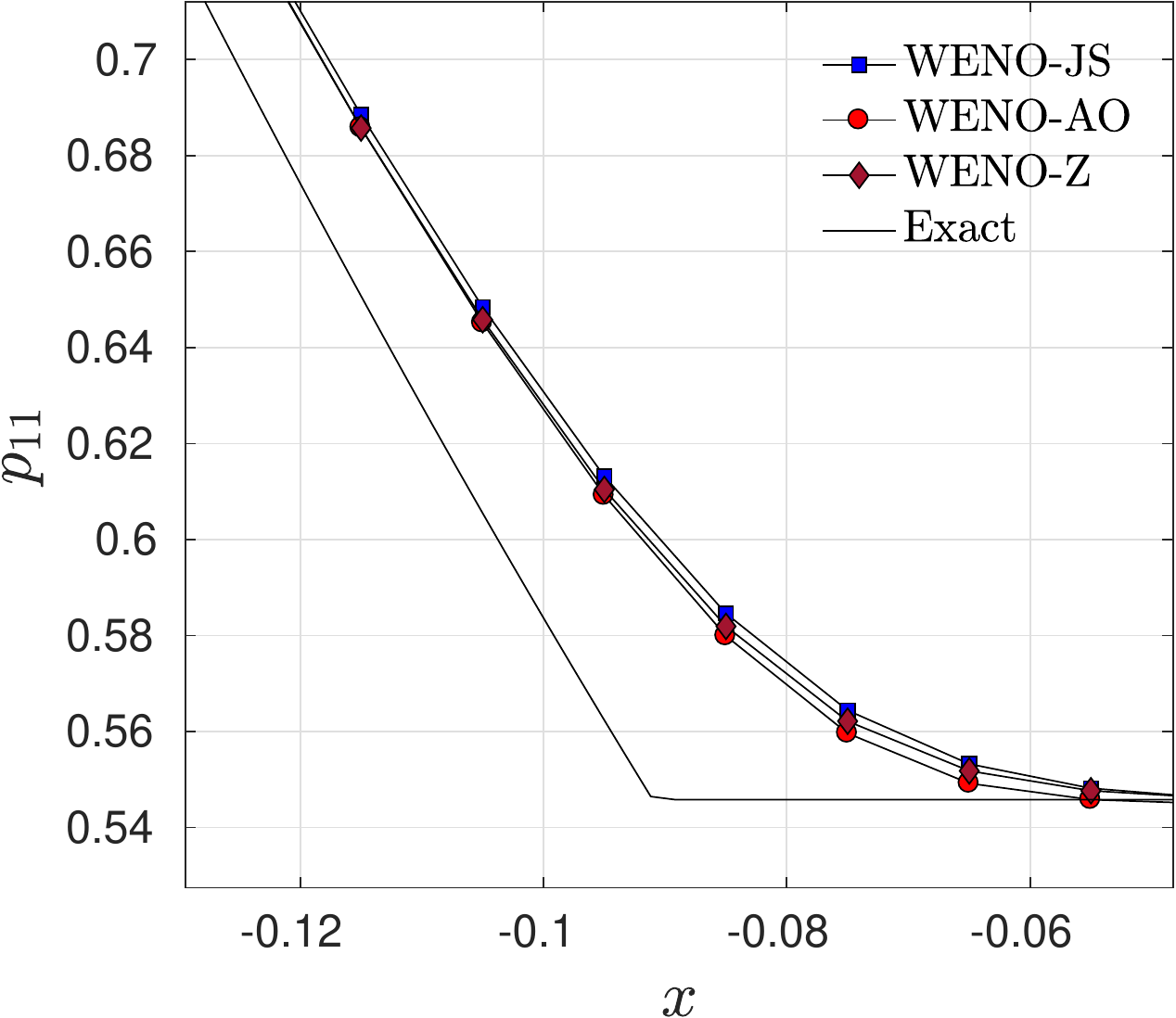} &
 \includegraphics[width=0.32\textwidth]{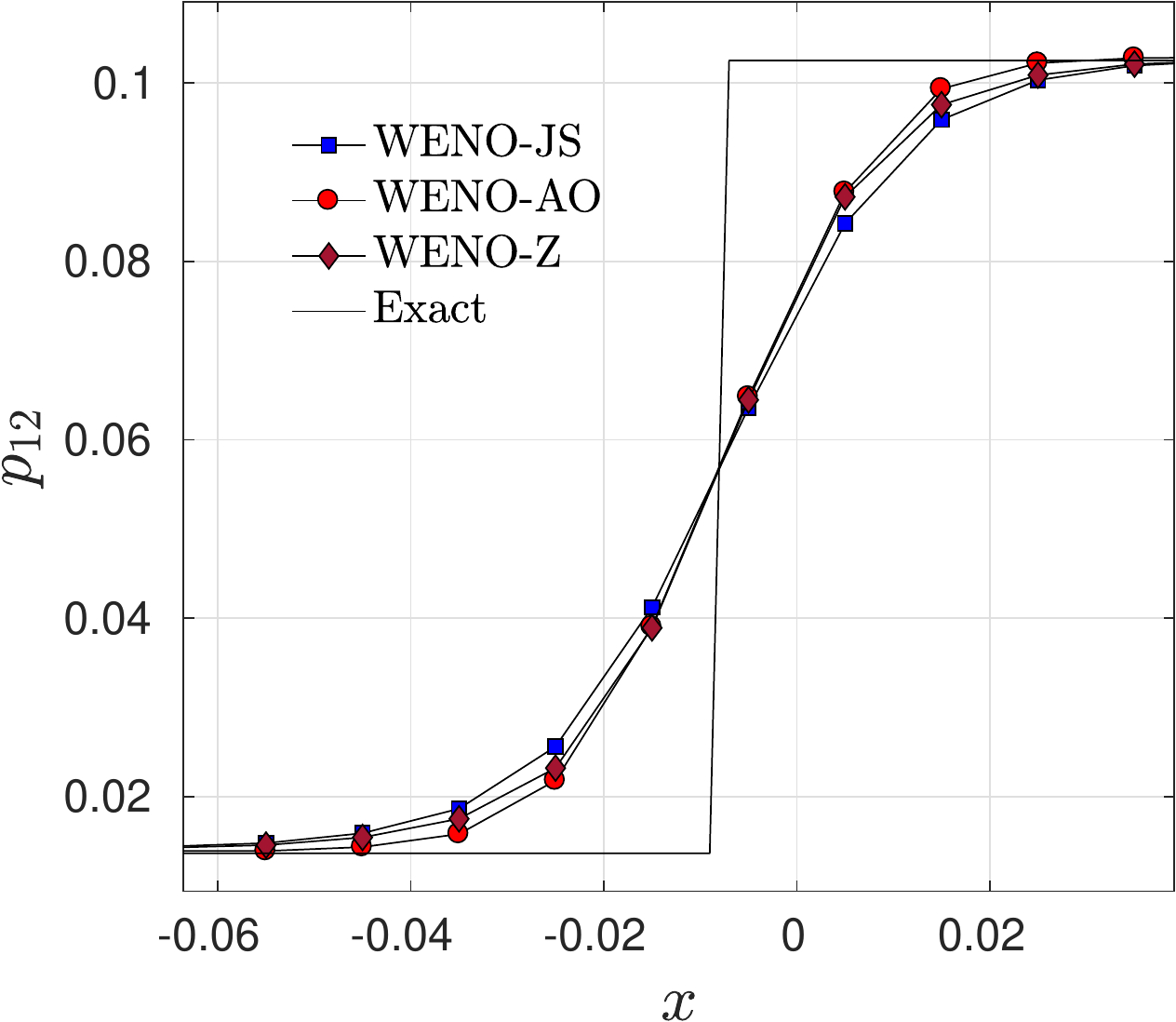} &
 \includegraphics[width=0.32\textwidth]{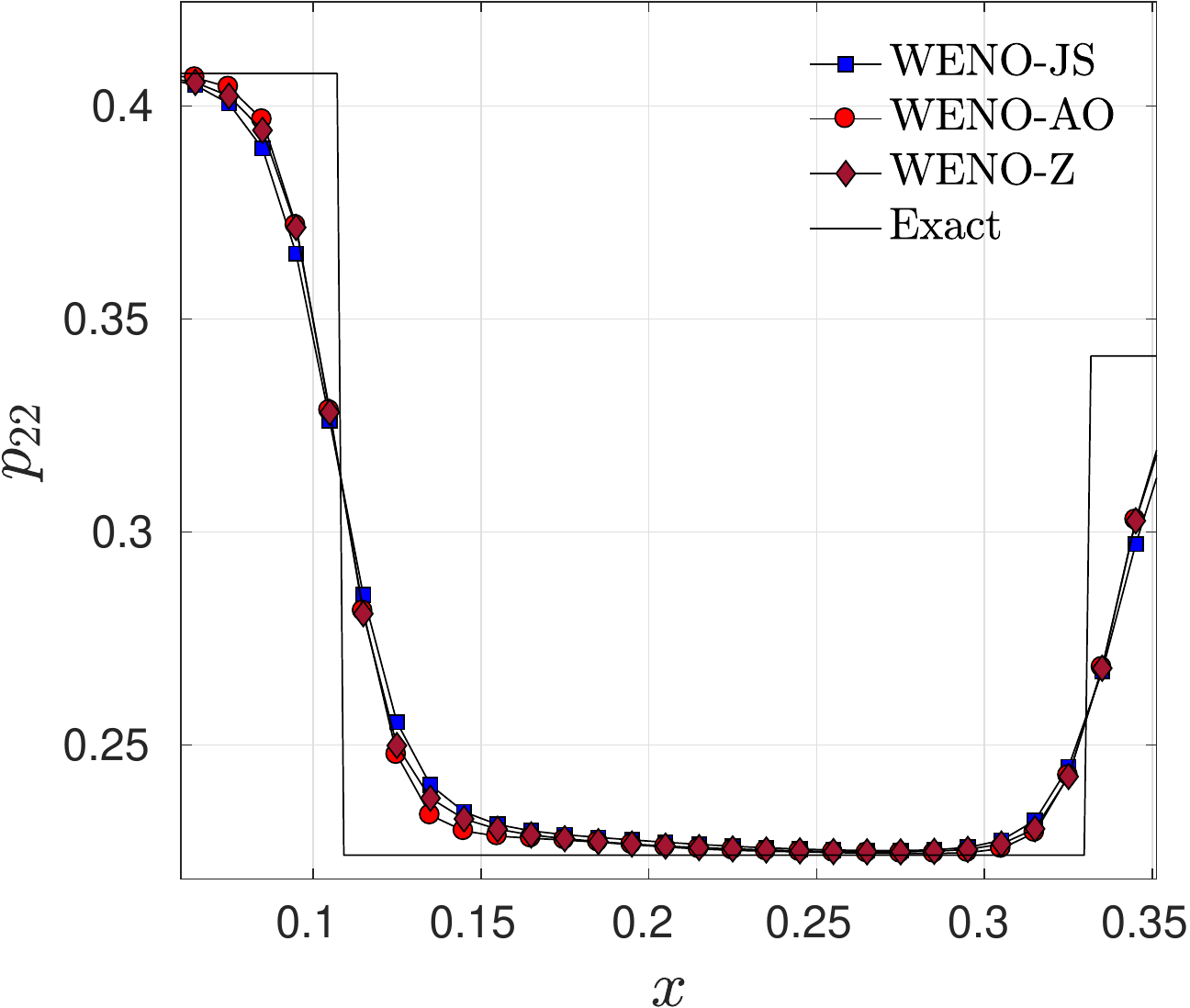}\\
(d) zoom of $p_{11}$  &  (e) zoom of $p_{12}$ & (f) zoom of $p_{22}$
\end{tabular}
\end{center}
 \caption{Comparison of pressure components obtained using WENO-JS, WENO-Z and WENO-AO schemes with the exact solution for Example \ref{ex:sod}.}
 \label{fig:sod2}
\end{figure}

\begin{figure}
\begin{center}
\begin{tabular}{ccc}
\includegraphics[width=0.32\textwidth]{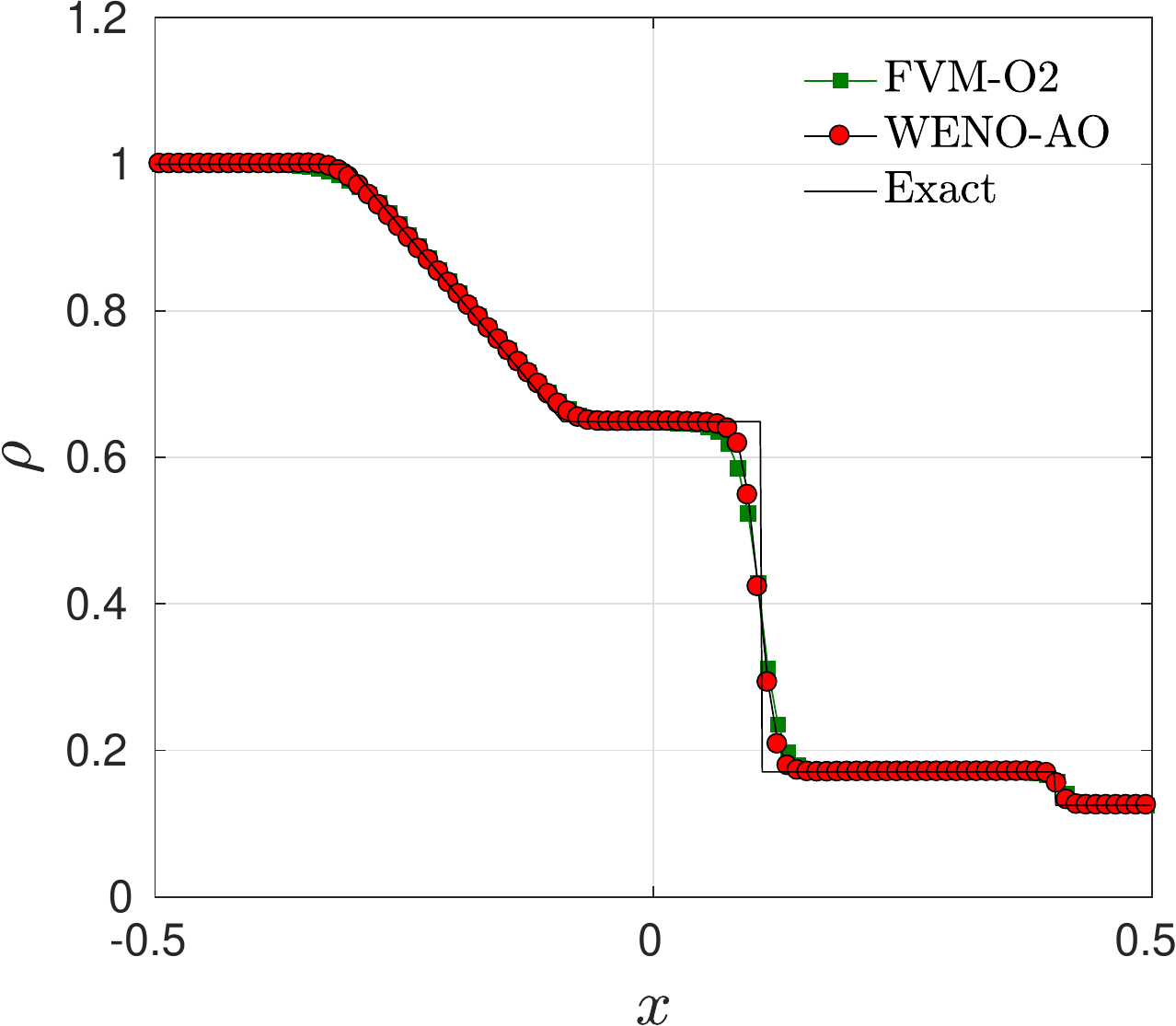} &
 \includegraphics[width=0.32\textwidth]{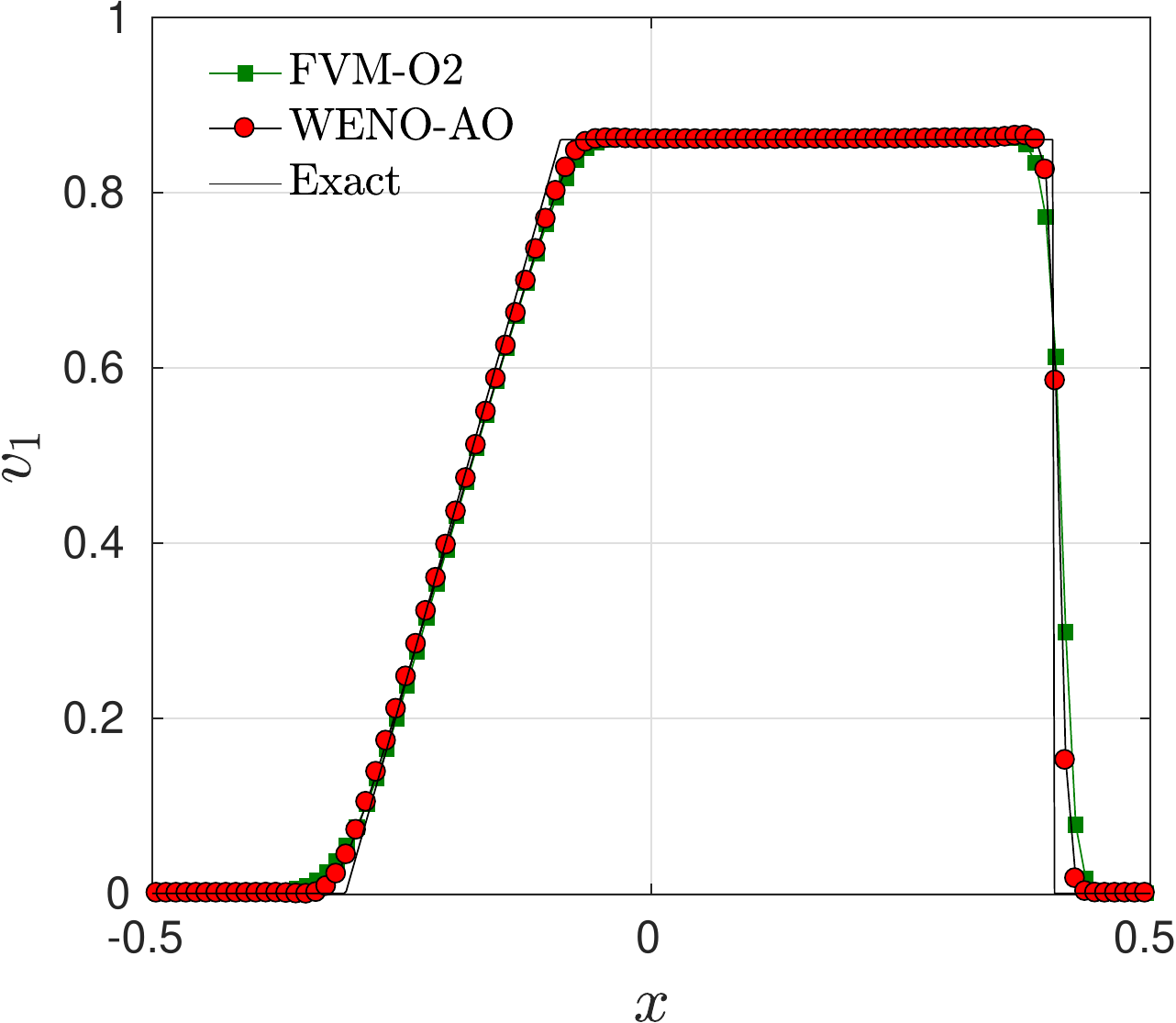} &
  \includegraphics[width=0.32\textwidth]{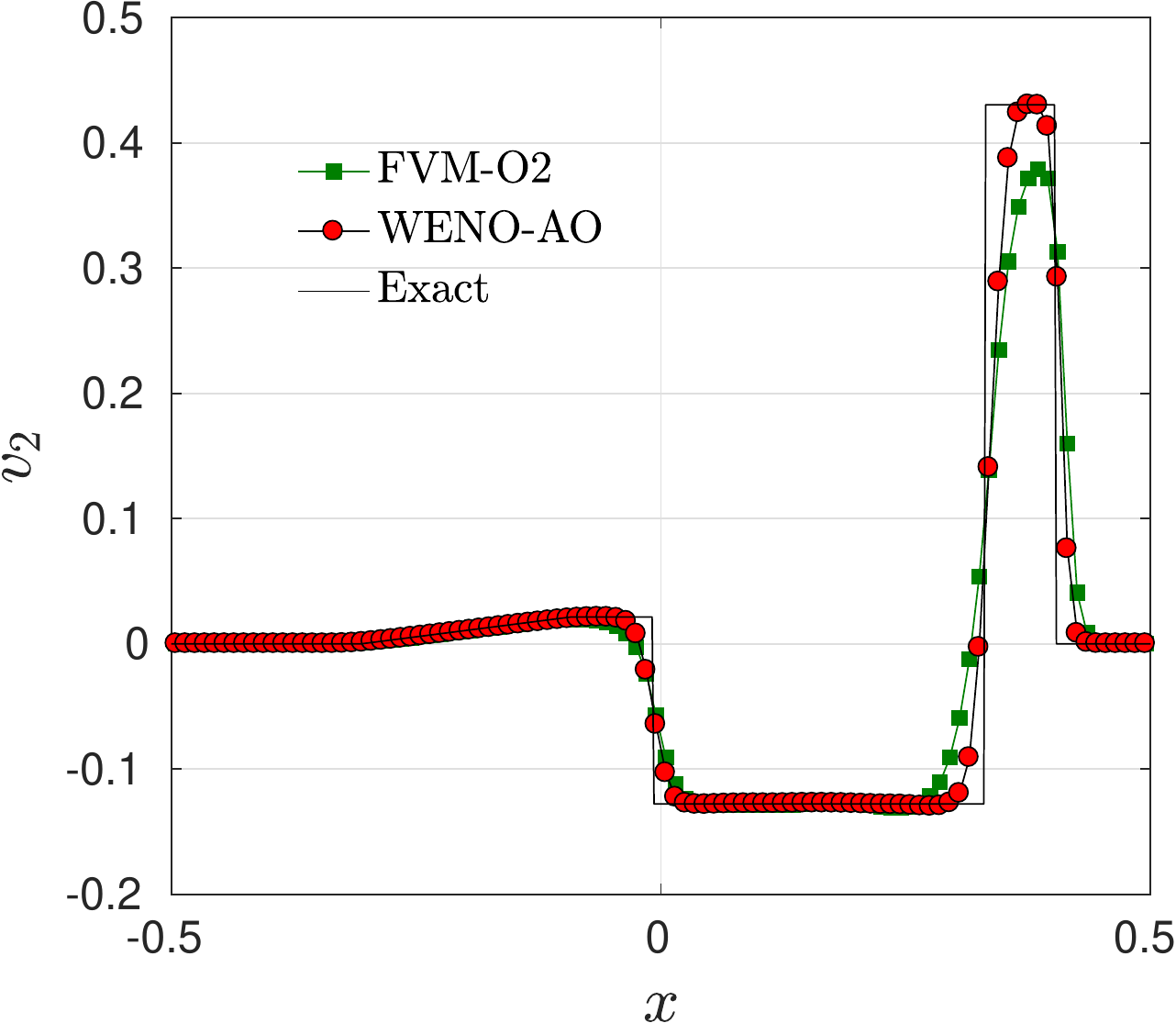} \\
 (a) $\rho$ & (b) $v_1$ &   (c) $v_2$  \\
 \includegraphics[width=0.32\textwidth]{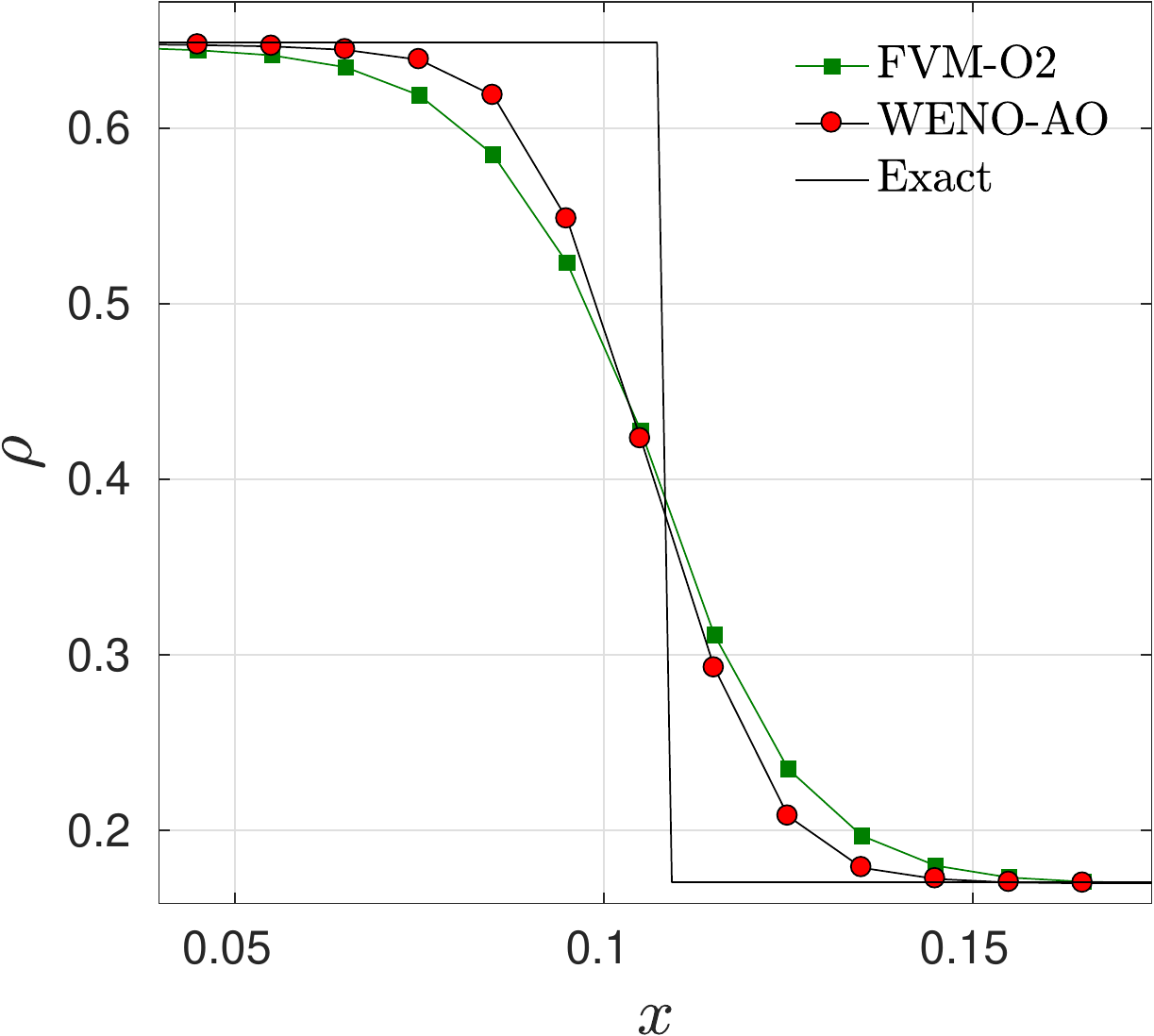} &
 \includegraphics[width=0.32\textwidth]{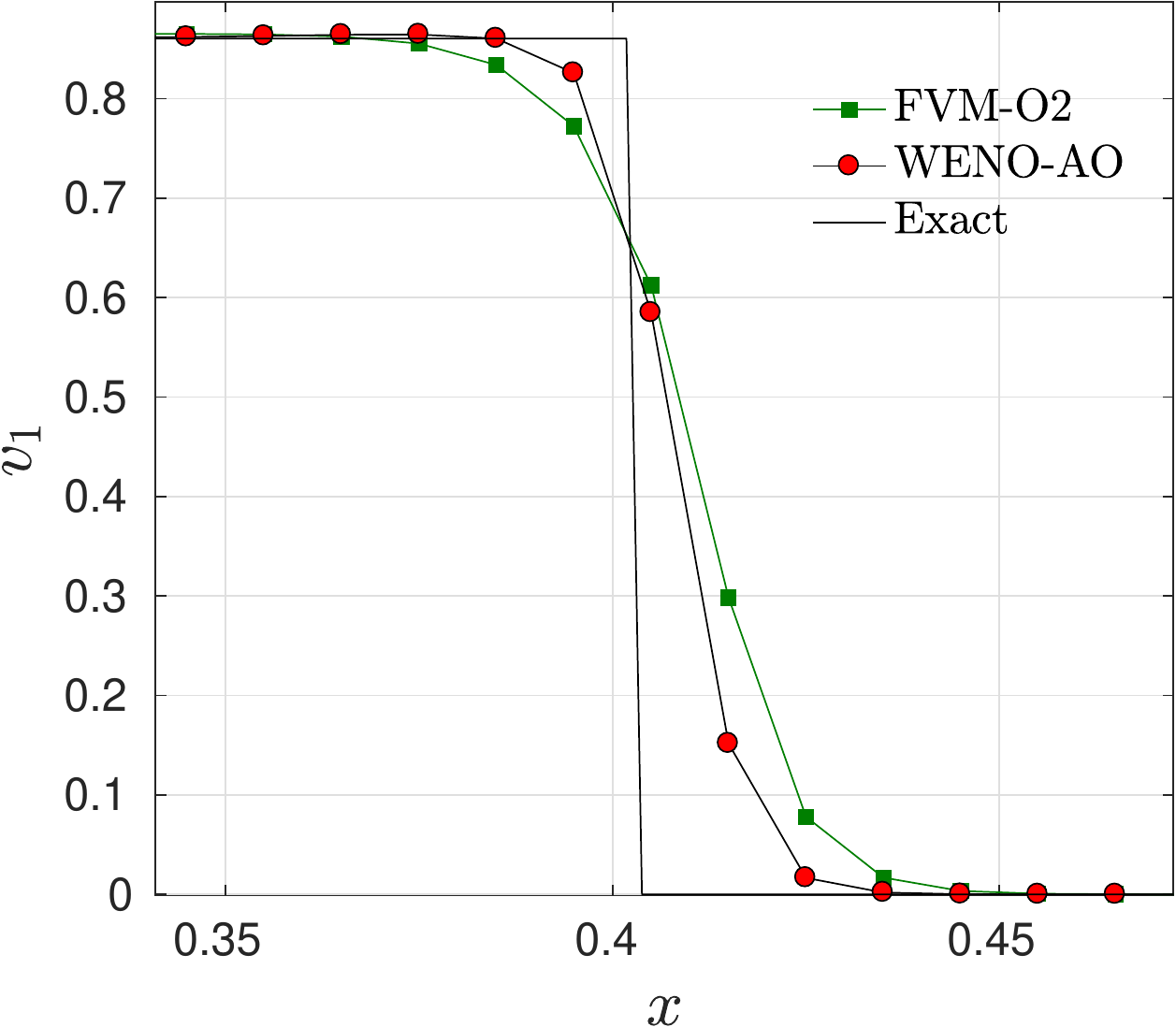} &
 \includegraphics[width=0.32\textwidth]{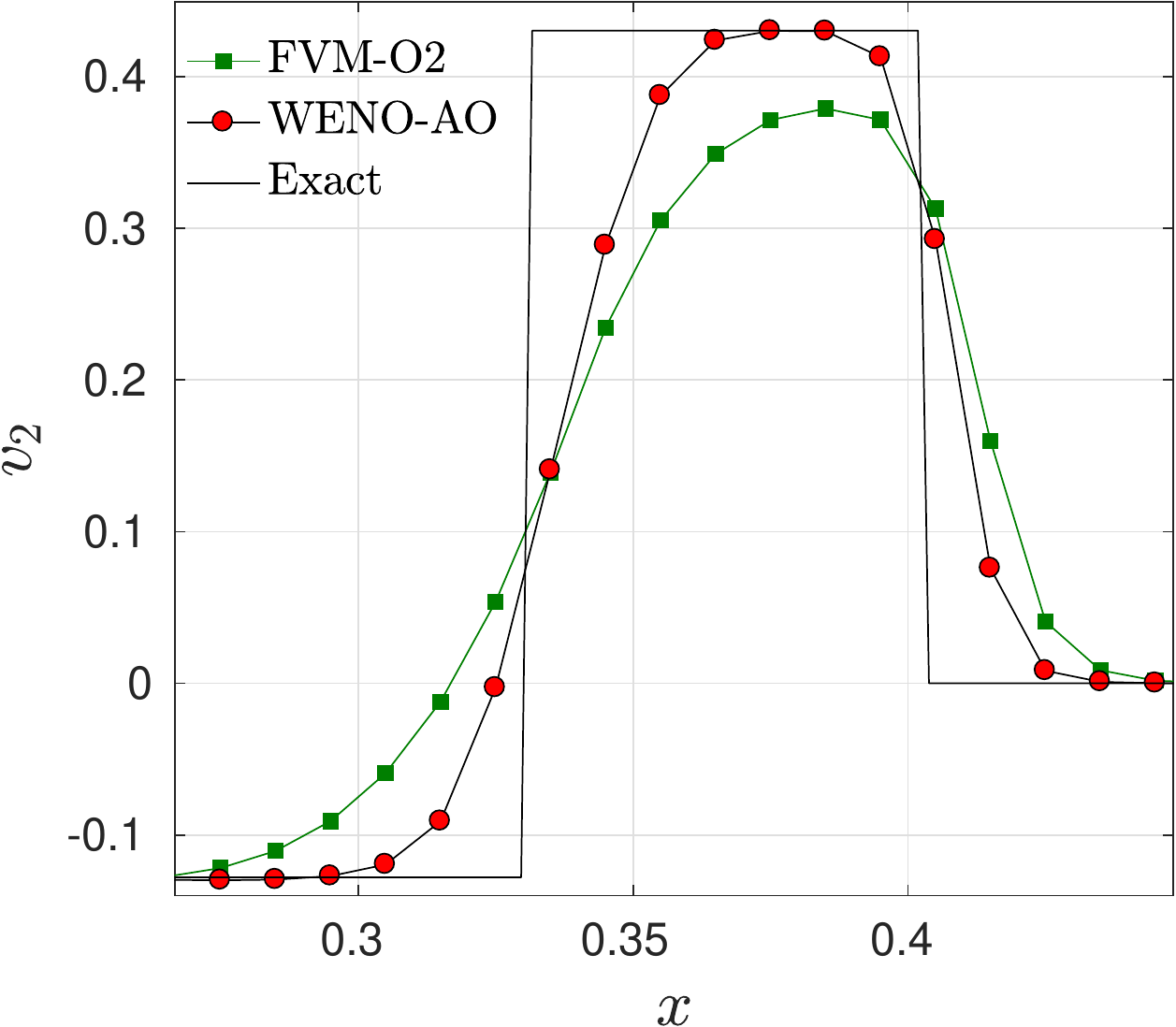}\\
(d) Zoom of $\rho$ &  (e) Zoom of $v_1$ & (f) Zoom of $v_2$
\end{tabular}
\end{center}
 \caption{Comparison of $\rho$, $v_1$ and $v_2$ obtained using WENO-AO scheme with second order finite volume method~\cite{meenaKumarFVM} for Example \ref{ex:sod}.}
 \label{fig:sodfvm}
\end{figure}

\begin{example}\label{ex:sod}{\rm (Sod shock tube test)
This test case involves the formation of shock, rarefaction, and contact wave. 
 The initial data, which has discontinuity at $x=0$ over the domain $[-0.5,0.5]$ is given by
  \[
  \bold{V}=\begin{cases}
            (1, \ 0, \ 0, \ 2, \ 0.05, \ 0.6), & \mbox{if}~~ x\leq 0\\
            (0.125, \ 0, \ 0, \ 0.2, \ 0.1, \ 0.2), & \mbox{if}~~ x>0
           \end{cases}
\]
}
\end{example}
The numerical solutions are computed at final time $T=0.125$ using 100 cells and outflow boundary conditions. In Figure \ref{fig:sod1} and \ref{fig:sod2}, we have compared the solutions of the primitive variables obtained using WENO-JS, WENO-Z, and WENO-AO schemes with the exact solution. Since this problem does not have low density or pressure, the
positivity limiter is not called in any WENO scheme. We observed from the Figure \ref{fig:sod1} and \ref{fig:sod2} that, all the proposed WENO schemes capture the shock wave in a non-oscillatory fashion. The resolution of WENO-AO scheme across the discontinuities is better than that of WENO-JS and WENO-Z schemes. In Figure \ref{fig:sodfvm}, we have compared our proposed algorithms with published results \cite{meenaKumarFVM} for density and velocity. We can easily observe that WENO-AO scheme has better resolution in comparison to the second order finite volume scheme. Similar observations are seen in the case of other variables (not shown here).
\begin{figure}
\begin{center}
\begin{tabular}{ccc}
\includegraphics[width=0.32\textwidth]{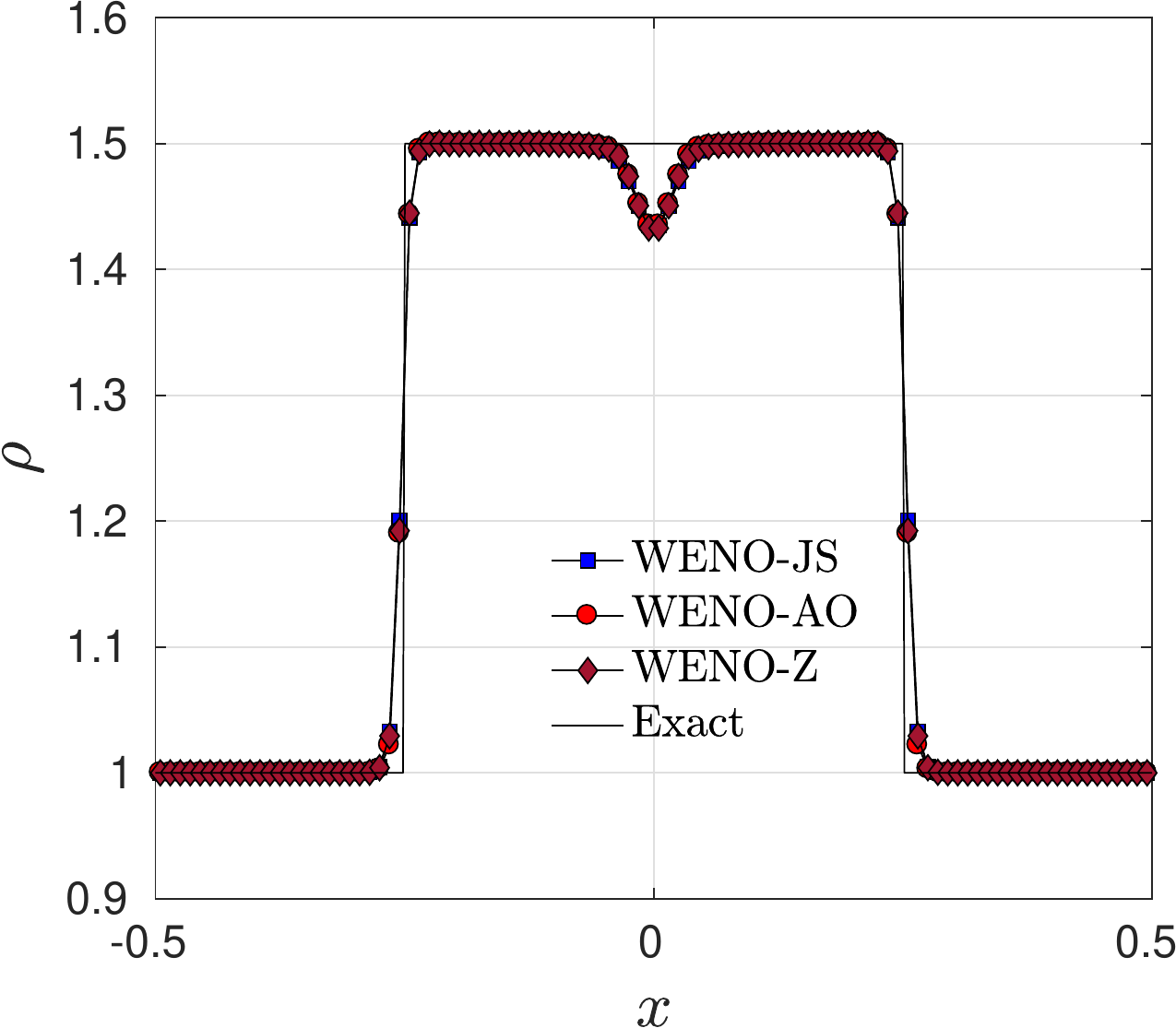} &
 \includegraphics[width=0.32\textwidth]{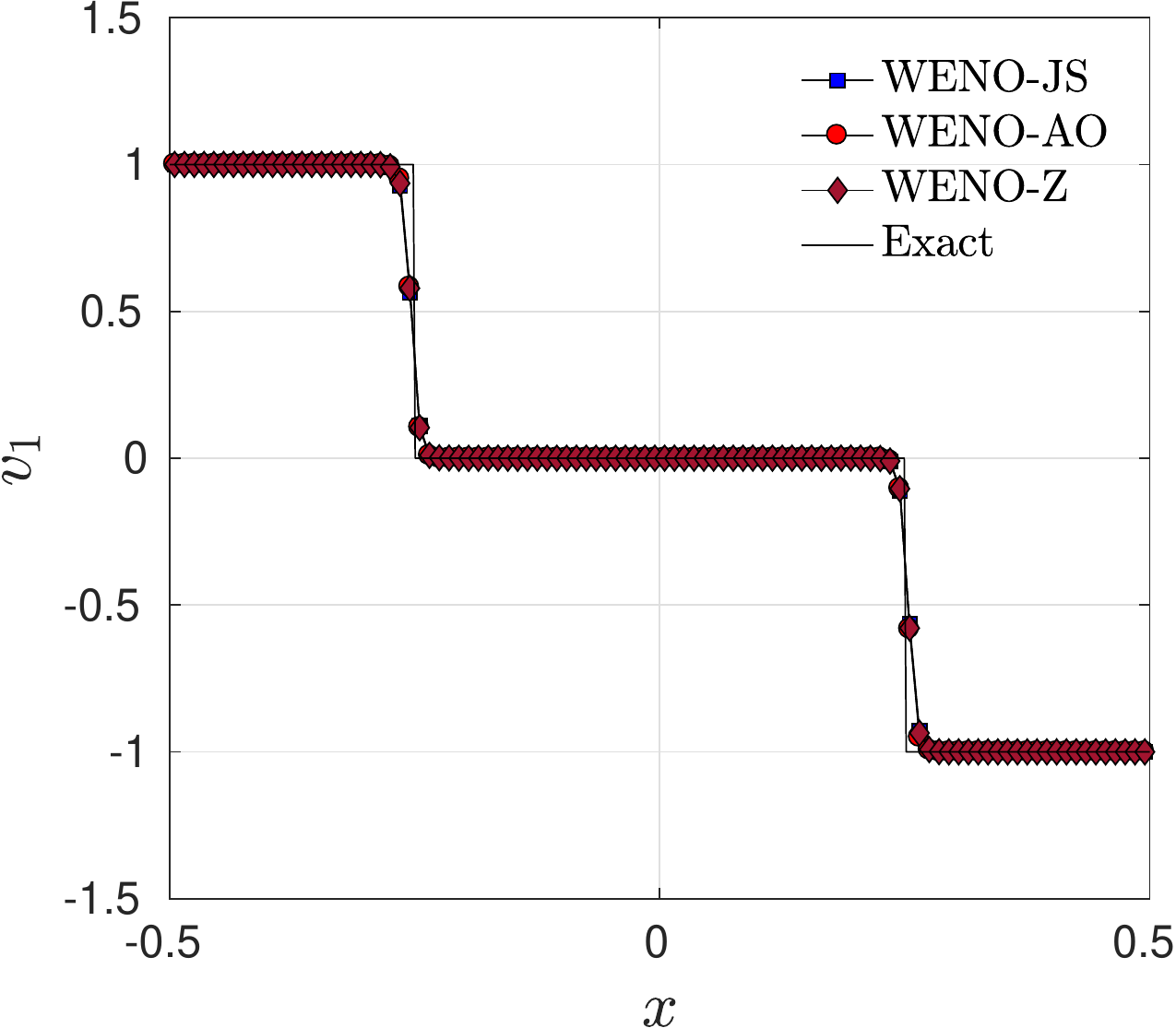} &
  \includegraphics[width=0.32\textwidth]{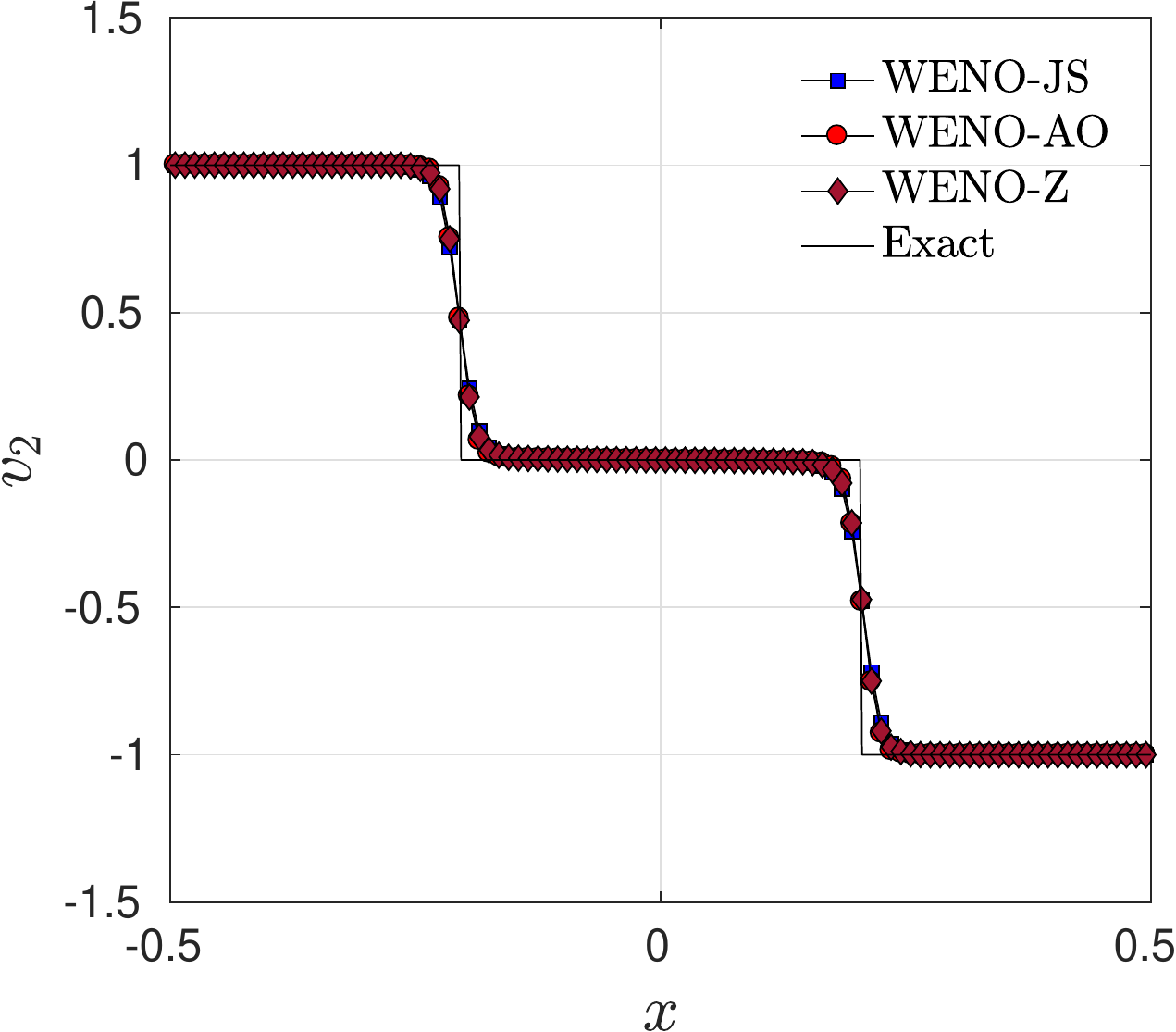} \\
 (a) $\rho$ & (b) $v_1$ &   (c) $v_2$  \\
 \includegraphics[width=0.32\textwidth]{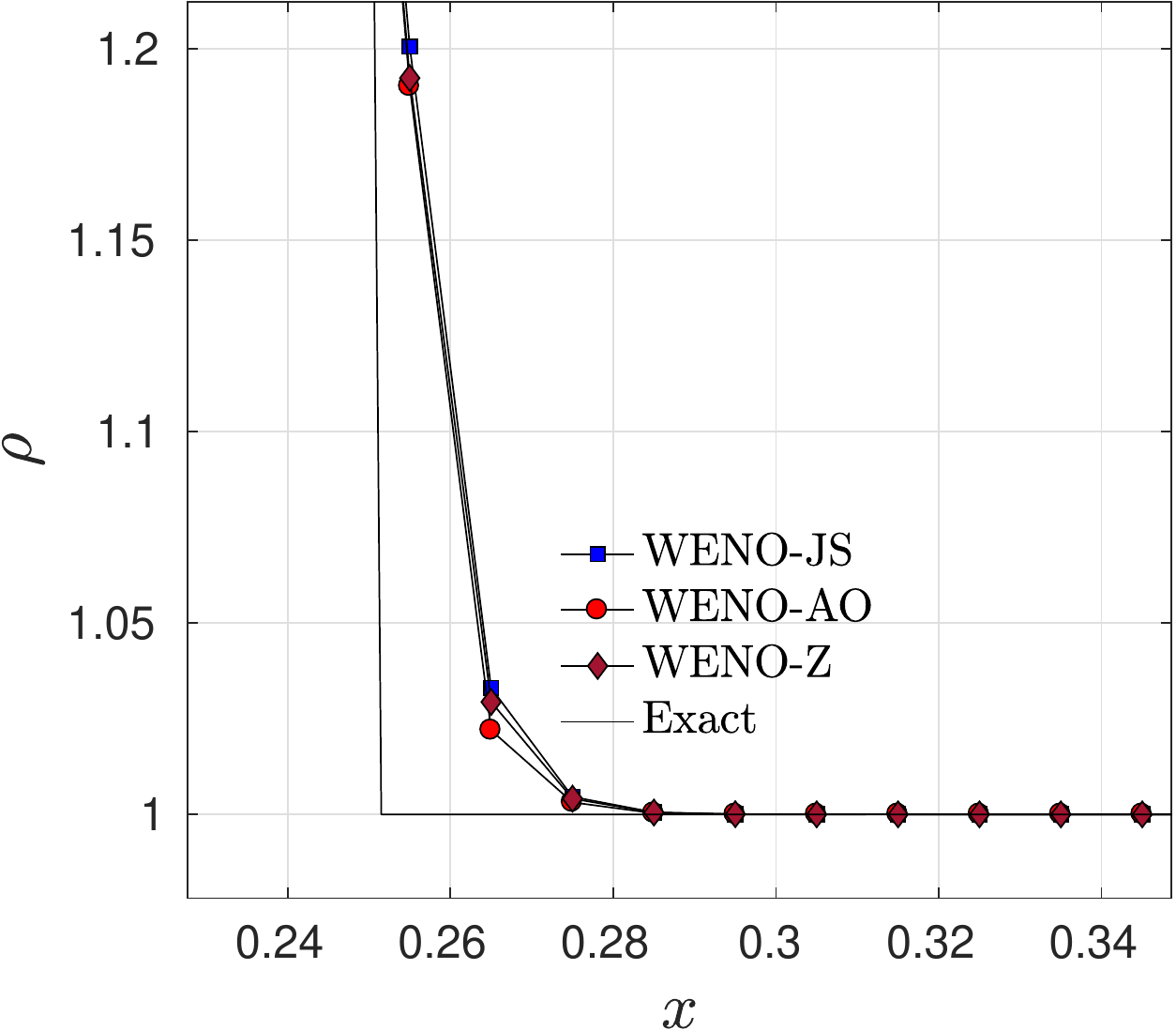} &
 \includegraphics[width=0.32\textwidth]{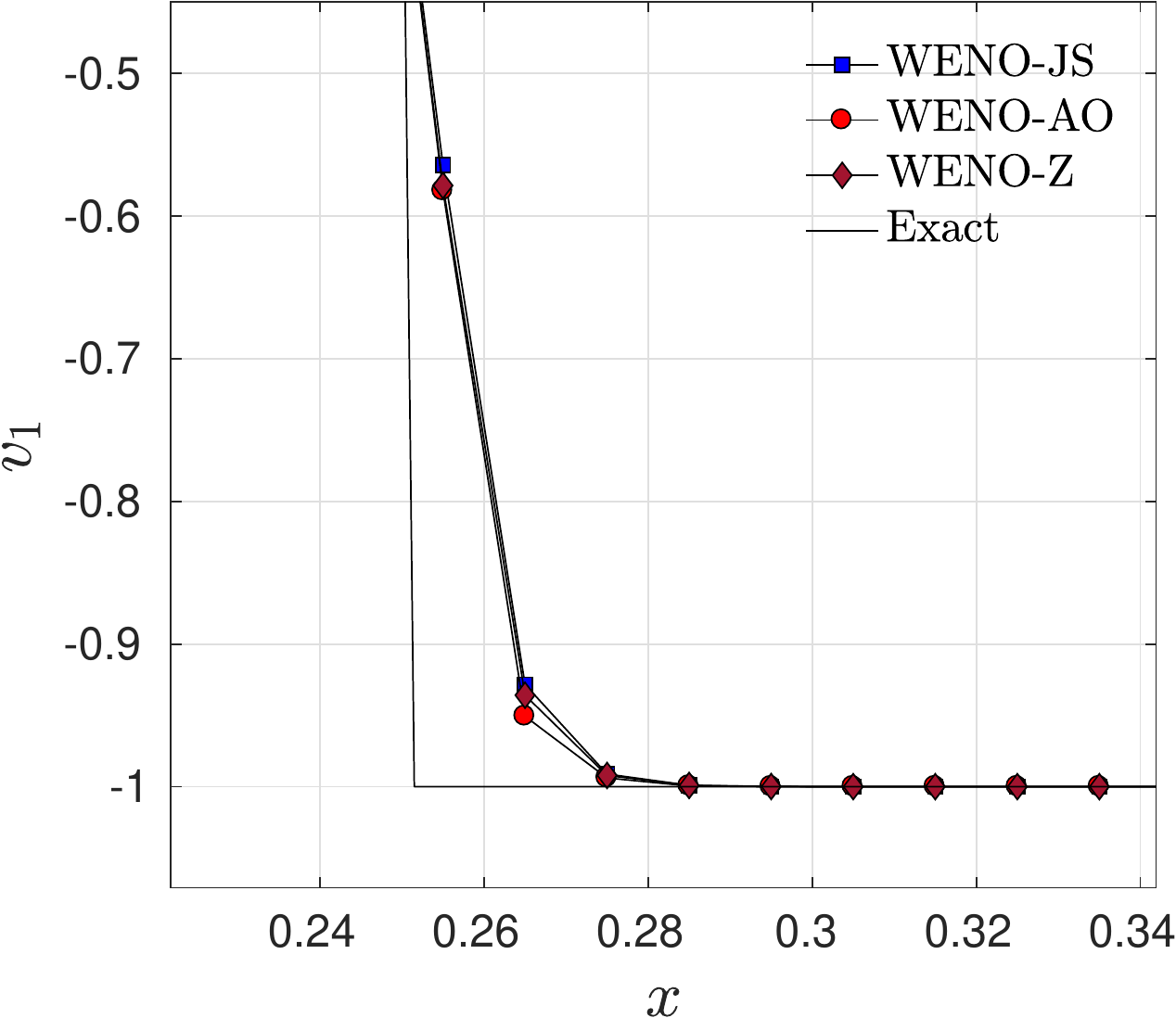} &
 \includegraphics[width=0.32\textwidth]{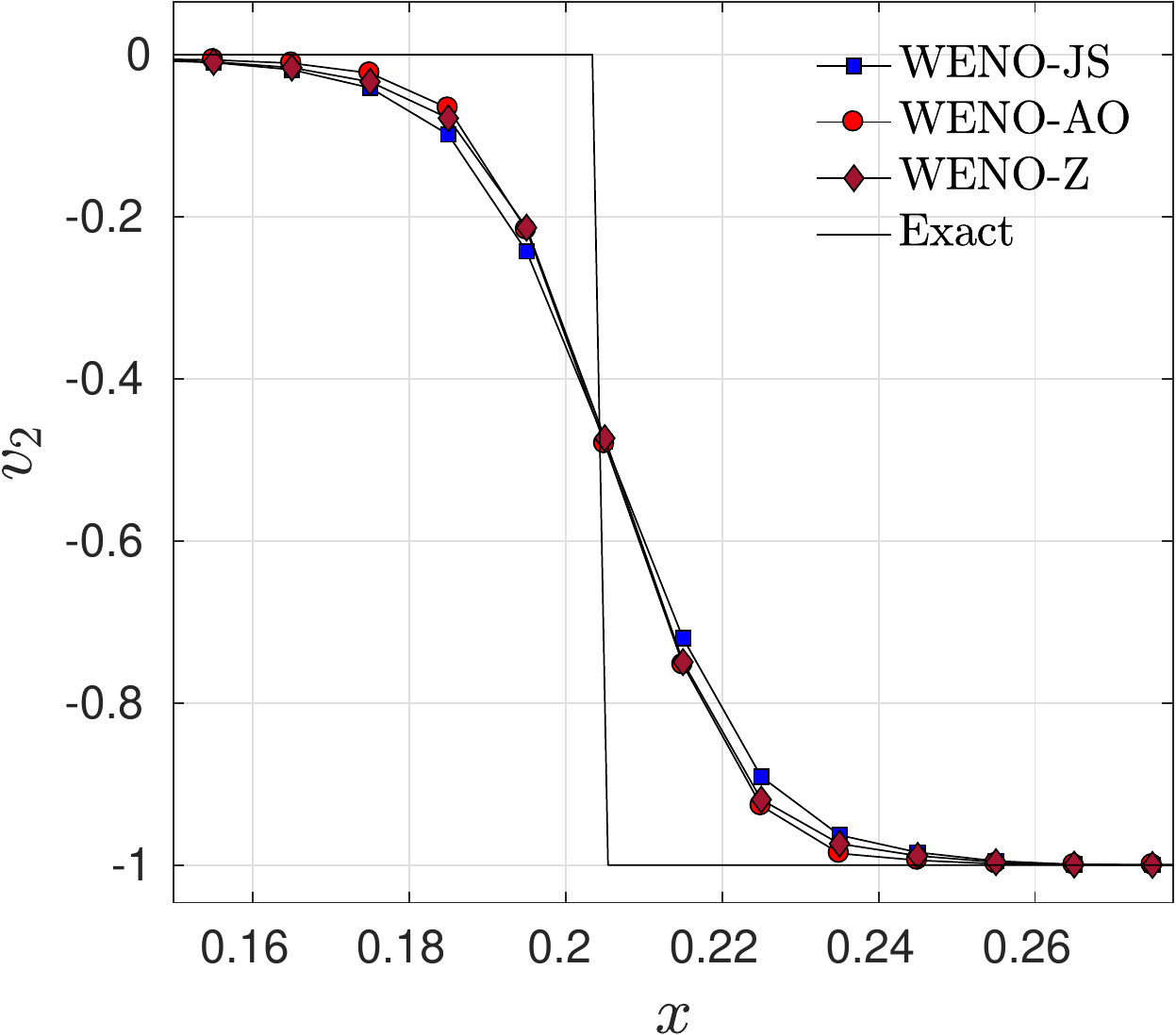}\\
(d) Zoom of $\rho$ &  (e) Zoom of $v_1$ & (f) Zoom of $v_2$
\end{tabular}
\end{center}
 \caption{Comparison of $\rho$, $v_1$ and $v_2$ obtained using WENO-JS, WENO-Z and WENO-AO schemes with the exact solution for Example \ref{ex:twoshock}.}
 \label{fig:twoshock1}
\end{figure}
\begin{figure}
\begin{center}
\begin{tabular}{ccc}
\includegraphics[width=0.32\textwidth]{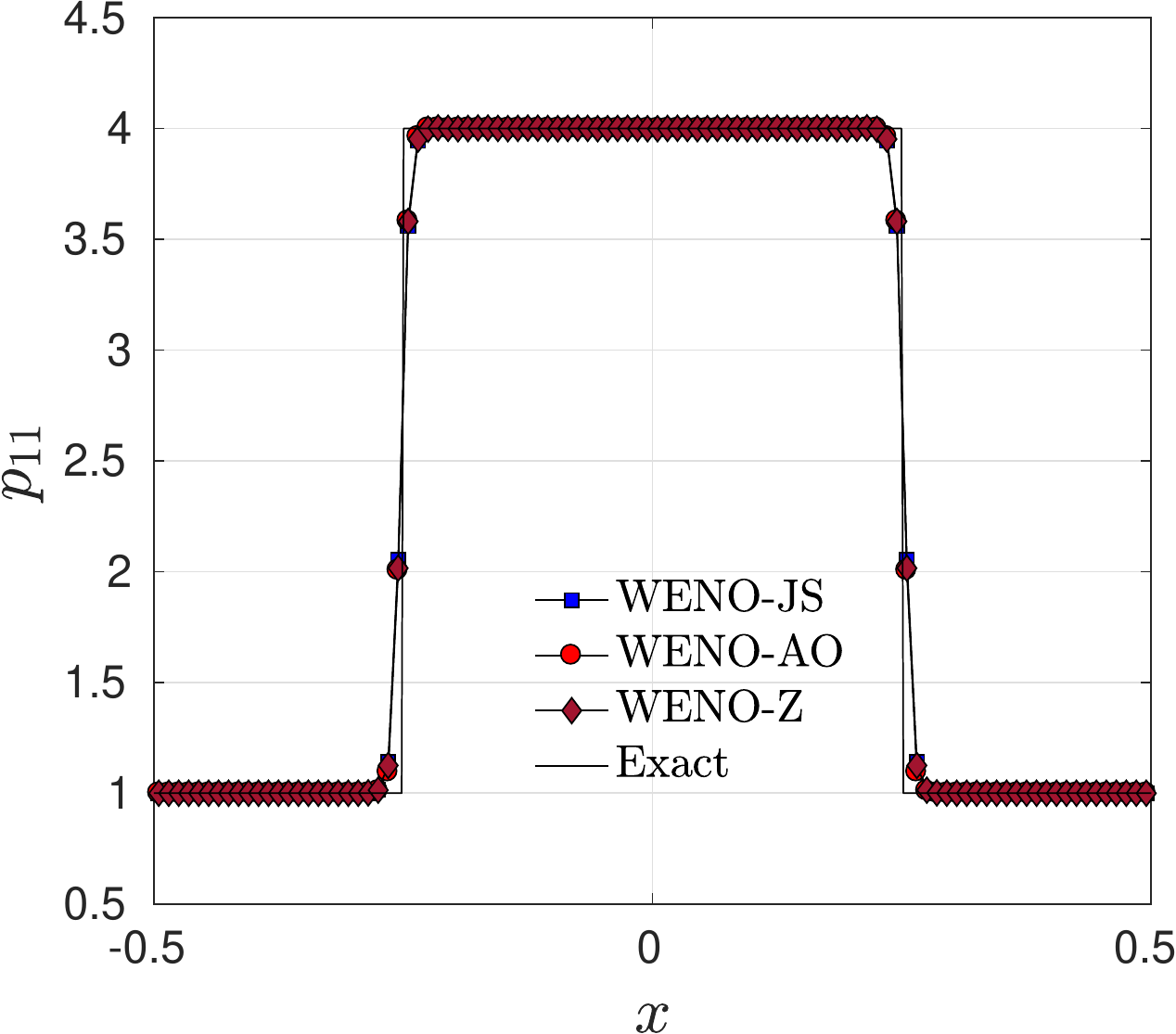} &
 \includegraphics[width=0.32\textwidth]{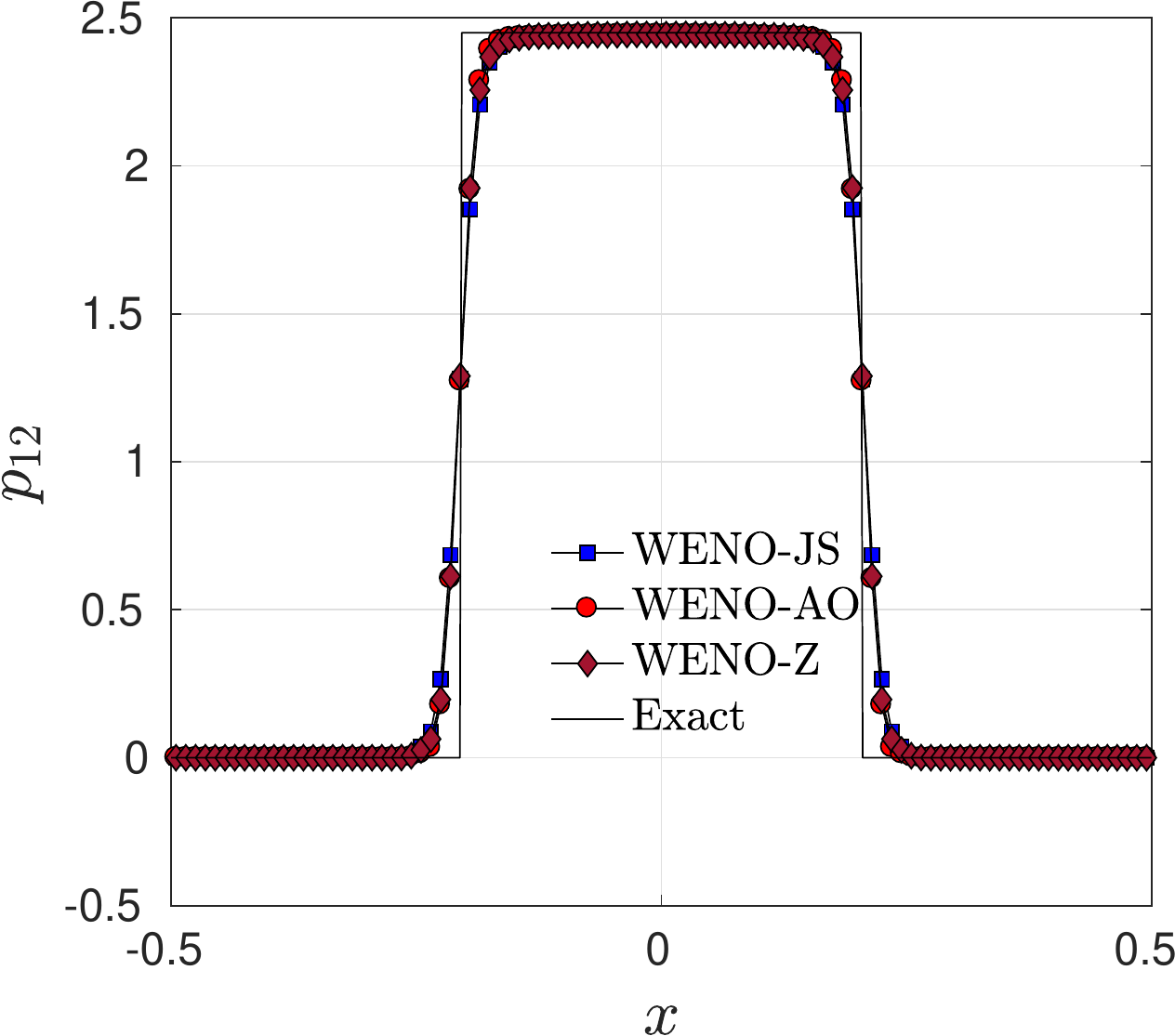} &
  \includegraphics[width=0.32\textwidth]{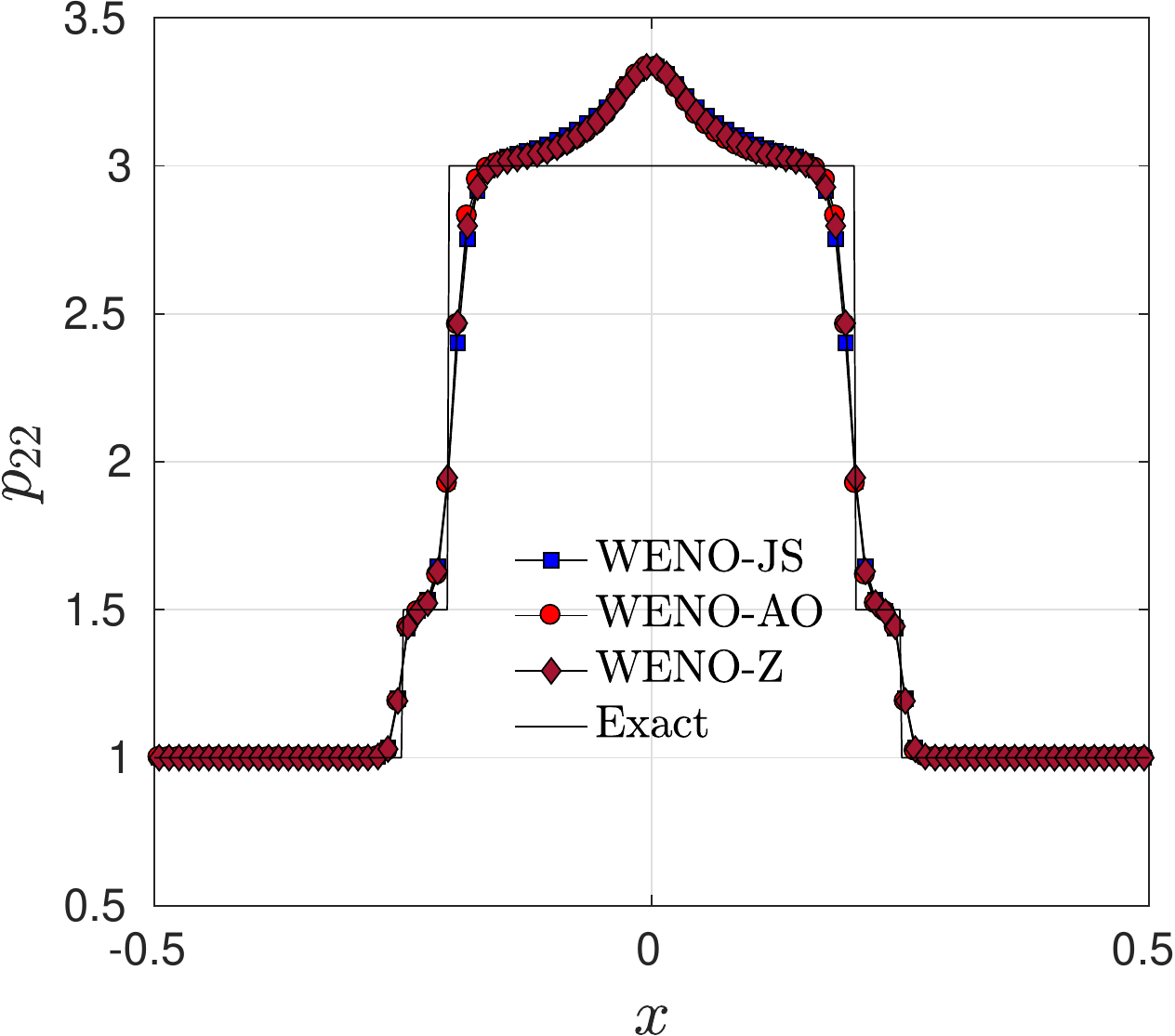} \\
 (a) $p_{11}$ & (b) $p_{12}$ &   (c) $p_{22}$  \\
 \includegraphics[width=0.32\textwidth]{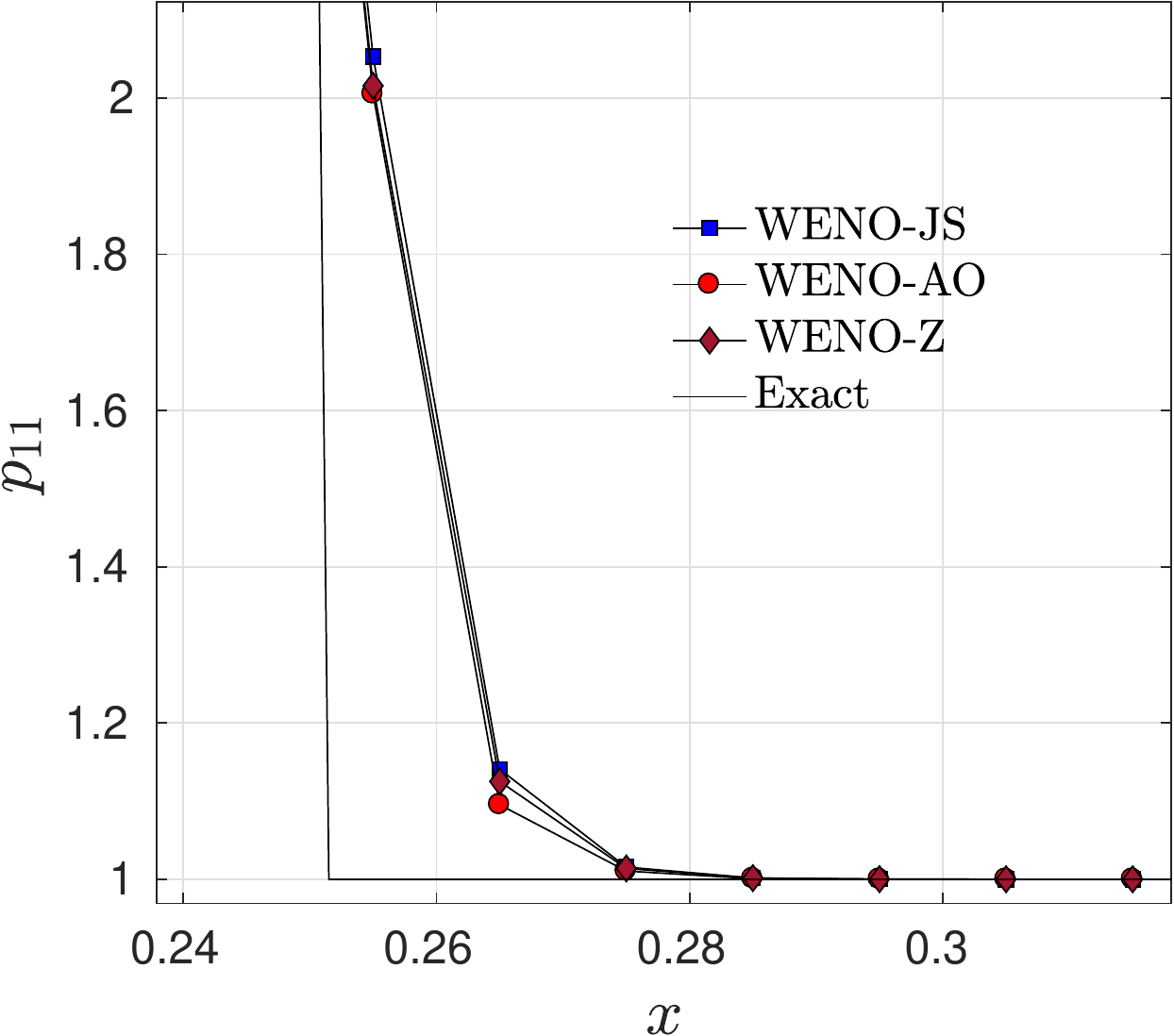} &
 \includegraphics[width=0.32\textwidth]{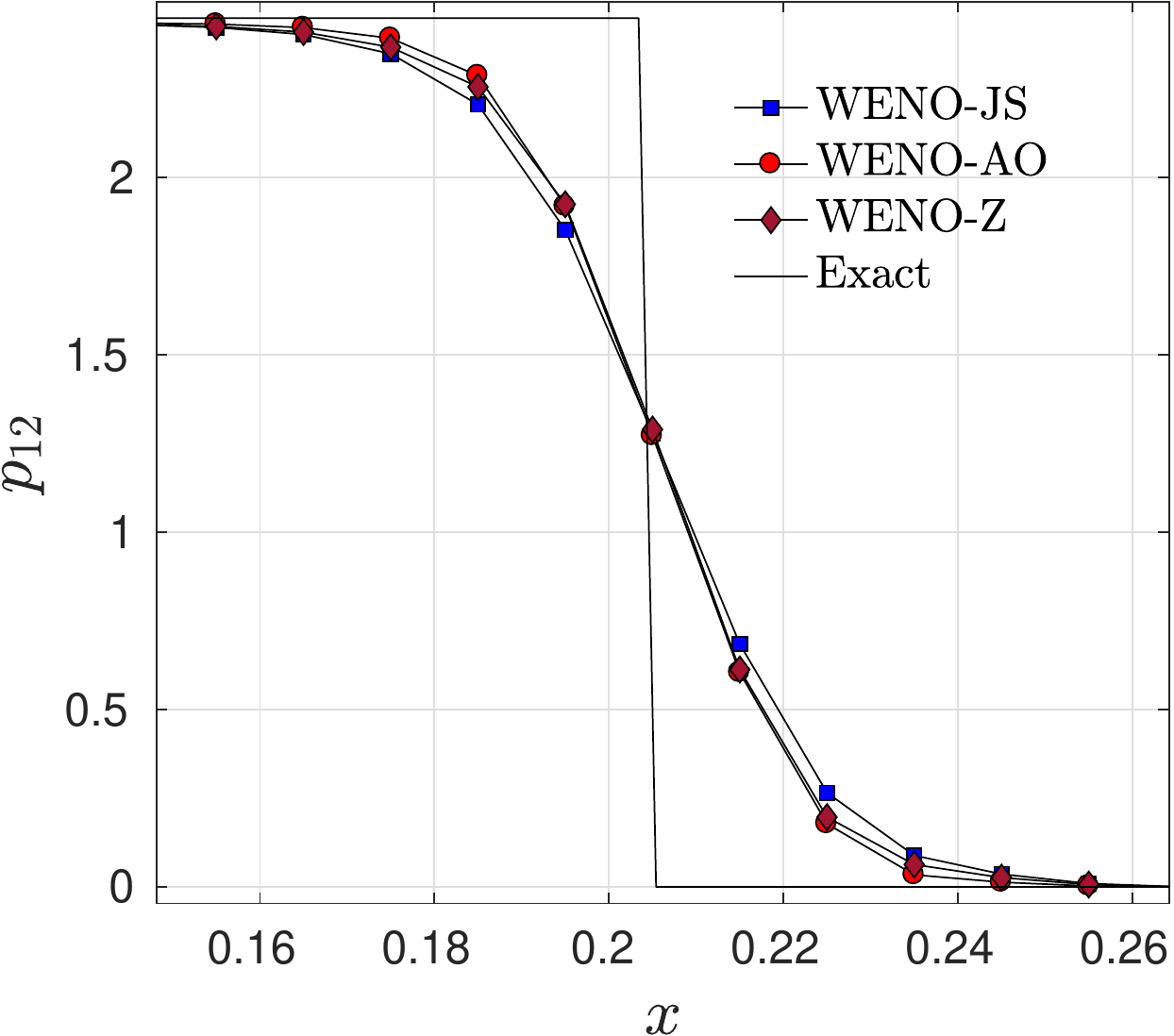} &
 \includegraphics[width=0.32\textwidth]{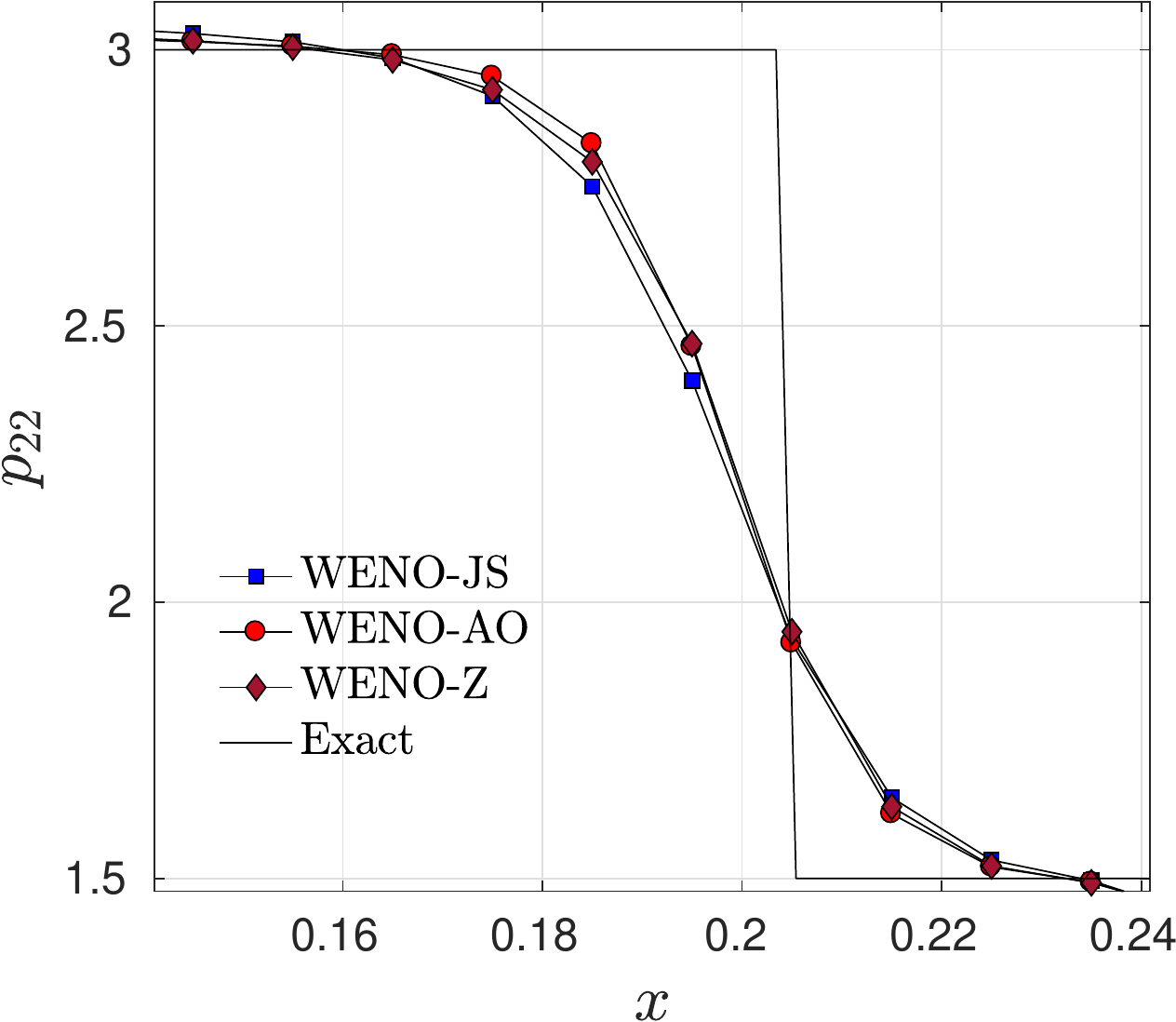}\\
(d) Zoom of $p_{11}$ &  (e) Zoom of $p_{12}$ & (f) Zoom of  $p_{22}$
\end{tabular}
\end{center}
 \caption{Comparison of pressure component obtained using WENO-JS, WENO-Z and WENO-AO schemes with the exact solution for Example \ref{ex:twoshock}.}
 \label{fig:twoshock2}
\end{figure}

\begin{example}\label{ex:twoshock}{\rm (Two-shock wave)
This problem is a standard test case for the Ten-Moment equations \cite{ber_06a, mee-kum_17a, sen-kum_18a}. It consists of a Riemann problem with an initial discontinuity in the velocities at $x=0$ over the domain $[-0.5,0.5]$. The initial data is given as
 \[
  \bold{V}=\begin{cases}
              (1, \ 1, \ 1, \ 1, \ 0, \ 1), &\mbox{if}~~x\leq 0\\
              (1, \ -1, \ -1, \ 1, \ 0, \ 1),&\mbox{if}~~x>0
             \end{cases}
 \]
 }
\end{example}

\begin{figure}
\begin{center}
\begin{tabular}{ccc}
\includegraphics[width=0.32\textwidth]{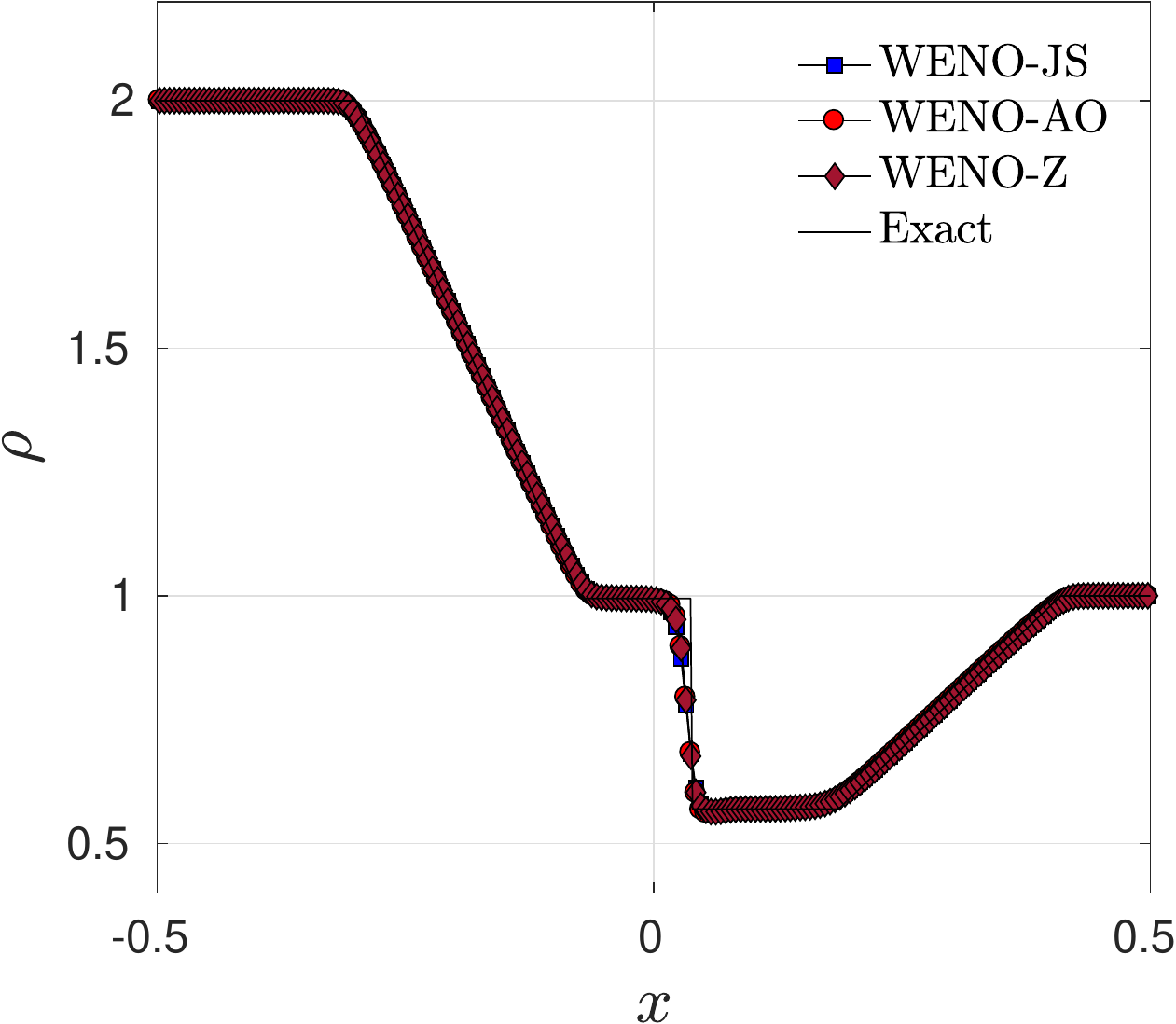} &
 \includegraphics[width=0.32\textwidth]{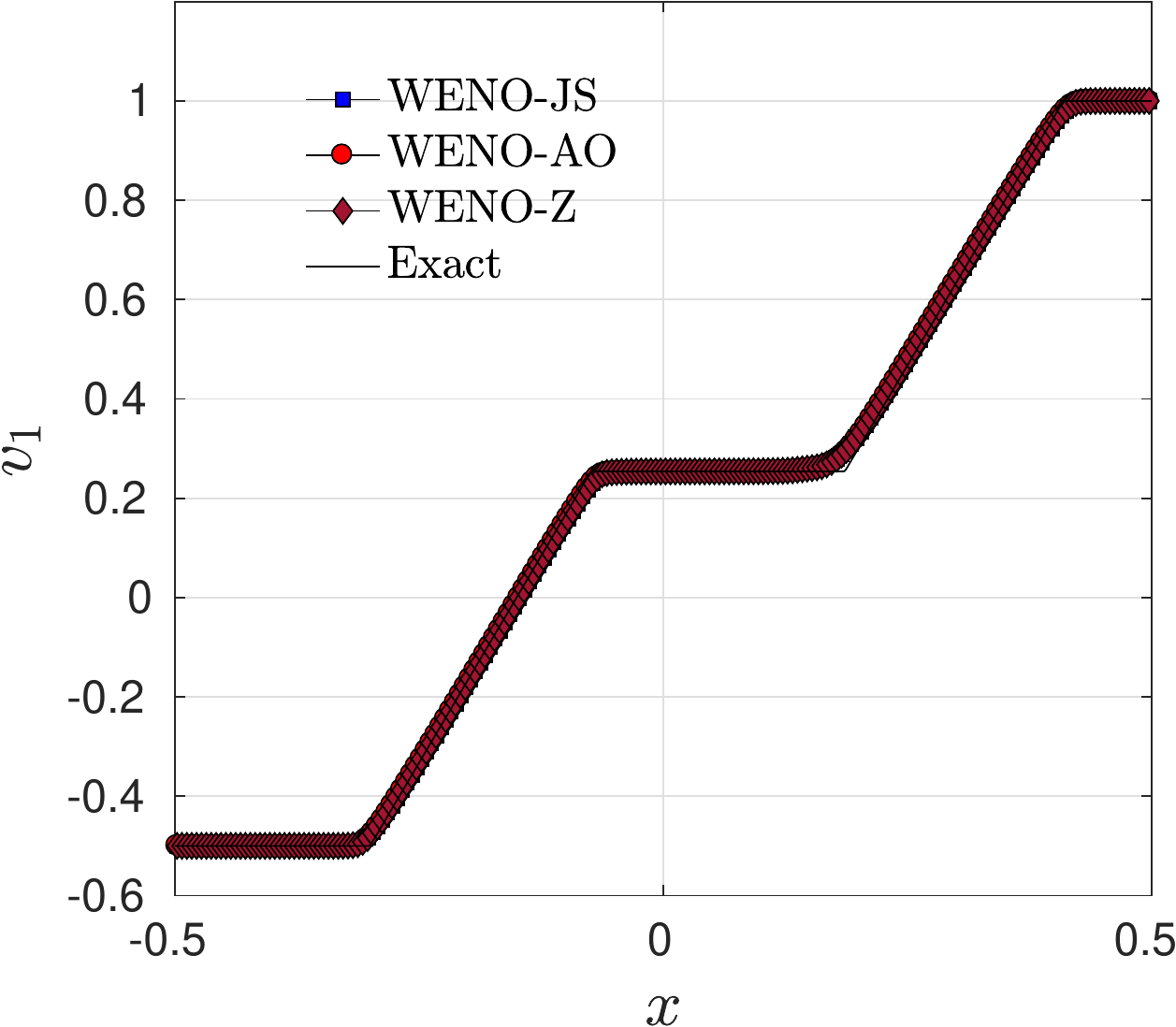} &
  \includegraphics[width=0.32\textwidth]{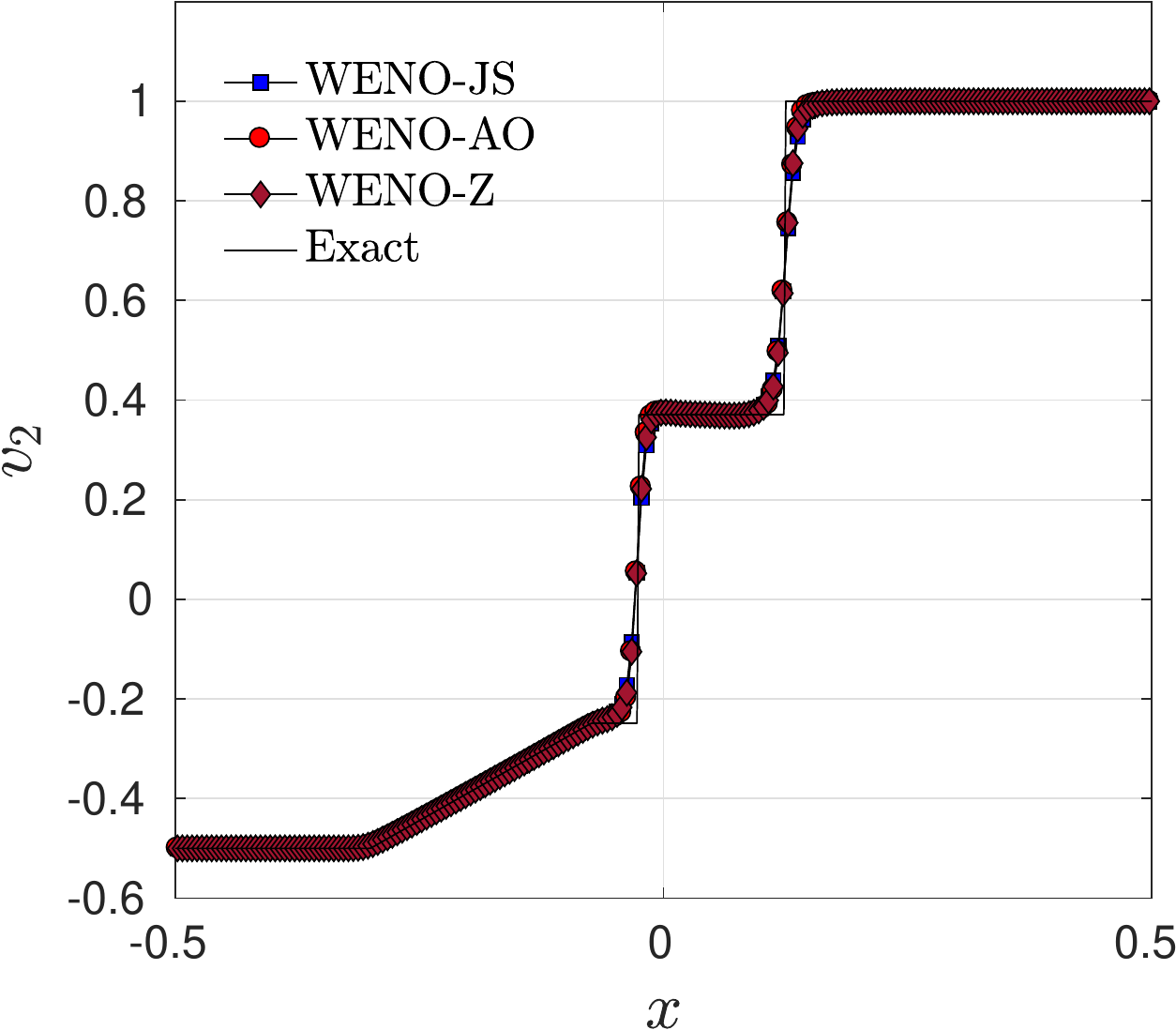} \\
 (a) $\rho$ & (b) $v_1$ &   (c) $v_2$  \\
 \includegraphics[width=0.32\textwidth]{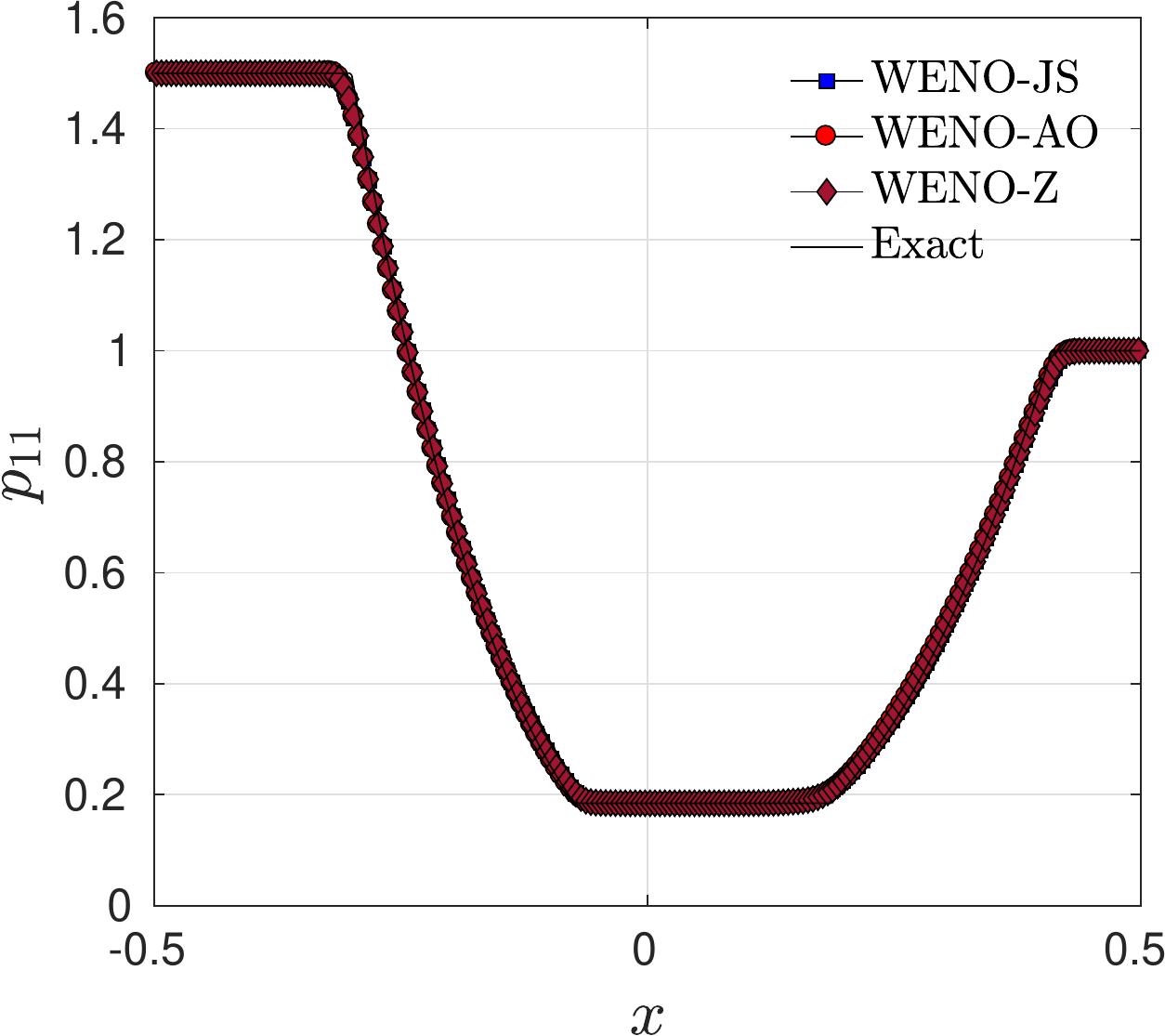} &
 \includegraphics[width=0.32\textwidth]{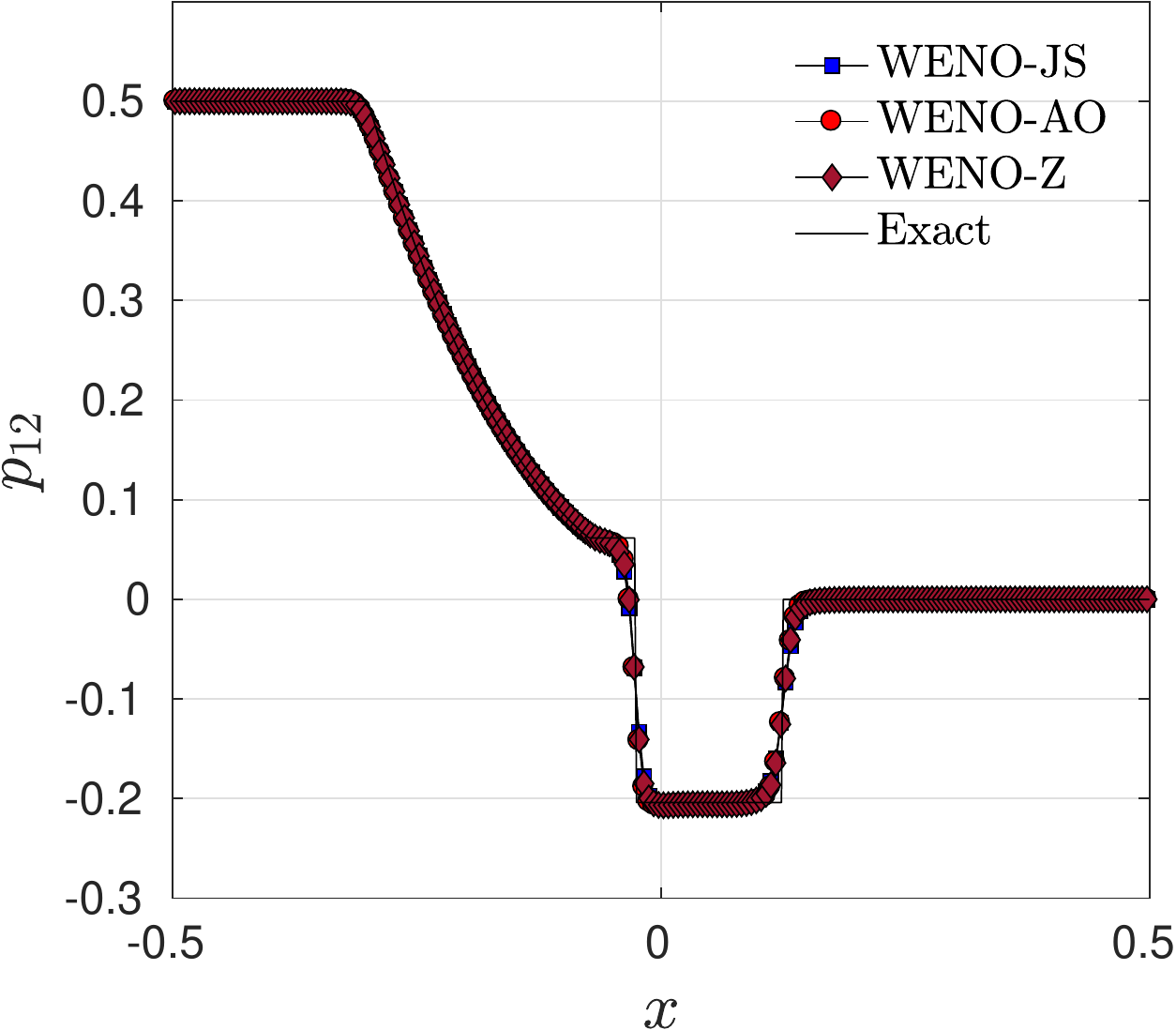} &
 \includegraphics[width=0.32\textwidth]{twor_p11}\\
(d) $p_{11}$ &  (e) $p_{12}$ & (f) $p_{22}$
\end{tabular}
\end{center}
 \caption{Comparison of primitive variables obtained using WENO-JS, WENO-Z and WENO-AO schemes with the exact solution for Example \ref{ex:tworar}.}
 \label{fig:tworarefaction1}
\end{figure}

\begin{figure}
\begin{center}
\begin{tabular}{ccc}
\includegraphics[width=0.32\textwidth]{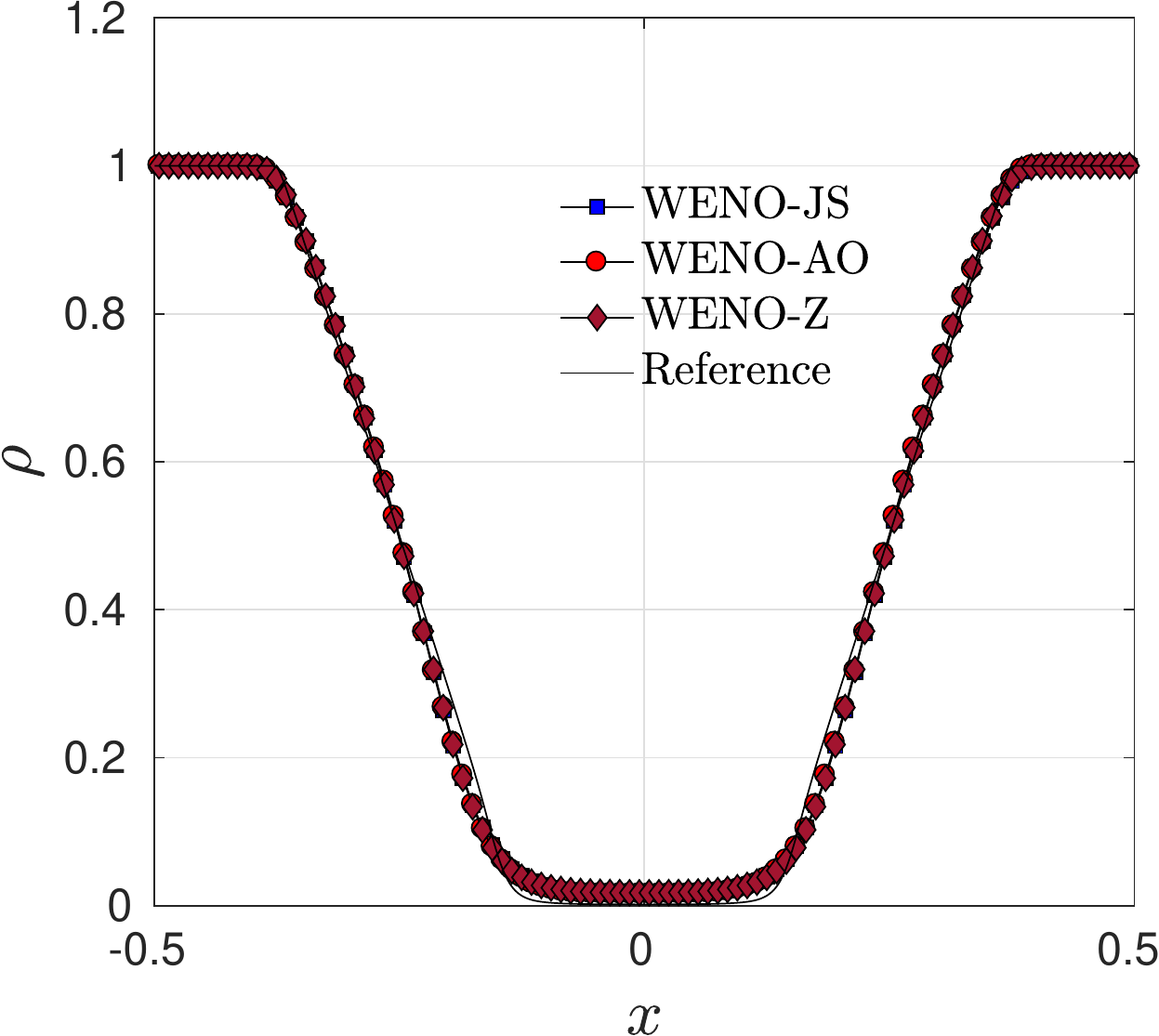} &
 \includegraphics[width=0.32\textwidth]{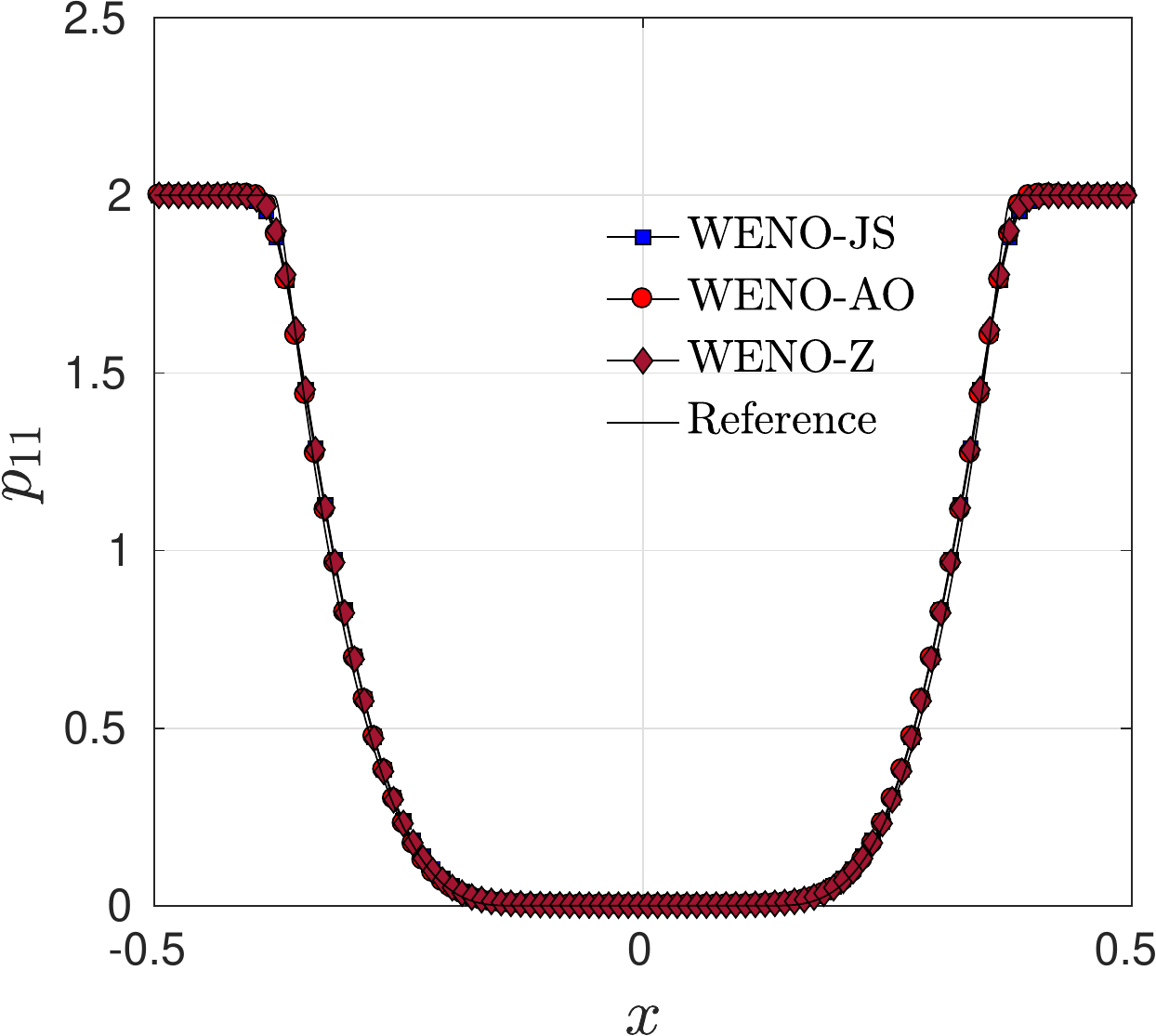} &
  \includegraphics[width=0.32\textwidth]{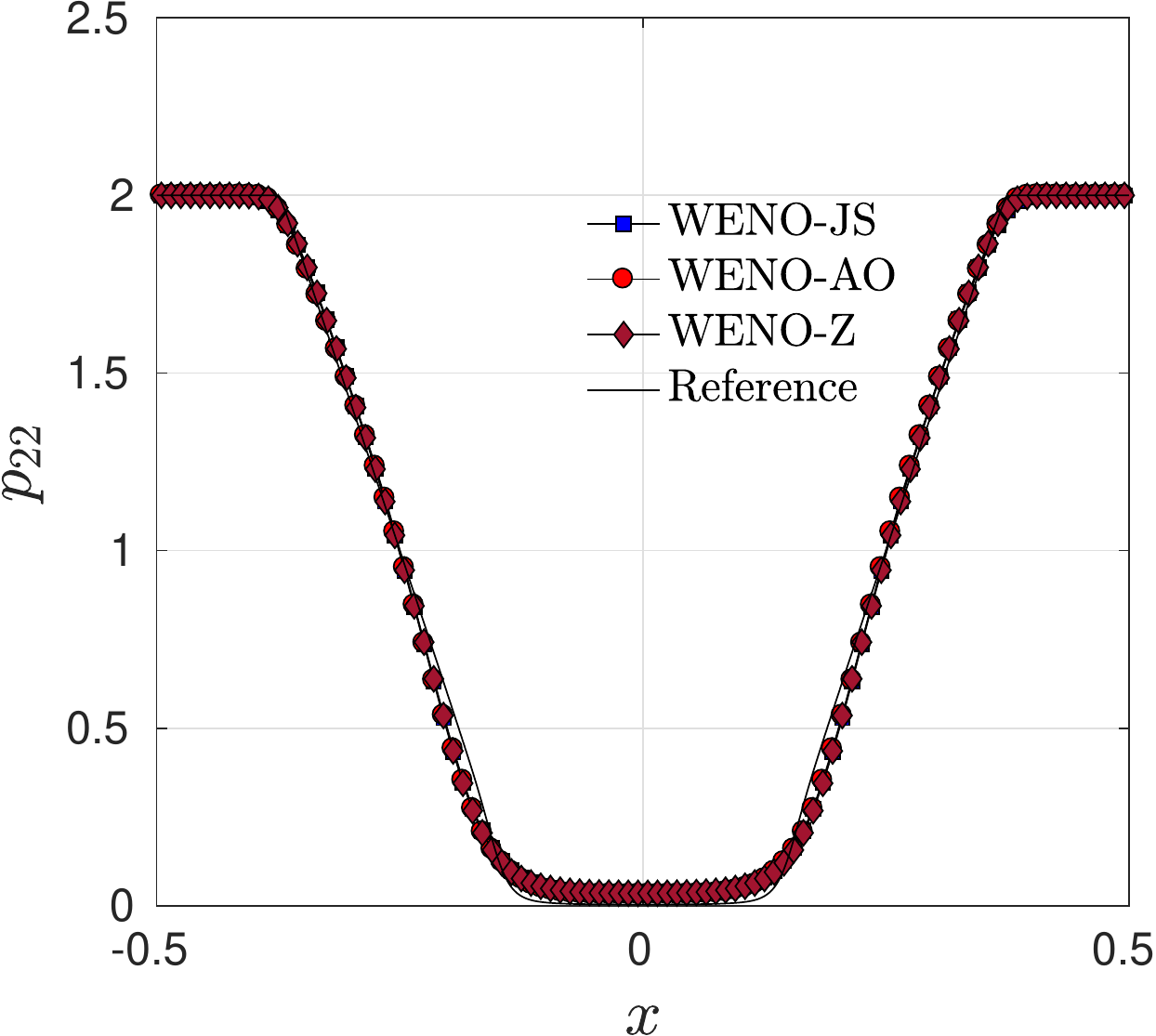} \\
 (a) $\rho$ & (b) $p_{11}$ &   (c) $p_{22}$  \\
 \includegraphics[width=0.32\textwidth]{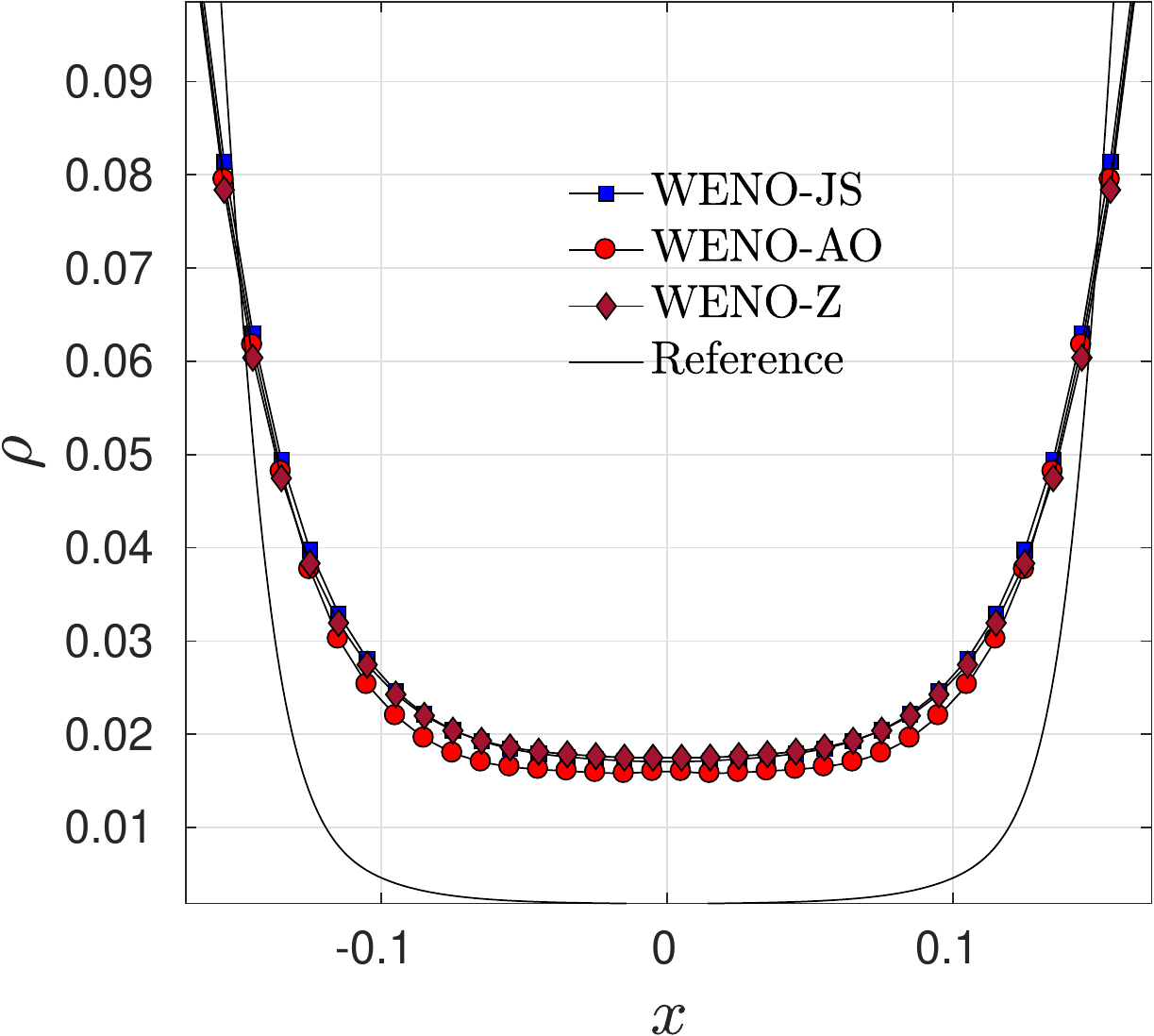} &
 \includegraphics[width=0.32\textwidth]{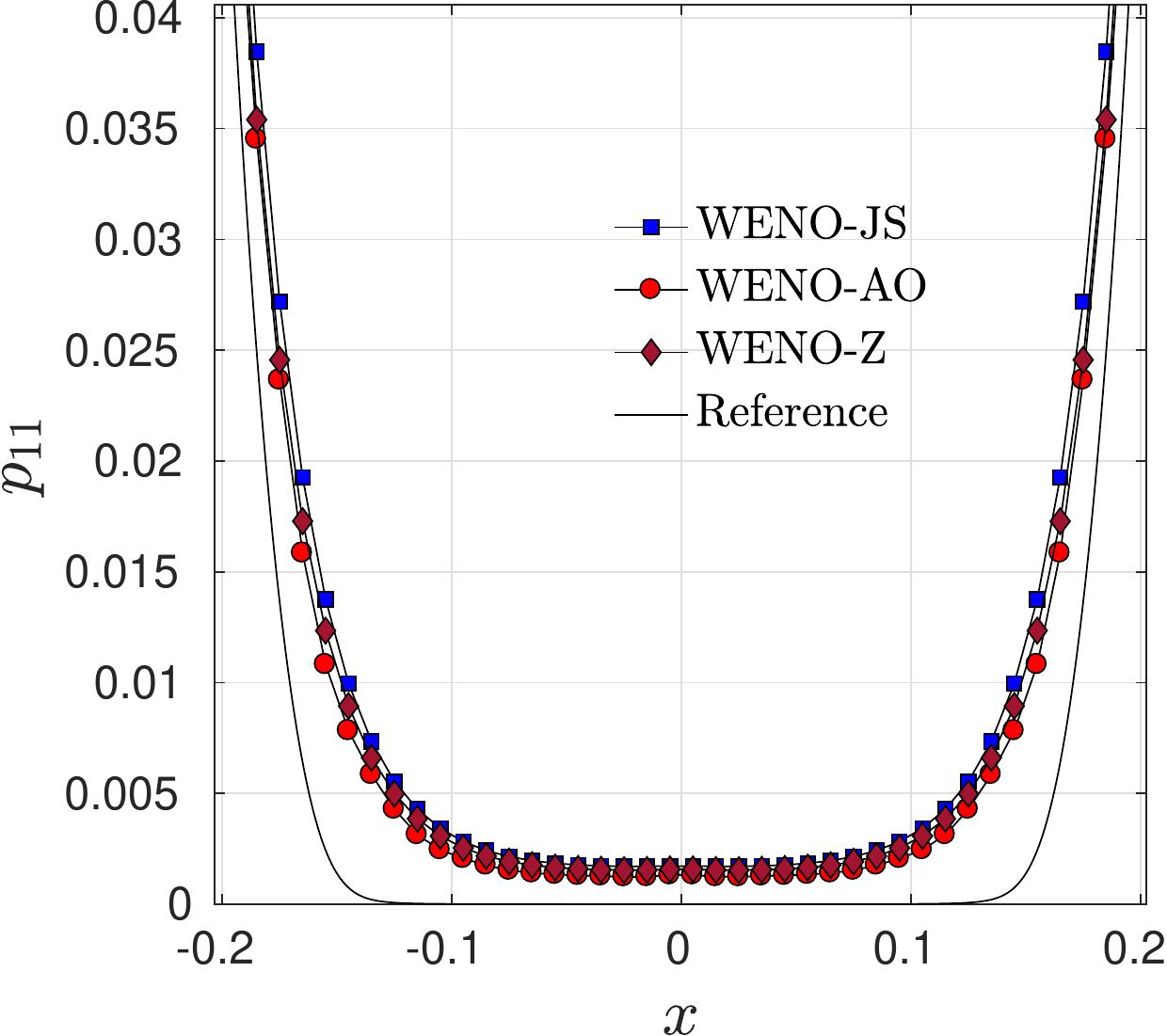} &
 \includegraphics[width=0.32\textwidth]{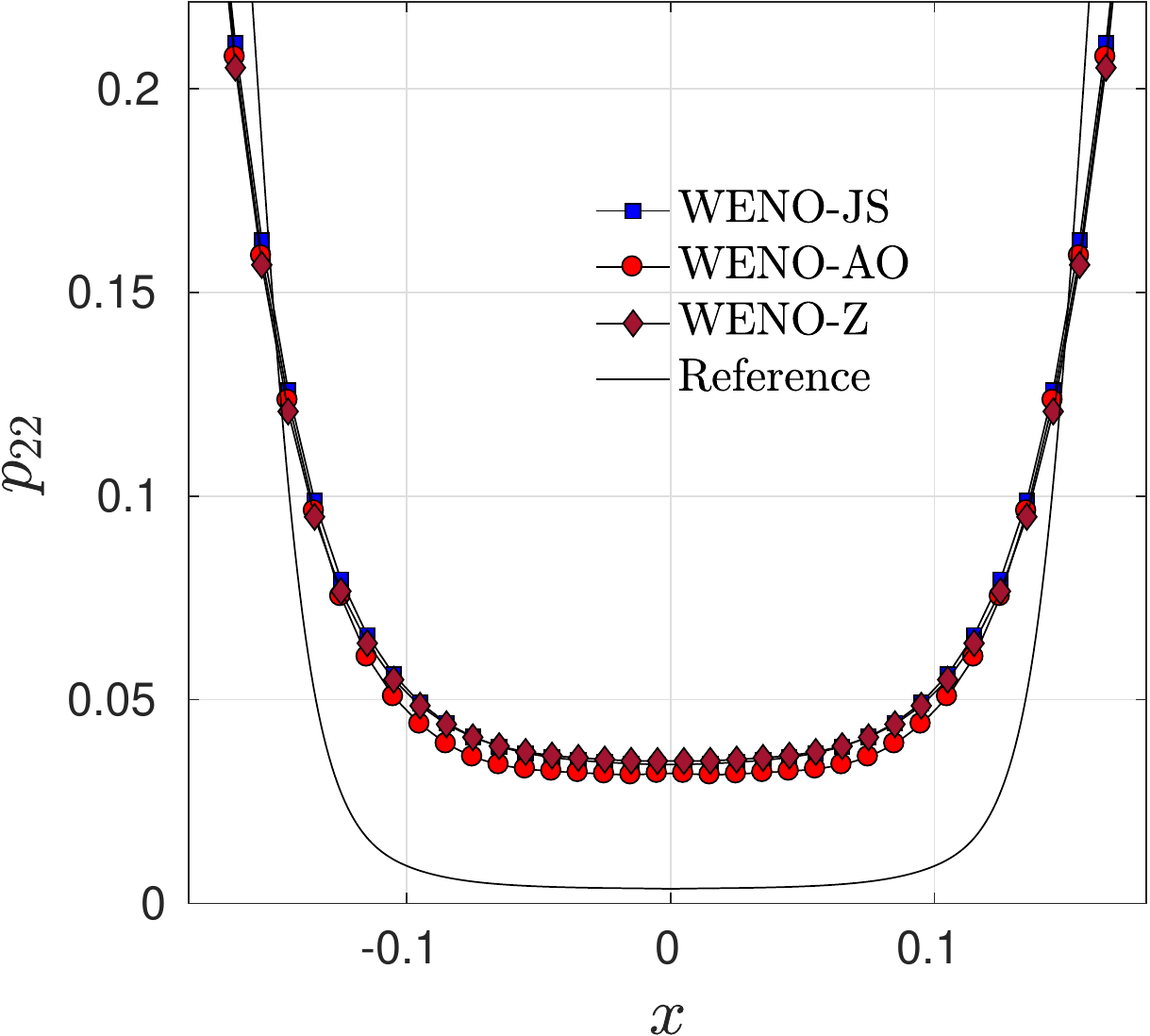}\\
(d) Zoom of $\rho$ &  (e) Zoom of $p_{11}$ & (f) Zoom of $p_{22}$
\end{tabular}
\end{center}
 \caption{Comparison of  primitive variables obtained using WENO-JS, WENO-Z and WENO-AO schemes with the exact solution for Example \ref{ex7}.}
 \label{fig:ex7}
\end{figure}

\noindent
The exact solution contains the two shock waves separated with contact discontinuity.
The numerical results are computed at the final time $T=0.125$ using 100 cells. In Figure \ref{fig:twoshock1} and \ref{fig:twoshock2}, we have compared the solutions obtained using WENO schemes 
with exact solutions. We observed that all the WENO schemes capture the shock efficiently without oscillations and the positivity limiter is not used during the simulation. We observed the spikes in  Figure \ref{fig:twoshock2}, which is the result of two opposing hypersonic flows at contact discontinuities. 

\begin{example}\label{ex:tworar}{\rm (Two-rarefaction wave)
This Riemann test contains two rarefaction waves separated with a contact discontinuity. Here,
 we consider the same domain and point of discontinuity as in the previous test. The initial data is given by
  \[
  \bold{V}=\begin{cases}
              (2, \ -0.5,  \ -0.5, \ 1.5,  \ 0.5, \ 1.5),& \mbox{if}~~x \leq 0\\
              (1, \ 1, \ 1, \ 1, \ 0, \ 1), &\mbox{if}~x>0
             \end{cases}
\]

 }
\end{example}
The numerical results are computed using WENO-JS, WENO-Z, and WENO-AO  schemes with 200 grid points up to the final time $T=0.15$ and comparison of the results is depicted in the Figure \ref{fig:tworarefaction1}. We can observe from the figure that all the proposed schemes resolve the solution accurately. Since this test case is not intended to test positivity limiter, we found that positivity limiter is not active in all the simulations. 

\begin{figure}
\begin{center}
\begin{tabular}{cc}
\includegraphics[width=0.48\textwidth]{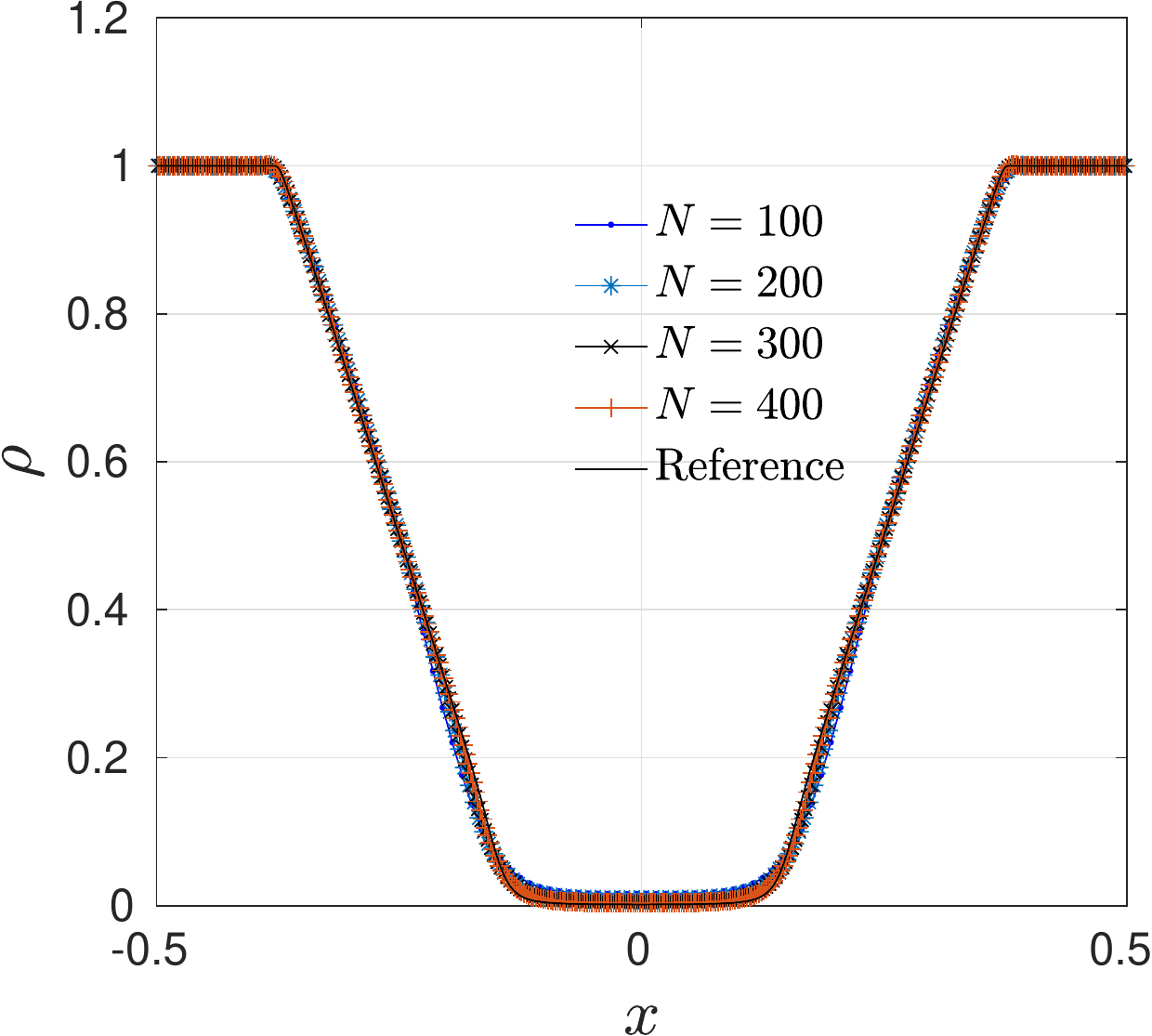} &
 \includegraphics[width=0.48\textwidth]{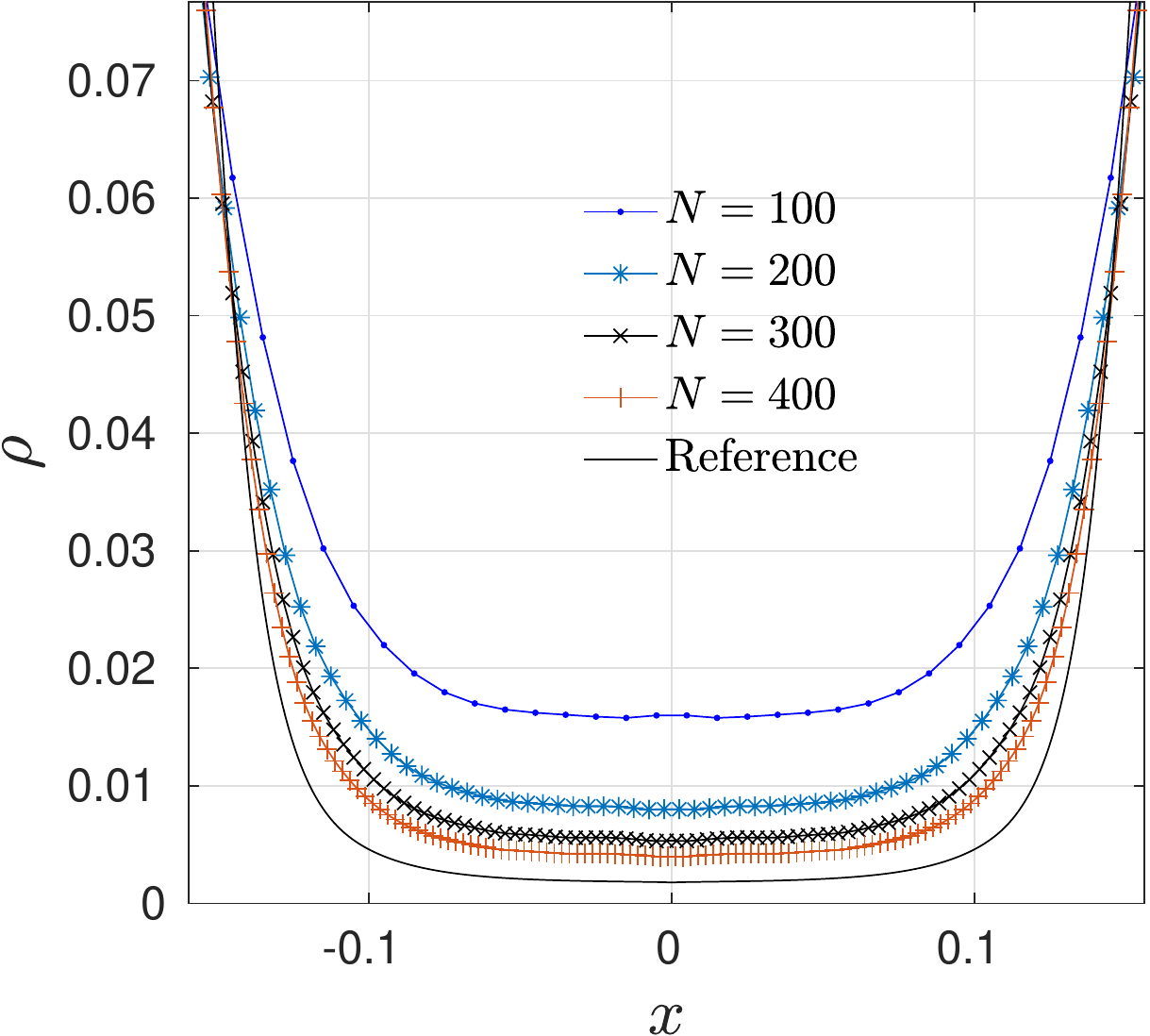} \\
 (a) Density & (b) Zoomed view of density
\end{tabular}
\end{center}
 \caption{ Numerical solution computed using WENO-AO scheme with varying the number of mesh points.}
 \label{fig:ex7ao}
\end{figure}

\begin{figure}
\begin{center}
\begin{tabular}{cc}
\includegraphics[width=0.48\textwidth]{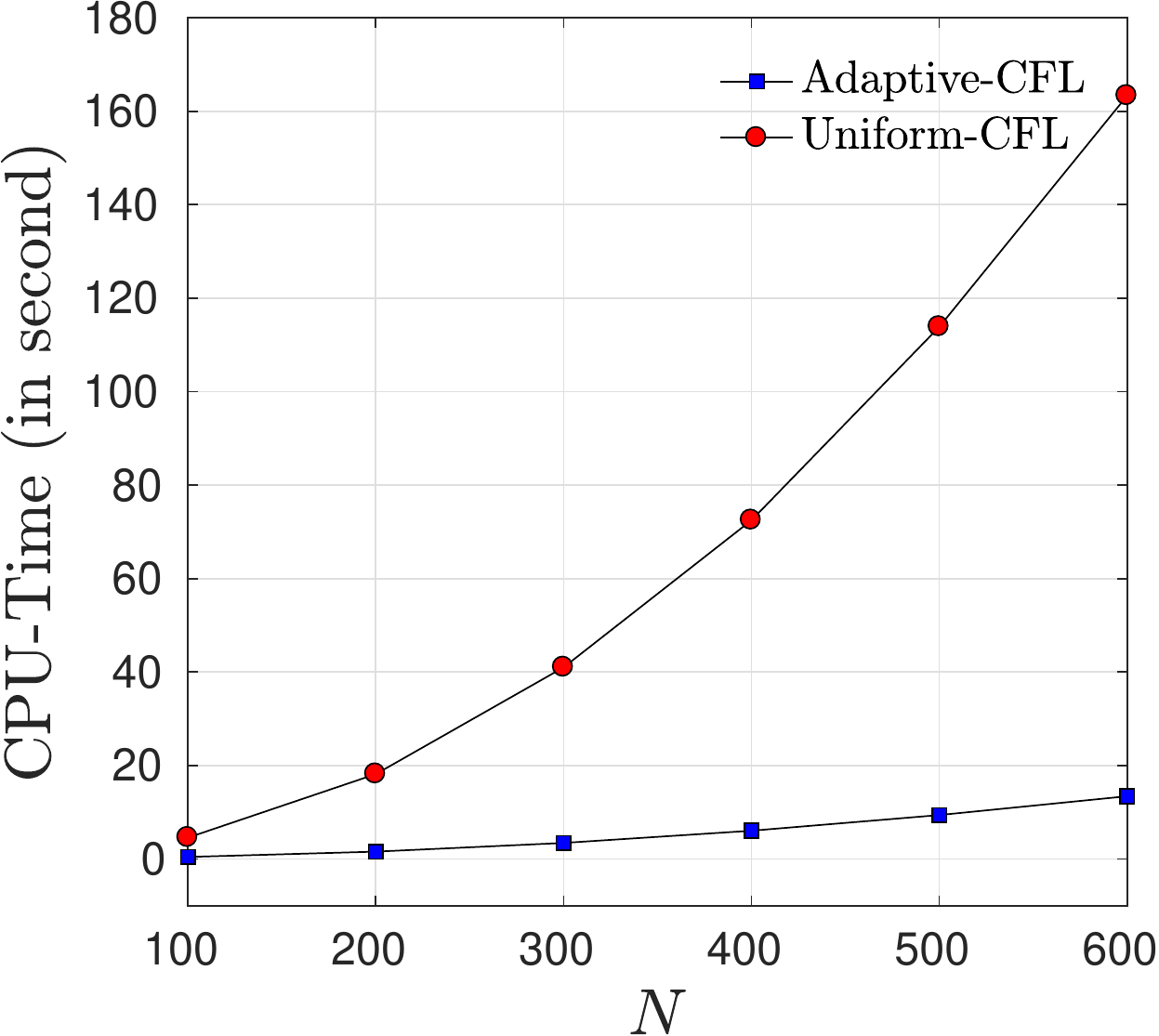} &
 \includegraphics[width=0.48\textwidth]{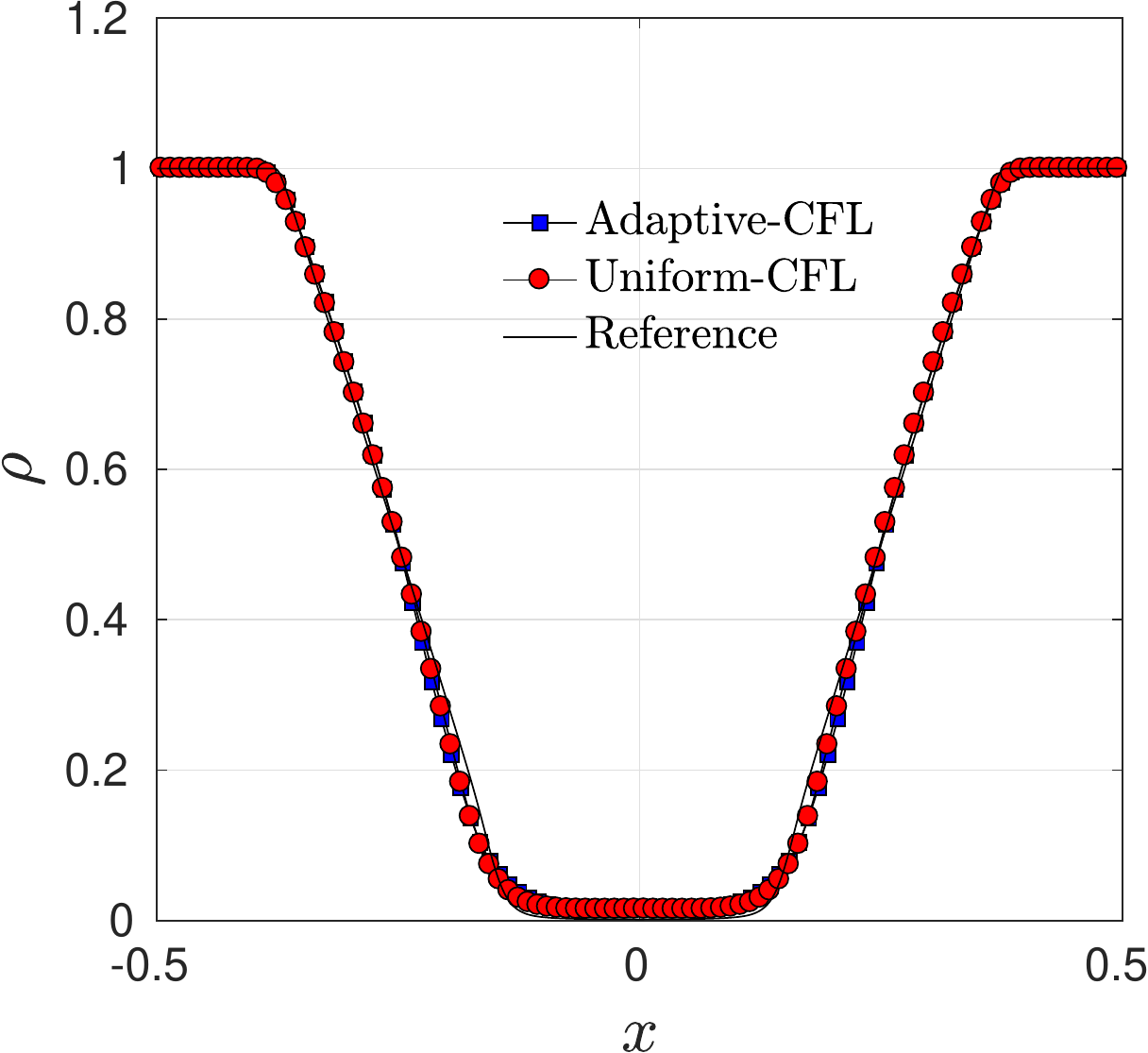} \\
 (a)  & (b) 
\end{tabular}
\end{center}
 \caption{(a) Comparison of CPU time in seconds for WENO-AO scheme with adaptive and uniform CFL for Example \ref{ex7} with varying  number of mesh points, (b) Comparison of solution obtained using adaptive and uniform CFL using $100$ mesh points.}
 \label{fig:ex7cpu}
\end{figure}

\begin{table}
\centering 
\begin{tabular}{|c| c|c| c|}
\hline 
 Grid & Adaptive-CFL  & Uniform-CFL & Speed-up factor\\
\hline
100  &  0.4        & 4.6         &11.5 \\ 
\hline 
200  &  1.6        & 18.2        &11.4 \\ 
\hline 
300  &  3.4        & 41.0        &12.0 \\ 
\hline 
400 &  6.0        & 72.5        &12.0 \\ 
\hline 
500  & 9.4        & 113.9       &12.1  \\ 
\hline 
600 &  13.4       & 163.3       &12.2 \\ 
\hline 
\end{tabular}
\caption{Comparison of CPU times taken in seconds  for Example \ref{ex7}, in case of adaptive and uniform CFL along with speed-up factor.}
\label{Table.ex7}
\end{table}

\begin{table}
\centering 

\begin{tabular}{|c| c|c| c|}
\hline 
Grid & Adaptive-CFL  & Uniform-CFL & Speed-up factor\\
\hline
$50\times  50$      &  3       & 4      &1.3\\ 
\hline 
$100\times  100$    &  20     & 34     &1.7 \\ 
\hline 
$150\times  150$    &  79      & 115    &1.4 \\ 
\hline 
$200\times  200$    &  189    & 272   & 1.4  \\ 
\hline 
$250\times  250$    & 308     & 535   &1.7 \\ 
\hline 
$300\times  300$    &  458     &  919   & 2.0 \\ 
\hline 
\end{tabular}
\caption{ Comparison of CPU time in seconds taken for Example \ref{ex8}, in case of adaptive and uniform CFL along with speed-up factor.}
\label{Table.ex8}
\end{table}

\begin{table}
\centering 
\begin{tabular}{|c| c|c| c|}
\hline 
 Grid               & Adaptive-CFL  & Uniform-CFL & Speed-up factor\\
\hline
$50\times  50$      &  5        & 9       &1.8  \\ 
\hline 
$100\times  100$    &  44      &  74     &1.7  \\ 
\hline 
$150\times  150$    &  108     & 253    &2.3 \\ 
\hline 
$200\times  200$    &  180      & 609     & 3.4\\ 
\hline 
$250\times  250$    & 325      & 1193    &3.7 \\ 
\hline 
$300\times  300$    &  531      & 2102    &3.9 \\ 
\hline 
\end{tabular}
\caption{Comparison of CPU time in seconds taken for Example \ref{ex9}, in case of adaptive and uniform CFL along with speed-up factor. }
\label{Table.ex9}
\end{table}

\begin{figure}
	\begin{center}
		\begin{tabular}{cc}
			\includegraphics[width=0.48\textwidth]{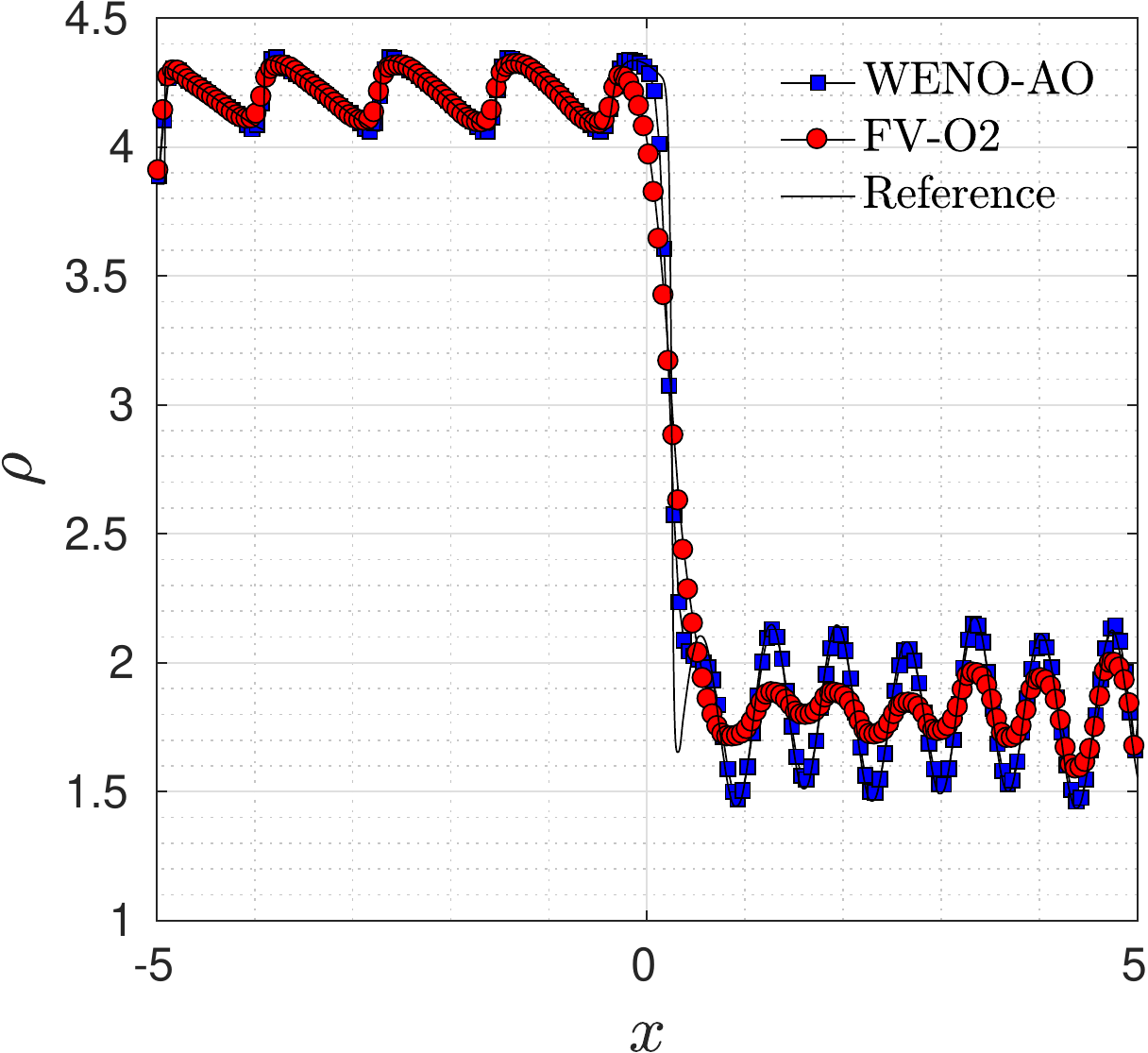} &
			\includegraphics[width=0.48\textwidth]{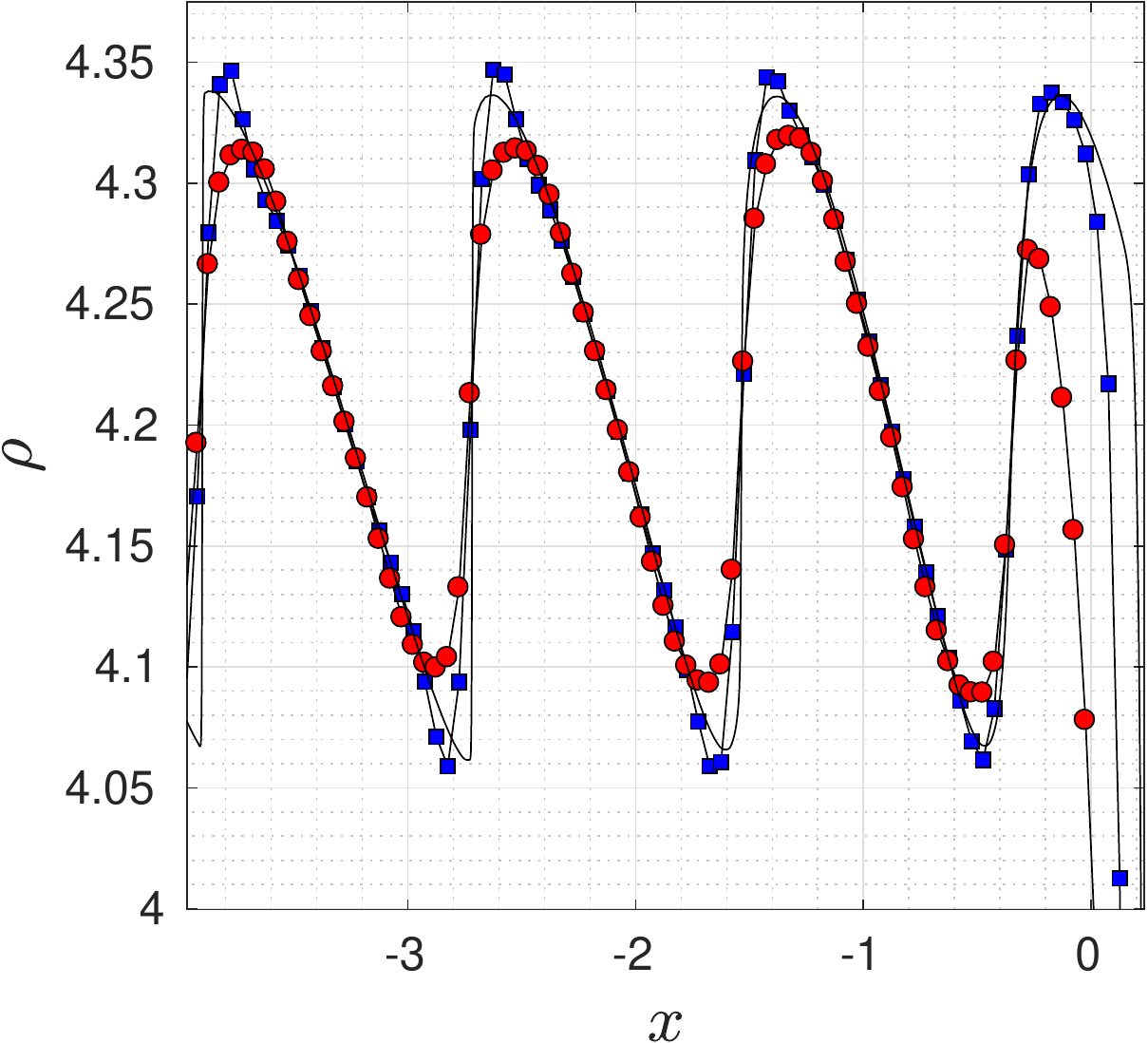} \\
			\includegraphics[width=0.48\textwidth]{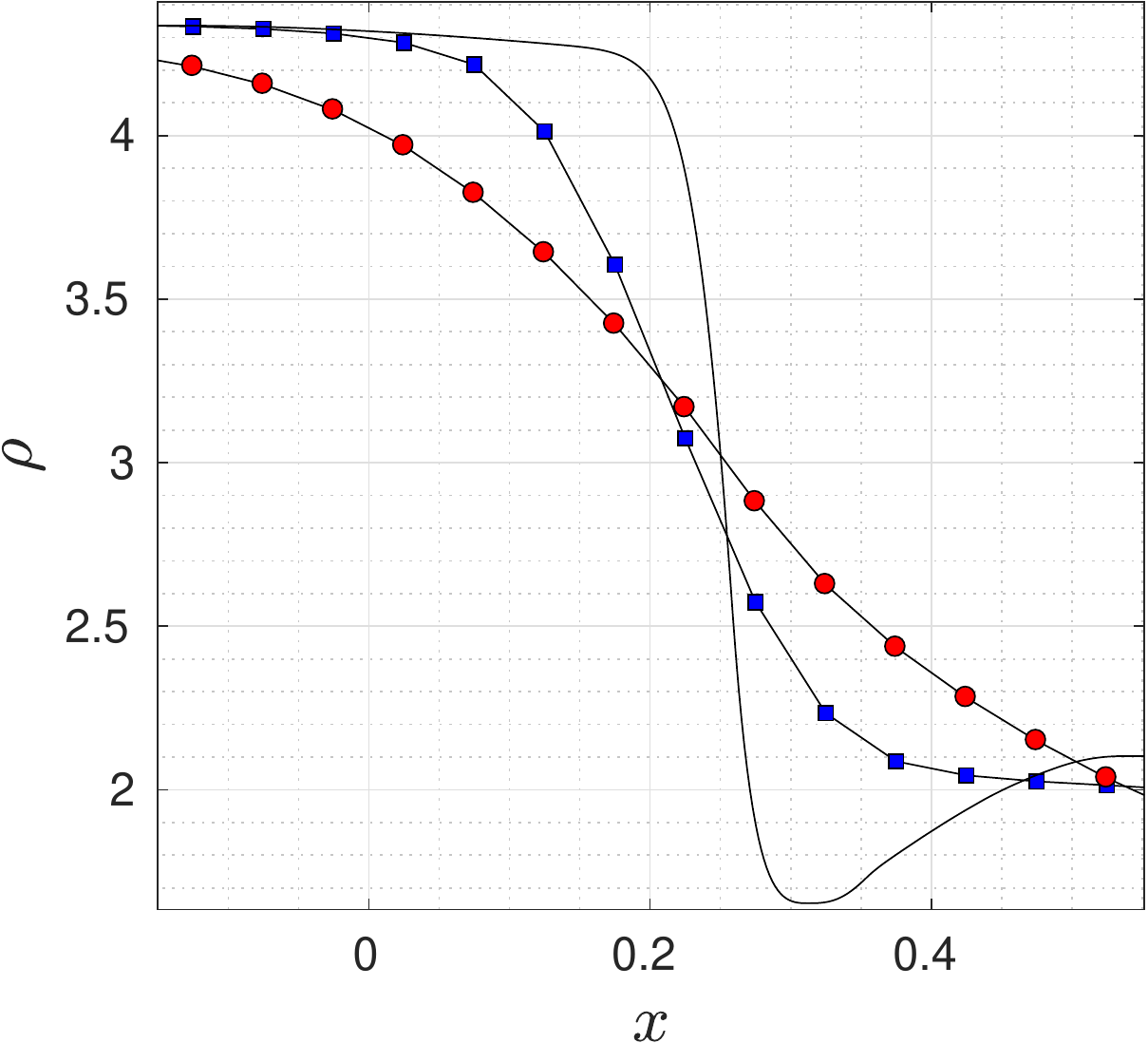} &
			\includegraphics[width=0.48\textwidth]{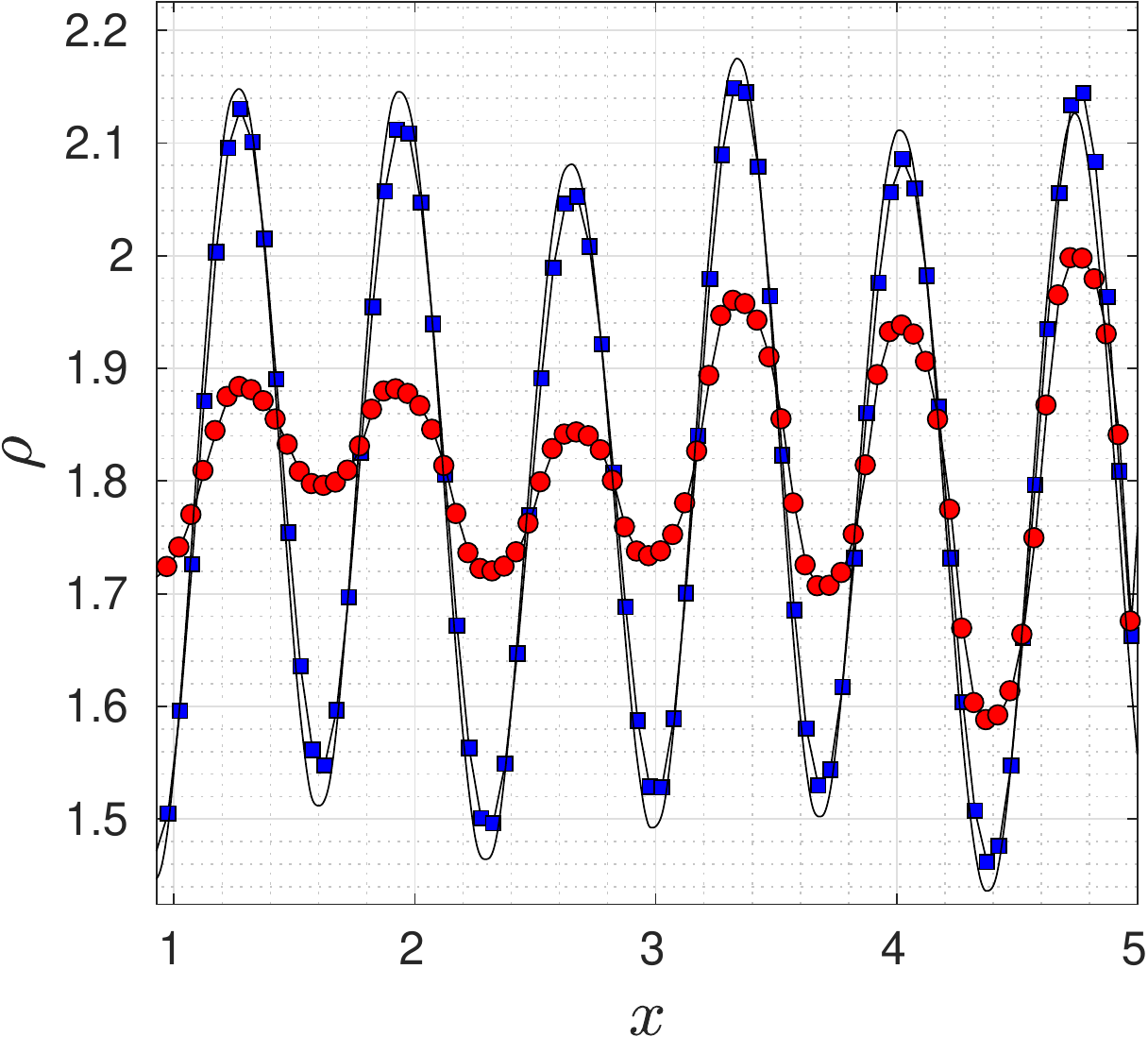}
		\end{tabular}
	\end{center}
	\caption{Comparison of density solutions for WENO-AO and second-order finite volume scheme (\cite{meenaKumarFVM}) for Example \ref{exShuOsher}.}
	\label{fig:ShuOsher}
\end{figure}

\begin{figure}
	\begin{center}
		\begin{tabular}{ccc}
			\includegraphics[width=0.32\textwidth]{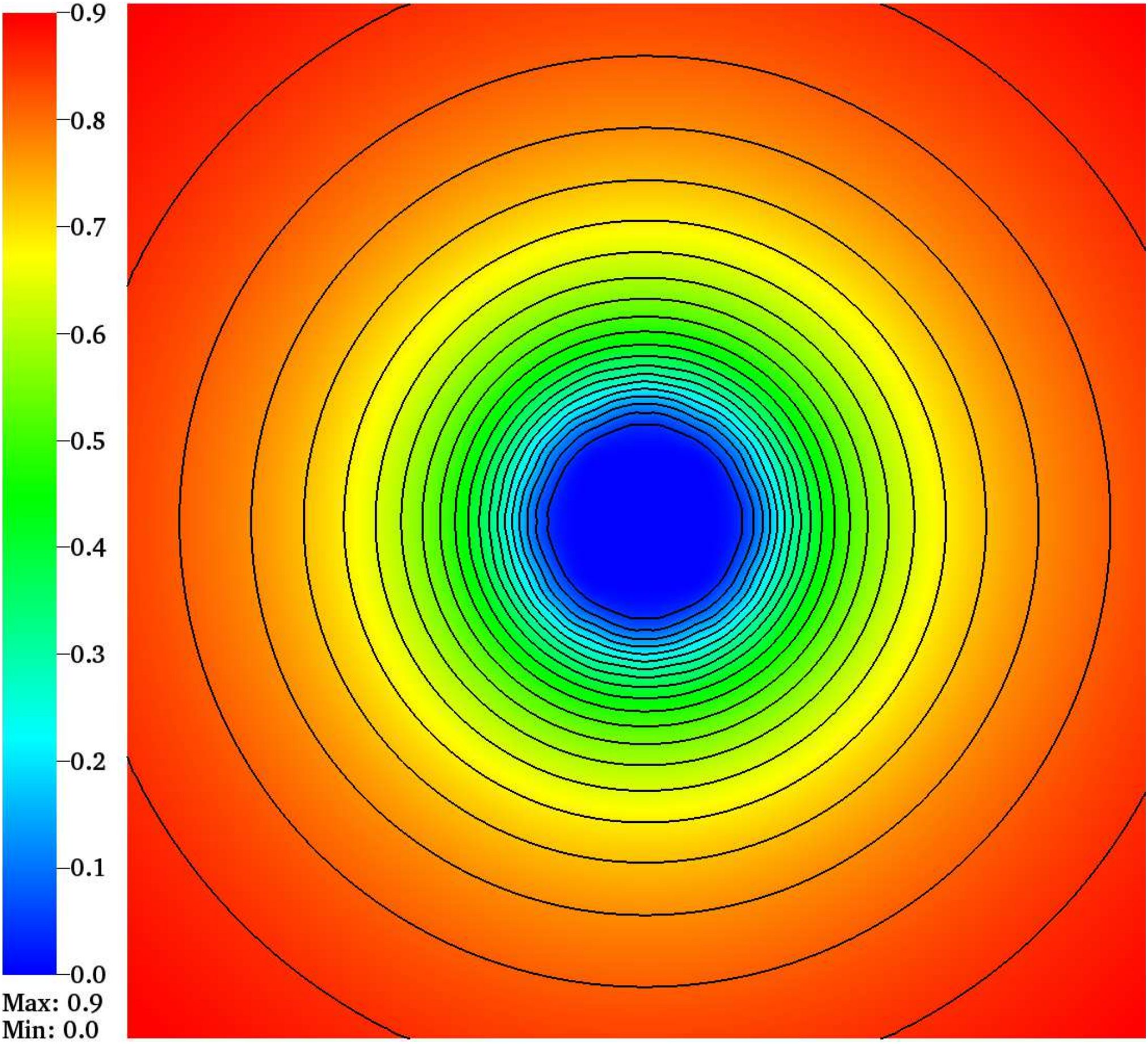} &
			\includegraphics[width=0.32\textwidth]{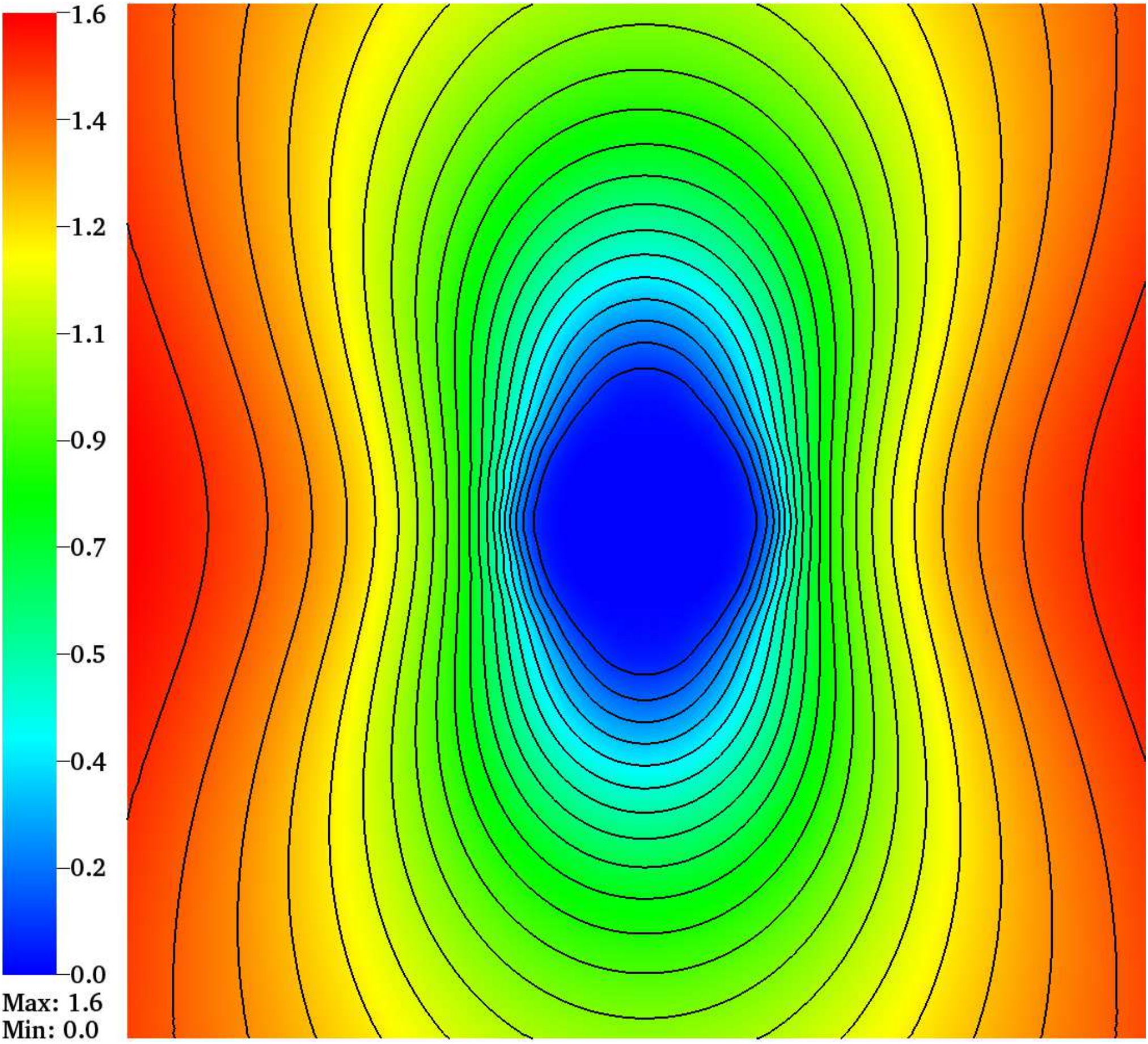} &
			\includegraphics[width=0.32\textwidth]{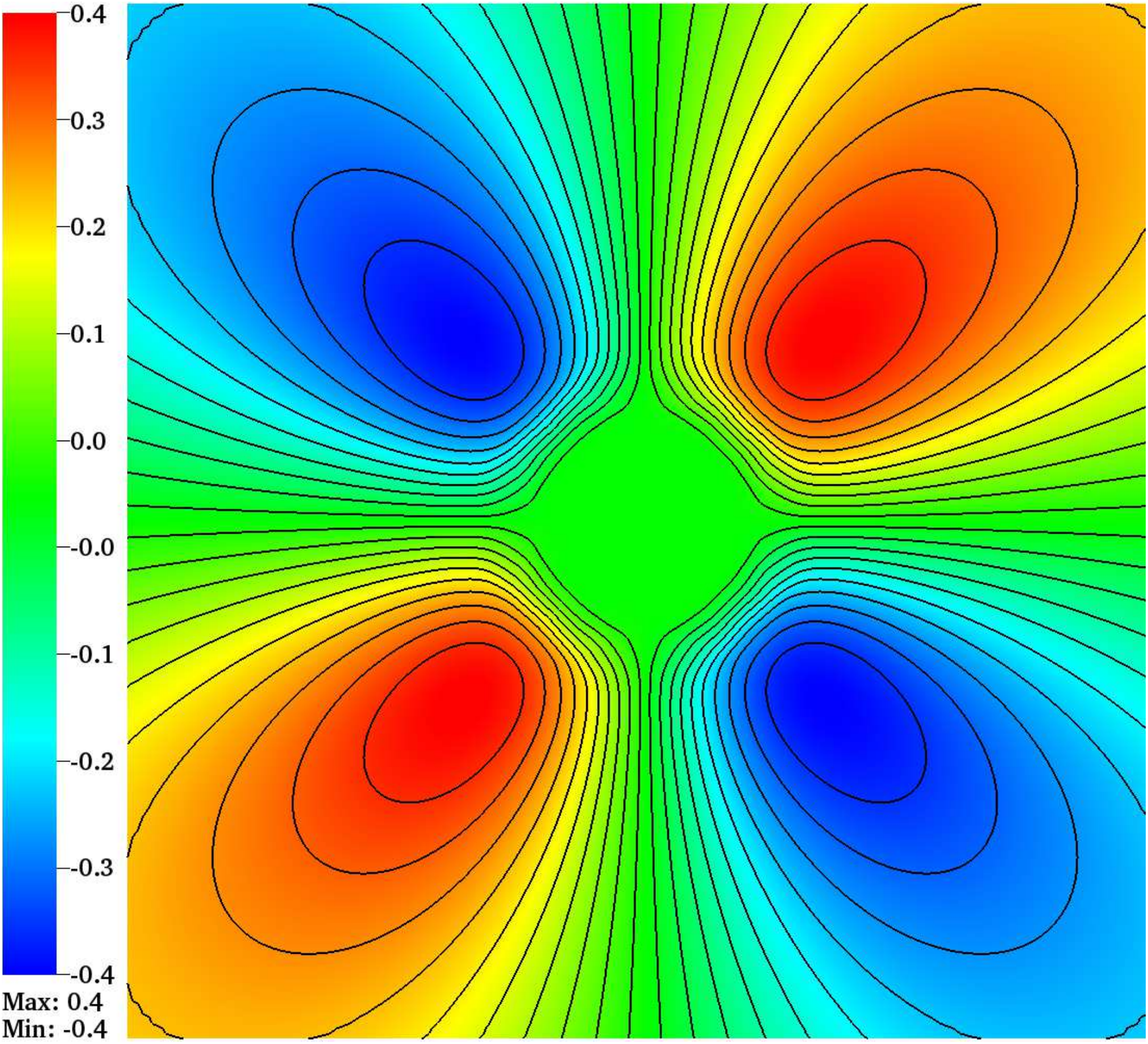} \\

			(a) $\rho$ & (b) $p_{11}$ & (c) $p_{12}$ \\
			\includegraphics[width=0.32\textwidth]{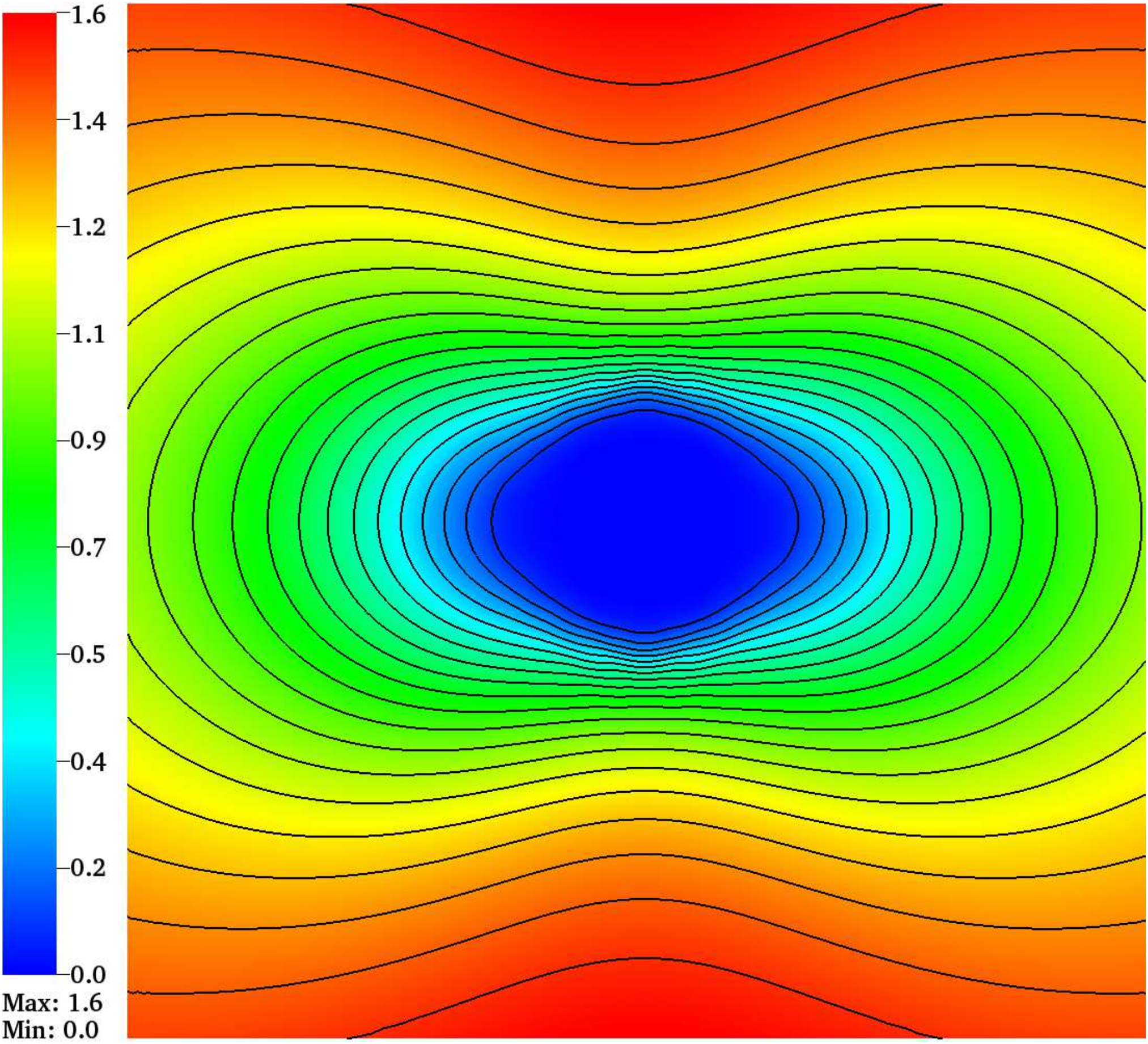} &
			\includegraphics[width=0.32\textwidth]{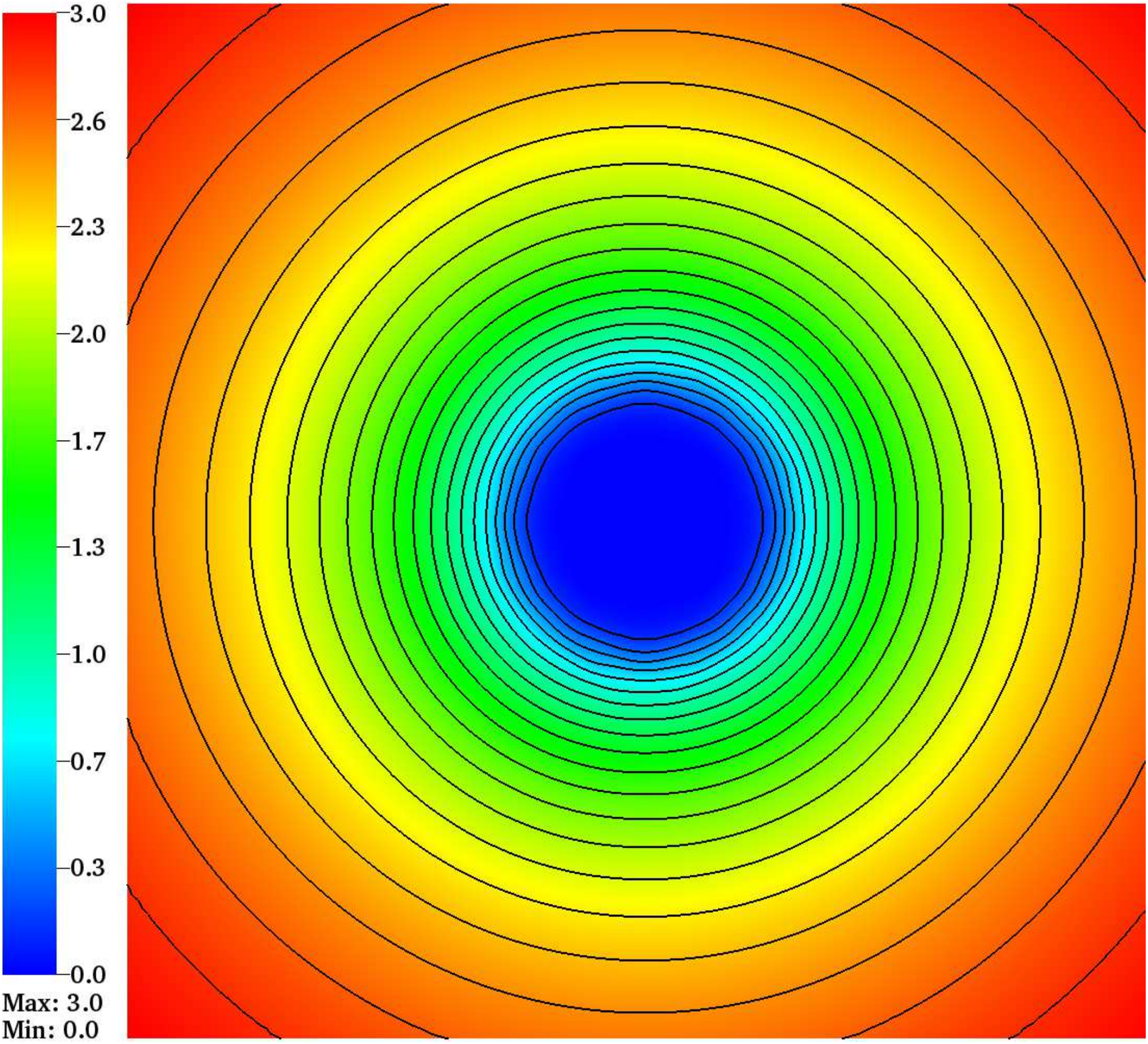} &
			\includegraphics[width=0.32\textwidth]{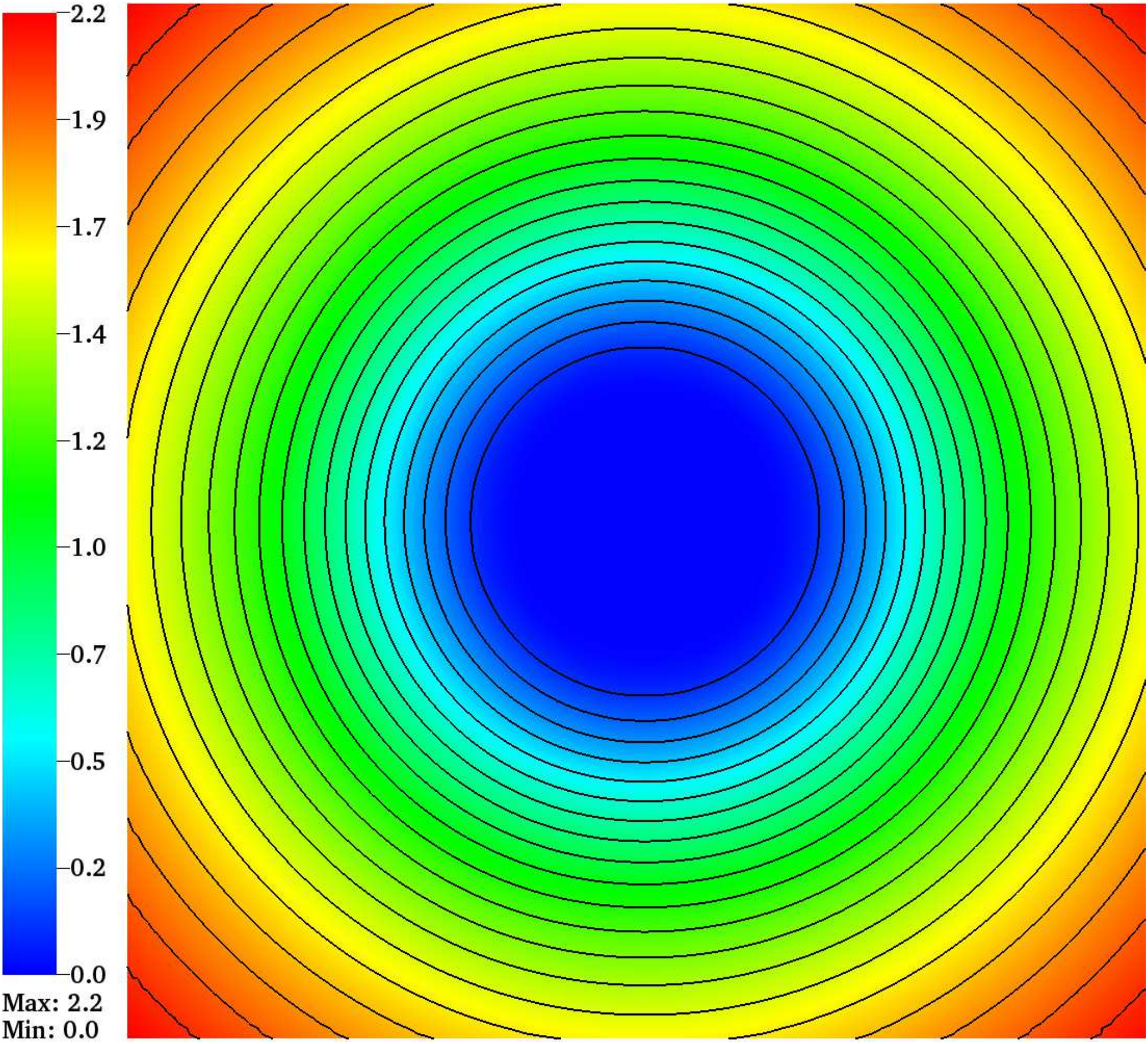}\\
			(d) $p_{22}$ & (e) $p_{11}+p_{22}$ & (f)  $p_{11}p_{22}-p_{12}^{2}$
		\end{tabular}
	\end{center}
	\caption{Plots of primitive variables with trace and determinant of pressure tensor with 20 contour lines for Example \ref{ex8} over the domain $[-2,2]\times [-2,2]$.}
	\label{fig:nv2d}
\end{figure}
\begin{example}\label{ex7}{\rm (1-D near vacuum test)
This Riemann problem \cite{ber_06a, ber-etal_15a} is used to test the positivity of the numerical schemes. The exact solution contains low density and pressure areas near the origin, which is difficult to compute using high order schemes without  a positivity limiter. The initial condition is given with a  discontinuity at the origin as follows
  \[
  \bold{V}=\begin{cases}
              (1, \ -5,  \ 0, \ 2,  \ 0, \ 2),& \mbox{if}~~x \leq 0\\
              (1, \ 5, \ 0, \ 2, \ 0, \ 2), &\mbox{if}~x>0
             \end{cases}
\]
 }
\end{example}
over the domain $[-0.5,0.5]$ with outflow boundary conditions. This  test case involves the propagation of two rarefaction waves away from the center leading to a low density and low pressure area near the origin.

 Numerical solutions computed using WENO schemes are presented at time $T=0.05$ using 100 mesh points. In Figure \ref{fig:ex7}, we have shown the numerical results obtained using considered WENO schemes for density and pressure components 
 $p_{11}$, $p_{22}$. Here, the reference solution is computed using WENO-JS scheme with 1000 mesh points. From Figure \ref{fig:ex7}, we can observe that the WENO-AO scheme result is closer to the reference solution as compared to the other considered WENO schemes. In Figure \ref{fig:ex7ao}, we have compared the result obtained using  WENO-AO scheme with increasing number of mesh points. The WENO-AO solution moves closer to the exact solution as we increase the number of mesh points. 
 Upto now, we have observed that WENO-AO performs better than the other two considered WENO schemes. In order to save space, henceforth we will show results obtained using WENO-AO scheme only. 
 
 The positivity of the solution is maintained by calling the positivity limiter during simulations. To maintain the positivity, we use a uniform CFL
 number equal to 1/12. Here, we want to compare the uniform CFL with adaptive  CFL. In the adaptive CFL, initially we set  CFL=0.95 and compute the solution. If solution fails to satisfy
 the positivity condition, we go back to previous level and reset the CFL equal to 1/12. This process is repeated until the final time is reached. In Table \ref{Table.ex7}, we have compared the CPU time (in second) taken by WENO-AO scheme using adaptive and uniform CFL. We can observe from Figure \ref{fig:ex7cpu} (a) and Table \ref{Table.ex7} that WENO-AO scheme with adaptive CFL is 10 times faster than WENO-AO scheme with uniform CFL. In Figure \ref{fig:ex7cpu} (b), we have compared the solutions obtained using adaptive and uniform CFL with 100 mesh points. The solution obtained using adaptive CFL is comparable with that obtained with uniform CFL condition.

\begin{example} \label{exShuOsher}{\rm (Shu-Osher problem)
This example is the modified version of Shu-Osher test \cite{shu-osh_89a} for the Ten-Moments equations. The primitive variables are initialized as follows
\[
\bold{V}=\begin{cases}
(3.857143, \ 2.699369,  \ 0, \ 10.33333,  \ 0, \ 10.33333),& \mbox{if } x \leq -4\\
(1+0.2 \sin(5x), \ 0, \ 0, \ 1, \ 0, \ 1), &\mbox{if } x > -4
\end{cases}
\]
Numerical simulations are performed over the domain $[-5,5]$ using 200 mesh points at time $T=1.8$. In Figure \ref{fig:ShuOsher}, we have compared the solution obtained using WENO-AO scheme with the second order finite volume scheme \cite{meenaKumarFVM} along with the reference solution. The reference solution is computed using finite volume scheme with 5000 mesh points. We can observe from the zoom versions of Figure \ref{fig:ShuOsher}, the WENO-AO scheme resolves the shock and high-frequency wave more accurately in comparison to second order finite volume scheme.	This again demonstrates the utility of high order scheme like WENO for computation of multiscale phenomena as compared to second order schemes.
}
\end{example}

\begin{example}\label{ex8}{\rm (2-D near vacuum test)
Here we consider a two-dimensional test case which contains low density and pressure areas \cite{mee-kum_17a}. The test is a generalization of one-dimensional case presented in Example \ref{ex7}. The simulations are performed over the  domain $[-2,2]\times[-2, 2]$ with outflow boundary conditions. The domain is filled with fluid at constant unit density with pressure $p_{11}=2,\;\;p_{12}=0$ and $p_{22}=2$. The velocity is taken to be $8(x/r,y/r)$, where $r = \sqrt{x^2+y^2}$ so the gas is flowing radially outward. The computational results are obtained for final time 0.05 using $200\time200$ cells. The results for density, pressure components and determinant of pressure tensor are shown in Figure \ref{fig:nv2d}. In Table \ref{Table.ex8} and Figure \ref{fig:ex8} (a), we have compared the numerical results obtained using adaptive CFL with that of uniform CFL. We observe that as we increase the number of mesh points, WENO-AO with adaptive CFL becomes faster than WENO-AO with uniform CFL. In Figure \ref{fig:ex8}, we have compared the solution obtained using adaptive CFL and uniform CFL  along the $y=0$ line. Both the solutions are comparable to each other.
 }
\end{example}
\begin{figure}
\begin{center}
\begin{tabular}{cc}
\includegraphics[width=0.50\textwidth]{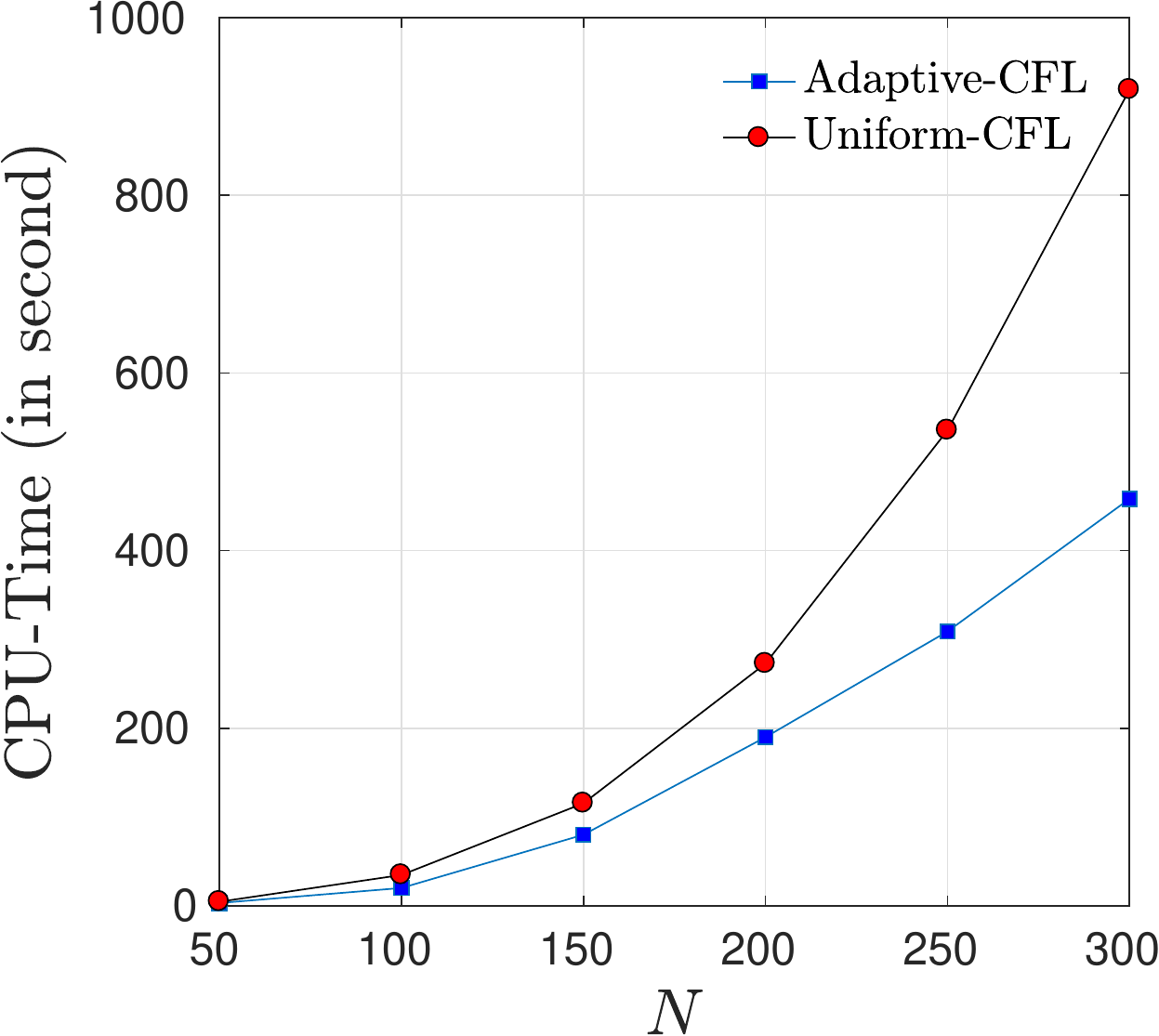} &
 \includegraphics[width=0.48\textwidth]{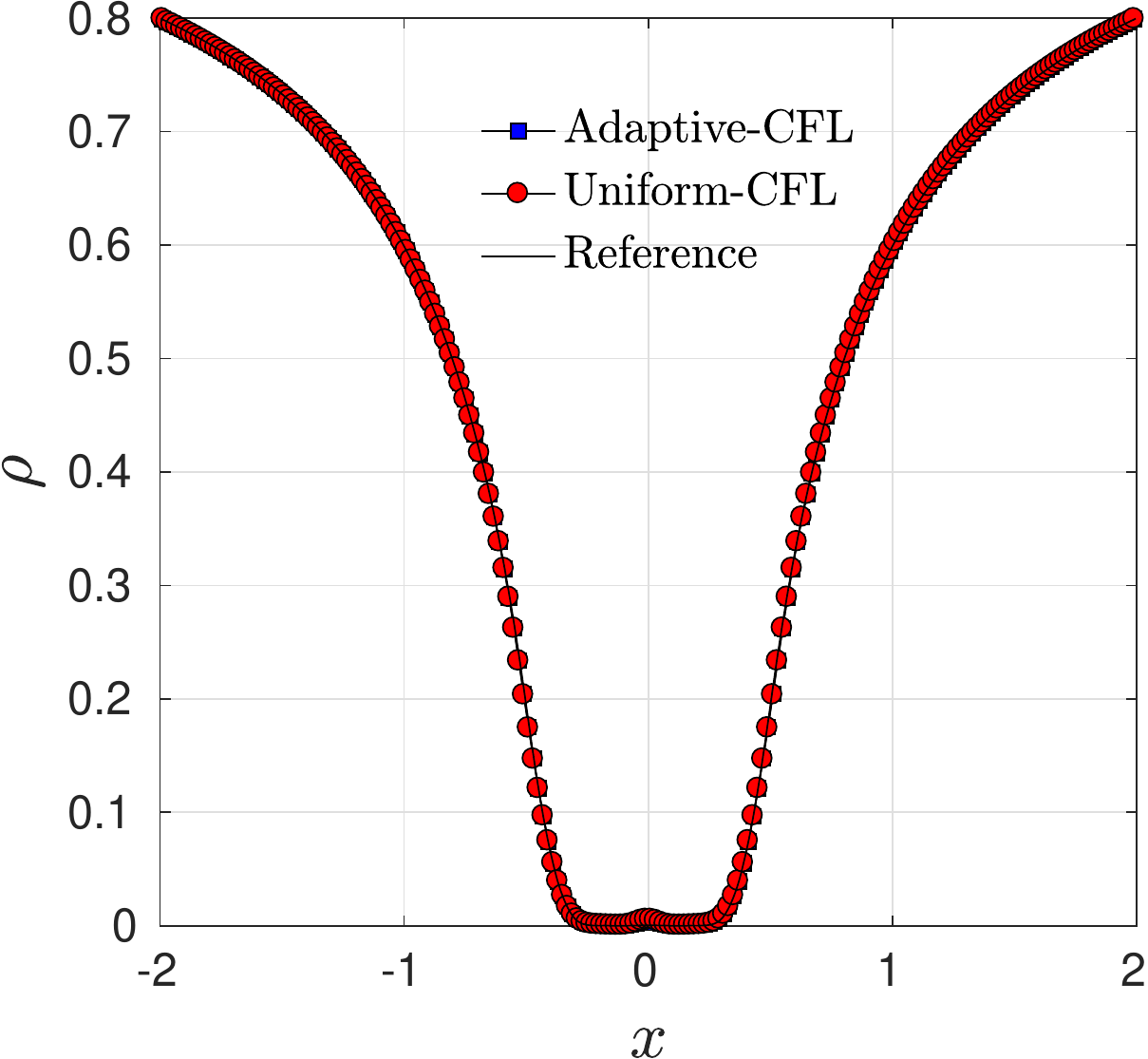} \\
 (a)  & (b) 
\end{tabular}
\end{center}
 \caption{(a) Comparison of CPU time in seconds of WENO-AO scheme with adaptive and uniform CFL for Example \ref{ex8} for various number of mesh points, (b) Comparison of density solution obtained using adaptive and uniform CFL for $100\times 100$ mesh points along the cross-section at $y=0$.}
 \label{fig:ex8}
\end{figure}
\begin{example}\label{ex9}{\rm (2-D discontinuous near vacuum test \cite{mee-kum_17a})
In this test case, we consider an initial condition with a discontinuity at unit radius in the domain $[-2,2]\times[-2,2]$ and the initial primitive variables are given by
 \[
  \bold{V}=\begin{cases}
              (1, \ 8\hat{r},  \ 8\hat{r}, \ 2, \ 0, \ 2), & \mbox{if } \sqrt{x^{2}+y^{2}}  < 1\\
              (1, \ 0, \ 0, \ 1, \ 0, \ 1), & \mbox{otherwise}
             \end{cases}
 \]
where $\hat{r} = (x/r,y/r)$, $r=\sqrt{x^2+y^2}$ is the unit vector in the radial direction and boundary values are obtained using outflow boundary conditions. The solutions are computed till final time $T=0.05$ using $200\time200$ mesh points. The numerical results are displayed in Figure \ref{fig:dnv2d}.  
We can observe that  WENO-AO scheme computes the solution in a non-oscillatory fashion while maintaining the postivity of the solution. The WENO-AO scheme with adaptive CFL is four times faster than WENO-AO scheme with uniform CFL as  observed from Table \ref{Table.ex9}. In Figure \ref{fig:ex9cpu} (b), we have compared the solution obtained with adaptive CFL and uniform CFL along the axis $y=0$ using $100\times 100$ mesh points. Both solutions are comprable with each other which shows that it is preferable to use adaptive CFL algorithm.
 }
\end{example}
\begin{figure}
	\begin{center}
		\begin{tabular}{ccc}
			\includegraphics[width=0.32\textwidth]{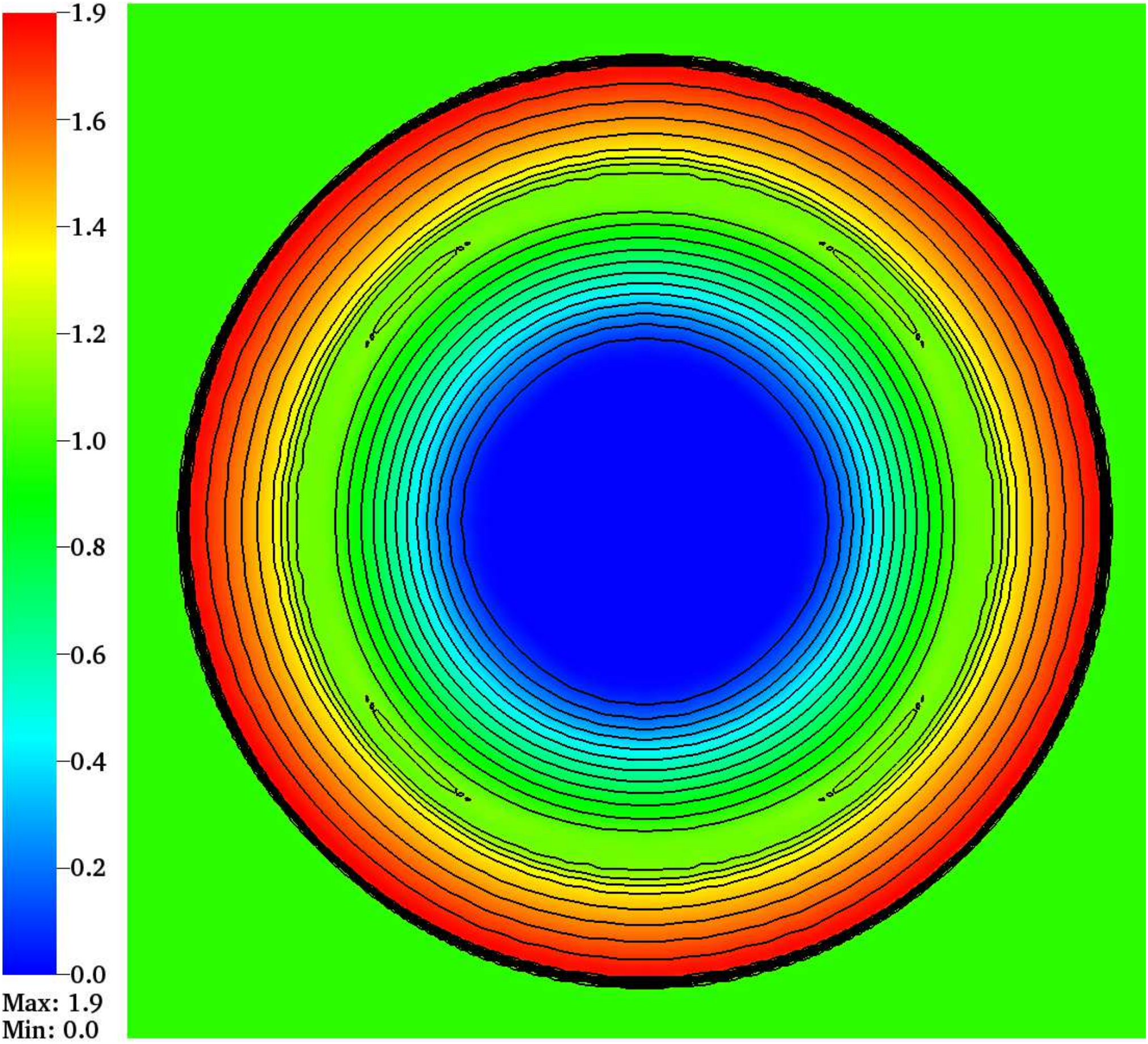} &
			\includegraphics[width=0.32\textwidth]{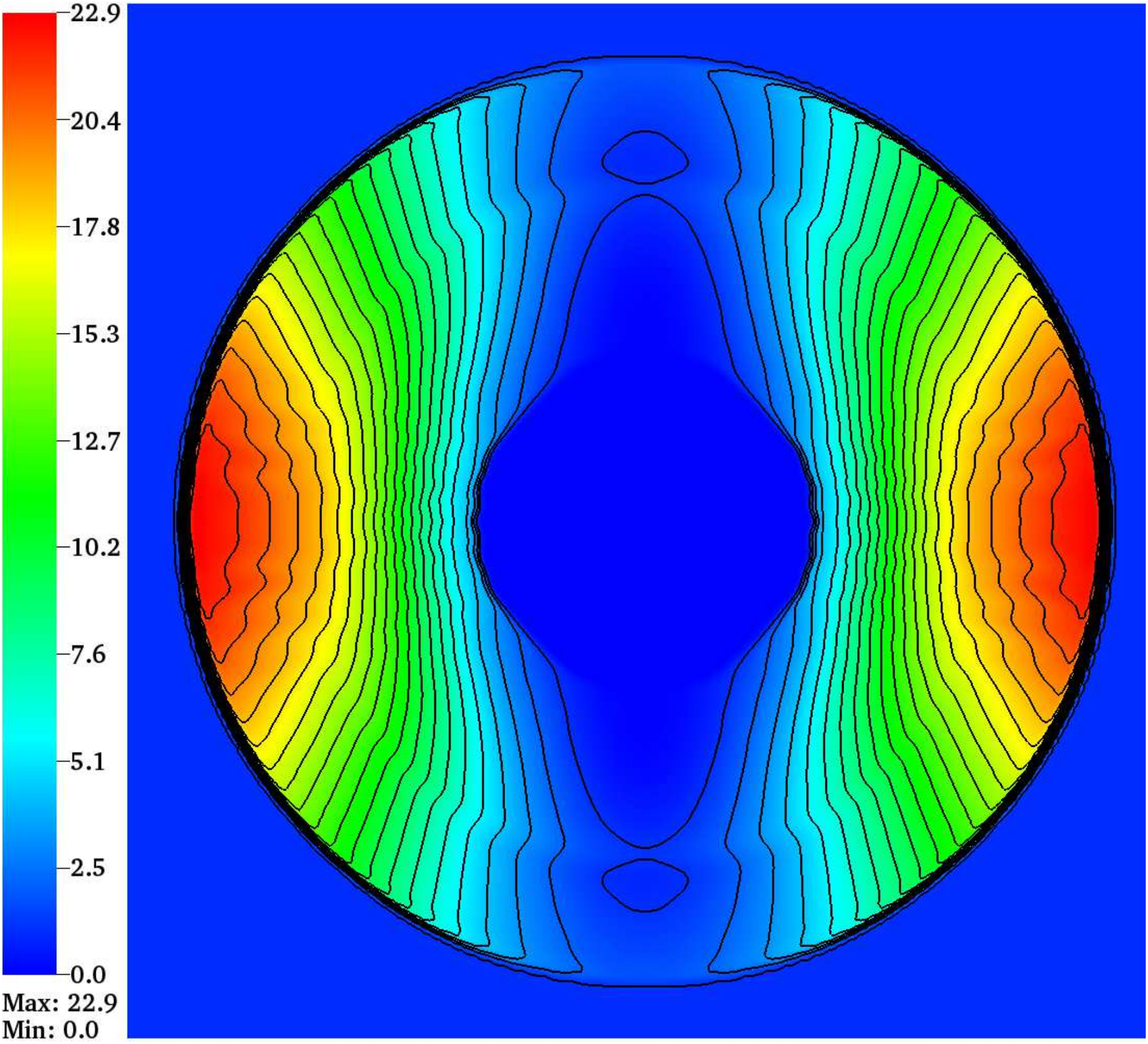} &
			\includegraphics[width=0.32\textwidth]{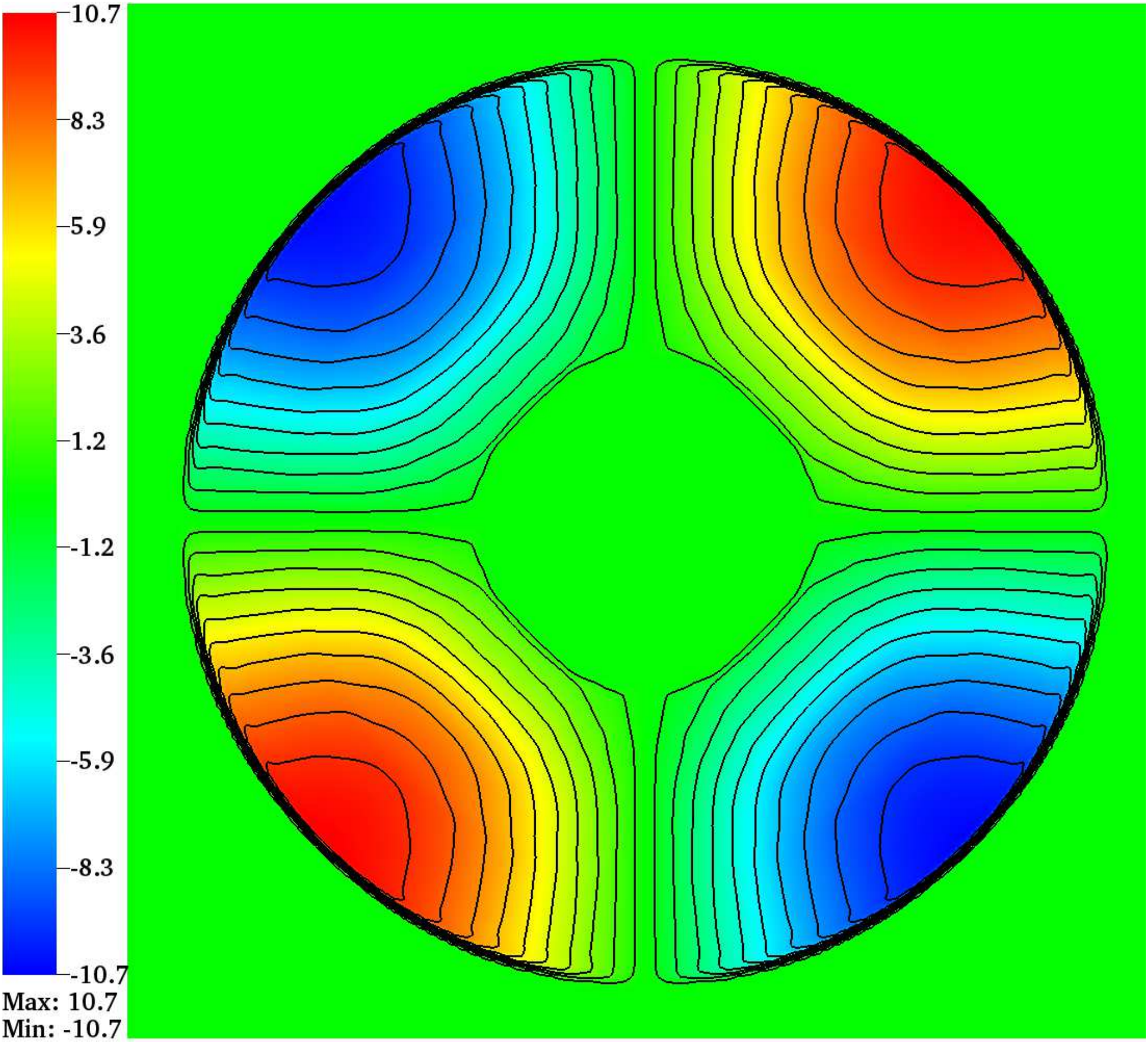} \\
			(a) $\rho$ & (b) $p_{11}$ & (c) $p_{12}$ \\
			\includegraphics[width=0.32\textwidth]{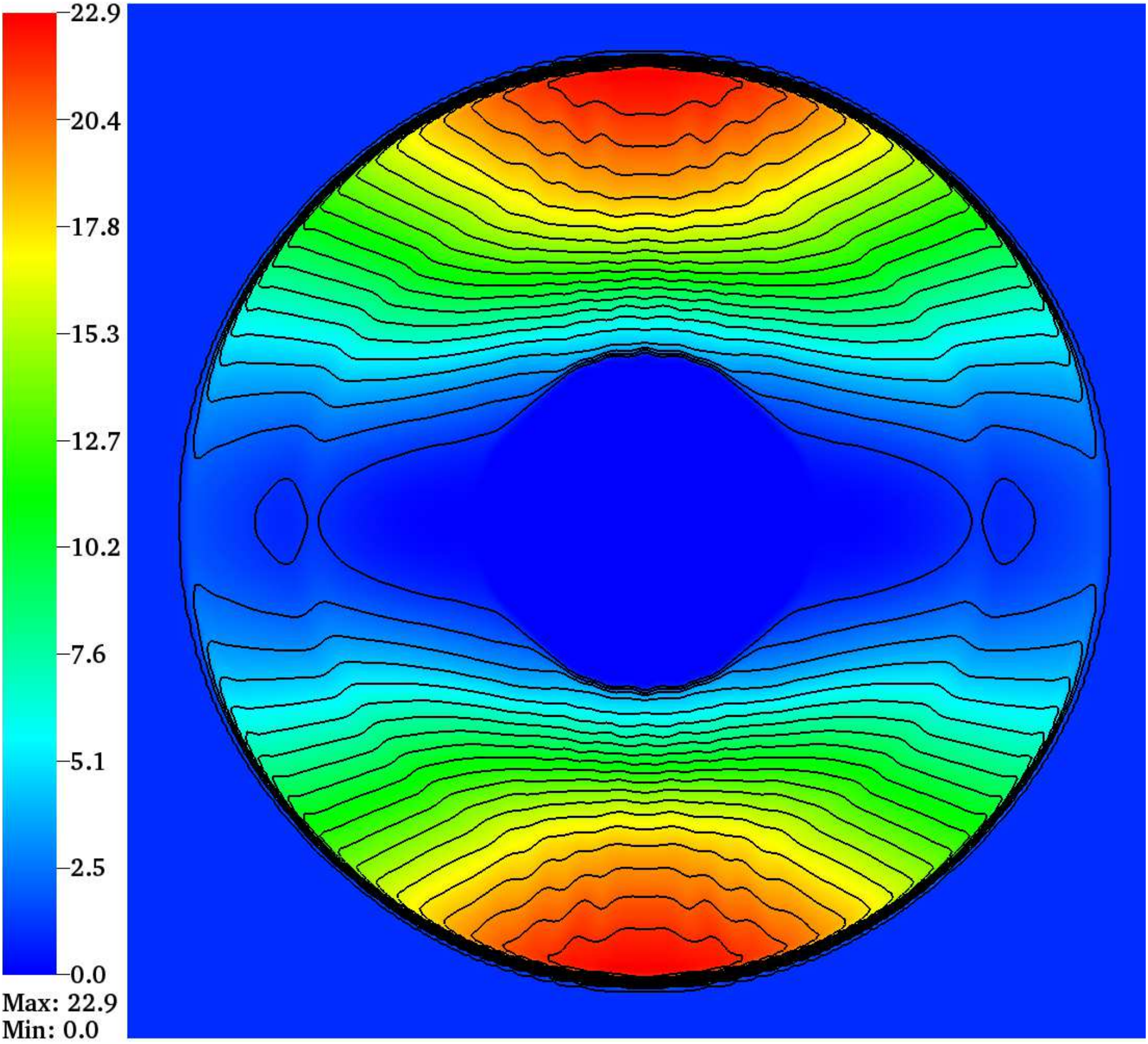} &
			\includegraphics[width=0.32\textwidth]{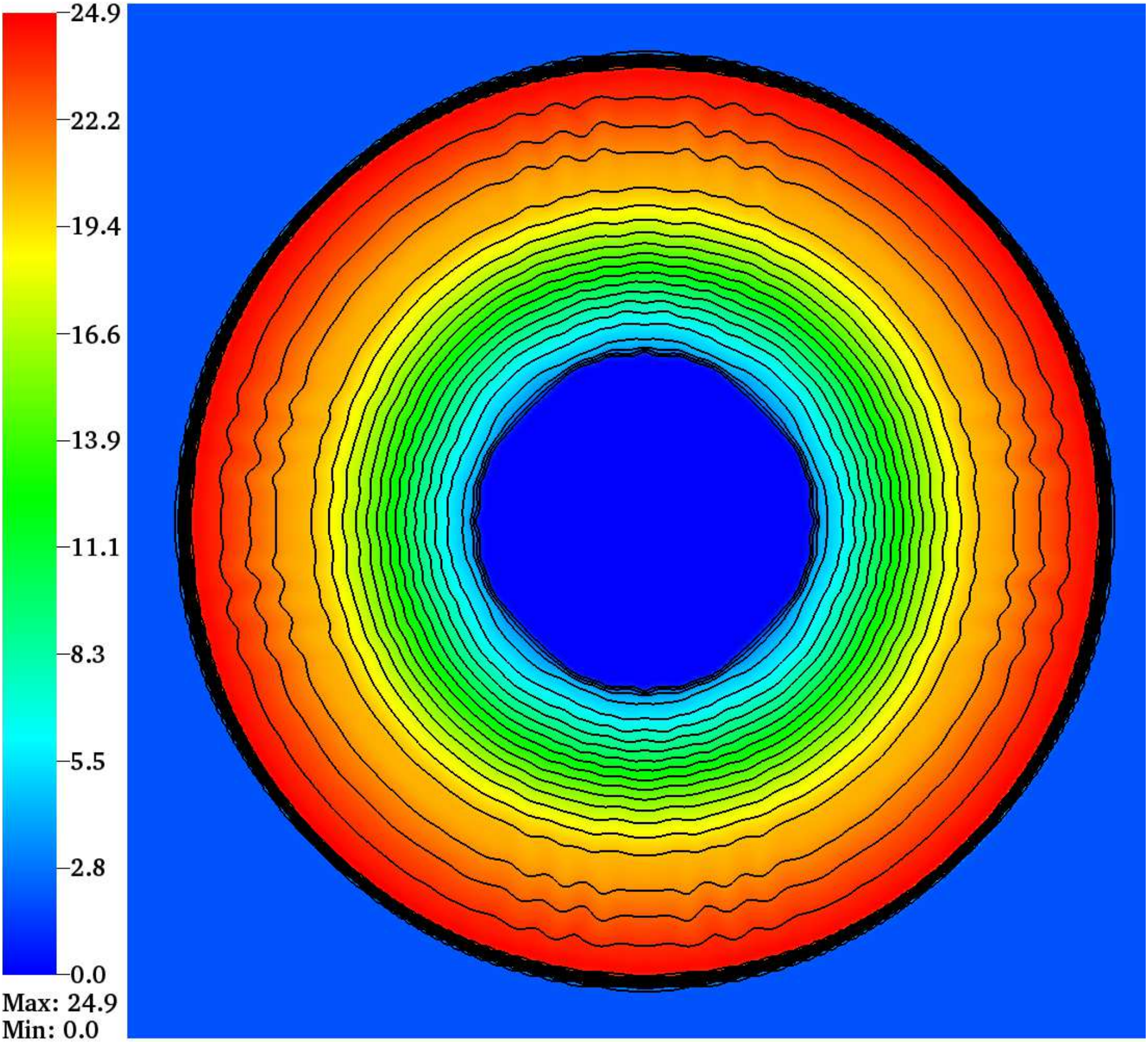} &
			\includegraphics[width=0.32\textwidth]{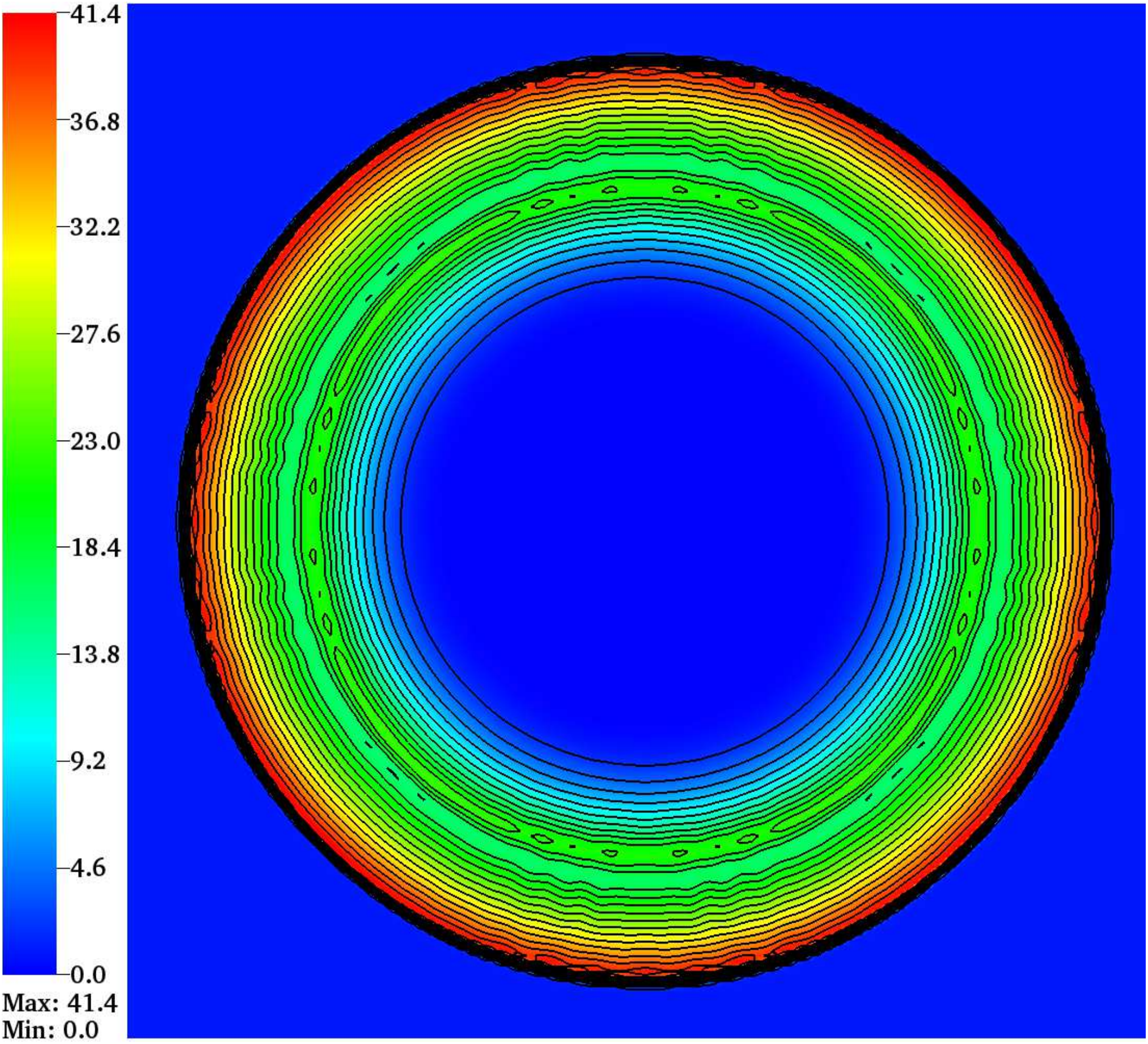}\\
			 (d) $p_{22}$ & (e) $p_{11}+p_{22}$ & (f)  $p_{11}p_{22}-p_{12}^{2}$
		\end{tabular}
	\end{center}
	\caption{Plots of primitive variables with trace and determinant of pressure tensor with 20 contour lines for Example \ref{ex9} over the domain $[-2,2]\times [-2,2]$.}
	\label{fig:dnv2d}
\end{figure}

\begin{figure}
\begin{center}
\begin{tabular}{cc}
\includegraphics[width=0.50\textwidth]{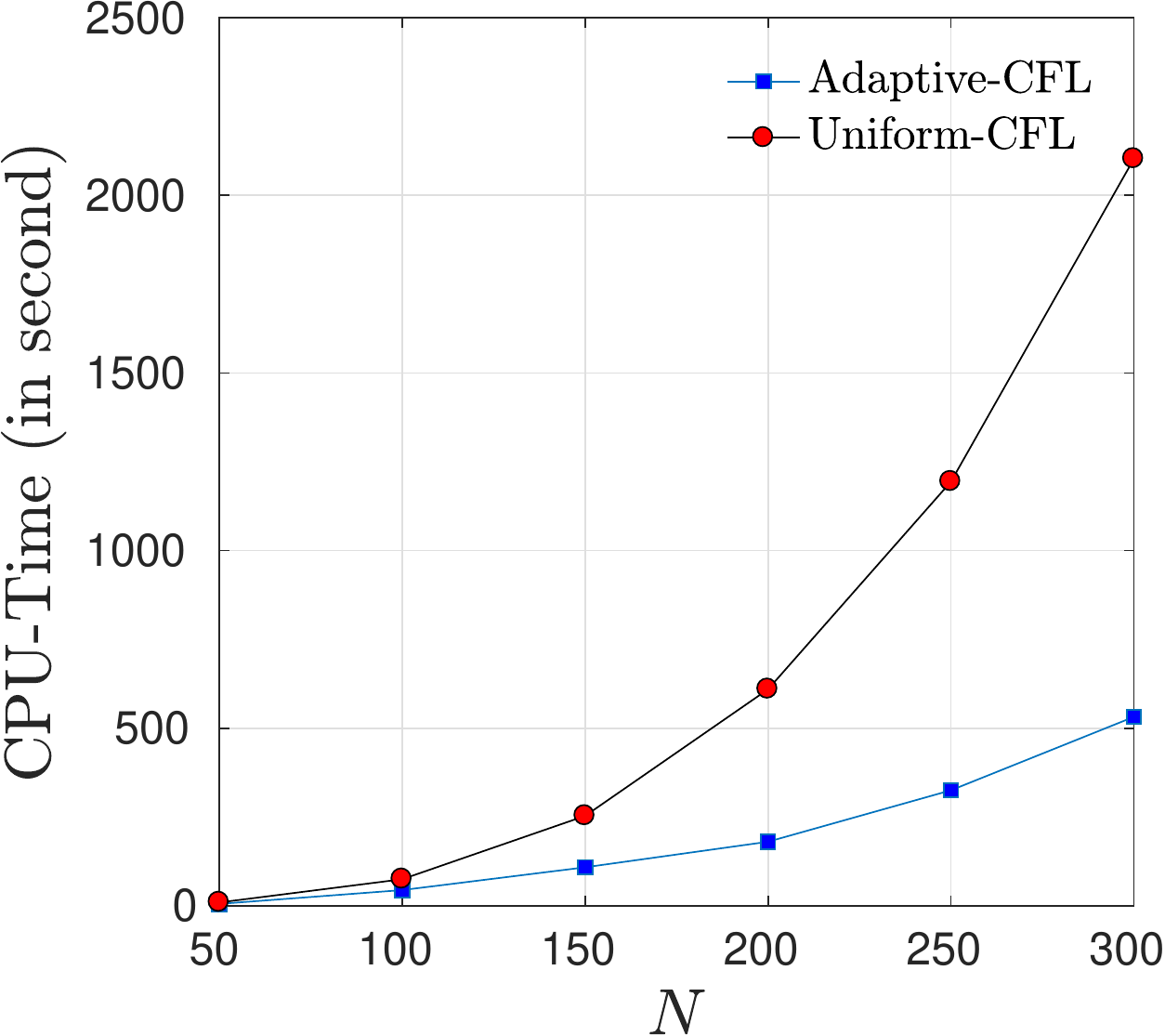} &
 \includegraphics[width=0.48\textwidth]{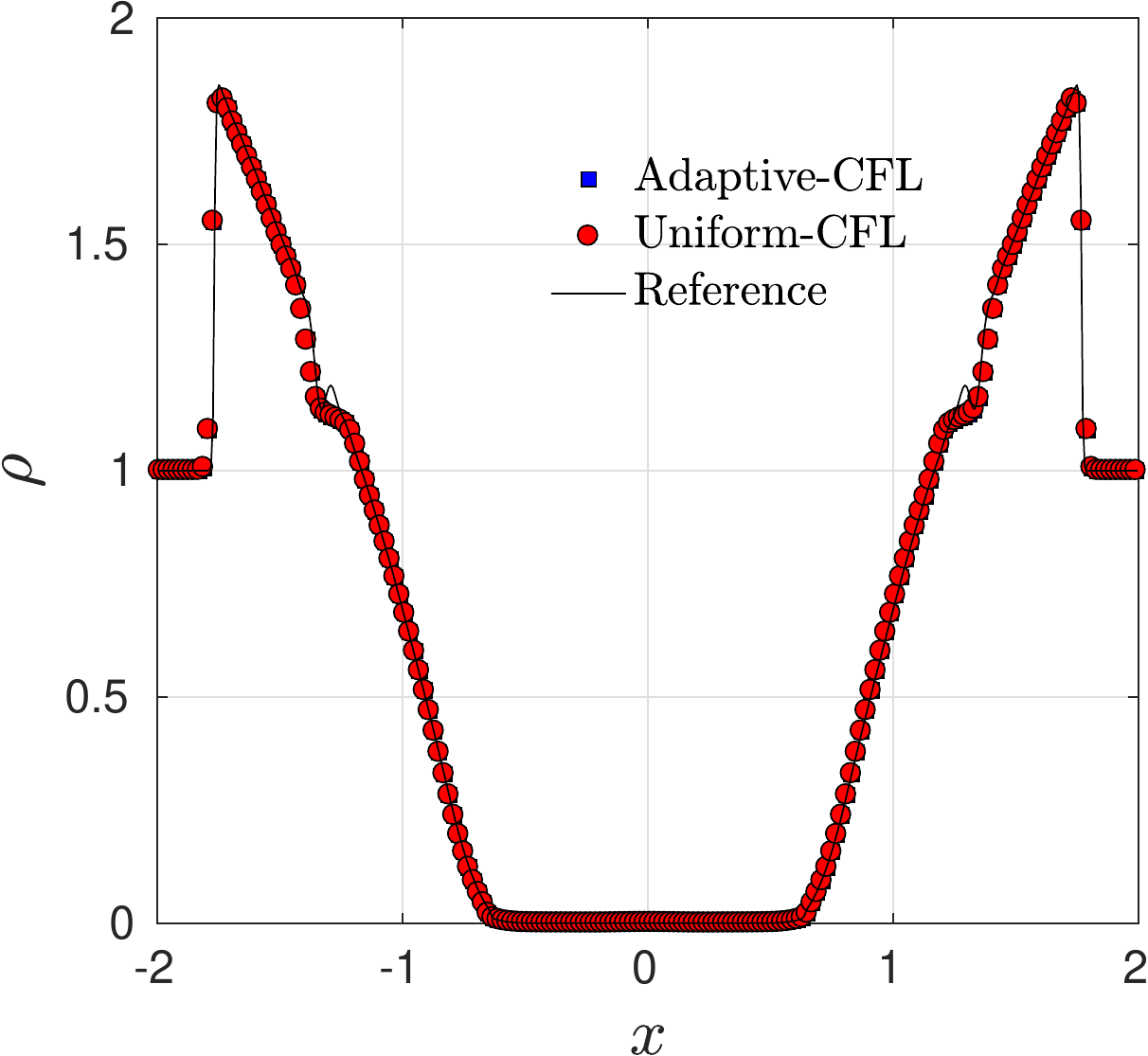} \\
 (a) & (b)
\end{tabular}
\end{center}
 \caption{(a) Comparison of CPU times of WENO-AO scheme with adaptive and uniform CFL for Example \ref{ex9} for varying number of mesh points, (b) Comparison of density solution obtained using adaptive and uniform CFL with $100\times 100$ mesh points along the cross-section at $y=0$.}
 \label{fig:ex9cpu}
\end{figure}

\begin{example}\label{ex:2drg}{\rm (Two-rarefaction waves with Gaussian source term)
In this problem, we analyze the effect of the source term on a two-dimensional  test case  containing the motion of two-rarefaction waves. Numerical experiments  are performed over the domain $[0,4]$ with initial conditions having  discontinuity at $x=2$. The initial profile of solution is  
 \[
  \bold{V}=\begin{cases}
              (1, \ -4,  \ 0, \ 9,  \ 7, \ 9),& \mbox{if } x \leq 2\\
              (1, \ 4, \ 0, \ 9, \ 7, \ 9), &\mbox{if } x>2
             \end{cases}
\]
and the Gaussian source term is given by,
\[
W(x,t) = 25\exp (-200 (x-2)^2)
\]
The numerical results are computed using  WENO-AO scheme with 500 grid points at time $T=0.1$. To see the effect of the source, we compute the solutions for the Riemann problem with source term (non-homogeneous) and without source term (homogeneous). In Figure \ref{fig:2ds} (a)-(b),  we have shown the density and pressure component $p_{11}$, respectively. The solution with and without source term  is predominantly in the low density regions in the middle of the domain. But the solution with source is seen to be much closer to the vacuum state. The solution without source term maintains its positivity without  calling the positivity limiter. On the other hand, the positivity of the solution with source term is preserved only by using the positivity limiter. The numerical results show good agreement with the published results \cite{mee-kum_17a, sen-kum_18a}.
 }
\end{example}

\begin{figure}
	\begin{center}
		\begin{tabular}{cc}
			\includegraphics[width=0.48\textwidth]{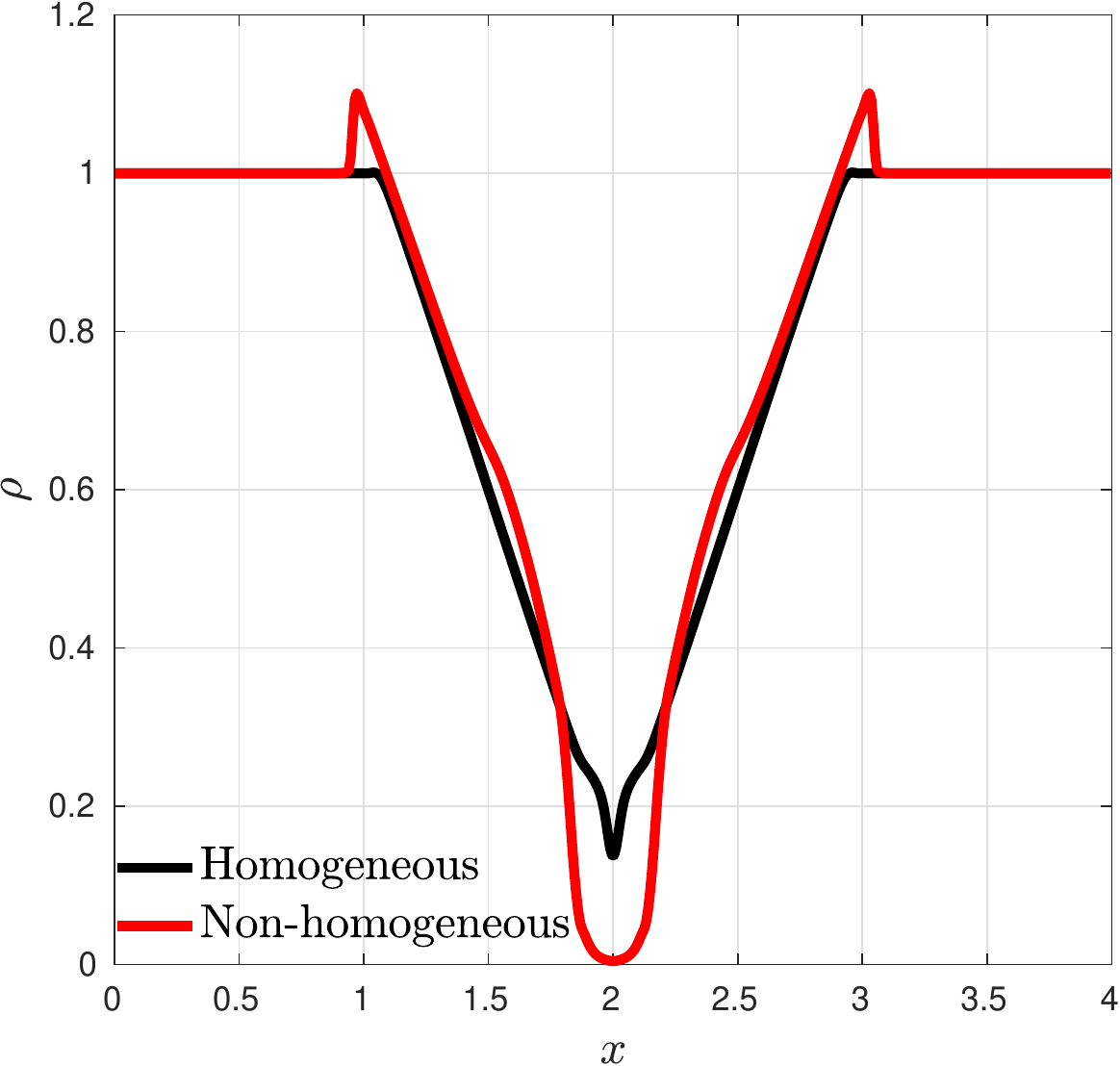} &
			\includegraphics[width=0.48\textwidth]{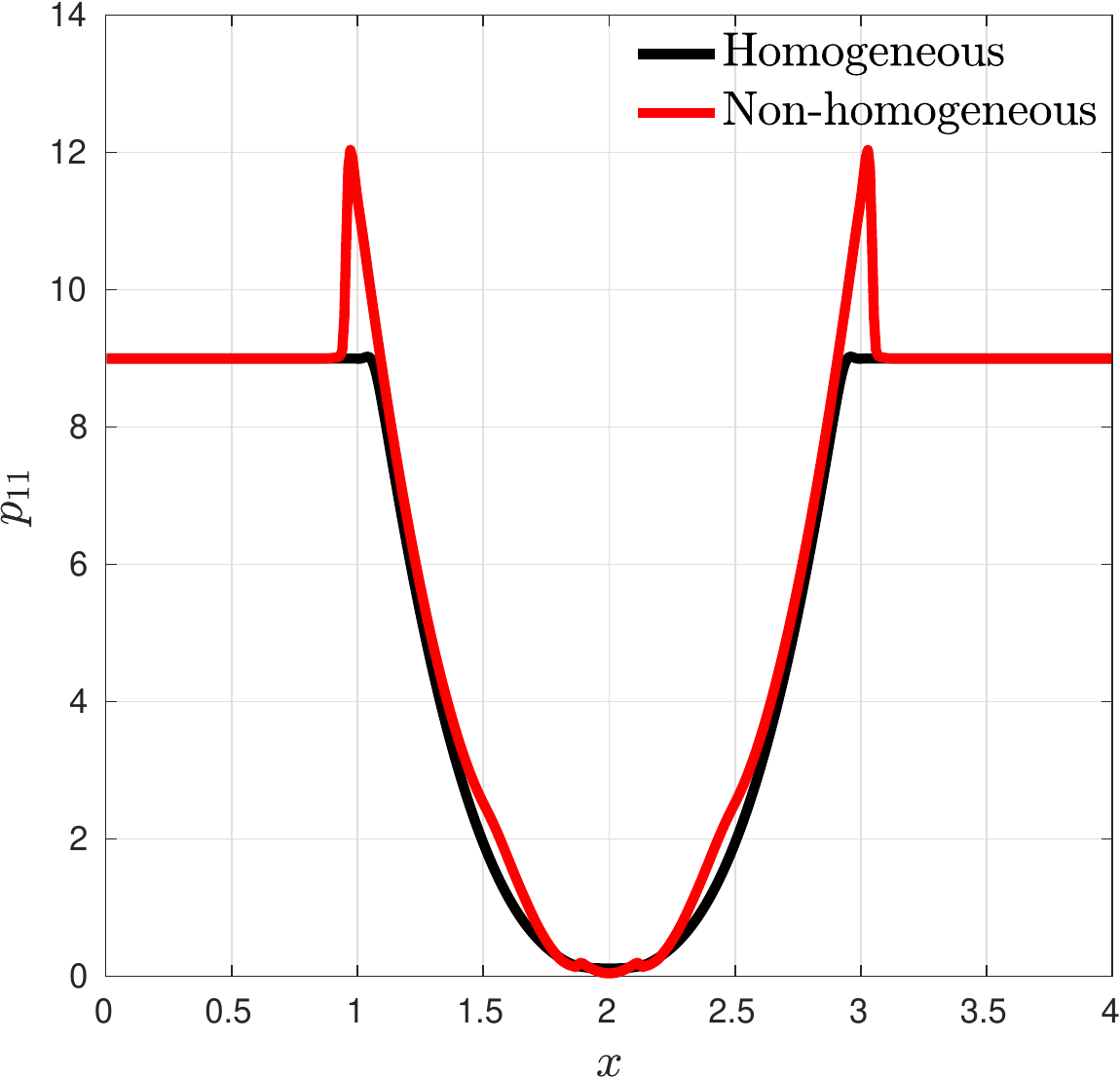} \\
			(a) Density & (b) Pressure component: $p_{11}$
		\end{tabular}
	\end{center}
	\caption{Numerical solution computed using WENO-AO scheme with 500 mesh points for Example \ref{ex:2drg}.}
	\label{fig:2ds}
\end{figure}

\begin{example}\label{ex:unp}{\rm (Uniform plasma state with two-dimensional Gaussian source \cite{mee-kum_17a})
This test is considered to see the two-dimensional effect of the source term \cite{mee-kum_17a, sen-kum_18a}. Here, we consider a uniform initial plasma state given by $\bold{V}=(0.1,0, 0,9, 7,9 )$ 
and a two-dimensional Gaussian source term given by,
\[
W(x,y,t) = 25\exp (-200  ((x-2)^2 + (y-2)^2))
\]
over the domain $[0,4]\times[0,4]$. Numerical results are computed at final time $T=0.1$ using $200\times200$ cells and outflow boundary conditions on all boundaries. We display the two-dimensional density  and pressure results in the Figures \ref{fig:s2up} (a)  and \ref{fig:s2up} (b), respectively. In Figure \ref{fig:s21d}, we have compared the one-dimensional cross-sections of density solution along the $y=-x+4$ line obtained using WENO-AO scheme  and the solution obtained using second order finite volume scheme~\cite{meenaKumarFVM} with different resolutions. Since we do not have an exact solution, the reference solution is obtained using the second order finite volume scheme~\cite{meenaKumarFVM} with $800\times 800$ grid points. From Figure~\ref{fig:s2up}-\ref{fig:s21d}, we can see that the solution consists of a low density area at the center of the domain. This is induced by the Gaussian source term, as a uniform state is preserved for a zero source term. The WENO-AO scheme obtains the solution without the use of positivity limiter and results are comparable with published results~\cite{mee-kum_17a, sen-kum_18a}.  The advantage of using a high order scheme is observed in Figure~\ref{fig:s21d} as the WENO-AO result on $200 \times 200$ mesh is close to the result of second order scheme on $800 \times 800$ mesh (Reference solution).
}
\end{example}

\begin{figure}
	\begin{center}
		\begin{tabular}{cc}
			\includegraphics[width=0.48\textwidth]{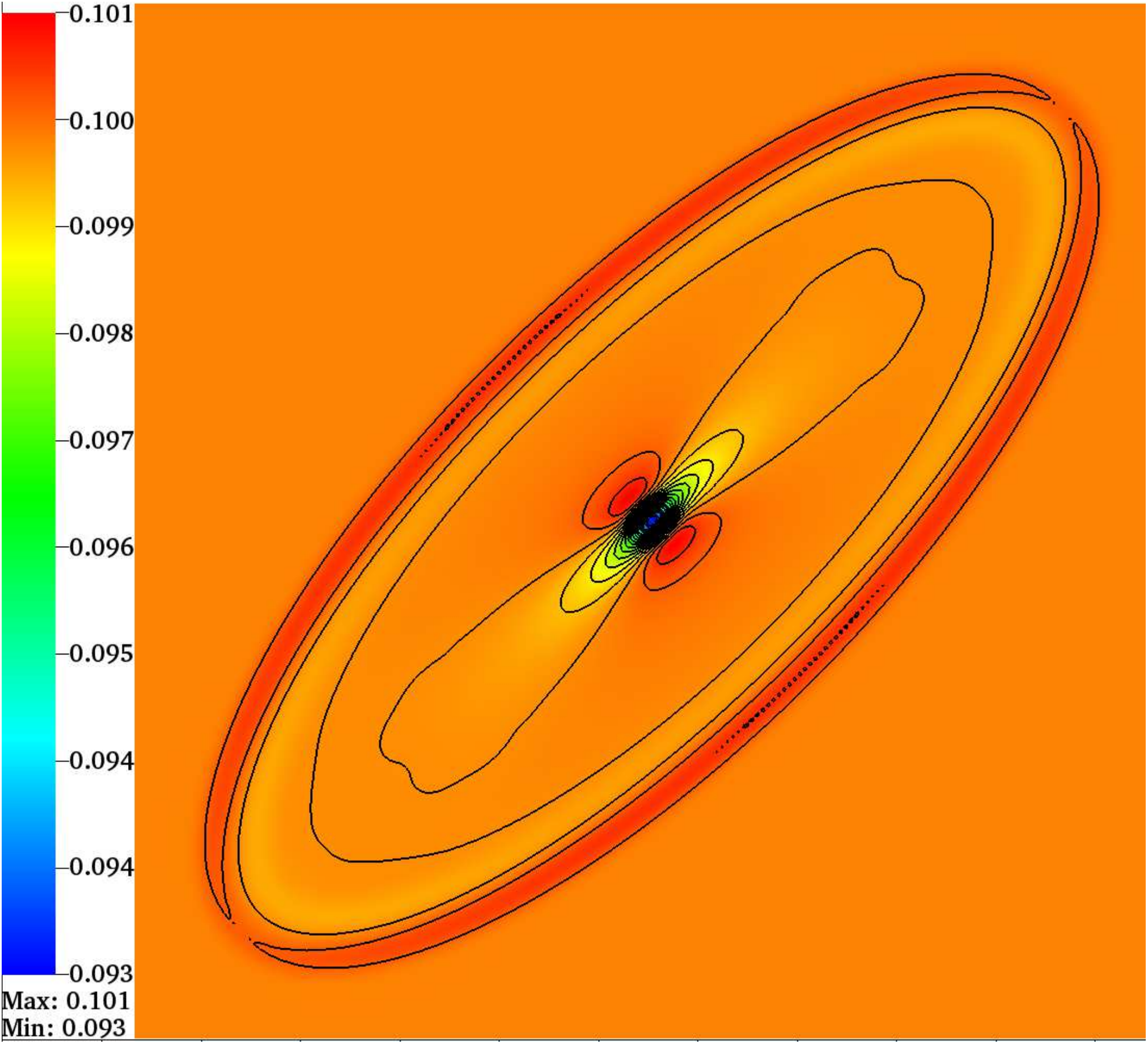} &
			\includegraphics[width=0.48\textwidth]{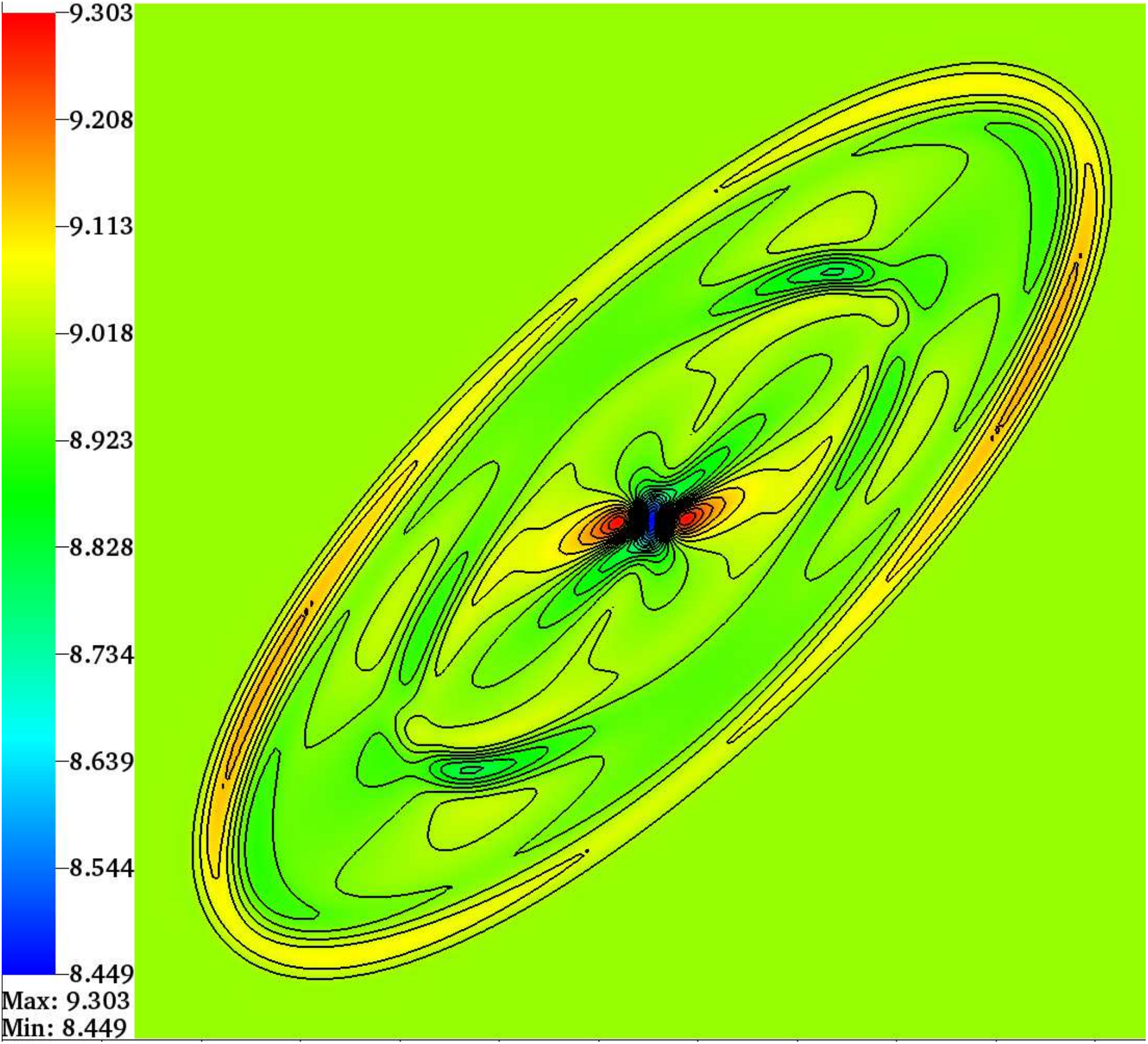} \\
			(a) Density & (b) Pressure component: $p_{11}$
		\end{tabular}
	\end{center}
	\caption{Plots of density and pressure component $p_{11}$ using 20 contour lines for Example \ref{ex:unp}.}
	\label{fig:s2up}
\end{figure}
\begin{figure}
	\begin{center}
		\begin{tabular}{cc}
			\includegraphics[width=0.48\textwidth]{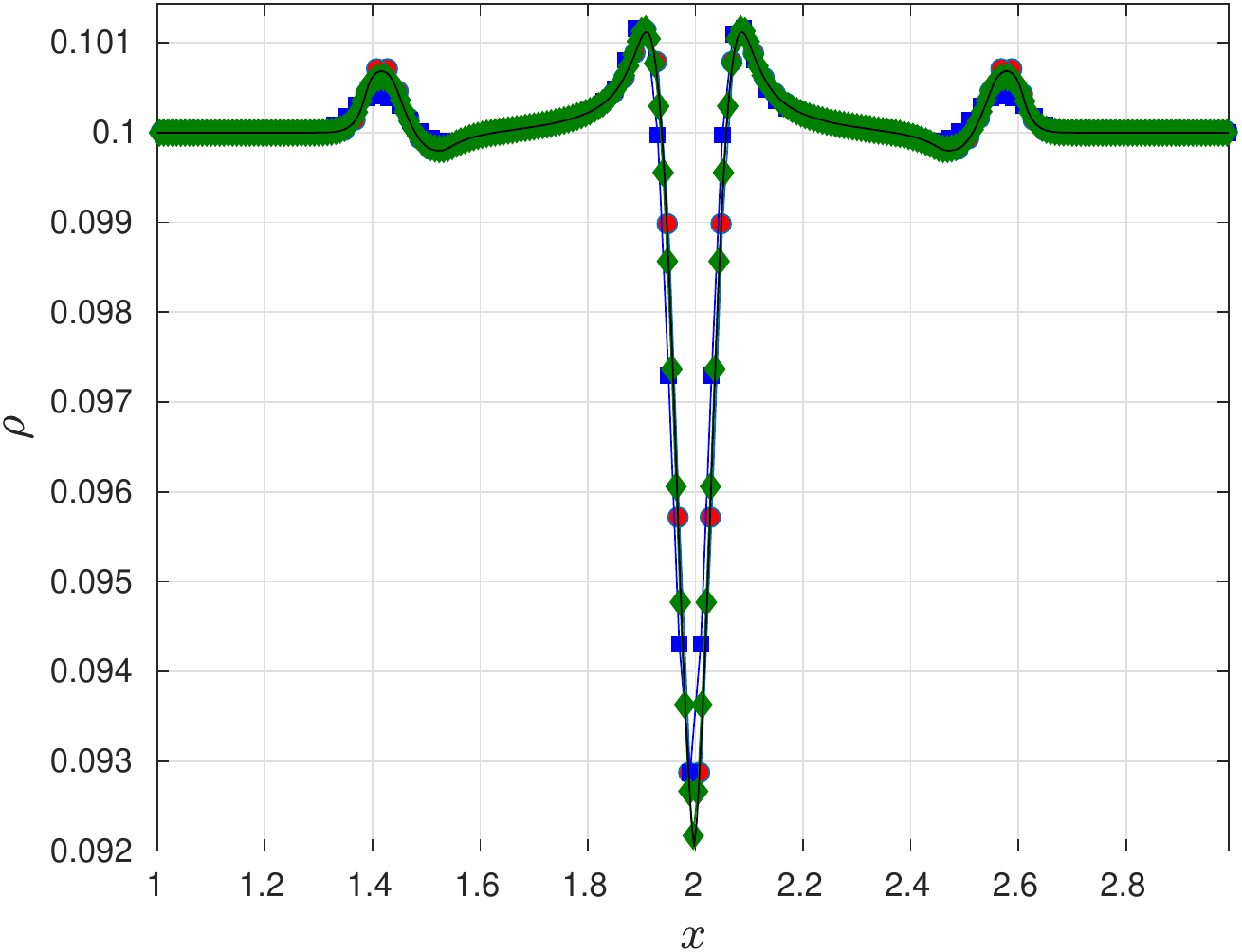} &
			\includegraphics[width=0.48\textwidth]{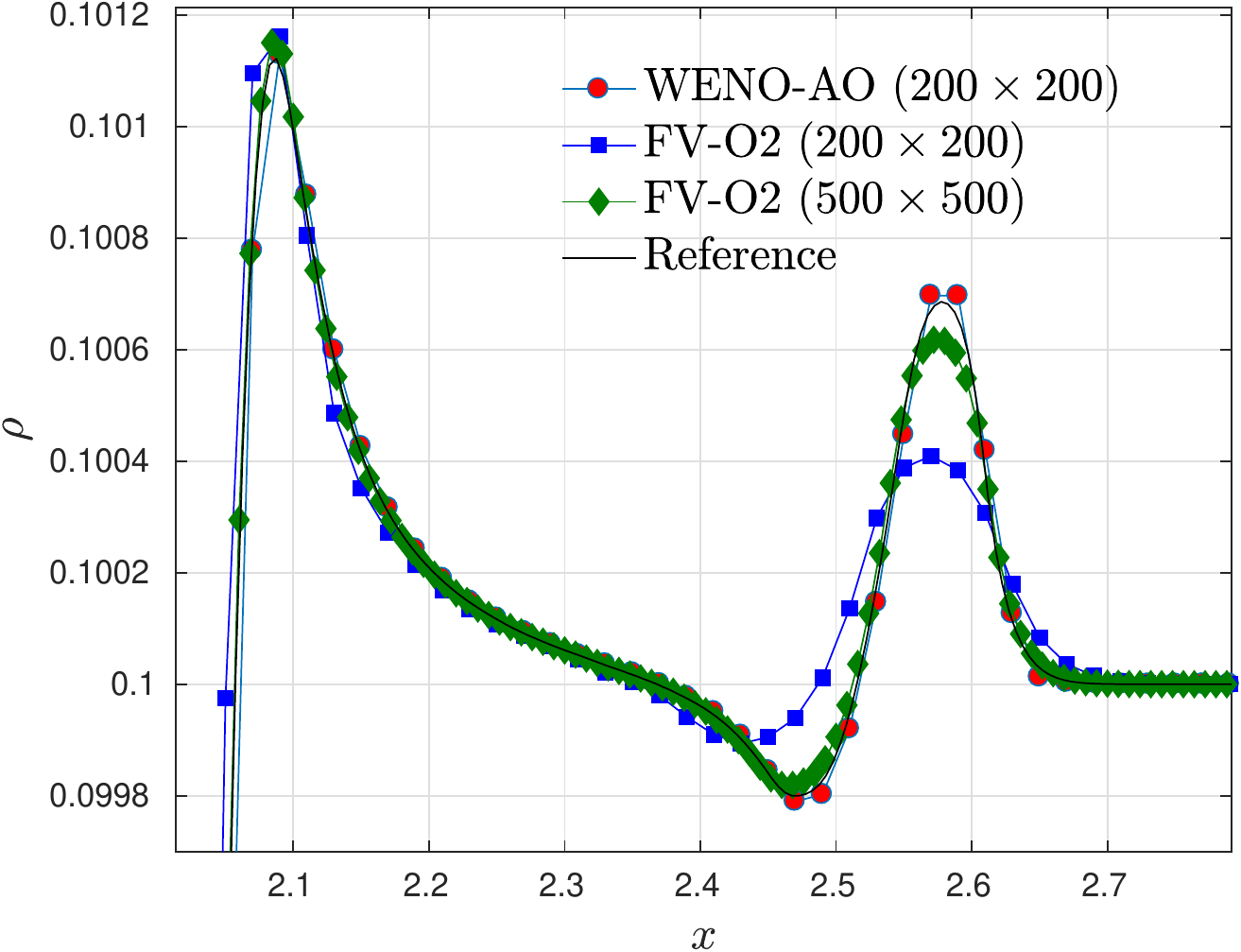} \\
			(a)  & (b) 
		\end{tabular}
	\end{center}
	\caption{One-dimensional cross-section plots of density variable along the line $x+y=4$ for Example \ref{ex:unp}.}
	\label{fig:s21d}
\end{figure}
\begin{figure}
\begin{center}
\begin{tabular}{cc}
\includegraphics[width=0.48\textwidth]{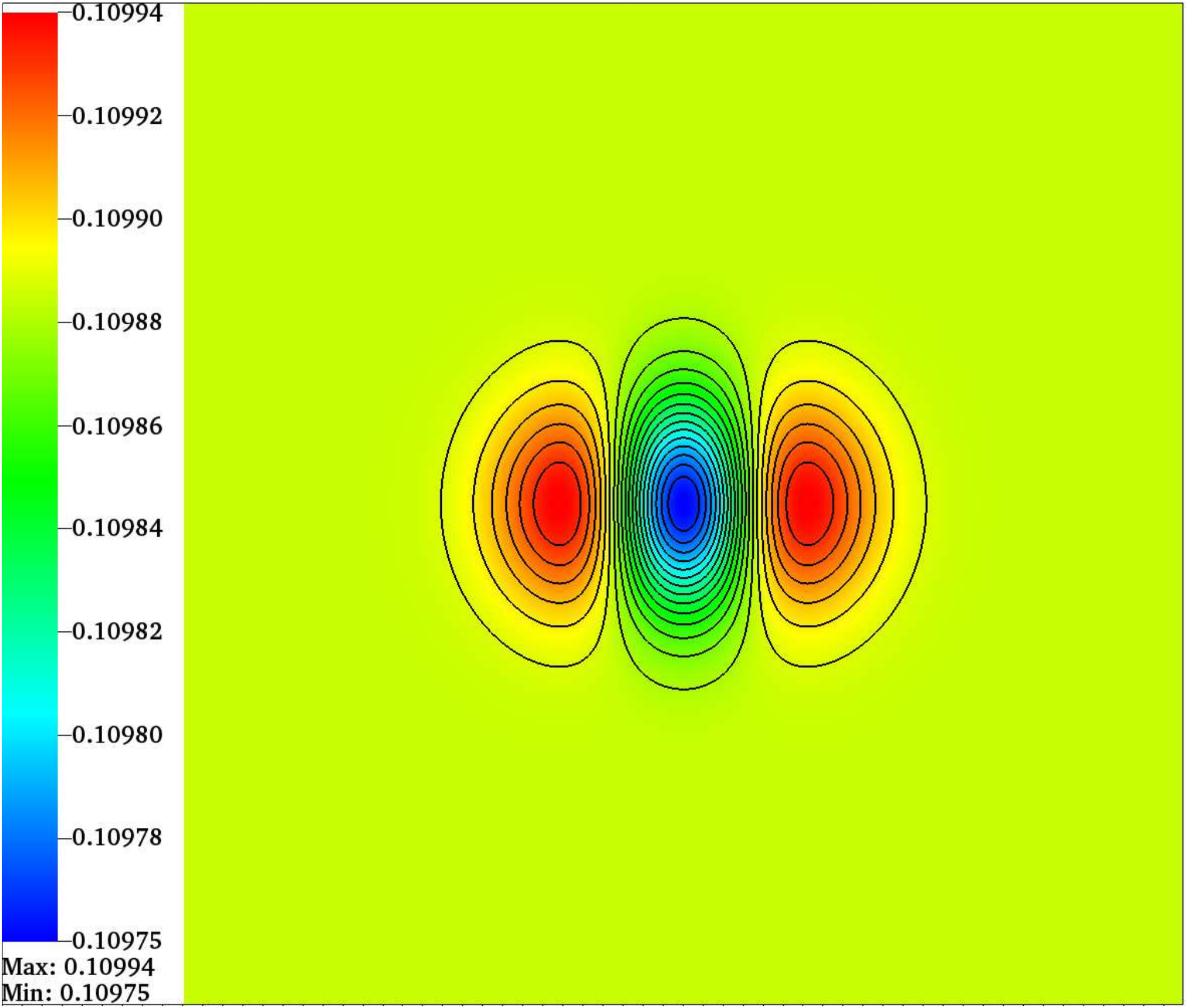} &
 \includegraphics[width=0.48\textwidth]{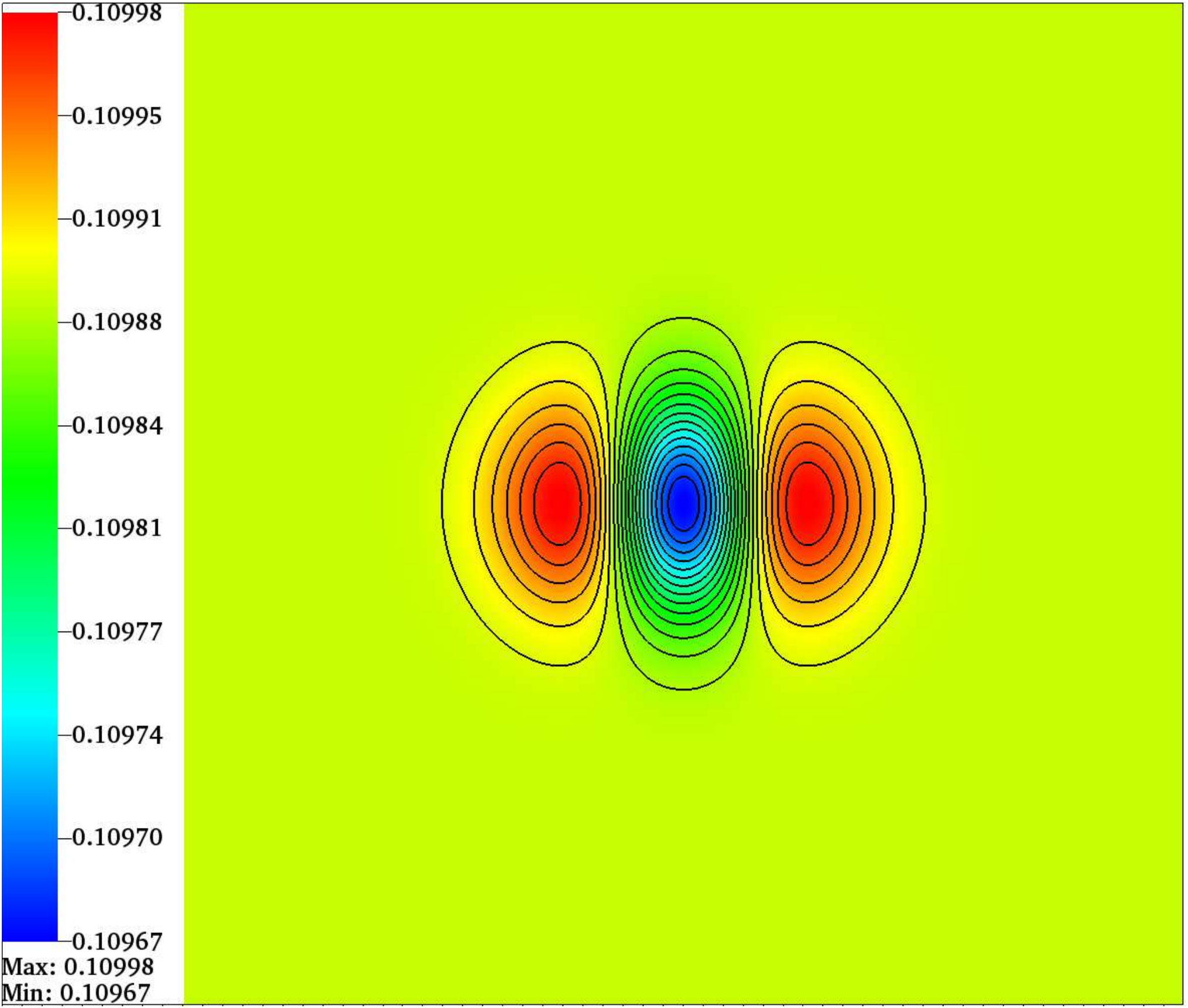} \\
 (a) Density $(v_T=0$) & (b) Density $(v_T=1$)
\end{tabular}
\end{center}
 \caption{Comparison of density for Example \ref{ex12} computed using WENO-AO scheme using $100\times 100$ mesh points. The number of contour lines taken to be 20. }
 \label{fig:ex12}
\end{figure}

\begin{figure}
\begin{center}
\begin{tabular}{cc}
\includegraphics[width=0.49\textwidth]{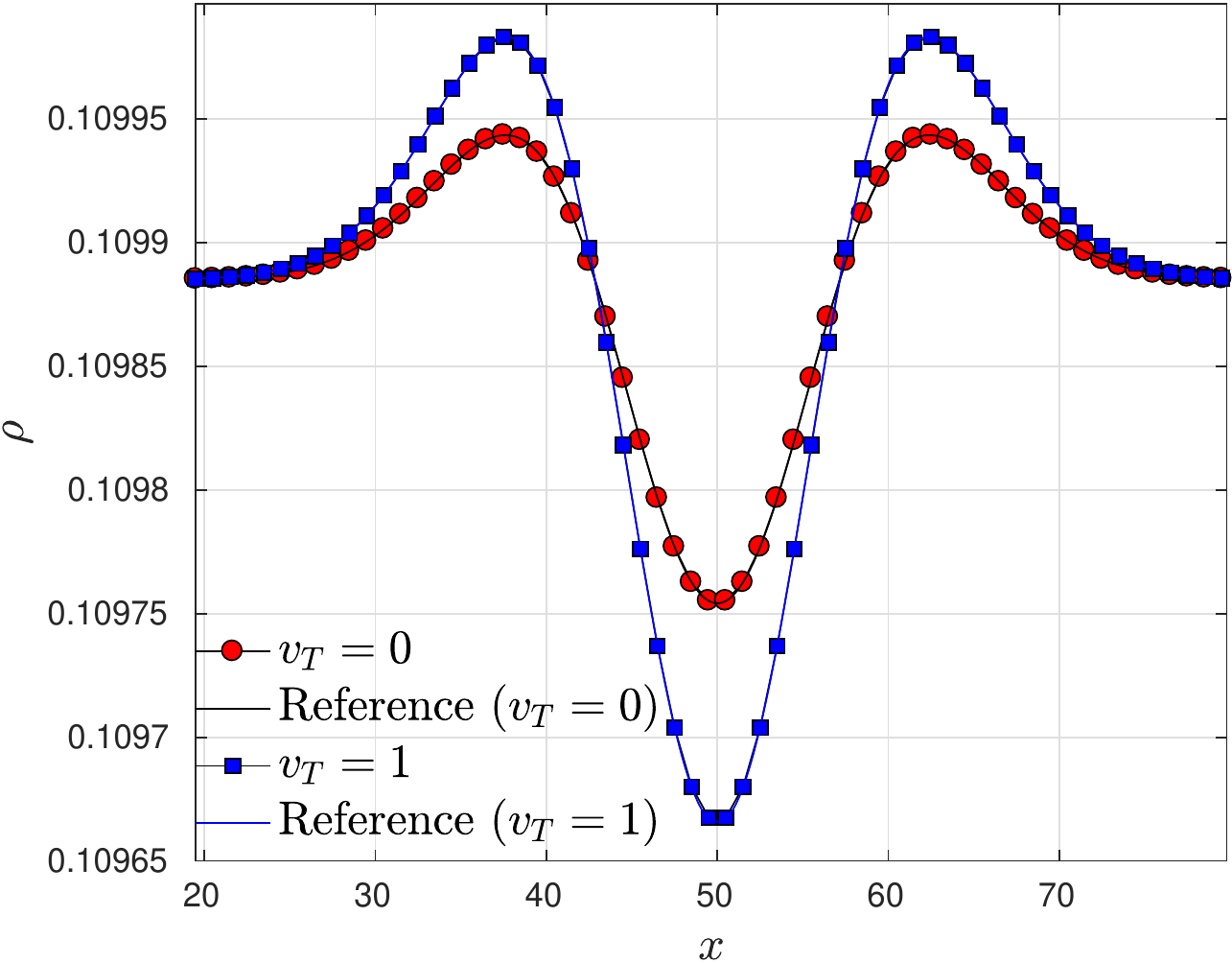} &
 \includegraphics[width=0.48\textwidth]{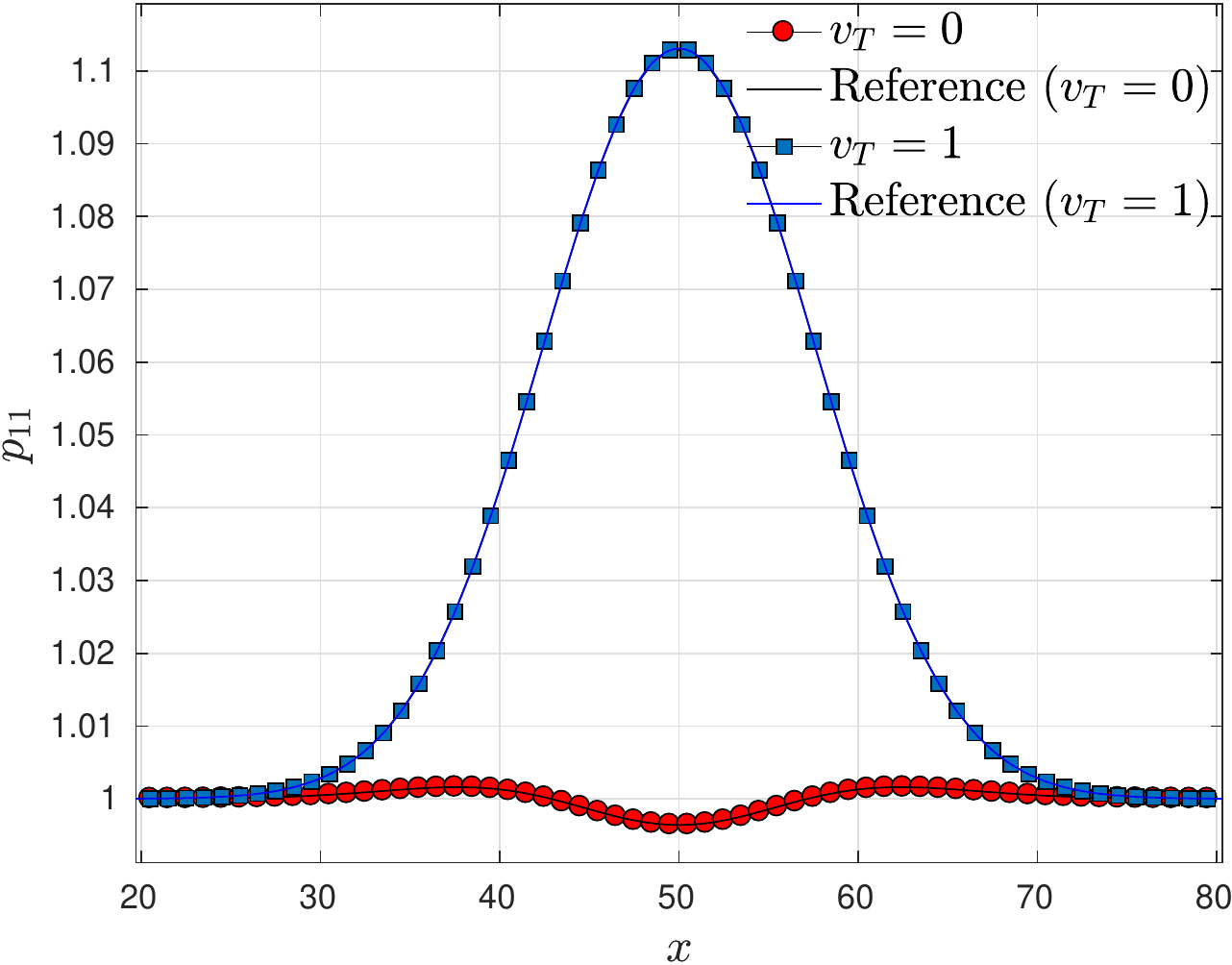} \\
 (a) $\rho$ & (b) $p_{11}$
\end{tabular}
\end{center}
 \caption{Comparison of density and pressure component $p_{11}$ for absorption coefficient $v_{T}=0$  and $v_{T}=1$ of Example \ref{ex12} along the line $y=50$ using $100\times100$ mesh points.}
 \label{fig:ex12cut}
\end{figure}

\begin{example}\label{ex12}{\rm (Realistic simulation in two-dimensions \cite{ber-etal_15a})
Consider a domain of $[0,100]\times[0,100]$ filled with plasma of density $0.109885$, initially at rest with pressure $p_{11}=p_{22}=1$ and $p_{12}=0$. This is excited with source term only in $x$-direction given by
\[
W(x,y)\equiv \exp\left( -\left( \frac{x-50}{10}\right) ^2-\left(\frac{y-50}{10} \right)^2\right)
\]
Here, we add the source term $v_T \rho W$ corresponding to the energy component. The term $v_T$ is known as {\em absorption coefficient}~\cite{ber-etal_15a, mee-kum_17a} and we will perform numerical experiments by choosing  different value of $v_T$. The outflow boundary conditions are considered on all boundaries. The numerical solutions are computed upto final time 0.5 for $v_T=0$ and $v_T=1$. In both cases, the positivity limiter is not active during simulation, since the solution remains away from the boundaries of the admissible set. In Figures \ref{fig:ex12} (a) and  \ref{fig:ex12} (b), we have shown the density variable 
for $v_T=0$ and $v_T=1$, respectively, using $100\times 100$ mesh points.  In Figure~\ref{fig:ex12cut}, the numerical results along the line $y=50$ are compared with the reference solution which is computed using finite volume scheme~\cite{meenaKumarFVM} with $500\times 500$ mesh points. In Figures \ref{fig:ex12cut} (a) and \ref{fig:ex12cut} (b), we have compared the density and pressure $p_{11}$ solutions respectively, for $v_{T}=0$ and $v_{T}=1$. We see that, as we increase the value of absorption coefficient $v_T$, the density decreases and pressure $p_{11}$ increases at the center of the domain, and these results are consistent with the results presented in \cite{ber-etal_15a, mee-kum_17a, mee-kum_17b, sen-kum_18a}.
}
\end{example}

\section{Summary and conclusions}
The Ten-Moment equations form a hyperbolic system of PDEs which can develop shocks and other discontinuous features. In this work, we have developed a positivity preserving finite difference WENO scheme for the Ten-Moment equations with source term. The key idea is to first establish that the scheme with Lax-Friedrich flux splitting is positive under a CFL condition and then use this to make the high order scheme positive by scaling the numerical flux of the high order scheme. The proof of the positivity property is independent of WENO reconstruction which makes this a very general approach. To demonstrate the idea, we consider three variants of WENO schemes, namely WENO-JS \cite{jia-shu_96a}, WENO-Z \cite{bor-etal_08d}, and WENO-AO \cite{bal-etal_16a}.  In addition, we solved the source ODE exactly and proved that the proposed solution of the source ODE is positivity preserving without any restriction on the time step. The discretization of the complete system with high order accuracy in time is achieved by an integrating factor strong stability preserving time integration scheme. The resultant scheme is shown to be positivity preserving for any given potential $W(x,y,t)$ in the source term. The numerical experiments show that design order of accuracy is achieved even in the presence of time dependent source terms.  The non-oscillatory nature is demonstrated on many test cases containing discontinuous solutions and we also show that the high order nature of the scheme gives better solutions compared to standard second order finite volume schemes. For some test cases, the positivity limiter is shown to lead to stable computations which otherwise break down due to loss of positivity. The positivity limiter induces a smaller CFL number and to improve the efficiency of the algorithm, we have implemented an adaptive CFL strategy. We have analyzed that the scheme with adaptive CFL is about 10-12 times faster than that with uniform CFL for 1-D test cases and almost 2-4 times faster in 2-D cases, while solution obtained using adaptive CFL is comparable with that of uniform CFL. In comparison of  WENO-AO scheme with WENO-JS, WENO-Z, and  finite volume~\cite{meenaKumarFVM} schemes, we conclude that the WENO-AO scheme resolves the shocks and high frequency waves more accurately. 

\section*{Acknowledgments} 
Rakesh Kumar would like to acknowledge funding support from the  National Post-doctoral Fellowship (PDF/2018/002621)  administered by SERB-DST,  India.

\bibliographystyle{IMANUM-BIB}

\bibliography{bib_weno}

\begin{thebibliography}{10}

\bibitem{bal-etal_16a}
Dinshaw~S. Balsara, Sudip Garain, and Chi-Wang Shu.
\newblock An efficient class of \{WENO\} schemes with adaptive order.
\newblock {\em Journal of Computational Physics}, 326:780 -- 804, 2016.

\bibitem{bal-shu_00a}
Dinshaw~S. Balsara and Chi-Wang Shu.
\newblock Monotonicity preserving weighted essentially non-oscillatory schemes
  with increasingly high order of accuracy.
\newblock {\em J. Comput. Phys.}, 160(2):405--452, 2000.

\bibitem{Batten1997}
Batten, P., Clarke, N., Lambert, C., Causon, D.M., 1997. On the Choice of Wavespeeds for the HLLC Riemann Solver. SIAM Journal on Scientific Computing 18, 1553–1570. https://doi.org/10.1137/S1064827593260140

\bibitem{berthon2015}
C.~Berthon, B.~Dubroca, and A.~Sangam.
\newblock An entropy preserving relaxation scheme for the ten-moments equations
  with source terms.
\newblock {\em Comm. Math. Sci.}, 13(8):2119--2154, 2015.

\bibitem{ber_06a}
Christophe Berthon.
\newblock Numerical approximations of the 10-moment {G}aussian closure.
\newblock {\em Math. Comp.}, 75(256):1809--1831, 2006.

\bibitem{ber-etal_15a}
Christophe Berthon, Bruno Dubroca, and Afeintou Sangam.
\newblock An entropy preserving relaxation scheme for ten-moments equations
  with source terms.
\newblock {\em Commun. Math. Sci.}, 13(8):2119--2154, 2015.

\bibitem{bor-etal_08d}
Rafael Borges, Monique Carmona, Bruno Costa, and Wai~Sun Don.
\newblock An improved weighted essentially non-oscillatory scheme for
  hyperbolic conservation laws.
\newblock {\em J. Comput. Phys.}, 227(6):3191--3211, 2008.

\bibitem{bouchet}
F.~Bouchut.
\newblock {\em Nonlinear stability of finite volume methods for hyperbolic
  conservation laws, and well-balanced schemes for sources}.
\newblock Frontiers in Mathematics Series, Birkhauser, 2004.

\bibitem{ger-etal_09a}
G.~A. Gerolymos, D.~S\'en\'echal, and I.~Vallet.
\newblock Very-high-order {WENO} schemes.
\newblock {\em J. Comput. Phys.}, 228(23):8481--8524, 2009.

\bibitem{godlewskiRaviart}
E.~Godlewski and P.~A. Raviart.
\newblock {\em Hyperbolic Systems of Conservation Laws}.
\newblock Mathematiques $\&$ Applications, Ellipses, 1991.

\bibitem{godlewski}
E.~Godlewski and P.~A. Raviart.
\newblock {\em Numerical Approximation of Hyperbolic Systems of Conservation
  Laws}.
\newblock Applied Mathematical Sciences, 118, Springer, New York, 1996.

\bibitem{guo-etal_14a}
Yan Guo, Tao Xiong, and Yufeng Shi.
\newblock A positivity-preserving high order finite volume compact-{WENO}
  scheme for compressible {E}uler equations.
\newblock {\em J. Comput. Phys.}, 274:505--523, 2014.

\bibitem{hen-etal_05a}
Andrew~K. Henrick, Tariq~D. Aslam, and Joseph~M. Powers.
\newblock Mapped weighted essentially non-oscillatory schemes: Achieving
  optimal order near critical points.
\newblock {\em Journal of Computational Physics}, 207(2):542 -- 567, 2005.

\bibitem{hu-etal_13a}
Xiangyu~Y. Hu, Nikolaus~A. Adams, and Chi-Wang Shu.
\newblock Positivity-preserving method for high-order conservative schemes
  solving compressible {E}uler equations.
\newblock {\em J. Comput. Phys.}, 242:169--180, 2013.

\bibitem{hua-che_18a}
Cong Huang and Li~Li Chen.
\newblock A simple smoothness indicator for the {WENO} scheme with adaptive
  order.
\newblock {\em Journal of Computational Physics}, 352(Supplement C):498 -- 515,
  2018.

\bibitem{lea-etal_18a}
Leah Isherwood, Zachary~J. Grant, and Sigal Gottlieb.
\newblock Strong {S}tability {P}reserving {I}ntegrating {F}actor
  {R}unge--{K}utta {M}ethods.
\newblock {\em SIAM J. Numer. Anal.}, 56(6):3276--3307, 2018.

\bibitem{jia-shu_96a}
Guang-Shan Jiang and Chi-Wang Shu.
\newblock Efficient implementation of weighted {ENO} schemes.
\newblock {\em J. Comput. Phys.}, 126(1):202--228, 1996.

\bibitem{kum-pra_18b}
Rakesh Kumar and Praveen Chandrashekar.
\newblock Efficient seventh order {WENO} schemes of adaptive order for hyperbolic conservation laws, Comp. Fluids, vol. 190, pp. 49-76, 2019.

\bibitem{kum-pra_18a}
Rakesh Kumar and Praveen Chandrashekar.
\newblock Simple smoothness indicator and multi-level adaptive order {WENO}
  scheme for hyperbolic conservation laws.
\newblock {\em Journal of Computational Physics}, 375:1059 -- 1090, 2018.

\bibitem{levermore1}
C.~D. Levermore.
\newblock Moment closure hierarchies for kinetic theories.
\newblock {\em Journal of Statistical Physics}, 83:1021--1065, 1996.

\bibitem{levermore2}
C.~D. Levermore and W.~J. Morokoff.
\newblock The {G}aussian moment closure for gas dynamics.
\newblock {\em SIAM Journal of Applied Mathematics}, 59(1):72--96, 1998.

\bibitem{lev_96a}
C.~David Levermore.
\newblock Moment closure hierarchies for kinetic theories.
\newblock {\em Journal of Statistical Physics}, 83(5):1021--1065, Jun 1996.

\bibitem{lev-wil_98a}
C.~David Levermore and William~J. Morokoff.
\newblock The gaussian moment closure for gas dynamics.
\newblock {\em SIAM Journal of Applied Mathematics}, 59:72--96, 1998.

\bibitem{liu-etal_94a}
Xu-Dong Liu, Stanley Osher, and Tony Chan.
\newblock Weighted essentially non-oscillatory schemes.
\newblock {\em Journal of Computational Physics}, 115(1):200 -- 212, 1994.

\bibitem{don-gro_05a}
J.~McDonald and C.~P.~T. Groth.
\newblock Numerical modeling of micron-scale flows using the gaussian moment
  closure.
\newblock {\em Paper 2005-5035, AIAA}, 2005.

\bibitem{meenaKumarFVM}
Asha~Kumari Meena and Harish Kumar.
\newblock Robust {MUSCL} schemes for ten-moment {G}aussian closure equations
  with source terms.
\newblock {\em Int. J. Finite Vol.}, 13:34, 2017.

\bibitem{mee-kum_17b}
Asha~Kumari Meena and Harish Kumar.
\newblock A well-balanced scheme for ten-moment gaussian closure equations with
  source term.
\newblock {\em Zeitschrift f{\"u}r angewandte Mathematik und Physik}, 69(1):8,
  Dec 2017.

\bibitem{mee-kum_17a}
Asha~Kumari Meena, Harish Kumar, and Praveen Chandrashekar.
\newblock Positivity-preserving high-order discontinuous galerkin schemes for
  ten-moment gaussian closure equations.
\newblock {\em Journal of Computational Physics}, 339:370 -- 395, 2017.

\bibitem{sangam}
A.~Sangam.
\newblock An {HLLC} scheme for ten-moments approximation coupled with magnetic
  field.
\newblock {\em Int. J. Computing Science and Mathematics}, 2(1/2):73--109,
  2008.

\bibitem{sen-kum_18a}
Chhanda Sen and Harish Kumar.
\newblock Entropy stable schemes for ten-moment gaussian closure equations.
\newblock {\em Journal of Scientific Computing}, 75(2):1128--1155, May 2018.

\bibitem{shi-etal_02}
Jing Shi, Changqing Hu, and Chi-Wang Shu.
\newblock A technique of treating negative weights in {WENO} schemes.
\newblock {\em Journal of Computational Physics}, 175(1):108 -- 127, 2002.

\bibitem{shu-osh_88a}
Chi-Wang Shu and Stanley Osher.
\newblock Efficient implementation of essentially non-oscillatory
  shock-capturing schemes.
\newblock {\em Journal of Computational Physics}, 77(2):439 -- 471, 1988.

\bibitem{shu-osh_89a}
Chi-Wang Shu and Stanley Osher.
\newblock Efficient implementation of essentially non-oscillatory
  shock-capturing schemes, ii.
\newblock {\em Journal of Computational Physics}, 83(1):32 -- 78, 1989.

\bibitem{Thomann2019}
Thomann, A., Zenk, M., Klingenberg, C., 2019. A second-order positivity-preserving well-balanced finite volume scheme for Euler equations with gravity for arbitrary hydrostatic equilibria: Well-balanced scheme for Euler equations with gravity. International Journal for Numerical Methods in Fluids. https://doi.org/10.1002/fld.4703

\bibitem{toro}
E.~F. Toro.
\newblock {\em Riemann Solvers and Numerical Methods for Fluids dynamics}.
\newblock A Pratical Introduction, Third edition, Springer, Berlin, 2009.

\bibitem{Waagan2011}
Waagan, K., Federrath, C., Klingenberg, C., 2011. A robust numerical scheme for highly compressible magnetohydrodynamics: Nonlinear stability, implementation and tests. Journal of Computational Physics 230, 3331–3351. https://doi.org/10.1016/j.jcp.2011.01.026

\bibitem{zha-shu_10d}
Xiangxiong Zhang and Chi-Wang Shu.
\newblock A genuinely high order total variation diminishing scheme for
  one-dimensional scalar conservation laws.
\newblock {\em SIAM J. Numer. Anal.}, 48(2):772--795, 2010.

\bibitem{Zha-shu_10a}
Xiangxiong Zhang and Chi-Wang Shu.
\newblock On positivity-preserving high order discontinuous {G}alerkin schemes
  for compressible {E}uler equations on rectangular meshes.
\newblock {\em J. Comput. Phys.}, 229(23):8918--8934, 2010.

\bibitem{Zha-shu_11a}
Xiangxiong Zhang and Chi-Wang Shu.
\newblock Positivity-preserving high order discontinuous {G}alerkin schemes for
  compressible {E}uler equations with source terms.
\newblock {\em J. Comput. Phys.}, 230(4):1238--1248, 2011.

\bibitem{zha-shu_12a}
Xiangxiong Zhang and Chi-Wang Shu.
\newblock Positivity-preserving high order finite difference {WENO} schemes for
  compressible Euler equations.
\newblock {\em Journal of Computational Physics}, 231(5):2245 -- 2258, 2012.

\bibitem{zhu-qiu_16a}
Jun Zhu and Jianxian Qiu.
\newblock A new fifth order finite difference {WENO} scheme for solving
  hyperbolic conservation laws.
\newblock {\em J. Comput. Phys.}, 318:110--121, 2016.

\end{thebibliography}

\end{document}